\DeclareSymbolFont{cyrletters}{OT2}{wncyr}{m}{n}
\DeclareMathSymbol{\Sha}{\mathalpha}{cyrletters}{"58}
\newcommand{\C}{\ensuremath{{\mathbb{C}}}}
\newcommand{\Z}{\ensuremath{{\mathbb{Z}}}\xspace}
\renewcommand{\P}{\ensuremath{{\mathbb{P}}}}
\newcommand{\Q}{\ensuremath{{\mathbb{Q}}}}
\newcommand{\R}{\ensuremath{{\mathbb{R}}}}
\newcommand{\F}{\ensuremath{{\mathbb{F}}}}
\newcommand{\E}{\ensuremath{{\mathbb{E}}}}
\newcommand{\ra}{\rightarrow}
\newcommand\Hom{\operatorname{Hom}}
\newcommand\Aut{\operatorname{Aut}}
\newcommand\Out{\operatorname{Out}}
\newcommand\im{\operatorname{im}}
\newcommand\Sym{\operatorname{Sym}}
\newcommand\Sur{\operatorname{Epi}}
\newcommand\SSur{\operatorname{Sur}}
\newcommand\tensor{\otimes}
\newcommand\isom{\simeq}
\newcommand\sub{\subset}
\newcommand\GL{\operatorname{GL}}
\newcommand\Pic{\operatorname{Pic}}
\renewcommand\C{\mathcal{L}}
\newcommand\W{\mathcal{W}}
\newcommand\V{\mathcal{V}}
\newcommand{\Surj}{\operatorname{Sur}}
\newcommand{\Epi}{\operatorname{Epi}}
\newcommand{\N}{\mathbb N}
\newcommand{\Prf}{\mathcal{P}}
\newcommand{\category}{C}
\newcommand{\sS}{\mathscr{S}}
\newcommand{\Diamantine}{diamond }
\newcommand{\Ggp}{G}
\newcommand{\Fgp}{F}
\newcommand{\ggp}{G}
\newcommand{\fgp}{F}
\newcommand{\barHornot }{H}
\numberwithin{equation}{section}
\newtheorem{proposition}[equation]{Proposition}
\newtheorem{theorem}[equation]{Theorem}
\newtheorem{corollary}[equation]{Corollary}
\newtheorem{example}[equation]{Example}
\newtheorem{lemma}[equation]{Lemma}
\theoremstyle{remark}
\newtheorem{remark}[equation]{Remark}
\newtheorem{definition}[equation]{Definition}
\newtheorem{nts}{Note to self}
\newcommand{\melanie}[1]{{\color{blue} \sf $\clubsuit\clubsuit\clubsuit$ Melanie: [#1]}}
\begin{document}

\begin{abstract} The moment problem in probability theory asks for criteria for when there exists a unique measure with a given tuple of moments. 
We study a variant of this problem for random objects in a category, where a moment is given by the average number of epimorphisms to a fixed object.
When the moments do not grow too fast, we give a necessary and sufficient condition for existence of a distribution with those moments, show that a unique such measure exists,
give formulas for the measure in terms of the moments, and prove that measures with those limiting moments  approach that particular measure.
Our result applies to categories satisfying some finiteness conditions and a condition that gives an analog of the second isomorphism theorem,
including the categories of finite groups,  finite modules, finite rings, as well as many variations of these categories.
This work is motivated by the non-abelian Cohen-Lenstra-Martinet program in number theory, which aims to calculate the distribution of random profinite groups arising as Galois groups of maximal unramified extensions of random number fields.\end{abstract}

\title{The moment problem for random objects in a category}
\author{Will Sawin}
\address{Department of Mathematics\\
Princeton University \\
Fine Hall, Washington Road \\
Princeton, NJ 08540 USA}  
\email{wsawin@math.princeton.edu}

\author{Melanie Matchett Wood}
\address{Department of Mathematics\\
Harvard University\\
Science Center Room 325\\
1 Oxford Street\\
Cambridge, MA 02138 USA}  
\email{mmwood@math.harvard.edu}

\maketitle

\section{Introduction}

The $k$th moment of a measure $\nu$ on the real numbers is defined to be $\int_X X^k d \nu$. 
The classical moment problem asks whether, given a sequence $M_k$, whether there exists a measure whose $k$th moment is $M_k$ for all $k$, and whether that measure is unique.
 For the existence part, a natural necessary and sufficient condition is known, whereas for uniqueness, multiple convenient sufficient conditions are known \cite{Schmudgen2017}. 
 An important auxiliary result allows us to apply the uniqueness result to determine the limit of a sequence of distributions from the limit of their moments. 

There have been recent  works, in number theory,  probability, combinatorics, and topology  
that have used analogs of the moment
problem to help identify distributions, but not distributions of numbers--rather distributions of algebraic objects such as groups \cite{Ellenberg2016,Wood2017,Wood2019a,Meszaros2020,Nguyen2021}
or modules \cite{Liu2019}.  Even more recent works require distributions on more complicated categories, such as groups with an action of a fixed group \cite{Boston2017,Sawin2020},  groups with certain kinds of pairings \cite{Lipnowski2020},  groups with other additional data \cite{Wood2018,Liu2022}, or tuples of groups \cite{Lee2022, Nguyen2022}.
The distributions that have been identified this way include distributions of class groups of function fields,  profinite completions of fundamental groups of $3$-manifolds,  sandpile groups of random graphs, and cokernels of random matrices.  These works have often required new proofs of moment problem analogs in each different category considered.

The goal of this paper is to prove a theorem on the moment problem for distributions of objects of a rather general category.
We expect that future works of many authors will consider distributions on new categories and we have tried to make our result  as general as possible so that it can be applied in these new settings. Already, the results of this paper have been used in subsequent work of the authors to study class groups \cite{SW-clrou} and by others to study cokernels of $p$-adic random matrices \cite{CheongHuang2023,CheongYu2023,Yan2023} and profinite groups that model absolute Galois groups \cite{AlbertsLD}.  In a future paper, we will consider profinite groups $G$ together with an action of a fixed group $\Gamma$ and a class in the group homology $H^3(G,\mathbb Z/n)$,  and will apply the main results of the current paper to understand the distributions of class groups and fundamental groups of curves over finite fields, and their influence under roots of unity in the base field. 

In the categories in which we work, 
we give a criterion for there to exist a measure on isomorphism classes of that category with a given sequence of moments, and a criterion for the measure  to be unique. We also give explicit formulas for the measure, when it exists, in terms of the moments.
This is a significant advance, as  previous work  has always relied on a priori knowing a distribution with certain moments.  Being able to construct a distribution from its moments  opens new avenues such as in our subsequent paper mentioned above.

Working at a high level of generality requires us to use some notation we expect to be unfamiliar. 
Thus, for clarity, we begin with the statement of our main theorem in the category of groups.  In many cases of interest, the probability distributions of interest are on the set of profinite groups (which includes finite groups), so we work there.

Let $\category$ be the category of finite groups.  A \emph{level} $\C$
 is a finitely generated formation of finite groups (a \emph{formation} is a set of isomorphism classes of groups closed under taking quotients and finite subdirect products). 
 For example, the group $\Z/p\Z$ generates the level of 
elementary abelian $p$-groups.  We consider the set $\mathcal{P}$ of isomorphism classes of profinite groups that have finitely many (continuous) homomorphisms to any finite group.  A group $X\in \mathcal{P}$ has a pro-$\C$ completion $X^\C$ which is the inverse limit of the (continuous) quotients of $X$ in $\C$.
We consider a topology generated on $\mathcal{P}$ generated by $\{X \,|\, X^\C\isom F\}$ as $\C$ and $F$ vary (and the associated Borel $\sigma$-algebra).
For a finite group $G$, the \emph{$G$-moment}
 of a measure $\nu$ on $\mathcal{P}$ is defined to be
$$
\int_{X\in \mathcal{P}} |\SSur(X,G)| d\nu.
$$

We give a condition, \emph{well-behaved}, below on a  real numbers $M_G$ indexed by (isomorphism classes of) finite groups
 that quantifies the notion that the $M_G$ ``don't grow too fast.''
For each surjection $\rho:G \ra F$, the quotients of $G$ (up to isomorphism) that $\rho$ factors through form a poset (\cite[Section 3.7]{Stanley2012}). We let $\mu(\rho)=\mu(F,G)$ be the M\"obius function of that poset.  For $F,G\in C$, we let $\hat{\mu}(F,G)=\sum_{\rho\in \SSur(G,F)/\Aut(F)} \mu(\rho)$.
Given such $M_G$, for each level $\C$ and finite group $F$, we define (what will be the formulas for our measures)
\begin{equation}\label{E:defv}
v_{\C,F}:=\sum_{G \in \C} \frac{\hat{\mu}(F,G)}{|\Aut(G)|} M_G.
\end{equation}

\begin{theorem}\label{T:groups}
For each finite group $G$, let $M_G$ be a real number such that the sequence $M_G$ is well-behaved.
\begin{enumerate}
\item[(Existence):] There is a measure on $\mathcal{P}$ with $G$-moment $M_G$ for each finite group $G$ if and only if the $v_{\C,F}$ defined above are non-negative for each $\C$ and $F$.
\item[(Uniqueness):] When such a measure $\nu$ exists, it is unique and is given by the formulas
$$
\nu(\{X \,|\, X^\C\isom F\}) =v_{\C,F}.
$$
\item[(Robustness):] If $\nu^t$ are measures on $\Prf$ such that for each finite group $G$,
$$
\lim_{t\ra\infty} \int_{X \in \mathcal{P}} |\SSur(X,G)| d\nu^t= M_G, 
$$
then a measure $\nu$ with moments $M_G$ exists and
the  $\nu^t$ weakly converge to $\nu$.
\end{enumerate}
\end{theorem}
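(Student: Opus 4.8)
The three parts of Theorem~\ref{T:groups} are naturally proved in sequence, and the key analytic input is a quantitative Möbius-inversion argument controlling the "don't grow too fast" condition. First I would establish the finite-level picture: for a fixed level $\C$, the pro-$\C$ completion $X \mapsto X^\C$ sorts $\mathcal{P}$ into the countably many open-closed classes $\{X \mid X^\C \isom F\}$ as $F$ ranges over the (countably many) isomorphism classes of groups in $\C$. A measure on $\mathcal{P}$ restricts to a probability-like (but not necessarily total-mass-one) distribution on each such level, and these are compatible under refinement of levels; conversely, a compatible family of such finite-level distributions glues, by a Kolmogorov-type extension argument on the profinite-generated $\sigma$-algebra, to a measure on $\mathcal{P}$. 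So the whole theorem reduces to a statement about, for each level $\C$, the finite linear system relating the level-$\C$ masses $\nu(\{X^\C\isom F\})$ to the moments $M_G$ with $G \in \C$.

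\textbf{Inverting the moment--mass relation.} The central identity is that, for $G \in \C$, $|\SSur(X,G)|$ depends only on $X^\C$, and
\[
M_G = \sum_{F \in \C} \nu(\{X^\C \isom F\}) \,|\SSur(F,G)|.
\]
Writing $v_{\C,F}$ for the putative masses, the claim of the Existence and Uniqueness parts is precisely that this (infinite, upper-triangular with respect to the "is-a-quotient-of" partial order) system is inverted by the formula~\eqref{E:defv}, via the Möbius function $\hat\mu(F,G)$ of the quotient poset and the classical identity $\sum_{\rho} |\SSur(F,G)|\hat\mu(\cdot)/|\Aut(\cdot)| = $ an appropriate Kronecker delta. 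This is a formal Möbius-inversion computation once one knows the sums converge absolutely, which is exactly where \emph{well-behavedness} enters: I would show it forces $\sum_{G \in \C} |\hat\mu(F,G)|\,|M_G|/|\Aut(G)| < \infty$ for every $\C, F$, so $v_{\C,F}$ is well-defined and the rearrangements are legitimate. Then: (Existence) $\Leftarrow$ is the gluing above applied to the $v_{\C,F}$, once one checks they are nonnegative (the hypothesis), that they sum to a finite total mass on each level, and that they satisfy the level-compatibility relation (another Möbius computation, or a consequence of uniqueness of inversion); (Existence) $\Rightarrow$ and (Uniqueness) follow because any measure with moments $M_G$ must have level masses solving the same system, and upper-triangularity with convergence forces the solution to be~\eqref{E:defv}, hence in particular nonnegative.

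\textbf{Robustness.} For the last part I would argue: well-behavedness is a condition on the \emph{limiting} moments $M_G$, and I expect the needed uniformity — a bound on $\nu^t$-moments by something summable against $|\hat\mu|/|\Aut|$ — to be extractable either from the hypotheses as stated or after a standard reduction, so that the sums $v^t_{\C,F} := \sum_{G\in\C} \hat\mu(F,G)M_{G}^t/|\Aut(G)|$ converge to $v_{\C,F}$ as $t \to \infty$ by dominated convergence on the index set $\C$. Since $v^t_{\C,F}$ is (by the Uniqueness part applied, level by level, to $\nu^t$ — more precisely, it is the level-$\C$, $F$-mass of $\nu^t$, valid even without $\nu^t$ being a priori well-behaved because the level-$\C$ subsystem truncates) equal to $\nu^t(\{X^\C \isom F\})$, we get convergence of the measures of a basis of open-closed sets generating the topology; a portmanteau/subsequence argument on the compact-ish space $\mathcal{P}$ (levels give a projective system of finite-or-countable discrete targets) upgrades this to weak convergence, and the limit has level masses $v_{\C,F}$, hence equals the measure $\nu$ produced in the Existence part and has the right moments (here one re-sums the level identity, again justified by well-behavedness of $M_G$).

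\textbf{Main obstacle.} The routine-looking steps — Möbius inversion, Kolmogorov gluing, portmanteau — are standard; the real work is the \emph{convergence bookkeeping}: extracting from "well-behaved" exactly the absolute-summability estimates that make every interchange of limits and sums legitimate, and, for Robustness, pinning down what uniform control on the $\nu^t$ is available (the hypothesis only gives pointwise convergence of each moment, so one must argue that pointwise convergence of the $M_G^t$ to a well-behaved limit already suffices, presumably because the level-$\C$ subsystem at each stage involves only the finitely-supported-on-$\C$ part and an error controlled by the tail of the well-behaved bound). I expect this analytic core to be where the definition of well-behaved is really used and where the bulk of the proof's length lies.
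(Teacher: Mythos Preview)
Your high-level architecture is right: reduce to levels, invert via M\"obius, glue. The (Uniqueness) direction really is the one-line exchange of summation you describe, justified by the absolute convergence that well-behavedness gives for $\sum_{G\in\C}|\hat\mu(F,G)|\,|M_G|/|\Aut(G)|$; this matches the paper (Lemma~\ref{L:IE} and Theorem~\ref{amc-1}).

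The genuine gap is in (Existence). To show that the masses $v_{\C,F}$ really have moments $M_G$, you need
\[
\sum_{F\in\C}|\Sur(F,H)|\,v_{\C,F}=M_H,
\]
which after expanding $v_{\C,F}$ is the double sum in~\eqref{E:FakeExist}. The paper explicitly computes (Example~\ref{no-absolute-convergence}) that this double sum is \emph{not} absolutely convergent even for the nicest possible moments $M_G\equiv 1$ on the category of finite groups, so ``another M\"obius computation'' or ``the tail of the well-behaved bound'' will not justify the rearrangement. The paper's fix is not analytic bookkeeping but a structural idea: filter $\C$ by complexity as $\W_0\subset\W_1\subset\cdots\subset\W_t=\C$ where each $G\to G^{\W_{i-1}}$ is semisimple, and exchange the sums one layer at a time. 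At each layer one first cancels the terms with $\rho\pi$ not relative-$\W_i$ using a M\"obius identity (Lemma~\ref{L:relC}), and only then are the remaining restricted sums absolutely convergent---this is where the $Z(\pi)^3$ in the definition of well-behaved is actually used (Proposition~\ref{P:makelocal}, via Lemma~\ref{Z-to-convolution}). Your proposal does not contain this idea.

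Your (Robustness) plan has a related problem. You want $v^t_{\C,F}\to v_{\C,F}$ by dominated convergence over $G\in\C$, but the hypothesis gives only $M_G^t\to M_G$ pointwise with no uniform bound, and $\C$ is infinite, so there is no dominating function. The paper instead works directly with the masses $p_{\C,F}^n=\nu^n(\{X^\C\cong F\})$: from finiteness of $\sup_n M_G^n$ it extracts a growth bound (Lemma~\ref{dominance}), then runs another layer-by-layer exchange along \emph{radical sequences} (Lemmas~\ref{radical-exchange-limit} and~\ref{radical-tower-bound}), showing any subsequential weak limit of the $p^n$ has the right moments; (Uniqueness) then pins it down. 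So here too the missing ingredient is the inductive tower through semisimple steps, not a global dominated-convergence estimate.
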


If one wishes to work with only finite groups, we have an analogous result, Theorem \ref{T:general}, for each level $\C$ of finite groups. In Section~\ref{s-examples}, we give examples to show how to compute these $v_{\C,F}$ in practice.

We next describe the more general formalism of which Theorem \ref{T:groups} is a special case.

\subsection{Main result}

We first review some standard notation. Let $\category$ be a small category. 
A \emph{quotient} of $G$ is a pair $(F, \pi^G_F)$ of an object $F\in \category$ and an epimorphism $\pi^G_F:G\ra F$.  Quotients 
$(F, \pi^G_F)$ and $(F,' \pi^G_{F'})$ are isomorphic when there is an isomorphism $\rho:F\ra F'$ such that $\pi^G_{F'}=\rho \pi^G_F$.

For $G$ an object of $\category$, quotients of $G$, taken up to isomorphism, form a partially ordered set, where $ H\geq  F$ if there is $h \colon H \to F$ compatible with the morphisms from $G$. 

A \emph{lattice} is a partially ordered set where any two elements  $x,y$ have a least upper bound (join) denoted $x \vee y$, and
a greatest lower bound (meet), denoted  $x\wedge y$. A \emph{modular} lattice is one where $a \leq b $ implies $a \vee (x \wedge b) = (a \vee x) \wedge b$ for all $x$.

We now introduce some less standard notation. For $F$ a quotient of $G$, let $ [F,G]$ be the partially ordered set of (isomorphism classes of) quotients of $G$ that are $\geq F$.

An object $G\in \category$ is called \emph{minimal} if it has only one quotient (itself).

A set $\mathcal L$ of objects of $\category$, stable under isomorphism, is called \emph{downward-closed} if, whenever $H$ is a quotient of $G$, if $ G\in \mathcal L$ then $H \in\mathcal L$.  It is called \emph{join-closed} if, whenever $H$ and $F$ are quotients of $G$ and $H \vee F$ exists (in the partially ordered set of isomorphism classes of quotients of $G$), 
if $H \in \mathcal L $ and $F\in \mathcal L$ then $H \vee F\in \mathcal L$.

A \emph{formation} is a downward-closed join-closed set of isomorphism classes of $\category$ containing every minimal object of $\category$.
For sets $B,\C$ of isomorphism classes of objects of $\category$, 
we say $B$ \emph{generates} $\C$ if $\C$ is the 
smallest downward-closed join-closed set containing $B$   and every minimal object of $\category$.  
A \emph{level} $\C$ is a set of isomorphism classes of objects of  $\category$ generated by a finite set $B$.

An epimorphism $G \to F$ is \emph{simple} if it is not an isomorphism and, whenever it is written as a composition of two epimorphisms, one is an isomorphism (i.e. if the interval $[F,G]$ contains only $F$ and $G$).

We can now define the type of categories to which our main theorem applies.

\begin{definition}\label{D:diamond}
A \emph{\Diamantine category} is a small category $\category$ 
such that
\begin{enumerate}
\item \label{D:modular}
For each object $G \in \category$, the set of (isomorphism classes of) quotients of $G$ form a finite modular lattice.
\item \label{D:Aut}
Each object in $\category$ has finite automorphism group.
\item \label{D:finite}
For each level $\C$ of $\category$ and each $G\in \C$, there are finitely many elements of $\C$ with a simple epimorphism to $G$.
\item The category $\category$ has at most countably many isomorphism classes of minimal objects.
\end{enumerate}
\end{definition}

The terminology comes from the fact that (1) implies these categories satisfy the \emph{Diamond Isomorphism Theorem} (Lemma~\ref{dit}), a generalization of the Second Isomorphism Theorem.
We will see later, in Section \ref{s-examples}, that the category of finite groups, the category of finite commutative rings, and the category of finite $R$-modules for an arbitrary ring $R$, are all examples of \Diamantine categories.
Diamond categories also include more exotic examples such as finite quasigroups, finite Lie algebras, finite Jordan algebras, and more concrete examples such as finite graphs with inclusions of induced subgraphs.  Moreover, we will see in Section 6 that one can make many modifications to a 
\Diamantine category $\category$ to obtain a new \Diamantine category,  such as one of objects of $\category$ with the action of a fixed finite group,
objects of $\category$ with an epimorphism to a fixed object, or objects of $\category$ with a choice of element in some finite functorial set associated to the object.

We will now define the necessary notation to state our main theorem, which is on the moment problem for measures of isomorphism classes of a \Diamantine category $C$.
We encourage the reader to consider the case when $\category$ is the category of finite groups first when trying to understand the definitions and theorems.
Some help with this is provided in Subsection \ref{ss-translation}.

For $\pi \colon G \to F$ an epimorphism, 
we define $\mu(\pi) = \mu( F, G)$, 
where $\mu$ is the M\"obius function of the poset of quotients of $G$. We set $\hat{\mu}(F,G) = \sum_{ \pi \in\Epi(G,F)/\Aut(F)} \mu(\pi)$.

For $\pi \colon G \to F$ an epimorphism, let $Z(\pi)$ be the number of $H \in [F,G]$ that satisfy the distributive law $H \vee ( K_1 \wedge K_2)  = (H \vee K_1 ) \wedge (H \vee K_2)$ for all $K_1,K_2 \in [F,G]$. We note that $F$ and $G$ always satisfy the distributive law for all $K_1,K_2$, so $Z(\pi) \geq 2$.

As in the classical theorems of analysis and probability saying that moments not growing too fast determine a unique distribution of real numbers, we will need
a notion of moments not growing too fast.
If for each $G\in\category_{/\isom}$ (isomorphism class of $\category$),  we choose a real number $M_G$, we say the tuple $(M_G)_{G}\in \R^{\category_{/\isom}}$   is \emph{well-behaved} at a formation $\W$ if for all $F \in \W$,
$$
\sum_{G \in \W}     \sum_{ \pi \in\Epi(G,F) }\frac{{|\mu}(F,G)|}{|\Aut(G)|} Z ( \pi)^3 |M_G| <\infty.$$
 (In Section~\ref{s-calculation} we will give formulas for $\mu(F,G)$ and $Z(\pi)$ that are easy to compute in practice.)
   We say $(M_G)_{G}$ is \emph{well-behaved} if it is well-behaved at level $\C$ for every level $\C$. 
We will provide some explanation in Remark \ref{well-behaved-explanation}, after the statements of the theorems in which the notion of well-behaved is used, to motivate why  this is the rough shape of what one needs for moments ``not growing too fast''.

For any formation $\W$,  we define the \emph{$\W$-quotient} $G^{\W}$ of $G$ to be the maximal quotient of $G$ that is in $\W$, i.e. the join of all quotients of $G$ that are in $\W$.
We use the discrete $\sigma$-algebra for any measure on a level $\C$.

Then our main theorem, below, says that for well-behaved moments,  a distributions exists with those moments if and only if certain formulas in the moments are positive, and when a distribution exists with those moments, it is given by those certain formulas.  Moreover, this uniqueness is robust even through taking limits.  
We write $\Sur(F,G)$ for the set of epimorphisms from $F$ to $G$.
If $\nu$ is a distribution on $\category_{/\isom}$, then \emph{$G$-moment} of $\nu$ is defined to be
\begin{equation}\label{E:mom}
\int_{F \in \category_{/\isom}} |\Sur(F,G)| d\nu.
\end{equation}
We also define for a formation $\W$ and $F\in \W$,
\begin{equation}\label{E:defvgen}
v_{\W,F}:=\sum_{G \in \W} \frac{\hat{\mu}(F,G)}{|\Aut(G)|} M_G.
\end{equation}

\begin{theorem}[Distributions on $\C$ from moments]\label{T:general}
Let $\category$ be a \Diamantine category 
and let $\C$ be a level. 
For each $G\in \C$,  let  $M_G$ be a real number so that $(M_G)_{G}$ is well-behaved at $\C$.
\begin{enumerate}
\item[(Existence):]
A measure $\nu_\C$ on $\C$ with $G$-moment  $M_G$ for all $G\in \C$ exists if and only if $v_{\C,F}\geq 0$ for all $F\in \C$.
\item[(Uniqueness):]
If such a measure exists, then it is unique and 
$
\nu_\C(\{F\})=v_{\C,F}
$
for all $F\in\C$.
\item[(Robustness):]
If  $\nu^t_\C$ are measures on  $\C$ such that for each $G\in \C$,
$$
\lim_{t\ra\infty} \int_{F \in \C} |\Sur(F,G)| d\nu^t_\C= M_G, 
$$
then for $F\in\C$, we have $\lim_{t\ra\infty} \nu^t_\C(\{F\})=v_{\C,F}.$
\end{enumerate}
\end{theorem}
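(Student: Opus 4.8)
The plan is to reduce everything to a single key identity: a M\"obius-inversion-type formula expressing, for each level $\C$ and each $F \in \C$, the probability $\nu_\C(\{F^\C \isom F\})$ (which for a measure on $\C$ is just $\nu_\C(\{F\})$) in terms of the $G$-moments of $\nu_\C$, valid for any measure $\nu_\C$ on $\C$ whose moments are finite and well-behaved at $\C$. Concretely, I expect an identity of the shape
\[
\nu_\C(\{F\}) \;=\; \sum_{G \in \C} \frac{\hat\mu(F,G)}{|\Aut(G)|}\,\Bigl(\,\int_{X \in \C} |\Sur(X,G)|\,d\nu_\C\,\Bigr),
\]
so that if $\nu_\C$ has $G$-moment $M_G$ for all $G$, then $\nu_\C(\{F\}) = v_{\C,F}$. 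The first step is therefore to establish this identity. The natural route is to start from the tautology $|\Sur(X,G)| = \sum_{\rho\colon X \to G} 1$ and to re-express the indicator $\mathbf 1_{X \cong F}$ (or more precisely a sum over quotients) using the M\"obius function of the quotient poset of $G$; this is where the finite modular lattice hypothesis and the Diamond Isomorphism Theorem (Lemma~\ref{dit}) enter, since they control how quotients of $X$ pull back along epimorphisms and make the poset combinatorics behave. The appearance of $\Aut(G)$ in the denominator comes from passing from epimorphisms to isomorphism classes of quotients, and the passage from $\mu$ to $\hat\mu$ from averaging over $\Aut(F)$-orbits. The crucial analytic point is that the double sum is \emph{absolutely} convergent under the well-behaved hypothesis — this is exactly what the $Z(\pi)^3$ weight is there to guarantee — so that one may interchange summation and integration freely; I would prove absolute convergence first and then do the formal manipulation.

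Granting the identity, the three parts follow quickly. For \textbf{Uniqueness}: any measure on $\C$ with the prescribed well-behaved moments $M_G$ must, by the identity, assign mass exactly $v_{\C,F}$ to each singleton $\{F\}$, and since $\C$ is countable and carries the discrete $\sigma$-algebra, the measure is determined by its values on singletons, hence unique and equal to the claimed $\nu_\C$. For \textbf{Existence}: the condition $v_{\C,F}\geq 0$ is clearly necessary (it is the value $\nu_\C(\{F\})$ of any such measure). For sufficiency one must check that $F \mapsto v_{\C,F}$ actually defines a probability measure with the right moments: nonnegativity is assumed, total mass one should come from the $F$ corresponding to a minimal/terminal object contribution telescoping (or from evaluating the moment identity at a suitable $G$), and that the resulting measure has $G$-moment $M_G$ is the "inverse" direction of the same M\"obius relation — one substitutes $\nu_\C(\{F\}) = v_{\C,F}$ into $\sum_F |\Sur(F,G)|\,v_{\C,F}$ and collapses the double sum back to $M_G$ using $\sum_{H \in [F,G]} \mu(F,H)\cdots = $ the appropriate Kronecker delta, again justified by absolute convergence from well-behavedness. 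A mild subtlety: one should confirm $\sum_F v_{\C,F} \le 1$ or $=1$; this likely requires knowing there are only countably many objects in $\C$ and an argument that the partial sums are bounded, which the finiteness conditions (Definition~\ref{D:diamond}(3),(4)) and well-behavedness supply.

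For \textbf{Robustness}: fix $F \in \C$. Apply the moment-to-probability identity to each $\nu^t_\C$, giving $\nu^t_\C(\{F\}) = \sum_{G \in \C} \frac{\hat\mu(F,G)}{|\Aut(G)|} M^t_G$ where $M^t_G \to M_G$. The issue is that this is an infinite sum and we only have pointwise convergence of the $M^t_G$, so we need a dominated-convergence argument: we must bound $|M^t_G|$ uniformly in $t$ by something summable against $|\hat\mu(F,G)|/|\Aut(G)|$. The standard trick here is that moments of a probability measure are \emph{monotone} in a suitable sense along the quotient poset — a convergent sequence of real numbers is bounded, so each $M^t_G$ is bounded in $t$ for fixed $G$, but to get a uniform summable bound one typically uses that for a probability measure $|\Sur(X,G)| \le |\Sur(X,G')|\cdot(\text{something})$ when $G' \ge G$, or more simply that the well-behavedness is "uniform enough" that a single dominating function works; I would extract from the hypotheses (or add as part of the well-behaved definition's consequences) that the tail $\sum_{G \notin \B\text{-bounded}} \frac{|\mu(F,G)|}{|\Aut(G)|}Z(\pi)^3 M^t_G$ is small uniformly in $t$ once $t$ is large, using that $M^t_G \to M_G$ and the limit sum converges. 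Then term-by-term convergence plus the uniform tail bound give $\lim_t \nu^t_\C(\{F\}) = \sum_G \frac{\hat\mu(F,G)}{|\Aut(G)|} M_G = v_{\C,F}$, and in particular the limiting values are nonnegative so a measure $\nu_\C$ with those moments exists by the Existence part.

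The main obstacle I anticipate is the \textbf{absolute convergence / uniform domination} bookkeeping, i.e. proving that the $Z(\pi)^3$-weighted well-behavedness hypothesis is exactly strong enough (a) to justify interchanging sum and integral in the basic identity and (b) to serve as a dominating function for the robustness limit. Everything else — the M\"obius inversion itself, the reduction of Uniqueness and the necessity half of Existence — is essentially formal once the Diamond Isomorphism Theorem is in hand. I would spend the bulk of the proof on carefully setting up the quotient-poset M\"obius calculus (probably isolating it as a combinatorial lemma about finite modular lattices and the functions $\mu, \hat\mu, Z$) and on the convergence estimates, and keep the three-part deduction short.
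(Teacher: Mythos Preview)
Your approach to (Uniqueness) is essentially correct and matches the paper: the M\"obius inversion identity (the paper's Lemma~\ref{L:IE}) together with absolute convergence of $\sum_G |\hat\mu(F,G)|\,|m_G|$ --- which does follow from well-behavedness --- gives $\nu_\C(\{F\})=v_{\C,F}$.

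There is, however, a genuine gap in your treatment of (Existence). You write that checking $\sum_F |\Sur(F,G)|\,v_{\C,F}=M_G$ amounts to collapsing the double sum $\sum_{F}\sum_{G'} S_{F,G}\,T_{G',F}\,m_{G'}$ back to $m_G$, ``again justified by absolute convergence from well-behavedness.'' This is precisely the step that fails. The paper exhibits (Example~\ref{no-absolute-convergence}) a well-behaved tuple --- in fact $M_G\equiv 1$ on finite groups --- for which this double sum is \emph{not} absolutely convergent. Well-behavedness controls $\sum_G |T_{G,F}|\,|m_G|$ for each fixed $F$, but once you introduce the outer sum over $F$ weighted by $S_{F,G}$ the resulting triple-indexed sum diverges absolutely. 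The paper's remedy is substantial: one filters the level $\C$ by formations $\W_0\subset\W_1\subset\cdots\subset\W_t=\C$ with each $\W_i\to\W_{i-1}$ semisimple (using the complexity filtration), and then exchanges sums one layer at a time via a cancellation identity (Lemma~\ref{L:relC}) and a bounded-exchange estimate (Proposition~\ref{P:makelocal}) that only require convergence over a single semisimple layer. The $Z(\pi)^3$ factor in the definition of well-behaved is tuned to make this layer-by-layer argument work, not the global exchange you propose.

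Your (Robustness) sketch has a related problem. You hope to dominate $|M_G^t|$ by something summable against $|\hat\mu(F,G)|/|\Aut(G)|$, but the moments $M_G^t$ need not be well-behaved for any fixed $t$ --- only their limit is --- so there is no evident dominating function. The paper instead avoids the moment-to-probability identity entirely for (Robustness): it first shows, via a ``radical sequence'' filtration and a counting estimate (Lemmas~\ref{dominance}, \ref{radical-exchange-limit}, \ref{radical-tower-bound}), that limits of moments commute with sums over $F$ in the identity $\sum_F S_{F,G}\,p_F^n$, and then applies a diagonal/subsequence compactness argument together with (Uniqueness) to pin down the limiting measure. The difficulty here is again that a direct dominated-convergence argument would require exactly the absolute convergence that fails.
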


\subsection{The main result in the profinite case}

If we use $M_G$ for each $G\in \category_{/\simeq}$ to define measures $\nu_\C$ as above for each level $\C$, then by the uniqueness
in Theorem~\ref{T:general}, for levels  $\C\sub \C'$ and the functor $\C' \ra {\C}$ that sends $X$ to $X^{\C}$, the measure $\nu_{\C'}$ pushes forward to $\nu_{{\C}}$.  This compatible system of measures for each level gives rise to a measure $\nu$.  However,  in most of our main examples of interest, this measure $\nu$ is not on the category $\category$, but rather on pro-objects of $\category$.
We define a \emph{pro-isomorphism class} $X$ of $\category$, to be,  for each level $\C$, an isomorphism class  $X^\C\in \C$ such that whenever $\C\sub \C'$ we have that $(X^{\C'})^\C\cong X^\C$.  (Here $X^{\C'}\in \category$, so $(X^{\C'})^\C$ takes the  $\C$-quotient of $X^{\C'}$.) 

The reader might wonder how this definition compares to the usual notion of a pro-object. We will show in Lemma \ref{pro-object-existence} that pro-isomorphism classes are naturally in bijection with isomorphism classes of pro-objects satisfying an additional condition we call \emph{small} (Definition \ref{def-small}),
 which in the case where $\category$ is the category of finite groups, so that pro-objects are profinite groups, restricts to the usual notion of small profinite groups (i.e groups with finitely many open subgroups of index $n$ for each $n$). 
 When $R$ is a Noetherian local commutative ring with finite residue field,  and $\hat{R}$ its completion at its maximal ideal,
then  pro-isomorphism 
classes of finite $R$-modules
are naturally in bijection with isomorphism classes of finitely generated $\hat{R}$-modules (Lemma~\ref{L:prolocalRmod}).

Let $\Prf$ be the set of pro-isomorphism classes of $\category$.
We define a topology on $\Prf$ with a generated by the open sets $U_{\C,F} = \{X \in \Prf \mid X^\C \cong F\}$ for each level $\C$ and $F\in \C$. It is straightforward to check that the $U_{\C,F}$ are a basis for this topology (i.e. that finite intersections of $U_{\C,F}$ are unions of $U_{\C,F}$).

For $X$ a pro-isomorphism class,  $G\in \category,$ and $\C$ a level containing $G$,
we have that $\abs{ \Sur (X^\C, G)}$ is independent of the choice of $\C$. 
(This is because if $\mathcal{D}$ is the level generated by $G$, 
any epimorphism $X^\C \to G$ factors through $(X^{\C} )^{\mathcal{D}} \isom X^{\mathcal{D}}$, and so $\Sur (X^\C, G) \cong \Sur(X^{\mathcal{D}}, G)$.) 
We define $\abs{ \Sur (X, G)}= \abs{ \Sur (X^\C, G)}$ for any such $\C$.  

  We take a $\sigma$-algebra (according to which we consider any measures) on $\Prf$ to be generated by the $U_{\C,F}$ for each level $\C$ and $F\in \C$. 
 
 Now, we give the consequence of Theorem~\ref{T:general} for measures on $\Prf$.
 
  \begin{theorem}\label{analytic-main-measure} 
Let $\category$ be a \Diamantine category. 
For each $G\in \category_{/\simeq}$,  let  $M_G$ be a real number so that $(M_G)_{G}$ is well-behaved.
\begin{enumerate}
\item[(Existence):]
There exists a measure $\nu$ on $\Prf$
with $G$-moment $M_G$ for all $G\in \category$ if and only if for every level $\C$ and every $F\in \C$, we have $v_{\C,F}\geq 0$.
\item[(Uniqueness):]
If such a measure $\nu$ exists, then it is unique, and for every level $\C$ and $F\in\C$, we have $\nu(U_{\C,F})=v_{\C,F}$.
\end{enumerate}
\end{theorem}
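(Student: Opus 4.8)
The plan is to deduce this theorem from Theorem \ref{T:general} by the standard inverse-limit / Kolmogorov-extension philosophy, adapted to the present combinatorial setting. First I would record the key compatibility fact already sketched in the paragraph preceding the statement: given well-behaved moments $(M_G)_G$, for each level $\C$ the tuple $(M_G)_{G \in \C}$ is well-behaved at $\C$, so Theorem \ref{T:general} applies at every level; moreover for $\C \subseteq \C'$ the pushforward of $\nu_{\C'}$ along the $\C$-quotient functor has the same $G$-moments as $\nu_\C$ for every $G \in \C$ (since $\Sur(X^{\C'},G) \cong \Sur((X^{\C'})^\C,G)$ when $G \in \C$), hence by the uniqueness clause of Theorem \ref{T:general} equals $\nu_\C$. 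So $(\nu_\C)_\C$ is a projective system of probability measures indexed by the poset of levels.

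For \textbf{existence}: assume all $v_{\C,F} \geq 0$. Then each $\nu_\C$ from Theorem \ref{T:general} exists, with $\nu_\C(\{F\}) = v_{\C,F}$, and they are compatible as above. I would then build $\nu$ on $\Prf$ directly: since the $U_{\C,F}$ form a basis for the topology and generate the $\sigma$-algebra, and since $\Prf = \varprojlim_\C \C$ as sets with the $U_{\C,F}$ pulling back the discrete $\sigma$-algebras, one defines $\nu(U_{\C,F}) := \nu_\C(\{F\}) = v_{\C,F}$ and checks this is a well-defined finitely additive set function on the algebra generated by the $U_{\C,F}$ (well-definedness is exactly the compatibility of the $\nu_\C$; countable additivity on this algebra is automatic because within a single level $\C$ the covers are finite, and the index set of levels is countable so one can invoke a Kolmogorov-type extension, or directly extend via Carathéodory using that each level contributes only countably many atoms). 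This yields a measure $\nu$ with $\nu(U_{\C,F}) = v_{\C,F}$. To see the $G$-moments are correct: for fixed $G$, pick any level $\C \ni G$; then $|\Sur(X,G)| = |\Sur(X^\C,G)|$ is a function pulled back from $\C$, so $\int_{\Prf} |\Sur(X,G)|\, d\nu = \int_{\C} |\Sur(F,G)|\, d\nu_\C = M_G$ by Theorem \ref{T:general}. Conversely, if a measure $\nu$ with all $G$-moments $M_G$ exists on $\Prf$, its pushforward $\nu_\C$ to each level $\C$ is a measure with $G$-moment $M_G$ for all $G \in \C$ (same pullback argument), so Theorem \ref{T:general}(Existence) forces $v_{\C,F} \geq 0$ for all $\C, F$.

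For \textbf{uniqueness}: if $\nu$ is any measure on $\Prf$ with $G$-moment $M_G$ for all $G$, then for each level $\C$ its pushforward is a measure on $\C$ with those moments, hence by Theorem \ref{T:general}(Uniqueness) equals the $\nu_\C$ above, so $\nu(U_{\C,F}) = \nu_\C(\{F\}) = v_{\C,F}$. Since the $U_{\C,F}$ generate the $\sigma$-algebra and are closed under the relevant intersections (finite intersections are unions of $U_{\C,F}$), a measure on $\Prf$ is determined by its values on them; hence $\nu$ is unique. The main obstacle I anticipate is purely measure-theoretic bookkeeping in the existence half: verifying that the finitely additive function $U_{\C,F} \mapsto v_{\C,F}$ on the generated algebra is countably additive and hence extends to the $\sigma$-algebra. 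This is where one must use that the category is \Diamantine---specifically that each level is generated by a finite set and (by Definition \ref{D:diamond}(3)) the relevant posets are finite---so that $\Prf$ is an inverse limit of finite-or-countable discrete spaces along a countable directed system, placing us squarely in the scope of the Kolmogorov consistency theorem; I would cite that rather than re-prove it. A minor secondary point to check is that the topology and $\sigma$-algebra are as claimed (that finite intersections of basic opens are unions of basic opens), but this is the straightforward verification already flagged as "straightforward to check" in the text.
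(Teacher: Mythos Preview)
Your overall architecture is correct and matches the paper's: push forward to each level, invoke Theorem~\ref{T:general} there, then assemble. The ``only if'' direction of Existence and the Uniqueness argument are essentially what the paper does (the paper also notes $\sigma$-finiteness before applying Carath\'eodory for uniqueness, but this is minor).

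The genuine gap is in your justification of countable additivity for the existence direction. Your claim that ``within a single level $\C$ the covers are finite'' is false: levels are only \emph{countable}, not finite (e.g.\ in finite groups, the level generated by $\mathbb Z/p\mathbb Z$ contains $(\mathbb Z/p)^n$ for all $n$). So a countable disjoint union $U_{\C,F} = \bigcup_j U_{\C',F_j}$ with $\C \subset \C'$ genuinely can be infinite, and countable additivity on the algebra generated by the $U_{\C,F}$ is not automatic. Likewise, ``cite Kolmogorov'' is too quick: the classical Kolmogorov extension theorem is for products, and the projective-limit versions that would apply here require hypotheses (Polish spaces, inner regularity, or compactness) that you have not verified; moreover the directed system of levels need not even be countable unless $\category_{/\simeq}$ is.

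The paper does exactly the work you are deferring. It first reduces to the case of a single minimal object (so the total mass is finite), then proves countable additivity by hand via Lemma~\ref{L:countadd}, whose key hypothesis~(2)---that a nested sequence of basic opens with pairwise nonempty intersection has nonempty intersection---is established using Lemma~\ref{L:Clim}. That lemma shows that a compatible system $(G_i)$ along an increasing chain of levels stabilizes when restricted to any narrow formation of finite complexity, which is precisely the compactness input that the abstract Kolmogorov-type theorems would also need. So your instinct is right that this is ``inverse-limit bookkeeping,'' but the bookkeeping is the content here, and it rests on the \Diamantine\ structure (finite complexity of levels, narrowness) in a way your proposal does not make explicit.
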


If $\category$ has countably many isomorphism classes, then our $\sigma$-algebra on $\Prf$  is equivalent to the Borel $\sigma$-algebra of the topology on $\Prf$
generated by the $U_{\C,F}$  (because then there are only countably many pairs $\C,F$, so arbitrary unions
of them are the same as countable unions).

\begin{theorem}\label{T:weak-convergence}
Let $\category$ be a \Diamantine category with at most countably many isomorphism classes. 
For each $G\in \category_{/\simeq}$,  let  $M_G$ be a real number so that $(M_G)_{G}$ is well-behaved.
 If $\nu^t$ is a sequence of measures on $\Prf$ such that for all $G \in \category$ 
 \begin{enumerate}
 \item[(Robustness):] \quad\quad\quad \quad\quad\quad \quad\quad\quad  $\displaystyle{\lim_{t \to \infty} \int_{X\in \Prf}  { \abs{\Sur(X,G)}} d\nu^t  = M_G, }$
 \end{enumerate}
then a measure $\nu$ as in Theorem~\ref{analytic-main-measure} exists 
and
as $t\ra\infty$ the $\nu^t$ weakly converge to $\nu$. 
\end{theorem}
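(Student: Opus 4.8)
The plan is to deduce everything from the level-by-level Robustness clause of \cref{T:general}, together with a tightness estimate that upgrades pointwise convergence on basic sets to weak convergence. First, for each level $\C$ let $\nu^t_\C$ be the pushforward of $\nu^t$ along the measurable map $X\mapsto X^\C$, a measure on $\C$ with the discrete $\sigma$-algebra. Since $\abs{\Sur(X,G)}$ is independent of the level used to compute it, the $G$-moment of $\nu^t_\C$ (for $G\in\C$) is exactly $\int_{X\in\Prf}\abs{\Sur(X,G)}\,d\nu^t$, which tends to $M_G$; as $(M_G)$ is well-behaved at $\C$, the Robustness part of \cref{T:general} yields $\lim_t\nu^t_\C(\{F\})=v_{\C,F}$ for all $F\in\C$, so in particular $v_{\C,F}\ge 0$. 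Letting $\C$ vary, \cref{analytic-main-measure} then produces the desired measure $\nu$, with $\nu(U_{\C,F})=v_{\C,F}=\lim_t\nu^t(U_{\C,F})$ for every $\C$ and $F$. What remains is to upgrade this convergence on the basic clopen sets $U_{\C,F}$ to weak convergence.

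For that I would exploit that $\category$ has countably many isomorphism classes: there are countably many pairs $(\C,F)$, the $U_{\C,F}$ are clopen and form a countable base, so $\Prf$ is separable metrizable and its Borel $\sigma$-algebra is the one in play, and the sets $\{X:X^\D\in T\}$ (for $\D$ a level, $T\subseteq\D$ arbitrary) form a generating algebra on which every $\nu^t$ and $\nu$ is given by summing point masses. I would then (i) prove uniform tightness of $\{\nu^t\}$, (ii) invoke Prokhorov's theorem to conclude that $\{\nu^t\}$ is relatively compact for weak convergence, and (iii) observe that any weak subsequential limit $\mu$ must satisfy $\mu(U_{\C,F})=\lim_j\nu^{t_j}(U_{\C,F})=v_{\C,F}=\nu(U_{\C,F})$, because each $U_{\C,F}$ is clopen, hence a $\mu$-continuity set; by countable additivity $\mu$ and $\nu$ then agree on the generating algebra and therefore coincide. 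Thus every subsequential limit equals $\nu$, and $\nu^t$ converges weakly to $\nu$.

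The crux, and the step I expect to be hardest, is the uniform tightness in (i). Given $\epsilon>0$, fix a cofinal increasing sequence of levels $\C_1\subseteq\C_2\subseteq\cdots$ (possible since the levels are countable and directed); it suffices to choose, for each $n$, a \emph{finite} $S_n\subseteq\C_n$ with $\sup_t\nu^t_{\C_n}(\C_n\setminus S_n)<\epsilon/2^{n+1}$, since then $K:=\bigcap_n\{X:X^{\C_n}\in S_n\}$ is compact (a closed subset of the product of the finite sets $S_n$, via the embedding $X\mapsto(X^{\C_n})_n$) and $\nu^t(\Prf\setminus K)\le\sum_n\nu^t_{\C_n}(\C_n\setminus S_n)<\epsilon$ uniformly in $t$. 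This reduces tightness to a statement about a single level: for each level $\C$ and $\epsilon>0$ there is a finite $S\subseteq\C$ with $\sup_t\nu^t_\C(\C\setminus S)<\epsilon$. Here I would apply Markov's inequality in the form $\nu^t_\C(\{F\})\le C_G/\abs{\Sur(F,G)}$ whenever $G$ is a quotient of $F$, where $C_G:=\sup_t\int_{\Prf}\abs{\Sur(X,G)}\,d\nu^t<\infty$ is finite because the sequence converges. The task becomes: assign to all but finitely many $F\in\C$ a quotient $G(F)\in\C$ taken from a fixed finite collection for which $\abs{\Sur(F,G(F))}$ is so large that $\sum_{F\in\C}C_{G(F)}/\abs{\Sur(F,G(F))}<\infty$; truncating this convergent sum to a finite $S$ then does the job. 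Constructing these test quotients and proving the growth bound is where the \Diamantine structure enters: the finiteness axiom~(3) in \cref{D:diamond} bounds the number of $F\in\C$ of a given composition length geometrically, while surjection counts onto suitable small fixed quotients grow geometrically in that length, and finite generation of $\C$ keeps the pool of relevant small quotients finite. In the guiding example of finite groups with $\C$ the elementary abelian $p$-groups this reduces to the elementary estimate $\abs{\Sur((\mathbb{Z}/p)^n,\mathbb{Z}/p)}=p^n-1$ together with $\sum_n 1/(p^n-1)<\infty$; the general case should be its lattice-theoretic analog, and here the well-behavedness hypothesis may also be invoked to control the contribution of the minimal objects of $\C$ when $\category$ has infinitely many of them. (This analysis also shows $\sup_t\nu^t(\Prf)<\infty$, so that $\nu$ is finite and weak convergence is meaningful.)
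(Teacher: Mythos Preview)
Your reduction to levels and application of the Robustness clause of \cref{T:general} to obtain $\lim_t\nu^t(U_{\C,F})=v_{\C,F}\geq 0$, and hence existence of $\nu$ via \cref{analytic-main-measure}, is correct and matches the paper. The divergence is in how you pass from pointwise convergence on the basic clopen sets $U_{\C,F}$ to weak convergence.

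The paper's route is considerably shorter and bypasses tightness and Prokhorov entirely. Using countability of $\category_{/\isom}$, one shows that \emph{every} open set $V\subseteq\Prf$ is a countable disjoint union of basic opens $U_{\C,F}$ (and that $\Prf$ is metrizable). Then Fatou's lemma applied termwise to the nonnegative summands $\nu^t(U_{\C,F})$ gives $\liminf_t\nu^t(V)\geq\sum\nu(U_{\C,F})=\nu(V)$ for every open $V$, and the Portmanteau theorem finishes. No relative compactness argument is needed because the basis is clopen and countable, and convergence on it is already in hand.

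Your tightness argument, by contrast, has a genuine gap. The assertion that axiom~(3) of \cref{D:diamond} ``bounds the number of $F\in\C$ of a given composition length geometrically'' is not what the axiom says: it gives only finiteness, with no uniform rate. More seriously, the claim that ``surjection counts onto suitable small fixed quotients grow geometrically in that length'' does not follow from the diamond axioms and is false as stated. In a level containing, say, finite abelian $p$-groups of exponent $p^k$, the quantity $\abs{\Sur(\mathbb{Z}/p^k,G)}$ is bounded independently of anything for any fixed $G$, so a single test quotient cannot detect growth along such chains; one would need the assignment $F\mapsto G(F)$ to vary with $F$ in a way you have not supplied. Your elementary-abelian example succeeds precisely because that level has complexity~$1$; the general finite-complexity case is where the difficulty lies, and your sketch (``should be its lattice-theoretic analog'') does not address it. Even if tightness could be rescued by other means---the estimates behind \cref{dominance} are closer in spirit to what would be needed than a bare Markov bound with one moment---it is unnecessary here given the Fatou--Portmanteau argument.
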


\begin{remark}\label{well-behaved-explanation} We explain some of the motivation for the definition of well-behaved. In both Theorem \ref{T:general} and Theorem \ref{analytic-main-measure}, the sum 
$$
\sum_{G \in \C} \frac{\hat{\mu}(F,G)}{|\Aut(G)|} M_G
$$
 appears as the formula for a probability. For this formula to be well-defined, the sum must be convergent, and since there is no obvious ordering of the terms, the sum must be absolutely convergent. The condition that the $M_G$ be well-behaved is a strictly stronger condition than this, because it requires the absolute convergence of the sum with an additional $Z(\pi)^3$ factor. This is technical restriction required by our method to prove the existence theorems, but we do not know if it is necessary for the statement of the theorems to hold. However, for the applications we are considering, the $Z(\pi)^3$ factor is not large enough to turn the sum from convergent to divergent, and thus causes no serious difficulty.
  \end{remark}
  
  \subsection{Motivation from number theory and prior work}
  
When $\category$ is the category of $\F_q$-vector spaces for some prime power $q$, then the moments in our sense
are  just a finite upper triangular  transformation of the classical moments of the size of the group, since $|\Hom(G,\F_q^k)|=|G|^k$.
However, the bounds on the classical moment problem have not been sufficient for many authors studying distributions on $\category$, and they have had to develop their own moment theorems.  This includes work on the distribution of  Selmer groups of congruent number elliptic curves \cite{Heath-Brown1994}, 
ranks of (non-uniform) random matrices over finite fields \cite{Alekseichuk1999,
 Cooper2000a}, 
and
$4$-ranks of class groups of quadratic fields \cite{Fouvry2006, Fouvry2006a}.  
  
Cohen and Lenstra \cite{Cohen1984} made conjectures on the distribution of class groups of quadratic number fields that have been a major motivating and explanatory force for work on class groups, including that on $4$-ranks mentioned above.  
  Ellenberg, Venkatesh, and Westerland \cite{Ellenberg2016} showed that the analog of Cohen and Lenstra's conjectures for Sylow $p$-subgroups of class groups of imaginary quadratic fields, 
when $\Q$ is replaced by $\F_q(t)$, holds as $q\ra\infty$.  To do this, they found the limiting moments of the distributions of interest and showed uniqueness and robustness in the moment problem for
the particular distribution on abelian $p$-groups conjectured by Cohen and Lenstra.
This work introduced the idea that considering averages of $\#\SSur(-,G)$ was a useful way to access distributions on finite abelian groups.
(For finite abelian groups,  these averages are a finite upper triangular transformation of classical mixed moments of a tuple of integral invariants that determine the group,
see \cite[Section 3.3]{Clancy2015}.)

This approach has been followed by many papers, seeking to show function field ($q\ra\infty$) analogs of the conjectures of Cohen and Lenstra and their many generalizations.  In \cite{Wood2018}, the second author shows such an analog for real quadratic function fields, which follows from a more refined result that gives the distribution of pairs $(\Pic^0,d_\infty)$, where $\Pic^0$ is the group of degree $0$ divisors on the associated curve and $d_\infty$ is an element of $\Pic^0$ given by the difference between the two points above $\infty$.  The quotient $\Pic^0/d_\infty$ gives the desired class group.  This work included a moment problem uniqueness theorem for the category of pairs $(A,s)$ where $A$ is an abelian $p$-group and $a\in A$.
In \cite{Boston2017}, Boston and the second author, proved function field analogs of some conjectures of Boston, Bush, and Hajir \cite{Boston2017a} on $p$-class tower groups of imaginary quadratic fields, which generalize the conjectures of Cohen and Lenstra to a non-abelian analog.  This work included a moment 
problem uniqueness theorem for the category of $p$-groups with a generator inverting automorphism.

Liu, Zureick-Brown, and the second author \cite{Liu2019} found the moments of the distributions of the fundamental groups of $\Gamma$-covers of $\P^1_{\F_q}$ split at infinity,
or equivalently of the Galois groups of the maximal unramified extensions of totally real $\Gamma$-extensions of $\F_q(t)$, as $q\ra\infty$.
For the part of these groups of order relatively prime to $q-1$ (the order of the roots of unity in $\F_q(t)$) and $q|\Gamma|$, they were able to construct an explicit distribution on profinite groups with an action of $\Gamma$ that had these moments.  In particular their distribution abelianizes to the distribution of finite abelian $\Gamma$-groups (i.e. groups with an action of $\Gamma$)
conjectured by Cohen and Martinet \cite{Cohen1990} for the class groups of totally real $\Gamma$-number fields, and thus one can apply the uniqueness and robustness results on the moment problem for finite abelian $\Gamma$-groups of Wang the the second author \cite{Wang2021} to obtain a $q\ra\infty $ analog of the Cohen-Martinet conjectures.  The first author \cite{Sawin2020} then showed a uniqueness and robustness result on the moment problem for (not necessarily abelian) $\Gamma$-groups, which then gave a $q\ra\infty$ theorem that the prime-to-$(q-1)q|\Gamma|$-part of the fundamental groups of $\Gamma$-covers of $\P^1_{\F_q}$  approached the distribution constructed in \cite{Liu2019}.   

For the part of the groups that has order not relatively prime to the roots of unity in the base field, even though moments can be found with the methods of \cite{Liu2019},  in general there is no known distribution with those moments.  For the case of quadratic extensions,  
Lipnowski, Tsimerman, and the second author \cite{Lipnowski2020} showed  a $q\ra\infty$ function field result for the class groups of quadratic function fields, even at primes dividing the order of roots of unity in the base field.
Their work considered additional pairings on the class groups (a structure they call a bilinearly enhanced group), and proved uniqueness and robustness theorem on the moment problem for  bilinearly enhanced abelian groups.

In combinatorics, work to find the distribution of Sylow-$p$ subgroups of cokernels of random integral matrices \cite{Wood2019a}, 
the distribution of the entire cokernel of such matrices \cite{Nguyen2021}, the  distribution of Sylow-$p$ subgroups of sandpile groups of Erd\H{o}s-R\'{e}nyi random graphs \cite{Wood2017}, and the
distribution of Sylow-$p$ subgroups of sandpile groups of $d$-regular random graphs \cite{Meszaros2020} has all involved
applying a uniqueness and robustness theorem for the moment problem on finite abelian groups.

Several recent works, particularly in probability, have been concerned with the distribution of tuples of finite abelian groups. Boreico \cite[Corollary 3.8.5]{Boreico2016}, while studying cokernels of random integral matrices with uniform, bounded coefficients, proves uniqueness and robustness results for $r$-tuples $(V_1,\dots,V_r)$ where $V_i$ is an $\F_{q^d_i}$-vector space for some (fixed) $d_i\in\N$.
Nguyen and Van Peski \cite[Theorem 9.1]{Nguyen2022} prove a uniqueness and robustness result for $r$-tuples of abelian groups in order to prove new universality results on the joint distribution of cokernels of products of random matrices.  Lee \cite[Theorem 1.3]{Lee2022} has also proven a uniqueness and robustness result for $r$-tuples of abelian groups, in order to apply it to several universality results for joint distributions of cokernels of random matrices perturbed in various ways.  These uniqueness and robustness results for tuples would also follow from the argument in \cite[Proof of Theorem 6.11]{Wang2021} 
 (since $(R_1\times\cdots \times R_r)$-modules correspond to tuples $(M_1,\dots,M_r)$, where $M_i$ is an $R_i$-module).
Lee \cite[Theorem 1.10]{Lee2022} also proves a uniqueness and robustness result for $r$-tuples of finite groups with an action of a fixed group $\Gamma$, with an application to the joint distribution of the quotient of free profinite  groups by Haar random relations and perturbations of those relations.  

Given the ever growing number of different categories that such moment problem results are required in, we have made every attempt here to give a result that will be applicable as broadly as possible.  All the moment problem results mentioned above
are special cases of the main theorems in this paper (except that, unlike the current paper, the moment problem results of \cite{Sawin2020} do not always require absolute convergence of the sums involved).

In all of the above discussed work, a known explicit distribution was given (or was found by trying various distributions)
with certain moments, and so uniqueness and robustness were the only aspects of the moment problem that were necessary.
Yet there remained settings where moments are known, but no distribution with those moments was known.
In one setting like this, the authors \cite{SW} found the distribution of profinite completions of $3$-manifold groups in the Dunfield-Thurston model of random Heegaard splittings, constructing the distribution itself from the knowledge of the moments.  This required working on the moment problem in the category of pairs $(G,h)$,  where $G$ is a finite group and $h\in H^3(G,\Z)$ and proving theorems on existence, uniqueness, and robustness.  

The current paper makes completely general the approach used in \cite{SW} on reconstruction of a distribution from its moments, via explicit formulas, without any a priori knowledge of a distribution with those moments.  One key motivation is our forthcoming work, which will give a $q\ra\infty$
theorem on the distribution of the part of fundamental groups of $\Gamma$-covers of $\P^1_{\F_q}$ that is relatively prime to $q|\Gamma|$ (but not necessarily $q-1$).  In particular, this will give a  correction to the conjectures of Cohen and Martinet at primes dividing the order of roots of unity in the base field, which will be provided in a short forthcoming paper \cite{SWROUCL}.  
We will obtain the desired distributions using the results of the current paper.

Before this paper,  many papers constructed  distributions as limits of distributions given by random matrix models or models of random generators and relations.
In this setting, it is usually very easy to compute the limiting moments of these distributions.  It is somewhat more challenging, but doable, to compute the limiting distributions.  As the categories got more complicated, showing that the limiting distribution had those limiting moments became increasingly more difficult.  
Now Theorem~\ref{T:general} can be used to not only compute the limiting distributions of such models in a straightforward way from their moments, but also immediately obtain that the limiting distribution has moments given by the limiting moments.  Moreover, if one's only goal is to have explicit formulas for a distribution with certain moments, one no longer has to guess possible models that may have the desired moments, since Theorem~\ref{T:general} gives those formulas directly.

\subsection{Strategy of proof}

To understand the proof of Theorem \ref{T:general}, it is helpful to first consider the special case where $(M_G)_G$ is finitely supported, i.e. $M_G=0$ for $G$ not isomorphic to any object in some finite list $G_1,\dots, G_n$. It follows that, if a measure $\nu$ with moments $M_G$ exists, then $\nu( \{G\})=0$ unless $G$ is isomorphic to one of $G_1,\dots, G_n$, since certainly $M_G \geq \nu (\{G\})$ as there is always at least one epimorphism from $G$ to itself. Thus, in this case, the problem of existence and uniqueness of a measure  can be expressed as the existence and uniqueness of a solution to the finitely many linear equations $M_{G_i}=  \sum_{j=1}^n  \abs{ \Sur(G_j, G_i) } \nu (\{G_j\})$ in finitely many variables $\nu(\{G_j\})$, together with the inequalities $\nu(\{G_j\}) \geq 0$.

The linear equations can be expressed concisely in terms of a matrix $A$ with $A_{ij} = \abs{ \Sur(G_j, G_i) } $. The necessary observation to handle this case is that $A$ is invertible. In fact, it is possible to order $G_1,\dots, G_n$ so that there are no epimorphisms from $G_j$ to $G_i$ if $j < i$. This forces $A$ to be upper-triangular, with nonzero diagonal, so an inverse always exists. Thus the linear equations always have a unique solution, and a measure exists if and only if that solution has  $\nu(\{G_j\}) \geq 0$ for all $j$.

A key observation that powers our proof is that one can explicitly calculate this inverse in great generality. In fact, the entry $A^{-1}_{ij}$ is given by the formula  $\frac{ \hat{\mu} (G_i, G_j )}{ \abs{ \Aut(G_j)}}$ (Lemma \ref{L:IE}). Using this formula, we immediately obtain a guess $v_{\C, F}$ for the unique measure with a given moments (in Equation \eqref{E:mom}). Proving that guess is correct, both that $v_{\C,F}$ produces a measure with those moments (Existence) and produces the only measure with those moments (Uniqueness) requires only that we exchange the order of summation in a certain sum.

Using our assumption that the quotients of any object form a finite modular lattice, we note that $ \hat{\mu} (G_i, G_j )$ vanishes unless there exists a semisimple epimorphism from $G_j$ to $G_i$. This allows us to restrict attention to semisimple morphisms in sums involving $\hat{\mu}$. Furthermore, we can calculate the exact value of $\hat{\mu}$ using known results on the structure of finite complemented modular lattices. 
From this we can prove the sum relevant for (Uniqueness) is absolutely convergent using the well-behavedness hypothesis, justifying the exchange of summation.

The sum relevant for (Existence), on the other hand, is trickier. It is not absolutely convergent, under well-behavedness or even much stronger assumptions, in reasonable categories like the category of finite groups. Instead we need a more intricate argument where we break each sum into a series of multiple sums and exchange them one at a time, proving absolute convergence only of certain restricted sums, which does follow from well-behavedness. 

More precisely, we can place any level $\C$ into a series of formations $\W_0 \subset \W_2 \subset \dots \subset \W_n=\C$ where $\W_0$ consists (in the category of groups) only of the trivial group or (in general) only minimal objects. We can express a sum over objects in $\C$ as a sum over objects $H_0\in \W_0$, then over objects $H_1\in \W_1$ such that $H_1^{\W_0}\isom H_0$, then over objects $H_2\in \W_2$ with $H_2^{\W_1} \cong H_1$, and so on.  Crucially, we can choose $\W_i$ to be ``close together" in the sense that for every $H \in \W_i$, the natural epimorphism $H\to H^{\W_{i-1}}$ is semisimple. This, again, lets us restrict attention to semisimple morphisms, which are controlled by the well-behavedness condition, when exchanging a given pair of sums.

The proof of (Robustness) requires, instead of an exchange of sums, the exchange of a sum with a limit. We apply a similar strategy, breaking the sum into several smaller sums, each over semisimple morphisms, and exchanging each sum with the limit in turn, using a dominated convergence argument.

The approach of attacking problems in level $\C$ by an inductive argument using a sequence of formations $\W_0,\dots, \W_n=\C$, 
where each element of $\W_i$ has a semisimple morphism to an element of $\W_{i-1}$, is something that we expect to be relevant to future problems about random groups and other random objects. As a first example, for the problem of efficiently sampling from the probability distributions we have constructed, we believe the best method will typically be to choose a random object in $\W_0$, then a random lift to $\W_1$, and so on, which reduces the problem to a series of steps of sampling from a conditional probability distribution, which will often be given by a simple probability distribution on an explicit set. For the existence problem, it is the combination of this inductive strategy with our M\"obius calculations which together allow us to restrict entirely to semisimple morphisms for the most difficult calculations in the proof.

The first step of the argument, where we find an explicit formula for the inverse matrix, is a special case of the phenomenon of M\"obius inversion in categories developed in \cite{CLL,Haigh1980,Leinster2008}. In the unifying language of \cite{Leinster2012}, $ \frac{\hat{\mu}(F,G)}{\abs{\Aut(G)}}$ is the coarse M\"obius function of the category obtained from $\C$ by keeping only the epimorphisms and ignoring all other morphisms and Lemma \ref{L:IE} is the M\"obius inversion formula. (It would be possible to derive Lemma \ref{L:IE} from \cite[Theorem 1.4]{Leinster2008} but we give a direct proof.)  Since this paper is devoted to demonstrating that this formula gives not just an inverse in the formal algebra of matrices but an inverse operator to the map sending a measure to its moments, at least in the well-behaved case, one can interpret our argument as making (in a special case) the notion of M\"obius inversion on categories analytic.

\subsection{Organization of the paper}
In Section~\ref{S:prelim},  we show basic facts about modular lattices and levels that will be used throughout the paper. 
 In Section~\ref{s-calculation}, we show how the M\"{o}bius function and $Z(\pi)$ can be given by simple formulas, and give examples in the categories of groups, rings, and modules.  In Section~\ref{S:Main}, we prove our main result, Theorem~\ref{T:general}, which operates at a single level.  In fact, we prove slightly stronger results, in which we give the minimal hypotheses separately for each piece of the theorem.  
   In Section~\ref{S:Pro}, we prove the results regarding distributions on pro-isomorphism classes (Theorems~\ref{analytic-main-measure} and \ref{T:weak-convergence}),  relate pro-isomorphism classes to classical pro-objects, and give examples in groups and modules to show what these are concretely.  In Section \ref{s-examples}, we give examples of the calculation of measures from moments, 
including showing that moments of size $O(|G|^{v})$ are well-behaved for any real $v$  in the categories of groups or $R$-modules.  
Our examples give a blueprint for how one evaluates,  in practice, the sums defining $v_{\C,F}$ and well-behavedness.
We also show many examples of diamond categories, including simple ones like (the opposites of) finite sets and finite graphs, and how various modifications to diamond categories lead to new diamond categories.

    \subsection{Further notation and conventions}

  
  We write $\category_{/\isom}$ for the set of isomorphism classes of objects of $\category$.  Throughout the paper, we sum over isomorphism classes. 
  We write $\sum_{G\in\category_{/\isom} } f(G)$, where $f$ is some function of \emph{objects} of $\category$ that is invariant under isomorphism and we intend $G$ to be an object of $\category$.
  This is a slight abuse of notation,  which more precisely means $\sum_{[G]\in\category_{/\isom} } f(G)$, where $[G]$ is an isomorphism class.
  
We write $\N$ for the non-negative integers.  
  
We write $f(x) \ll g(x)$ to mean  that there exists a constant $C$ such that $f(x)\leq Cg(x)$ for all values of $x$.  The implicit constant may depend on certain parameters, and we will say what it depends on.
  
We refer to quotients by the object, and leave the morphism implicit, whenever this does not create ambiguity.
If $F$ is a quotient of $G$, we denote the implicit epimorphism by $\pi^G_F$. We refer to the isomorphism classes of quotients by a choice of one of their elements.

Our notation $[F,G]$ for the partially ordered set of quotients of $G$ that are $\geq F$ agrees with the usual notion of an interval in lattice theory. For $H\in [F,G]$,
note that there is a unique map $H\ra F$ implied by $H\geq F$ that is compatible with the $G$-quotient structure, and this map is an epimorphism.
A priori the notation $[F,H]$ might either refer to  the interval in $[F,G]$ or the the partially ordered set of quotients of $H$ that are $\geq F$, but
we can see these two options are in natural correspondence, as a quotient of $H$ is also a quotient of $G$ (using the given map $G\ra H$), and the notions of isomorphism correspond. 
For $F,H\in \category$, we only write $F\vee H$ or $F\wedge H$ when we already have a $G$-quotient structure on $F$ and $H$ for some $G$,  and
we mean to take these operations in the partially ordered set of (isomorphism classes) of quotients of $G$.  
If we need to remind the reader what $G$ is, we write  $F\vee_G H$ or $F\wedge_G H$.

The joins and meets in the lattice guaranteed by Definition~\ref{D:diamond} \eqref{D:modular} have an interpretation in terms of products and coproducts, when the latter exist. 
When $F$ and $H$ are quotients of $G$,  and the pushout
of $F$ and $H$ over $G$ exists, then it is the meet $F\wedge H$.
When the meet $F\wedge H$ exists and the fiber product of $F$ and $H$ over $F \wedge H$ exists and is a quotient of $G$, the fiber product is the join $F \vee H$.
However, we will sometimes want to consider categories where fiber products may not exist.

Whenever we say a sum of terms is finite, that always includes the claim that there are only countably many non-zero terms in the sum.

We never consider any morphisms in our category $C$ that are not epimorphisms, so for convenience we may remove non-epimorphisms from our sets of morphisms.

\subsection{Translation of terms for the category of groups}\label{ss-translation}
We now summarize the meaning of some of the notation and results in this section and the next section in the special case where $\category$ is the category of finite groups, as a reference for readers.

For $G \to F$ a morphism of groups, the kernel is a group with an action of $G$ by conjugation, i.e. a \emph{$G$-group}. We say a $G$-group is \emph{simple} if it has no nontrivial proper normal $G$-invariant subgroup. Using this notation, we see that a morphism $G \to F$ of finite groups is {\bf simple} if and only if the kernel is a finite simple $G$-group.

A morphism $\pi \colon G \to F$ is {\bf semisimple} if and only if the kernel is a product of finite simple $G$-groups, i.e. a product of finite normal subgroups with no $G$-invariant normal subgroup. Here the ``only if" direction is because for $F$ a $G$-invariant normal subgroup of $\ker \pi$ and $H$ a complement in the sense that $F \cap H =\{1\}$ and $FH =\ker \pi$, we have $\ker \pi \cong F \times H$ as $G$-groups, and we can repeat this process until no nontrivial proper $G$-invariant normal subgroups are left, thereby writing $\ker \pi$ as a product of finite simple $G$-groups and the ``if" direction follows from Lemma \ref{semisimple-equivalence}.
Lemma \ref{semisimple-equivalence} also includes the fact that a sub-$G$-group or quotient $G$-group of a product of finite simple $G$-groups is itself the product of finite simple $G$-groups.

The {\bf dimension} of a group $G$ is the length of its Jordan-H\"older filtration.
The {\bf dimension} of a surjection $G \to F$ is the length of the initial segment of a Jordan-H\"older filtration of $G$  which includes the kernel of the surjection as one of its members. The dimension of a semisimple morphism is the number of finite simple $G$-groups we can write the kernel as a product of.

The {\bf diamond isomorphism theorem} Lemma \ref{dit} in the case of groups follows from the second isomorphism theorem.

\subsection{Acknowledgments}
The authors would like to thank Akshay Venkatesh, John Baez, Ofer Gabber, and Jiwan Jung for helpful conversations related to this project and comments on earlier versions of this manuscript.
Will Sawin was supported by NSF grant DMS-2101491 while working on this paper.
Melanie Matchett Wood was partially supported by a Packard Fellowship for Science and Engineering,  NSF grant DMS-1652116,   NSF Waterman Award DMS-2140043, and a MacArthur Fellowship.  Wood was also partially supported by the Radcliffe Institute for Advanced Study at Harvard University as a Radcliffe Fellow.

\section{Preliminaries on lattices and categories}\label{S:prelim}

Throughout this section we let $\category$ be a \Diamantine category.

The fundamental result about modular lattices is the following.

\begin{lemma}[Diamond Isomorphism Theorem, {\cite[Ch.  I, Thm. 13 on p. 13]{Birkhoff1967}}]\label{dit} If $G$ and $H$ are two elements of a modular lattice, then the intervals $[ G\wedge H, H]$ and $[G, G\vee H]$ are isomorphic as lattices. The isomorphism is given by the map $\cdot \vee G \colon  [ G\wedge H, H]\to [G, G\vee H]$ and the inverse map $\cdot \wedge H \colon  [G, G\vee H]\to  [ G\wedge H, H]$. \end{lemma}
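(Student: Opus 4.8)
The plan is to exhibit the two maps $\phi = \cdot \vee G \colon [G \wedge H, H] \to [G, G\vee H]$ and $\psi = \cdot \wedge H \colon [G, G\vee H] \to [G\wedge H, H]$ explicitly and verify that they are mutually inverse, order-preserving bijections; since an order isomorphism between lattices automatically preserves joins and meets (these are characterized purely by the order relation), this yields the claimed lattice isomorphism.

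First I would check the maps are well-defined. If $G \wedge H \leq x \leq H$, then $G \leq x \vee G \leq H \vee G = G \vee H$, so $\phi(x) \in [G, G\vee H]$; dually, if $G \leq y \leq G \vee H$, then $G \wedge H \leq y \wedge H \leq H$, so $\psi(y) \in [G\wedge H, H]$. Both maps are monotone, since $\vee$ and $\wedge$ are monotone in each argument.

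The crux is showing $\psi \circ \phi = \mathrm{id}$ and $\phi \circ \psi = \mathrm{id}$, and this is exactly where modularity is used. For $x$ with $G \wedge H \leq x \leq H$, apply the modular law $a \leq b \Rightarrow a \vee (c \wedge b) = (a \vee c) \wedge b$ with $a = x$, $b = H$, $c = G$ (legitimate since $x \leq H$): this gives $(x \vee G) \wedge H = x \vee (G \wedge H) = x$, the last equality because $x \geq G \wedge H$. Dually, for $y$ with $G \leq y \leq G \vee H$, apply the modular law with $a = G$, $b = y$, $c = H$ (legitimate since $G \leq y$): this gives $(y \wedge H) \vee G = (G \vee H) \wedge y = y$, the last equality because $y \leq G \vee H$. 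Hence $\phi$ and $\psi$ are inverse bijections.

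Finally I would observe that a bijection of posets which is monotone with monotone inverse is an isomorphism of posets, and an isomorphism of posets between lattices is automatically a lattice isomorphism. There is no genuine obstacle in this argument; the only point requiring care is invoking the modular law in the correct orientation in each of the two computations, i.e. checking that the required inequality $a \leq b$ holds before applying it.
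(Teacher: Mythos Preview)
Your proof is correct and is exactly the standard argument; the paper does not give its own proof but simply cites Birkhoff, whose proof is the one you have written out.
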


We say an epimorphism $G \to F$ is \emph{semisimple} if the lattice $[F,G]$ is complemented (i.e. for every $H$ in $[F,G ]$, there exists $K\in [F, G]$ such that $H \vee K = G$ and $H \wedge K = F$, i.e. $K$ is a complement of $H$).   (When we refer to a morphism as simple or semisimple, we always mean that it is an epimorphism.)

\begin{remark} The definition of a complemented lattice is a natural generalization of the notion of a semisimple representation, which can be defined as representations for which each invariant subspace $V$ has an invariant complement $W$, i.e. an invariant subspace such that $V \cap W =\{0\}$ and $V + W$ is the whole space. \end{remark}

A key advantage of the modular lattice property is that it gives the set of semisimple morphisms a number of useful properties.
 A basic fact in lattice theory is the following.

\begin{lemma}\label{complemented-equivalence} Let $L$ be a finite modular lattice. Let $1$ be the maximum element and $0$ the minimum element of $L$. The following are equivalent:

\begin{enumerate}

\item $L$ is complemented.

\item Every interval in $L$ is complemented.

\item $1$ is the join of the minimal non-zero elements of $L$.

\item $0$ is the meet of the maximal non-one elements of $L$.

\end{enumerate} 

\end{lemma}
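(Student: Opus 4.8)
The plan is to prove the cycle of implications $(1)\Rightarrow(2)\Rightarrow(3)\Rightarrow(1)$ together with the self-dual equivalence $(3)\Leftrightarrow(4)$; since $(2)\Rightarrow(1)$ is trivial (take the interval $[0,1]=L$ itself), this suffices. The equivalence $(3)\Leftrightarrow(4)$ follows formally from the fact that the opposite lattice $L^{\mathrm{op}}$ of a finite modular lattice is again a finite modular lattice (modularity is a self-dual condition), and statement $(4)$ for $L$ is exactly statement $(3)$ for $L^{\mathrm{op}}$; likewise $(1)$ is self-dual. So the real content is $(1)\Rightarrow(2)$ and $(3)\Rightarrow(1)$ (and, for the cycle, $(2)\Rightarrow(3)$, or more economically $(1)\Rightarrow(3)$ directly).

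For $(1)\Rightarrow(2)$: given an interval $[a,b]\subseteq L$ and an element $x\in[a,b]$, I want a complement of $x$ inside $[a,b]$. Take a complement $y$ of $x$ in $L$, so $x\vee y=1$ and $x\wedge y=0$, and set $y':=(y\vee a)\wedge b\in[a,b]$. Then compute, using modularity ($a\le x$ gives $x\vee(y\wedge x\cdots)$ — more precisely, since $a \le b$ and $a\le x$): $x\wedge y' = x\wedge((y\vee a)\wedge b) = (x\wedge(y\vee a))\wedge b = ((x\wedge y)\vee a)\wedge b = a\wedge b = a$ (here the middle step $x\wedge(y\vee a)=(x\wedge y)\vee a$ uses modularity with $a\le x$), and dually $x\vee y' = x\vee((y\vee a)\wedge b) = (x\vee y\vee a)\wedge b = 1\wedge b = b$ (using modularity with $x\vee a = x \le b$). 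So $y'$ is a complement of $x$ in $[a,b]$. This is the Diamond Isomorphism Theorem–flavored computation; the only delicate point is applying the modular law with the correct comparabilities, so I would write those two chains out carefully.

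For $(3)\Rightarrow(1)$: let $p_1,\dots,p_k$ be the atoms (minimal nonzero elements) of $L$, so $1=p_1\vee\cdots\vee p_k$. Given $x\in L$, I build a complement as a join of a subset of the atoms: greedily pick atoms not below $x$, forming $y=p_{i_1}\vee\cdots\vee p_{i_r}$ with $x\wedge y=0$ and $y$ maximal among such joins of atoms. I claim $x\vee y=1$: otherwise some atom $p_j\not\le x\vee y$, hence $p_j\not\le x$, and I would like to adjoin it; the point is $x\wedge(y\vee p_j)=0$, which follows because $(y\vee p_j)\wedge x \le$ something computable via modularity — if it were nonzero it would contain an atom, and one shows using $p_j\wedge(x\vee y)=0$ (since $p_j\not\le x\vee y$ and $p_j$ is an atom) plus modularity that $y\vee p_j$ meets $x$ trivially, contradicting maximality; and if $x\vee y=1$ fails one argues similarly that $1$ can't be the join of the $p_i$ unless... — this is the standard argument that in a modular lattice generated by its atoms every element has a complement. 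The cleanest route is: show that $x\wedge y=0$ forces the sublattice generated by $x,y$ and relevant atoms to be distributive, and run an exchange/maximality argument. $(2)\Rightarrow(3)$ (to close the cycle) is easy: if $1$ is not the join $s$ of the atoms, then $s<1$, and the interval $[s,1]$ is complemented by $(2)$, so it has an atom $q$ (a minimal nonzero element of $[s,1]$), but $q$ would then dominate an atom of $L$, forcing that atom $\le s$ below $q$ — one checks $q$ itself must be an atom of $L$ lying above $s$ but not below $s$, contradiction; alternatively derive $(1)\Rightarrow(3)$ directly from the complement-building argument above.

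The main obstacle is the implication $(3)\Rightarrow(1)$: the greedy/maximality construction of a complement from atoms requires care to verify that adjoining a new atom keeps the meet with $x$ equal to $0$, and this is precisely where one must invoke modularity (it is false in general lattices). I would structure that step around the auxiliary claim that if $x\wedge y=0$ and $p$ is an atom with $p\not\le x\vee y$, then $x\wedge(y\vee p)=0$, proved by: $y\wedge(x\vee\text{?})$... — concretely, $(y\vee p)\wedge x$, being $\le y\vee p$ and meeting considerations, and using $p\wedge(x\vee y)=0$. Everything else is either self-duality bookkeeping or the routine modular-law manipulations already modeled by Lemma \ref{dit}.
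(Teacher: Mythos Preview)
The paper itself does not prove this lemma directly: it cites Birkhoff for the equivalence of (1)--(3) and then invokes self-duality for (4). Your plan to give a self-contained cycle of implications is thus more than the paper does, and your $(1)\Rightarrow(2)$ computation and the duality argument for $(3)\Leftrightarrow(4)$ are both correct and cleanly written.

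There are, however, two soft spots. First, your $(2)\Rightarrow(3)$ argument is muddled: an atom $q$ of the interval $[s,1]$ is an element covering $s$, not an atom of $L$, so the line ``one checks $q$ itself must be an atom of $L$'' does not go through. The painless fix is to prove $(1)\Rightarrow(3)$ directly instead: if $s$ is the join of the atoms and $c$ is a complement of $s$, then $c>0$ would force $c$ to lie above some atom $p$, whence $p\le s\wedge c=0$, a contradiction; so $c=0$ and $s=1$.

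Second, in $(3)\Rightarrow(1)$ you correctly isolate the key auxiliary claim --- if $x\wedge y=0$ and $p$ is an atom with $p\not\le x\vee y$, then $x\wedge(y\vee p)=0$ --- but you do not actually prove it, only gesture at ``meeting considerations'' and modularity. Here is a clean argument you can slot in, using the Diamond Isomorphism Theorem (Lemma~\ref{dit}) already available in the paper: since $p$ is an atom with $p\not\le y$, the interval $[y,y\vee p]\cong[y\wedge p,p]=[0,p]$ has exactly two elements. Setting $z:=x\wedge(y\vee p)$, the element $z\vee y$ lies in $[y,y\vee p]$, so either $z\vee y=y$, giving $z\le y$ and hence $z\le x\wedge y=0$, or $z\vee y=y\vee p$, giving $p\le z\vee y\le x\vee y$, contradicting $p\not\le x\vee y$. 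With this claim in hand, your greedy maximality argument goes through.
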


\begin{proof}
For the first three, this is \cite[VIII, Corollary to Theorem 1 on p. 114]{Birkhoff1948}, and the equivalence between (1) and (4) follows from the equivalence between (1) and (3) and the fact that the complemented modular lattices are manifestly stable under duality. \end{proof}

Translated into morphisms,  Lemma~\ref{complemented-equivalence} immediately gives the following.

\begin{lemma}\label{semisimple-equivalence} Let $\pi \colon G \to F$ be an epimorphism in $\category$. The following are equivalent:

\begin{enumerate}

\item $\pi$ is semisimple.

\item For every factorization $\pi = \pi_1 \circ \pi_2$ with $\pi_1,\pi_2$ epimorphisms, $\pi_1$ and $\pi_2$ are semisimple.

\item $G$ is the join of $F_1,\dots, F_n$ with $F_i \to F$ simple.

\item $F$ is the meet of $G_1,\dots, G_n$ with $G \to G_i$ simple.

\end{enumerate}

\end{lemma}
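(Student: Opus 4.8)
The plan is to deduce the lemma from Lemma~\ref{complemented-equivalence} applied to the finite modular lattice $L := [F,G]$ of (isomorphism classes of) quotients of $G$ that are $\geq F$; by Definition~\ref{D:diamond} \eqref{D:modular} this is a finite modular lattice, with maximum $G$ and minimum $F$. The entire argument consists of translating the four conditions of Lemma~\ref{complemented-equivalence} for $L$ into the four conditions of the present lemma.

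First I would fix the dictionary. By the definition of semisimple recalled above, condition~(1) says exactly that $L$ is complemented. A factorization $\pi = \pi_1 \circ \pi_2$ into epimorphisms $\pi_2 \colon G \to H$ and $\pi_1 \colon H \to F$ is the same data as an element $H \in L$, and under this the lattice controlling semisimplicity of $\pi_2$ is the interval $[H,G]$ of $L$, while the lattice controlling semisimplicity of $\pi_1$ is the interval $[F,H]$ of $L$ (here one uses that the interval $[F,H]$ of $L$ agrees with the lattice of quotients of $H$ lying above $F$, as recalled in the notation subsection). An epimorphism $F_i \to F$ with $F_i \in L$ is simple exactly when the interval $[F,F_i]$ consists only of $F$ and $F_i$, i.e. when $F_i$ is a minimal non-zero element of $L$; since $L$ is finite, asking that $G$ be the join of finitely many such $F_i$ (condition~(3)) is the same as asking that $G$ be the join of all minimal non-zero elements of $L$, because any join of minimal non-zero elements is $\leq$ their total join. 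Dually, condition~(4) amounts to asking that $F$ be the meet of all maximal non-$G$ elements of $L$.

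With the dictionary in hand, conditions~(1), (3), (4) of the present lemma become precisely conditions~(1), (3), (4) of Lemma~\ref{complemented-equivalence} for $L$, hence are mutually equivalent. It then remains to splice in condition~(2). Choosing the trivial factorization $\pi = \pi \circ \operatorname{id}_G$ shows (2) $\Rightarrow$ (1). Conversely, if (1) holds then $L$ is complemented, so by Lemma~\ref{complemented-equivalence} every interval of $L$ is complemented; applying this to the intervals $[H,G]$ and $[F,H]$ attached to an arbitrary factorization gives that $\pi_2$ and $\pi_1$ are semisimple, which is (2). This closes the cycle of implications and finishes the proof.

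I do not expect a genuine obstacle here, as the lemma is a faithful restatement of Lemma~\ref{complemented-equivalence}. The only steps requiring a moment of care are: confirming that the interval $[F,H]$ of $L$ really coincides with the lattice of quotients of $H$ lying above $F$, so that semisimplicity of $\pi_1$ is literally the lattice-theoretic condition on $[F,H]$ (this is exactly the compatibility of the two readings of $[F,H]$ noted in the notation subsection); and the harmless passage between ``$G$ is a join of finitely many simple quotients'' and ``$G$ is the join of all atoms of $L$,'' which uses nothing beyond finiteness of $L$.
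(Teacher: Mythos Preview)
Your proposal is correct and follows exactly the approach the paper takes: the paper simply states that the lemma is Lemma~\ref{complemented-equivalence} ``translated into morphisms,'' and you have carefully written out that translation. Your handling of condition~(2) is appropriate---you do not try to match it literally with condition~(2) of Lemma~\ref{complemented-equivalence}, but instead prove $(1)\Leftrightarrow(2)$ directly (using Lemma~\ref{complemented-equivalence}(2) only for the implication $(1)\Rightarrow(2)$), which is the right thing to do.
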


For fixed $H\leq G,$ (i.e. for $H$ a quotient of $G$ via $\pi^G_H$),
\begin{equation}\label{E:Mobdef}
\sum_{F\in [H,G]} \mu(H,F)=\sum_{F\in [H,G]} \mu(F,G)=\begin{cases}
1 &\textrm{if $G=H$}\\
0 &\textrm{if $G\ne H$},
\end{cases}
\end{equation}
by the defining property of the M\"{o}bius function and another well-known property (\cite[Proposition 3.7.1]{Stanley2012} for $f$ the characteristic function of $H$).

Semisimple morphisms play a crucial role in the study of the M\"obius function.

\begin{lemma}[Philip Hall]\label{non-semisimple-vanishing}
Let $G$ be an object of $\category$ and $F$ a quotient of $G$.  If $G\ra F$ is not semisimple, then $\mu(F,G)=0$. 
\end{lemma}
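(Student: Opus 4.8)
The plan is to use the classical characterization of Philip Hall for the value of the M\"obius function of a lattice in terms of chains, together with the structure theory of the interval $[F,G]$. Recall Hall's theorem: for a finite poset $P$ with minimum $\hat 0$ and maximum $\hat 1$, one has $\mu(\hat 0, \hat 1) = \sum_{k} (-1)^k c_k$, where $c_k$ is the number of chains $\hat 0 = x_0 < x_1 < \dots < x_k = \hat 1$ of length $k$. Applying this to the interval $L = [F,G]$ of quotients of $G$ lying above $F$ (which is a finite modular lattice by Definition~\ref{D:diamond}\eqref{D:modular}), we get $\mu(F,G)$ as a signed count of maximal-type chains from $F$ to $G$.

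The key step is to produce, under the hypothesis that $G \to F$ is \emph{not} semisimple, a fixed-point-free involution on the set of chains from $F$ to $G$ that reverses the parity of the length, which will force the alternating sum to vanish. By Lemma~\ref{semisimple-equivalence} (equivalently Lemma~\ref{complemented-equivalence}), $G \to F$ is not semisimple exactly when the lattice $L = [F,G]$ is not complemented, which by Lemma~\ref{complemented-equivalence}(3) means that the join $J$ of the minimal nonzero elements (atoms) of $L$ is \emph{not} equal to the maximum $G$; note $J \geq F$ so $J$ itself is an element of $L$, and $F \le J < G$. The involution I would use is: given a chain $c\colon F = x_0 < x_1 < \dots < x_k = G$, compare it with the chain obtained by inserting or deleting the element $x_i \vee J$ at the appropriate spot — more carefully, one uses the standard trick of pairing chains according to whether or not they contain $J$ (or some canonically chosen element depending on $J$ and the chain). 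Concretely: since every atom of $L$ lies below $J$, and $J<G$, the element $J$ has a nontrivial role; the map that toggles the presence of $J$ in a chain (inserting $J$ where it fits, or deleting it when present) is a well-defined involution provided one checks it is always legal, i.e. that $J$ is comparable to, and strictly between, the appropriate consecutive pair. This is where modularity is used.

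The main obstacle is verifying that the toggling involution is genuinely well-defined and fixed-point-free: one must ensure that for every chain not containing $J$, there is a unique consecutive pair $x_{i-1} < x_i$ with $x_{i-1} < x_i \wedge J$... wait, more precisely one needs $x_{i-1} \le J$ and $x_i \not\le J$ or similar, so that $J$ can be inserted strictly between $x_{i-1}$ and $x_i$; since $F \le J$, the bottom of the chain is below $J$, and since $G \not\le J$, the top is not, so such an index exists, and one must argue uniqueness and that $x_{i-1} < x_{i-1}\vee J \le$ something $< x_i$... Rather than push this delicate combinatorics, the cleanest route is to invoke a standard closure-operator argument: the map $x \mapsto x \vee J$ is not quite what we want, but the observation that $\mu(F,G)=0$ whenever $L$ has an element $J \neq G$ above all atoms is exactly \cite[Ch.\ I, Cor.\ to Thm.\ 13]{Birkhoff1967}-adjacent folklore (it is the statement that a lattice with a nontrivial \emph{coatom-free-like} obstruction has vanishing M\"obius number), and the standard proof is precisely the Philip Hall chain-toggling argument sketched above. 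So I would state Hall's theorem, translate non-semisimplicity into "the join of the atoms of $[F,G]$ is a proper element $J$", and then carry out the involution, checking well-definedness with modularity. The alternative, if the involution proves fiddly, is the generating-function identity $\sum_{H\in[F,G]}\mu(F,H)\, t^{?}=\dots$, but I expect the direct bijective argument to be shortest.

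Let me also note the degenerate case: if $G \to F$ is not semisimple then in particular it is not an isomorphism, so $F < G$ and $[F,G]$ has at least two elements; the argument above never divides by anything and handles this uniformly, since $J$ with $F \le J < G$ exists precisely because $L$ is non-complemented. I would conclude $\mu(F,G) = \sum_k (-1)^k c_k = 0$ by the sign-reversing involution, completing the proof.
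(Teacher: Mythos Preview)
Your structural observation is exactly right: by Lemma~\ref{complemented-equivalence}(3), non-semisimplicity of $G\to F$ is equivalent to the join $J$ of the atoms of $[F,G]$ being strictly below $G$, and this is indeed the fact that forces $\mu(F,G)=0$. However, the specific chain-toggling involution you describe does not work. To insert $J$ into a chain between consecutive elements $x_{i-1}<x_i$ you need $x_{i-1}<J<x_i$, but from $x_{i-1}\le J$ and $x_i\not\le J$ you do not get $J<x_i$: in a general lattice $J$ and $x_i$ need not be comparable at all, and modularity alone does not repair this. So the parity-reversing bijection, as stated, is not well-defined. You seem to sense this (``fiddly'', ``rather than push this delicate combinatorics''), but the retreat to ``folklore'' leaves the proof incomplete.

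The paper's proof is precisely the closure-operator argument you allude to but do not carry out. It works from the top rather than the bottom: let $\Lambda\subseteq [F,G]$ be the set of $H$ with $G\to H$ semisimple (meet-closed by Lemma~\ref{semisimple-equivalence}), so every $K\in[F,G]$ has a minimal $K^{\operatorname{ss}}\in\Lambda$ above it. One writes
\[
0=\sum_{H\in\Lambda}\mu(H,G)\sum_{K\in[F,H]}\mu(F,K)
=\sum_{K\in[F,G]}\mu(F,K)\sum_{H\in[K^{\operatorname{ss}},G]}\mu(H,G),
\]
the first equality because each inner sum vanishes (as $H>F$), the second by reversing the order. The inner sum on the right vanishes unless $K^{\operatorname{ss}}=G$, which happens only for $K=G$, leaving $\mu(F,G)=0$. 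This is short, uses only the defining recursion of $\mu$, and avoids any chain combinatorics. If you want to salvage a direct argument along your lines, the clean route is this kind of Galois-connection / closure-operator identity (as in Rota's Corollary), not an involution on chains.
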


\begin{proof} 
This follows from \cite[Corollary p.  349]{Rota1964}
 but we give the simple proof below.

  Let $\Lambda$ be the sublattice of  quotients $H$ of $G$ such that $G\ra H$ is semisimple.  
  By Lemma~\ref{semisimple-equivalence}, we have that $\Lambda$ is meet-closed, and thus for every quotient $H$ of $G$, 
  there exists $H^{\operatorname{ss}},$ the minimal semisimple quotient of $G$ that is $\geq H$.  
Then we have
\begin{align*}
0 &=\sum_{H\in \Lambda} \mu(H,G) \sum_{K \in [F,H]} \mu(F,K) &\textrm{(each interior sum vanishes by defn. of $\mu$)}\\
 &=\sum_{K \in [F,G]}\mu(F,K)  \sum_{\substack{H\in \Lambda \cap [K,G]\\\textrm{i.e., } H\in [K^{\operatorname{ss}},G]}} \mu(H,G)   &\textrm{(rearrange sum)}\\
  &=\mu(F,G),
\end{align*}  
where the last equality holds because the inner sum vanishes unless $K^{\operatorname{ss}}=G$
by \eqref{E:Mobdef},
 and  $K^{\operatorname{ss}}=G$ if and only if $K=G$
  (as if $K<G$ then $K$ is less than some simple quotient of $G$).
 \end{proof}

Let $\pi \colon G\to F$ be an epimorphism. Let $H$ be the join of all the minimal non-$F$ elements in the interval $[F,G]$. We call the epimorphism $G \to H$ the \emph{radical} of $\pi$. Since $H$ is the join of the minimal non-$F$ elements, $H \to F$ is semisimple, and it is the maximal semisimple morphism through which $\pi$ factors.  We call $H\ra F$ the \emph{semisimplification} of $\pi$.

The finiteness in Definition~\ref{D:diamond} \eqref{D:modular}  
ensures that we can write any epimorphism $\pi$ as a finite composition of simple morphisms. The modularity assumption implies that, when we do this, the number of simple morphisms depends only on $\pi$ \cite[V, Theorem 3 on p. 68]{Birkhoff1948}. 

\begin{definition} We define the \emph{dimension} of $\pi$ to be the number of simple morphisms it is a composition of 
(or $0$ if $\pi$ is an isomorphism)
and the dimension of $G$ to be the dimension of its morphism to its unique minimal quotient.\end{definition}

Note the dimension of an epimorphism $G \to H$ is the difference between the dimension of $G$ and the dimension of $H$. 
Also, if $A< B$, them $\dim A <\dim B$.

We say that a formation $\W$ is \emph{narrow} if for each $G\in\W$, there are finitely many elements of $\W$ with a simple epimorphism to $G$.  Since we work in a diamond category, a level is a narrow formation. 
As an example, if $C$ is the category of finite abelian groups,  the formation of finite abelian $p$-groups is narrow, but the formation of all finite abelian groups is not narrow.

We will use many inductive arguments, sometimes needing to induct on dimension.

\begin{lemma}\label{dim-C-finite} For a narrow formation $\W$, and $F\in\W$, and $d$ a natural number, there are finitely many $G\in \W$ with 
an
 epimorphisms $G \to F$ of dimension $d$. \end{lemma}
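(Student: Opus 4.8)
The plan is to induct on the dimension $d$. For the base case $d=0$, an epimorphism $G \to F$ of dimension $0$ is an isomorphism, so the only such $G$ (up to isomorphism) is $F$ itself, and there is exactly one. For the inductive step, suppose the claim holds for all dimensions less than $d$, and fix $F \in \W$ and an epimorphism $\pi \colon G \to F$ of dimension $d \geq 1$.

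The idea is to factor $\pi$ through an intermediate quotient that is "one step" from $F$. Concretely, since $\pi$ is a composition of $d$ simple morphisms with $d \geq 1$, there is a quotient $H$ of $G$ with $H \in [F,G]$ such that $H \to F$ is simple and $G \to H$ has dimension $d-1$. (Take the first step of a factorization of $\pi$ into simple morphisms; note $H$ lies in $\W$ because $\W$ is downward-closed and $H$ is a quotient of $G \in \W$.) Now I would count in two stages: first there are finitely many choices for the isomorphism class of $H$, namely those $H \in \W$ admitting a simple epimorphism to $F$ — this is finite precisely by the hypothesis that $\W$ is narrow; and second, for each such $H$, there are finitely many $G \in \W$ admitting an epimorphism $G \to H$ of dimension $d-1$, by the inductive hypothesis applied to $H$ in place of $F$. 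Since every $G$ admitting a dimension-$d$ epimorphism to $F$ arises (up to isomorphism) with some such intermediate $H$, and $G$ ranges over a subset of the union, taken over the finitely many $H$, of the finite sets supplied by the inductive hypothesis, we conclude that there are only finitely many such $G$ up to isomorphism.

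One point to be careful about: I should check that the dimension is additive along the factorization $G \to H \to F$, so that $G \to H$ really has dimension $d-1$; this is exactly the remark after the definition of dimension, that the dimension of $G \to H$ is the difference of the dimensions of $G$ and $H$ (equivalently, here, that $\dim \pi = \dim(H \to F) + \dim(G \to H) = 1 + (d-1)$). I also want to make sure the counting is genuinely a count of isomorphism classes: the relevant sets from the narrowness hypothesis and the inductive hypothesis are sets of isomorphism classes, and a finite union of finite sets of isomorphism classes is finite, so there is no subtlety here.

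The main obstacle — really the only nontrivial input — is identifying the correct "one step" reduction, i.e. realizing that every dimension-$d$ epimorphism factors through an intermediate $H$ with $H \to F$ simple and that narrowness is exactly what controls the finitely many choices of $H$. Everything else is bookkeeping with dimensions and finite unions. I do not expect any convergence or lattice-theoretic difficulty beyond invoking the already-established facts that epimorphisms factor into simple morphisms and that dimension is additive.
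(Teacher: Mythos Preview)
Your proof is correct and takes essentially the same approach as the paper: induction on $d$, peeling off one simple morphism and invoking narrowness. The only cosmetic difference is that you factor $G \to H \to F$ with $H \to F$ simple and apply narrowness at $F$, whereas the paper factors $G \to G' \to F$ with $G \to G'$ simple and applies narrowness at each $G'$ from the inductive hypothesis; both arrangements work equally well.
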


\begin{proof} This follows by induction on $d$, using the definition of narrow and the fact that any epimorphism $G \to F$ of dimension $d$ factors (in at least one way) as $G \to G' \to F$ with $G \to G'$ of dimension 1 (i.e. simple) and $G' \to F$ of dimension $d-1$. \end{proof}

The following will be very useful in inductive arguments, in which we will write morphisms as compositions of semisimple morphisms.

\begin{lemma}\label{one-less-semisimple} 
Let $\W$ be a formation.
The set $\W'$ of isomorphism classes of $G$ such that $G \to G^{\W}$ is semisimple is a formation.\end{lemma}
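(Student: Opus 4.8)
The goal is to show that $\W'$, the set of isomorphism classes $G$ for which $G \to G^{\W}$ is semisimple, is a formation, i.e.\ downward-closed, join-closed, and contains every minimal object. Containing every minimal object is immediate: if $G$ is minimal, its only quotient is itself, so $G^{\W}=G$ and $G\to G^{\W}$ is an isomorphism, hence (vacuously) semisimple. So the real content is downward-closure and join-closure, and the natural strategy is to translate everything into the lattice of quotients and apply the characterizations of semisimple/complemented from Lemmas~\ref{complemented-equivalence} and \ref{semisimple-equivalence}, together with the Diamond Isomorphism Theorem (Lemma~\ref{dit}).

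\textbf{Downward-closure.} Suppose $G \in \W'$ and let $H$ be a quotient of $G$; I must show $H \to H^{\W}$ is semisimple. The key observation is that $H^{\W}$, computed as a quotient of $H$, coincides with the quotient $(G^{\W}) \vee H$ of $G$ intersected appropriately — more precisely, working in the lattice of quotients of $G$, we have $H \geq H^{\W} = H \wedge G^{\W}$ is wrong in general, so instead I would argue: $H^{\W}$ is the maximal $\W$-quotient of $H$, and since quotients of $H$ are quotients of $G$, $H^\W$ is the maximal element of $\W$ among quotients of $G$ that are $\le H$; because $\W$ is join-closed this maximal element is $H \wedge G^\W$ (the meet being taken among quotients of $G$). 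Now $[H^\W, H] = [H \wedge G^\W, H]$, and by the Diamond Isomorphism Theorem this interval is isomorphic to $[G^\W, G^\W \vee H]$, which is a sub-interval of $[G^\W, G]$. Since $G \to G^\W$ is semisimple, $[G^\W, G]$ is complemented, and by Lemma~\ref{complemented-equivalence}(2) every interval in it is complemented; hence $[H^\W, H]$ is complemented, i.e.\ $H \to H^\W$ is semisimple, so $H \in \W'$.

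\textbf{Join-closure.} Suppose $H, F$ are quotients of some $G$, both in $\W'$, and $H \vee F$ exists; I must show $H \vee F \in \W'$, i.e.\ that $(H\vee F) \to (H \vee F)^\W$ is semisimple. Set $W := (H \vee F)^\W$, the maximal $\W$-quotient of $H \vee F$. Using the characterization in Lemma~\ref{semisimple-equivalence}(3), it suffices to exhibit $H \vee F$ as a join of simple-over-$W$ quotients. Since $H \to H^\W$ is semisimple, $H$ is a join of quotients simple over $H^\W$; pushing these forward (via the epimorphism $H \to H \vee F$, noting $H \le H \vee F$) and taking joins with $W$ — and using that $H^\W \le W$ since $H^\W$ is a $\W$-quotient of $H\vee F$ — one gets that $H \vee W$ is a join over $W$ of simple morphisms, hence $H \vee W \to W$ is semisimple; similarly $F \vee W \to W$ is semisimple. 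Then $(H \vee W) \vee (F \vee W) = H \vee F \vee W = H \vee F$ (the last equality because $W \le H \vee F$), and a join of two semisimple-over-$W$ quotients is semisimple over $W$ (this is Lemma~\ref{semisimple-equivalence}(3) again: the union of the two families of simple-over-$W$ generators still generates). Hence $(H\vee F) \to W$ is semisimple and $H \vee F \in \W'$.

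\textbf{Main obstacle.} The delicate point throughout is the identification of $H^\W$ (the $\W$-quotient, defined intrinsically via quotients of $H$) with an explicit lattice-theoretic expression in the quotients of $G$ — namely $H \wedge G^\W$ in the downward-closure step, and keeping track of which joins/meets are taken in which lattice in the join-closure step. This requires the remark in the ``Further notation'' subsection that the interval $[F,H]$ in the quotient lattice of $G$ agrees with the quotient lattice of $H$ above $F$, plus the join-closedness of $\W$ to know that a join of $\W$-quotients stays in $\W$ and hence that a genuine maximal $\W$-quotient exists. Once the correct lattice-theoretic translation is pinned down, both parts reduce to routine applications of Lemma~\ref{dit} and Lemma~\ref{complemented-equivalence}; I would present the downward-closure argument in full and note that join-closure is the dual/parallel argument via Lemma~\ref{semisimple-equivalence}(3).
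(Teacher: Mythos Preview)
Your argument is correct and matches the paper's: downward-closure via the Diamond Isomorphism Theorem identifying $[H^\W,H]$ with a subinterval of the complemented lattice $[G^\W,G]$, and join-closure by writing $H\vee F$ as a join of simples over a common $\W$-quotient (the paper uses $H^\W\vee F^\W$ rather than your $W=(H\vee F)^\W$ and then invokes Lemma~\ref{semisimple-equivalence}(2), but this is a harmless variation). Two cosmetic slips to fix: the identity $H^\W=H\wedge G^\W$ follows from \emph{downward}-closure of $\W$ (so $H\wedge G^\W\in\W$) together with $H^\W\le G^\W$, not from join-closure; and there is no ``epimorphism $H\to H\vee F$''---the arrow goes $H\vee F\to H$, though your actual lattice manipulation (joining with $W$) is fine.
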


\begin{proof} 
First we show that $\W'$ is downward-closed.  
Let $G \to  G^{\W}$ be semisimple and $H$  a quotient of $G$.  We have $ G^{\W} \wedge H \in {\W}$ and is a quotient of $H$, and thus is $\leq H^{\W}$. So  $[H^{\W}, H ]$ is an interval of of  $[G^{\W} \wedge H, H] $. By  Lemma \ref{dit}, 
$[G^{\W} \wedge H, H] $ is isomorphic to $[G^{\W} , G^{\W} \vee H]$, which is an interval of the complemented modular lattice $[G^{\W},G]$. Since intervals in complemented modular lattices are complemented (Lemma \ref{complemented-equivalence}),  $[H^{\W}, H ] $ is complemented and thus $H \to H^{\W}$ is semisimple.

Next we show that $\W'$ is join-closed.
Suppose $G \to  G^{\W}$ and $H \to H^{\W}$ are semisimple. and $G$ and $H$ are quotients of some common object.  We have $G^{\W} \vee H^{\W}$ is in $\W$ and is a quotient of $G \vee H$, and thus is $\leq  (G \vee H)^{\W}$. Thus by Lemma \ref{semisimple-equivalence}, it suffices to show $G \vee H \to G^{\W} \vee H^{\W}$ is semisimple. 

We have $[ G^{\W} \vee H^{\W},  H^{\W} \vee G] = [  G^{\W} \vee H^{\W},  G^{\W} \vee H^{\W}\vee G]$, which, by Lemma \ref{dit} is isomorphic to $[ ( G^{\W} \vee H^{\W}) \wedge G, G]$.
The interval $[ ( G^{\W} \vee H^{\W}) \wedge G, G]$
 is an interval in $[G^{\W}, G]$ and thus is complemented. 
Thus $[ G^{\W} \vee H^{\W},  H^{\W} \vee G], $ and similarly 
 $[ G^{\W} \vee H^{\W},  G^{\W}\vee H]$ are complemented. So both of $G\vee H^{\W}$ and $G^{\W}\vee H$ are the join of minimal non-$G^{\W} \vee H^{\W}$ elements (by Lemma \ref{complemented-equivalence}), which all lie in $[ G^{\W} \vee H^{\W}, G\vee H]$, and thus the join of all such elements is $\geq (G\vee H^{\W}) \vee (G^{\W}\vee H) = G\vee H$ and thus must be equal to $G \vee H$. This makes $[ G^{\W} \vee H^{\W}, G\vee H]$ complemented and $G \vee H \to G^{\W} \vee H^{\W}$ semisimple by Lemma \ref{complemented-equivalence}.

Clearly, all minimal $G$ have $G \to G^{\W}$ semisimple, and this proves the lemma.
\end{proof}

We have some basic facts about the interaction of lattice operations of taking the $\W$ quotient.

 \begin{lemma}\label{L:Cvee}
 For any formation $\W$ and $A\leq B$ objects of $\category$, we have 
\begin{enumerate}
\item $A^\W \leq B^\W$,
\item $(A\vee B^\W)^\W=B^\W$,  and
\item $A \wedge B^\W =A^\W$.
\end{enumerate}
 \end{lemma}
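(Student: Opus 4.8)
The plan is to establish the three assertions in order, using (1) in the proofs of (2) and (3). Throughout, $A\le B$ means $A$ is a quotient of $B$, so $A$, $B$, $A^\W$, $B^\W$, and the join $A\vee B^\W$ and meet $A\wedge B^\W$ all live in the poset of quotients of $B$, which is a finite lattice by Definition~\ref{D:diamond}\eqref{D:modular}; hence these joins and meets exist. The one standing fact I will use repeatedly is that for any object $X$, the quotient $X^\W$ belongs to $\W$ (it is a finite join of quotients of $X$ lying in $\W$, which stays in $\W$ since $\W$ is join-closed and contains the minimal quotient of $X$) and is therefore the \emph{maximum} among quotients of $X$ that lie in $\W$.

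For (1): compose $B\to A\to A^\W$ to view $A^\W$ as a quotient of $B$; it lies in $\W$, so by maximality of $B^\W$ we get $A^\W\le B^\W$.

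For (3): the meet $A\wedge B^\W$ satisfies $A\wedge B^\W\le B^\W$, so it is a quotient of $B^\W$, and since $B^\W\in\W$ and $\W$ is downward-closed, $A\wedge B^\W\in\W$; being also $\le A$, it is a quotient of $A$ in $\W$, whence $A\wedge B^\W\le A^\W$. Conversely $A^\W\le A$ and, by (1), $A^\W\le B^\W$, so $A^\W$ is a common lower bound of $A$ and $B^\W$, giving $A^\W\le A\wedge B^\W$; hence equality. For (2): the join $A\vee B^\W$ is $\le B$ (both $A$ and $B^\W$ are), so it is a quotient of $B$, and applying (1) to $A\vee B^\W\le B$ yields $(A\vee B^\W)^\W\le B^\W$; conversely $B^\W\le A\vee B^\W$ exhibits $B^\W$ as a quotient of $A\vee B^\W$ lying in $\W$, so $B^\W\le (A\vee B^\W)^\W$, and equality follows.

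I do not expect any real obstacle here: the only thing needing a word of justification is the standing fact that $X^\W\in\W$ and is the largest quotient of $X$ in $\W$, which follows from the definition of a formation; granting that, each of the three parts is a one- or two-line comparison in the quotient lattice of $B$.
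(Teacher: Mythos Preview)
Your proof is correct and follows essentially the same approach as the paper's: both arguments use the maximality of $X^\W$ among quotients of $X$ in $\W$ together with downward-closedness of $\W$, deriving (2) and (3) from (1) via identical inequality chains.
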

\begin{proof}
Since $A^\W \leq A \leq B$, and $A^\W\in\W$, we have $A^\W \leq B^\W$ by definition of $B^\W$.
Thus $B^\W=(B^\W)^\W \leq (A\vee B^\W)^\W \leq B^\W,$ which gives the second statement.
We have $A^\W\leq A,B^\W$, so $A^\W\leq A \wedge B^\W$.  Since $A \wedge B^\W\leq B^\W$, we have that 
$A \wedge B^\W\in\W$, using that $\W$ is downward-closed.  Then since $A \wedge B^\W \leq A$, we have that  $A \wedge B^\W \leq A^\W$,
and the third statement follows.
\end{proof} 
 
 \begin{definition}
For $G\in \category$, we define the \emph{complexity} of $G$ to be the minimal $k$ such that
 the morphism $G\ra M$ for a minimal object $M$ can be written as a concatenation of $k$ semisimple morphisms.
 If $G$ is minimal, then it has complexity $0$.  We say a formation $\W$ has \emph{finite complexity} if there is a finite upper bound on the complexity of the objects in $\W$, and \emph{complexity $k$} if $k$ is the maximum complexity of objects in $\W$.
 \end{definition}
 
 For example if $C$ is the category of finite abelian groups, then $\Z/p^n\Z$ for a prime $p$ is complexity $n$, while $(\Z/p\Z)^n$ has complexity $1$.
 
 \begin{lemma}\label{L:complexity}
The set $\V_k$ of isomorphism classes of complexity $\leq k$ is a formation.
 For $G\in\category$ and $k\geq 1$, we have $G\in \V_k$ if and only if $G\ra G^{\V_{k-1}}$ is semisimple.
 \end{lemma}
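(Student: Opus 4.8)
The plan is to establish two things: first, that $\V_k$ is a formation, and second, the characterization of membership in $\V_k$ in terms of semisimplicity of $G \to G^{\V_{k-1}}$. For the first claim, I would argue that $\V_k$ is downward-closed and join-closed, and that it contains all minimal objects (the latter being immediate, since a minimal object has complexity $0 \leq k$). For downward-closure: suppose $G \in \V_k$, so the morphism $G \to M$ to a minimal quotient $M$ factors as a concatenation of $k$ semisimple morphisms, and let $H$ be a quotient of $G$. Then a minimal quotient $M'$ of $H$ is also a minimal quotient of $G$, so $G \to M'$ is a concatenation of $k$ semisimple morphisms; I want to deduce that $H \to M'$ is a concatenation of $\leq k$ semisimple morphisms. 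The natural tool is to intersect the filtration witnessing the complexity of $G$ with $H$: if $G = G_0 \to G_1 \to \dots \to G_k = M'$ with each $G_{i-1} \to G_i$ semisimple, then $H \wedge G_i$ gives a chain $H = H \wedge G_0 \geq H \wedge G_1 \geq \dots \geq H \wedge G_k = M'$, and by the Diamond Isomorphism Theorem (Lemma~\ref{dit}) together with the stability of complemented modular lattices under passing to intervals (Lemma~\ref{complemented-equivalence}), each step $H\wedge G_{i-1} \to H \wedge G_i$ is semisimple. This is essentially the same maneuver used in the proof of Lemma~\ref{one-less-semisimple}, applied one semisimple layer at a time. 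For join-closure, the cleanest route is probably induction on $k$ using the second statement of the lemma once it is proven, or alternatively a direct argument mirroring the join-closed part of Lemma~\ref{one-less-semisimple}: given $G, H \in \V_k$ quotients of a common object, one shows $G \vee H \to (G \vee H)^{\V_{k-1}}$ is semisimple and $(G\vee H)^{\V_{k-1}} \in \V_{k-1}$ by the inductive hypothesis, hence $G \vee H \in \V_k$.

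For the second claim, I would proceed by induction on $k$ and unwind the definition of complexity. Suppose $G \to G^{\V_{k-1}}$ is semisimple. Since $G^{\V_{k-1}} \in \V_{k-1}$, its morphism to a minimal quotient $M$ is a concatenation of $\leq k-1$ semisimple morphisms; prepending the semisimple morphism $G \to G^{\V_{k-1}}$ exhibits $G \to M$ as a concatenation of $\leq k$ semisimple morphisms, so $G \in \V_k$. Conversely, suppose $G \in \V_k$, so there is a chain $G = G_0 \to G_1 \to \dots \to G_j = M$ with $j \leq k$, each step semisimple, and $M$ minimal. Then $G_1 \in \V_{k-1}$, so $G_1 \leq G^{\V_{k-1}}$; thus $G \to G^{\V_{k-1}}$ is a quotient of the semisimple morphism $G \to G_1$ (meaning $G^{\V_{k-1}}$ lies in the interval $[G_1, G]$ which is complemented, since $[G_1,G] = [G_1, G_0 \vee G_1]$ and by Lemma~\ref{dit} and Lemma~\ref{complemented-equivalence} any interval of a complemented modular lattice is complemented). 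Here I need $G_1 \leq G^{\V_{k-1}}$ rather than equality; but then $G^{\V_{k-1}} \in [G_1, G]$ and the interval $[G^{\V_{k-1}}, G]$ is a subinterval of the complemented lattice $[G_1, G]$, hence complemented, so $G \to G^{\V_{k-1}}$ is semisimple by Lemma~\ref{semisimple-equivalence}.

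The main obstacle, I expect, is organizing the induction so that the two statements of the lemma are proved in tandem without circularity: the join-closure part of "$\V_k$ is a formation" is most naturally deduced from the semisimplicity characterization at level $k$, which in turn one wants to prove using that $\V_{k-1}$ is a formation. So the clean structure is a single induction on $k$ whose inductive hypothesis asserts both that $\V_{k-1}$ is a formation and the characterization at level $k-1$; in the inductive step one first proves downward-closure of $\V_k$ directly (the filtration-intersection argument above, which only uses Lemmas~\ref{dit} and~\ref{complemented-equivalence}), then proves the characterization at level $k$ using that $\V_{k-1}$ is a formation (so that $G^{\V_{k-1}}$ is well-defined as a genuine maximal quotient in $\V_{k-1}$), and finally deduces join-closure of $\V_k$ from the characterization exactly as in Lemma~\ref{one-less-semisimple}. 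The base case $k=0$ is trivial: $\V_0$ is the set of minimal objects, which is a formation by fiat (it is downward-closed and join-closed vacuously, and contains all minimal objects). The only genuinely delicate point is making sure that "$G \to M$ is a concatenation of $k$ semisimple morphisms for \emph{some} minimal quotient $M$" behaves well — but in a diamond category each object has a unique minimal quotient (it is the minimum of the lattice of quotients), so this ambiguity does not arise, and I would note that explicitly.
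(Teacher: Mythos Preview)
Your proposal is correct and follows essentially the same inductive strategy as the paper, with the same use of Lemma~\ref{semisimple-equivalence} for the characterization and Lemma~\ref{one-less-semisimple} to close the induction. The one difference is organizational: you prove downward-closure of $\V_k$ directly via the filtration-intersection argument before establishing the characterization, whereas the paper proves the characterization first and then invokes Lemma~\ref{one-less-semisimple} once to obtain that $\V_k$ is a formation, getting downward-closure and join-closure simultaneously. Your direct downward-closure argument is valid, but redundant --- once you know $\V_k = \{G : G \to G^{\V_{k-1}} \text{ is semisimple}\}$ and $\V_{k-1}$ is a formation by the inductive hypothesis, Lemma~\ref{one-less-semisimple} hands you the entire formation property in one stroke.
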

\begin{proof}
We prove the statement by induction on $k$ and it is clear for $k=0$.  Suppose the lemma is true for $k$.

Let $G\in \V_{k+1}$.  Then by definition, $G$ has a semisimple morphism $G\ra H$ to an $H$ of complexity $k$.
Since $H$ is a quotient of $G$ and $h\in \V_k$, we have that $G\ra H$ factors through $G\ra G^{\V_k}$.
Thus by Lemma~\ref{semisimple-equivalence} (2), we have that $G\ra G^{\V_k}$ is semisimple.

If $G\ra G^{\V_{k}}$ is semisimple, then $G$ by definition has complexity $\leq {k+1}$.  Thus the second statement of the lemma is true for $k+1$.  Then by Lemma~\ref{one-less-semisimple} we have that $\V_{k+1}$ is a formation, completing the induction.
\end{proof} 
 
 \begin{corollary}\label{C:levelcom}
Any level $\C$ has finite complexity.
 \end{corollary}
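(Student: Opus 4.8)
The plan is to reduce the statement to the two facts that every single object of $\category$ has finite complexity and that each $\V_k$ is a formation, and then bootstrap from the finite generating set of $\C$ to all of $\C$.

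First I would check that an arbitrary object $G\in\category$ has finite complexity. By Definition~\ref{D:diamond}\eqref{D:modular} the lattice of quotients of $G$ is finite, hence of finite length, so the morphism $G\to M$ to the minimal quotient $M$ of $G$ can be written as a concatenation of $\dim G$ simple morphisms. Every simple morphism $H\to K$ is semisimple, since the two-element interval $[K,H]$ is trivially complemented (equivalently, $H$ is the join of simple quotients, namely $H$ itself, so Lemma~\ref{semisimple-equivalence} applies). Thus $G\to M$ is exhibited as a concatenation of $\dim G$ semisimple morphisms, and the complexity of $G$ is at most $\dim G$, which is finite.

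Now let $\C$ be a level, so $\C$ is generated by a finite set $B$ of isomorphism classes. Put $k=\max_{G\in B}\dim G$ (and $k=0$ if $B$ is empty), which is finite. By the previous paragraph $B\subseteq \V_k$, where $\V_k$ is the set of isomorphism classes of complexity $\le k$. By Lemma~\ref{L:complexity}, $\V_k$ is a formation, i.e.\ a downward-closed, join-closed set of isomorphism classes containing every minimal object of $\category$; since it also contains $B$, and $\C$ is by definition the smallest such set containing $B$, we get $\C\subseteq\V_k$. Hence every object of $\C$ has complexity at most $k$, so $\C$ has finite complexity.

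There is no real obstacle here: the content is entirely in Lemma~\ref{L:complexity} (that the $\V_k$ are formations) and in the elementary observation that a morphism to a minimal object is a finite composition of simple, hence semisimple, morphisms. The only point to get right is the passage from the generating set $B$ to the full level $\C$, which is immediate once one knows $\V_k$ is a formation.
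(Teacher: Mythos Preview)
Your proof is correct and follows essentially the same approach as the paper: take a finite generating set $B$, bound the complexity of its members, and use Lemma~\ref{L:complexity} to conclude $\C\subseteq\V_k$. The only cosmetic difference is that the paper sets $k$ to be the maximal \emph{complexity} of the generators (taking finiteness of complexity for granted), whereas you use the cruder bound $k=\max_{G\in B}\dim G$ and explicitly justify why each object has finite complexity; both choices work equally well.
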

 \begin{proof}
 If $\C$ is generated by objects $G_1,\dots, G_n$, let $k$ be the maximal complexity of any $G_i$.
 Then all $G_i\in \V_k$, and since $\V_k$ is a formation by Lemma~\ref{L:complexity}, we have $\C\sub\V_k$.
 \end{proof}
 
 Lastly, we have a simple observation about the concept of a well-behaved tuple.
 
 \begin{lemma}\label{big-O-wb} Let $(M_G)_G$ and $(M_G')_G$ be tuples of nonnegative real numbers such that $M_{G'} = O (M_G)$ for all $G$, i.e. there exists a constant $c$ such that $M_{G'} \leq c M_G$ for all $G$.
 
 If $(M_G)_G$ is well-behaved at level $\C$, then $(M_G')_G$ is as well, and if $(M_G)_G$ is well-behaved, then $(M_G')_G$ is as well. 
 \end{lemma}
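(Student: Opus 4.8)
The plan is to argue termwise; the assertion is essentially a monotonicity property of the sums that appear in the definition of well-behaved. Fix a formation $\W$ — which will be the level $\C$ in the first assertion, and an arbitrary level in the second — and fix $F \in \W$. The quantity controlling well-behavedness at $\W$,
\[
\sum_{G \in \W} \sum_{\pi \in \Epi(G,F)} \frac{|\mu(F,G)|}{|\Aut(G)|}\, Z(\pi)^3\, |M_G|,
\]
is a sum of nonnegative terms indexed by pairs $(G,\pi)$, and the tuple $(M_G)_G$ enters only through the factor $|M_G|$.

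First I would record the elementary inputs: since both tuples consist of nonnegative reals, $|M_G'| = M_G'$ and $|M_G| = M_G$ for all $G$, and the hypothesis provides a constant $c$ with $M_G' \leq c M_G$, hence $|M_G'| \leq c\,|M_G|$, for every isomorphism class $G$. Then for each $F \in \W$,
\begin{align*}
\sum_{G \in \W} \sum_{\pi \in \Epi(G,F)} \frac{|\mu(F,G)|}{|\Aut(G)|}\, Z(\pi)^3\, |M_G'|
&\leq \sum_{G \in \W} \sum_{\pi \in \Epi(G,F)} \frac{|\mu(F,G)|}{|\Aut(G)|}\, Z(\pi)^3\, c\,|M_G| \\
&= c \sum_{G \in \W} \sum_{\pi \in \Epi(G,F)} \frac{|\mu(F,G)|}{|\Aut(G)|}\, Z(\pi)^3\, |M_G|,
\end{align*}
where the inequality holds termwise because the remaining factors $|\mu(F,G)|/|\Aut(G)|$ and $Z(\pi)^3$ are nonnegative, and pulling the constant $c$ out of a series of nonnegative terms is harmless. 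If $(M_G)_G$ is well-behaved at $\W$, the right-hand side is finite for every $F \in \W$, so the left-hand side is too, which is precisely well-behavedness of $(M_G')_G$ at $\W$.

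Finally, applying this with $\W = \C$ gives the first assertion, and applying it with $\W$ ranging over all levels gives the second, since by definition $(M_G)_G$ is well-behaved exactly when it is well-behaved at every level. I do not expect any genuine obstacle here: the only point requiring care is that every term in the double sum is nonnegative — guaranteed by the hypothesis that both tuples are nonnegative — so that the comparison $|M_G'| \leq c\,|M_G|$ may be inserted inside the sum without worrying about conditional convergence or the order of summation.
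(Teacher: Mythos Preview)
Your proof is correct and matches the paper's approach exactly: a termwise comparison using the nonnegativity of all factors together with $M_G' \leq c M_G$, followed by pulling the constant out of the sum. The paper states this in one displayed chain of inequalities, but the content is identical.
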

 
\begin{proof} This follows immediately from the definition since
\[  \sum_{G \in \C}     \sum_{ \pi \in\Epi(G,F) }\frac{\abs{{\mu}(F,G)}}{|\Aut(G)|} Z ( \pi)^3 M_G' \leq   \sum_{G \in \C}     \sum_{ \pi \in\Epi(G,F) }\frac{\abs{{\mu}(F,G)}}{|\Aut(G)|} Z ( \pi)^3 c M_G \] \[= c \sum_{G \in \C}     \sum_{ \pi \in\Epi(G,F) }\frac{\abs{{\mu}(F,G)}}{|\Aut(G)|} Z ( \pi)^3  M_G < \infty. \qedhere\] \end{proof}

 \section{Calculation of the M\"obius function}\label{s-calculation}

In this section, we explain how to calculate the M\"obius function so as to obtain explicit formulas from our main theorems in particular cases. We also explain how to estimate the sum appearing in the definition of well-behaved, so as to check the well-behavedness assumption.  In view of Lemma \ref{non-semisimple-vanishing}, we may restrict attention to semisimple morphisms $\pi \colon G \to H$. For such morphisms, $[H,G]$ is by definition a finite complemented modular lattice. There is a powerful classification theorem for finite complemented modular lattices. We begin by reviewing the classification theorem, then explain how to compute the M\"obius function in each case, and finally show how to apply these formulas in the categories of finite groups, finite modules, and finite rings.

We begin by giving some examples of finite complemented modular lattices.

\begin{example}\label{subset-example} The simplest example of a complemented modular lattice is the lattice of subsets of a set, with the ordering given by inclusion. The complements here are the usual notion of the complement of the set. The subsets of a finite set form a finite complemented modular lattice.\end{example}

\begin{example}\label{subspace-example} Another example of a complemented modular lattice is the subspaces of a vector space, again ordered by inclusion. The complements of a subspace are the usual notion of complement in linear algebra. The subspaces of a finite-dimensional vector space over a finite field 
form a finite complemented
modular lattice.\end{example}

There is a relation between Examples \ref{subset-example} and \ref{subspace-example}: A one-dimensional vector space over a field has only two subspaces, itself and the zero subspace, so the lattice of subspaces has two elements. The product of $n$ copies of this two-element lattice is the lattice of subsets of an $n$-element set. A construction of finite complemented modular lattices which generalizes both of the previous two cases is to take a finite product of a sequence of lattices, each element of which is the lattice of subspaces of some vector space over some finite field.

Not all finite complemented modular lattices arise this way, so we need two additional constructions.

\begin{example} First, note that the lattice of subspaces of $\mathbb F_q^2$ consists of a minimal element (the zero-dimensional subspace), a maximal element (the two-dimensional subspace), and $q+1$ incomparable elements (the one-dimensional subspaces). We can generalize this by taking the lattice consisting of a minimal element, a maximal element, and $q+1$ incomparable elements, where $q$ is not necessarily a prime power.\end{example}

\begin{example}\label{plane-example} Second, note that the lattice of subspaces of $\mathbb F_q^3$ consists of a minimal element, a maximal element, and elements corresponding to one-dimensional and two-dimensional subspaces. The one-dimensional subspaces are naturally in bijection with points of the finite projective plane $\mathbb P^2(\mathbb F_q)$, and the two-dimensional subspaces are naturally in bijection with lines in this plane. We can generalize this by considering finite projective planes which do not necessarily arise from vector spaces over finite fields. Since these are exactly the finite projective planes that don't satisfy Desargues's theorem, they are called non-Desarguesian planes. The smallest examples have $91$ points and $91$ lines.

We form a lattice from a finite projective plane by choosing the set of elements of the lattice to consist of one minimal element, one maximal element, the set of points of the plane, and the set of lines of the plane, with a partial ordering defined so that the minimal element is $\leq$ every other element, the maximal element is $\geq$ every other element, a point is $\leq$ a line if the point lies on the line, and every other pair of elements is incomparable.

Every finite projective plane, including non-Desarguesian ones, has a number $q$, called the order, so that every point lies on $q+1$ lines, every line contains $q+1$ points, the number of points is $q^2+q+1$, and the number of lines is $q^2+q+1$ \cite[Theorem 1 on p. 17]{FPP}. \end{example}

These constructions, combined with subspaces of a finite projective space, can be produce to construct every finite complemented modular lattice.

\begin{theorem}[{\cite[VIII, Theorem 6 on p. 120]{Birkhoff1948}}]\label{fcml-classification} Every finite complemented modular lattice can be expressed uniquely as a finite product of lattices of one of the following three forms:

\begin{itemize}

\item The lattice of linear subspaces of an $n$-dimensional space over a finite field $\mathbb F_q$, for some $n \geq 1$ and prime power $q$.

\item A lattice consisting of one minimal element, one maximal element, and $q+1$ incomparable elements, for $q \geq 2$ not a prime power.

\item A lattice consisting of one minimal element, one maximal element, the set of points in $P$, and the set of lines in $P$, for $P$ a non-Desarguesian finite projective plane, with $\leq$ defined as in Example \ref{plane-example}.

\end{itemize}

\end{theorem}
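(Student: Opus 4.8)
The plan is to reduce to the directly indecomposable case and then invoke the coordinatization theory of projective geometries. First, by \Cref{complemented-equivalence} a finite complemented modular lattice $L$ is relatively complemented and atomistic: its top element is the join of the atoms, and every interval is again complemented and modular, hence again a finite complemented modular lattice. So $L$ is a geometric modular lattice of some finite length $n$, and we may induct on $n$ once we handle the indecomposables.

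The second step is the canonical direct-product decomposition. Direct decompositions of a bounded lattice correspond to its complemented neutral elements, which in a complemented modular lattice form a Boolean sublattice $Z(L)$, the center. Writing the top element as the join of the finitely many atoms $c_1,\dots,c_k$ of $Z(L)$ yields $L \simeq \prod_{i=1}^k [0,c_i]$, where each factor is an interval in $L$ — hence a finite complemented modular lattice — and now has trivial center, i.e. is directly indecomposable. Since $Z(L)$ is intrinsic to $L$, this decomposition is unique up to reordering; this will supply the uniqueness clause of the theorem once we know the three listed families are indecomposable and pairwise non-isomorphic.

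The third, and essential, step is to classify the directly indecomposable finite complemented modular lattices by length $n$. For $n=0$ the lattice is trivial (the empty product); for $n=1$ it is the two-element chain, which is the subspace lattice of a one-dimensional $\F_q$-space. For $n\geq 2$, $L$ is the lattice of flats of an \emph{irreducible} projective geometry of projective dimension $n-1$ (irreducibility being exactly indecomposability). Here the heavy classical input enters: by the Veblen--Young / von Neumann coordinatization theorems, an irreducible projective geometry of dimension $\geq 3$ (that is, $n\geq 4$) is isomorphic to $\mathbb{P}(D^n)$ for a division ring $D$, and the same holds in dimension $2$ (that is, $n=3$) precisely when the geometry is Desarguesian; the non-Desarguesian finite projective planes give exactly the indecomposable length-$3$ lattices not of this form. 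Finiteness of $L$ forces $D$ to be a finite division ring, hence a field $\F_q$ by Wedderburn's little theorem, so the coordinatizable case produces precisely the subspace lattices of $\F_q^n$. For $n=2$ an indecomposable lattice has $0 < \{\text{atoms}\} < 1$ with at least three atoms (two atoms would make it a product of two chains), and being finite it has $q+1$ atoms for some integer $q\geq 2$; when $q$ is a prime power this is the subspace lattice of $\F_q^2$, and when $q$ is not a prime power we obtain the remaining listed family. Reassembling over the factors $[0,c_i]$ gives the claimed product decomposition, and the pairwise non-isomorphism of the three families — separated by length, by whether Desargues's theorem holds, and by the parameter $q$ — together with the canonicity of the center gives uniqueness.

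The hard part is unambiguously the coordinatization input: proving that irreducible projective geometries of dimension $\geq 3$ are automatically Desarguesian and hence arise from a division ring, and identifying exactly which length-$3$ lattices fail to be coordinatizable. This is the deep content of Veblen--Young and von Neumann, which I would cite rather than reprove; everything else — the reduction through the center, the bookkeeping in lengths $0,1,2$, the appeal to Wedderburn, and the uniqueness argument — is comparatively routine lattice theory.
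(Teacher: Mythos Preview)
The paper does not give a proof of this theorem; it simply cites it as \cite[VIII, Theorem 6 on p.~120]{Birkhoff1948}. Your outline is a correct sketch of the classical argument one finds in that reference (and its descendants): decompose via the Boolean center into directly indecomposable factors, identify each indecomposable factor of length $\geq 2$ with the lattice of flats of an irreducible projective geometry, invoke Veblen--Young coordinatization in dimension $\geq 3$ and the Desarguesian/non-Desarguesian dichotomy in dimension $2$, and apply Wedderburn's little theorem to force the division ring to be $\F_q$. One small wrinkle worth noting: the length-$1$ factor (the two-element chain) is the subspace lattice of a one-dimensional space over \emph{any} $\F_q$, so the parameter $q$ is not determined there---the paper acknowledges this in Lemma~\ref{L:muvalues} with the parenthetical ``We can take $q_i$ arbitrary if $n_i=1$.'' This does not affect the uniqueness statement since the lattice itself is still unique.
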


For $\pi \colon G \to H$ semisimple, let $\omega (\pi)$ be the number of terms appearing in this product representation of $[H,G]$, and we call
these the \emph{simple factors} of the lattice.

\begin{lemma}\label{L:muvalues}
 Let $L$ be a finite complemented modular lattice, and write $L$ as a product $\prod_{i=1}^{\omega} L_i$ as in Theorem \ref{fcml-classification}.

Define $q_1,\dots, q_{\omega}$ and $n_1,\dots, n_\omega$ as follows: For each $i$, if $L_i$ is the lattice of linear subspaces of $\mathbb F_{q^n}$, set $q_i=q$ and $n_i=n$. (We can take $q_i$ arbitrary if $n_i=1$.) If $L_i$ has one minimal element, one maximal element, and $q+1$ incomparable elements, set $q_i=q$ and $n_i=2$. If $L_i$ is obtained from a finite projective plane of order $q$, set $q_i=q$ and $n_i=3$.

Then for $0$ and $1$ the minimal and maximal elements of $L$, we have
\[ \mu(0,1) = \prod_{i=1}^\omega  (-1)^{n_i} q_I^{\binom{n_i}{2}} .\]
\end{lemma}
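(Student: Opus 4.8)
The plan is to reduce the computation to a single factor of the product decomposition and then handle the three types of Theorem~\ref{fcml-classification} one at a time. The first step uses the multiplicativity of the M\"obius function over products of posets: if $P,Q$ are finite posets, then $\mu_{P\times Q}\bigl((a,b),(c,d)\bigr)=\mu_P(a,c)\,\mu_Q(b,d)$ \cite[Proposition 3.8.2]{Stanley2012}. Applied to $L=\prod_{i=1}^\omega L_i$ and the pair $(0,1)$ of extreme elements, this gives $\mu_L(0,1)=\prod_{i=1}^\omega\mu_{L_i}(0_i,1_i)$, so it suffices to prove that a factor of type $(q_i,n_i)$ has M\"obius value $(-1)^{n_i}q_i^{\binom{n_i}{2}}$, and then multiply the cases together.

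For the first type, the lattice of subspaces of $\mathbb{F}_q^n$, this is a classical fact (e.g.\ \cite[Example 3.10.2]{Stanley2012}), but I would include a short derivation for completeness: since $[0,V]$ is again the subspace lattice of $V\cong\mathbb{F}_q^{\dim V}$, the value $\mu(0,V)$ depends only on $k=\dim V$, say $\mu(0,V)=m_k$; the defining identity $\sum_{V}\mu(0,V)=0$ for $n\geq 1$, grouped by dimension, becomes $\sum_{k=0}^n \binom{n}{k}_q m_k=0$ where $\binom{n}{k}_q$ is the Gaussian binomial coefficient counting $k$-dimensional subspaces. Since $m_0=1$ and the $q$-binomial theorem gives $\sum_{k=0}^n\binom{n}{k}_q(-1)^k q^{\binom{k}{2}}=\prod_{i=0}^{n-1}(1-q^i)=0$ for $n\geq 1$, an induction on $n$ (comparing the two recursions term by term) yields $m_k=(-1)^k q^{\binom{k}{2}}$, hence $\mu(0,1)=(-1)^n q^{\binom{n}{2}}$. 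In the degenerate case $n_i=1$ the lattice is the two-element chain, $\mu(0,1)=-1$, which matches $(-1)^1 q^0$ for any value of $q$, consistent with the freedom in choosing $q_i$.

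For the second and third types I would compute directly from the recursion $\mu(0,1)=-\sum_{x<1}\mu(0,x)$, after first noting that every atom $x$ has $\mu(0,x)=-1$. In the type-$(q,2)$ lattice there are $q+1$ atoms and nothing else strictly below $1$, so $\mu(0,1)=-\bigl(1+(q+1)(-1)\bigr)=q=q^{\binom{2}{2}}$. In the type-$(q,3)$ lattice coming from a finite projective plane of order $q$ there are $q^2+q+1$ points (each an atom) and $q^2+q+1$ lines, where each line $\ell$ lies above exactly the $q+1$ points on it, so $\mu(0,\ell)=-\bigl(1+(q+1)(-1)\bigr)=q$; hence $\mu(0,1)=-\bigl(1+(q^2+q+1)(-1)+(q^2+q+1)q\bigr)=-q^3=(-1)^3 q^{\binom{3}{2}}$. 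Here I would cite \cite[Theorem 1 on p. 17]{FPP} for the incidence counts (number of points, number of lines, and points per line in terms of the order). Multiplying the three cases through the product formula of the first paragraph then gives the claimed value $\mu(0,1)=\prod_{i=1}^\omega(-1)^{n_i}q_i^{\binom{n_i}{2}}$.

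The only real content is the subspace-lattice value, which is classical; there is essentially no conceptual obstacle. The point to be careful about is purely bookkeeping: making sure one uses the correct incidence data for a general (possibly non-Desarguesian) finite projective plane of order $q$ rather than tacitly assuming it comes from $\mathbb{F}_q^3$, and checking the $n_i=1$ boundary case is compatible with the arbitrariness of $q_i$.
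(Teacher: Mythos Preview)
Your proof is correct and follows essentially the same approach as the paper: reduce via multiplicativity of the M\"obius function over products to a single factor, cite the classical Philip Hall formula for the subspace lattice, and handle the remaining two types by direct computation. The paper's proof is terser (it leaves the last two cases as ``a straightforward explicit calculation''), whereas you have carried out those calculations explicitly and carefully checked the $n_i=1$ boundary case; this is fine and arguably an improvement in exposition.
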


Note that if $n_i=1$ then the formula does not depend on $q_i$.

\begin{proof} Since M\"obius function is multiplicative in a product of posets, we may reduce to the case $\omega=1$, so assume $L$ is one of the three forms of Theorem \ref{fcml-classification}. If $L$ is the lattice of subspaces of a linear space, this is a classical result of Philip Hall \cite[Equation on p. 352]{Rota1964}.

In the other two cases, this is a straightforward explicit calculation, and is likely also classical.
\end{proof}

Let us now see how this classification applies to the categories of groups, rings, and modules (which we will check in Section \ref{s-examples} each form a \Diamantine category). 

\begin{lemma}\label{module-semisimple} Let $M$ be a module for a ring $R$. The lattice of submodules of $M$ is complemented if and only if $M$ is a product of simple $R$-modules.

In this case, there must exist finite simple $R$-modules $M_1,\dots, M_\omega$, pairwise non-isomorphic, and natural numbers $n_1,\dots,n_\omega \geq 1$, such that $M \cong \prod_{i=1}^\omega M_i^{n_i}$. Let $q_i$ be the order of the field of endomorphisms of $M_i$. Then the lattice of submodules of $M$ is the product from $i$ from $1$ to $\omega$ of the lattice of subspaces of $\mathbb F_{q_i}^{n_i}$. The dual of the lattice of submodules is also isomorphic to the same product. \end{lemma}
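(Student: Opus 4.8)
\textbf{Proof plan for Lemma \ref{module-semisimple}.}

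The plan is to reduce the statement to pure module theory (Wedderburn-type structure theory for semisimple modules) together with the dictionary between submodule lattices and the lattice operations $\vee,\wedge$. First I would prove the equivalence ``the submodule lattice of $M$ is complemented $\iff$ $M$ is semisimple (a sum of simple submodules)''. The forward direction: if the lattice is complemented, then in particular the socle $\operatorname{soc}(M)$ (the sum of all simple submodules) has a complement $W$, i.e. $\operatorname{soc}(M)\cap W=0$ and $\operatorname{soc}(M)+W=M$; if $W\neq 0$ then $W$ contains a simple submodule (this needs a small argument for general $R$, but for \emph{finite} $R$-modules $W$ has finite length, so it has a minimal nonzero submodule), which would then lie in $\operatorname{soc}(M)\cap W=0$, a contradiction; hence $W=0$ and $M=\operatorname{soc}(M)$ is semisimple. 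The reverse direction is the standard fact that a sum of simple modules is semisimple and every submodule is a direct summand; I would cite this as classical (e.g.\ Lang or Lam), since it is exactly the statement that every submodule has a complement.

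Next I would invoke the structure theorem for semisimple modules of finite length: $M$ is a finite direct product $\prod_{i=1}^\omega M_i^{n_i}$ with the $M_i$ pairwise non-isomorphic finite simple $R$-modules and $n_i\ge 1$. By Schur's lemma $\operatorname{End}_R(M_i)$ is a finite division ring, hence by Wedderburn's little theorem a finite field; let $q_i$ be its order. The isotypic decomposition $M=\bigoplus_i M_i^{n_i}$ is canonical, so every submodule $N\le M$ decomposes as $N=\bigoplus_i (N\cap M_i^{n_i})$, and conversely; this gives a lattice isomorphism between the submodule lattice of $M$ and the product of the submodule lattices of the $M_i^{n_i}$. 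It therefore suffices to identify the submodule lattice of a single isotypic piece $M_i^{n_i}$ with the subspace lattice of $\mathbb F_{q_i}^{n_i}$. This is the usual identification: $\operatorname{Hom}_R(M_i,M_i^{n_i})\cong \operatorname{End}_R(M_i)^{n_i}$ is an $n_i$-dimensional $\mathbb F_{q_i}$-vector space, and $N\mapsto \operatorname{Hom}_R(M_i,N)$ is an order-preserving bijection from submodules of $M_i^{n_i}$ to $\mathbb F_{q_i}$-subspaces of that space, with inverse $V\mapsto \sum_{f\in V}\operatorname{im}(f)$. Combining, the submodule lattice of $M$ is $\prod_{i=1}^\omega (\text{subspace lattice of }\mathbb F_{q_i}^{n_i})$.

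For the last sentence, I would note that the lattice of subspaces of a finite-dimensional vector space is self-dual via $V\mapsto V^\perp$ (orthogonal complement with respect to any nondegenerate bilinear form, or equivalently the annihilator in the dual space), hence each factor in the product is isomorphic to its own dual; since the dual of a product of posets is the product of the duals, the whole submodule lattice is isomorphic to its dual. I do not expect any serious obstacle here: the only point requiring mild care is the forward direction of the first equivalence for a general (possibly noncommutative, infinite) ring $R$ acting on a \emph{finite} module $M$ --- but finiteness of $M$ makes every submodule of finite length, so ``every nonzero submodule contains a simple one'' is immediate, and the rest is citation of standard semisimple module theory plus Schur/Wedderburn.
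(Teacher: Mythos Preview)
Your proposal is correct and follows essentially the same route as the paper: establish the equivalence between complemented and semisimple, decompose into isotypic pieces, identify each isotypic submodule lattice with a subspace lattice over the endomorphism field, and deduce self-duality via a nondegenerate bilinear form. The only minor differences are that the paper obtains the equivalence by invoking its earlier Lemma~\ref{complemented-equivalence} (the maximal element is a join of atoms) rather than your socle-plus-complement argument, and identifies the isotypic lattice by parameterizing injections via full-rank matrices rather than via the functor $N\mapsto\Hom_R(M_i,N)$; these are cosmetic variations of the same argument.
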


\begin{proof}By Lemma \ref{complemented-equivalence}, the lattice of quotients of $M$ is complemented if and only if $M$ is a join of simple modules, i.e. a quotient of the product of simple modules, but every quotient of a (finite) product of simple modules is itself a product of simple modules by induction.

If $M$ is a product of simple modules, then we can take one representative from each isomorphism class appearing in the product to write $M \cong \prod_{i=1}^\omega M_i^{n_i}$. We can calculate the lattice of submodules of $\prod_{i=1}^\omega M_i^{n_i}$ by noting that, by Jordan-H\"older, each submodule is isomorphic to $ \prod_{i=1}^\omega M_i^{e_i}$ for some $e_i \leq n_i$, that injections $ \prod_{i=1}^\omega M_i^{e_i} \times  \prod_{i=1}^\omega M_i^{n_i}$ are parameterized by tuples of full rank $n_i \times e_i$ matrices over $\mathbb F_{q_i}$, and two matrices give the same submodule if and only if they can be related by elements of $\prod_{i=1}^\omega GL_{e_i}(\mathbb F_{q_i})$, i.e. if and only if they have the same image.

Finally, it is easy to check that this product lattice is isomorphic to its dual, by picking a nondegenerate bilinear form on $\mathbb F_q^{e_i}$.\end{proof}

\begin{lemma}\label{module-semisimple-description} Let $\category$ be the category of finite modules over a fixed ring $R$. Then a morphism $\pi \colon M \to N $ of finite $R$-modules is semisimple if and only if $\ker \pi$ is a product of simple $R$-modules.

In this case, there must exist finite simple $R$-modules $M_1,\dots, M_\omega$, pairwise non-isomorphic, and natural numbers $n_1,\dots,n_\omega \geq 1$, such that $\ker \pi \cong \prod_{i=1}^\omega M_i^{n_i}$. Let $q_i$ be the order of the field of endomorphisms of $M_i$. Then the lattice $[N,M]$ is the product from $i$ from $1$ to $\omega$ of the lattice of subspaces of $\mathbb F_{q_i}^{n_i}$. Thus $ \mu(N,M) = \prod_{i=1}^\omega  (-1)^{n_i} q_i^{\binom{n_i}{2}} $.

\end{lemma}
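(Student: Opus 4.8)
The plan is to reduce everything to the two preceding lemmas. First, I would establish the translation between the category-theoretic notion of semisimple morphism (complementedness of the interval $[N,M]$) and the module-theoretic statement about $\ker\pi$. A submodule $K$ of $M$ containing $\ker\pi$ corresponds, via the correspondence theorem, to a submodule $K/\ker\pi$ of $M/\ker\pi\cong N$, but what I actually want is the reverse: quotients of $M$ through which $\pi$ factors are precisely modules $M/K$ with $K\le\ker\pi$, so the interval $[N,M]$ (quotients of $M$ that are $\geq N$) is order-isomorphic to the lattice of submodules of $\ker\pi$. Under this identification, $N$ corresponds to $\ker\pi$ itself (the top of the submodule lattice, giving the smallest quotient) and $M$ corresponds to the zero submodule — so I should be careful that the isomorphism is order-reversing, which is why the dual appears. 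Then Lemma \ref{module-semisimple} applied to the module $\ker\pi$ tells me exactly when its submodule lattice is complemented (namely, $\ker\pi$ a product of simple modules), and since complementedness is self-dual this is equivalent to $[N,M]$ being complemented, i.e. to $\pi$ being semisimple by definition. This gives the first claim.

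For the second paragraph of the statement: assuming $\pi$ is semisimple, $\ker\pi$ is a product of simple $R$-modules by what was just shown, so Lemma \ref{module-semisimple} directly provides the decomposition $\ker\pi\cong\prod_{i=1}^\omega M_i^{n_i}$ with the $M_i$ pairwise non-isomorphic finite simple modules, $n_i\geq 1$, $q_i$ the order of $\End(M_i)$, and identifies the submodule lattice of $\ker\pi$ — together with its dual — as $\prod_{i=1}^\omega$ of the lattice of subspaces of $\mathbb F_{q_i}^{n_i}$. Since $[N,M]$ is (anti-)isomorphic to this submodule lattice and the lattice is self-dual, $[N,M]$ is itself $\prod_{i=1}^\omega$ of the subspace lattice of $\mathbb F_{q_i}^{n_i}$.

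Finally, the M\"obius function value follows by feeding this product decomposition into Lemma \ref{L:muvalues}: each factor is the lattice of subspaces of $\mathbb F_{q_i}^{n_i}$, so in the notation of that lemma the parameters are exactly $q_i$ and $n_i$, and the formula gives $\mu(N,M)=\prod_{i=1}^\omega(-1)^{n_i}q_i^{\binom{n_i}{2}}$. I expect the only genuinely delicate point to be bookkeeping the order-reversal in the correspondence between $[N,M]$ and the submodule lattice of $\ker\pi$, and confirming that it does not affect the final answer — which it does not, precisely because Lemma \ref{module-semisimple} records that this submodule lattice is isomorphic to its own dual. Everything else is a direct citation of Lemmas \ref{module-semisimple} and \ref{L:muvalues}.
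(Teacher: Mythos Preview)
Your proposal is correct and follows essentially the same approach as the paper: both set up the order-reversing bijection between $[N,M]$ and submodules of $\ker\pi$, invoke Lemma \ref{module-semisimple} for the complementedness characterization and the self-dual identification of the lattice as a product of subspace lattices, and then cite Lemma \ref{L:muvalues} for the M\"obius value. Your treatment of the order-reversal and self-duality is slightly more explicit than the paper's, but the argument is the same.
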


\begin{proof} Every module  $K \in [N,M]$ defines a submodule of $\ker \pi$, the kernel of the natural morphism $M \to K$. Conversely, every submodule $ S\subseteq\ker \pi$ gives a module $M/S \in [N,M]$. This bijection sends inclusion of submodules to the $\geq$ relation on $[N,M]$, so $\pi$ is semisimple if and only if the lattice of submodules of $\ker \pi$ is complemented. By Lemma \ref{module-semisimple}, this occurs only if $\ker \pi$ is a product of finite simple $R$-modules. The lattice $[N,M]$ is dual to the lattice of submodules of $\ker \pi$, which by Lemma \ref{module-semisimple} has the stated description. The M\"obius formula then follows from Lemma \ref{L:muvalues}

\end{proof}

\begin{lemma}\label{ring-semisimple-description} Let $\category$ be the category of finite  
rings. Then a morphism $\pi \colon R \to S$ of finite
rings is semisimple if and only if $\ker \pi$ is a product of finite simple $R$-bi-modules.

In this case, there must exist finite simple $R$-bi-modules $M_1,\dots, M_\omega$, pairwise non-isomorphic, and natural numbers $n_1,\dots,n_\omega \geq 1$, such that $\ker \pi \cong \prod_{i=1}^\omega M_i^{n_i}$. Let $q_i$ be the order of the field of endomorphisms of $M_i$. Then the lattice $[S,R]$ is the product of $i$ from $1$ to $\omega$ of the lattice of subspaces of $\mathbb F_{q_i}^{n_i}$.  Thus $ \mu(S,R) = \prod_{i=1}^\omega  (-1)^{n_i} q_i^{\binom{n_i}{2}} $.

In particular, if $\pi$ is a local morphism of finite local commutative rings with residue field $\kappa$, then $\pi$ is semisimple if and only if $\ker \pi \cong \kappa^n$ for some $n$, and, then $[S,R]$ is isomorphic to the lattice of subspaces of $\kappa^n$, so $\mu(S,R) = (-1)^n \abs{\kappa}^{ \binom{n}{2}}$.

\end{lemma}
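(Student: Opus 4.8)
The plan is to reduce the statement to Lemma~\ref{module-semisimple-description} via the standard dictionary between quotient rings and two-sided ideals, using that the two-sided ideals of $R$ contained in a fixed ideal $I$ are exactly the sub-$R$-bi-modules of $I$. First I would record the combinatorial translation: an epimorphism $\pi\colon R\to S$ of finite rings has kernel $I:=\ker\pi$ a two-sided ideal and $S\cong R/I$; an element of $[S,R]$ is a quotient $R/J$ with $J$ a two-sided ideal contained in $I$, with $R/J\geq R/J'$ in $[S,R]$ iff $J\subseteq J'$, and distinct ideals giving non-isomorphic quotients of $R$. Thus $[S,R]$ is anti-isomorphic to the lattice of two-sided ideals of $R$ contained in $I$. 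Now a subset $J\subseteq I$ is a two-sided ideal of $R$ precisely when it is an additive subgroup with $RJ\subseteq J$ and $JR\subseteq J$, i.e.\ precisely when it is a sub-$R$-bi-module of $I$ for the bi-module structure on $I$ coming from multiplication in $R$. Hence $[S,R]$ is the dual of the lattice of sub-$R$-bi-modules of $I$.

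Next I would invoke Lemma~\ref{module-semisimple}. Since $R$-bi-modules are the same as modules over the finite ring $R\otimes_\Z R^{\mathrm{op}}$, with the obvious matching of sub-objects, finite simple objects, and endomorphism rings, that lemma applies to $I$: the lattice of sub-$R$-bi-modules of $I$ is complemented iff $I$ is a product of finite simple $R$-bi-modules, and in that case $I\cong\prod_{i=1}^\omega M_i^{n_i}$ with the $M_i$ pairwise non-isomorphic finite simple $R$-bi-modules and $q_i=|\End(M_i)|$, the lattice of sub-$R$-bi-modules of $I$ equals $\prod_{i=1}^\omega$ of the lattice of subspaces of $\F_{q_i}^{n_i}$, and this lattice is isomorphic to its own dual. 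Dualizing and combining with the previous step, $\pi$ is semisimple (i.e.\ $[S,R]$ is complemented, by definition of semisimple) iff $I$ is a product of finite simple $R$-bi-modules, and then $[S,R]$ has the stated description; the value $\mu(S,R)=\prod_{i=1}^\omega(-1)^{n_i}q_i^{\binom{n_i}{2}}$ then follows from Lemma~\ref{L:muvalues}.

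For the ``in particular'' clause I would specialize: if $R$ is finite local commutative with residue field $\kappa$ and $\pi$ is a morphism of such rings, then $I=\ker\pi$ is an ideal of a commutative ring, so the left and right $R$-actions on $I$ agree, hence also on every sub-bi-module and every simple subquotient bi-module of $I$; such bi-modules are just $R$-modules, and the only finite simple $R$-module is $\kappa$, with $\End_R(\kappa)=\kappa$. So semisimplicity of $\pi$ forces $I\cong\kappa^n$ for some $n$ and $[S,R]\cong$ the lattice of $\kappa$-subspaces of $\kappa^n$, giving $\mu(S,R)=(-1)^n|\kappa|^{\binom{n}{2}}$, and conversely $I\cong\kappa^n$ makes $[S,R]$ complemented.

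The only genuinely delicate point — hence the main obstacle — is the bookkeeping in the translation step: checking that ``two-sided ideal of $R$ contained in $I$'' and ``sub-$R$-bi-module of $I$'' cut out literally the same subsets, tracking the order-reversal so that complementedness of $[S,R]$ corresponds to complementedness of the sub-bi-module lattice, and confirming that passing to $R\otimes_\Z R^{\mathrm{op}}$ keeps us within the hypotheses of Lemma~\ref{module-semisimple}. Everything after that is a direct appeal to Lemmas~\ref{module-semisimple} and~\ref{L:muvalues}.
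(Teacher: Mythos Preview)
Your proposal is correct and follows essentially the same approach as the paper: identify $[S,R]$ with the dual of the lattice of sub-$R$-bi-modules of $\ker\pi$, pass to $R\otimes_{\Z}R^{\mathrm{op}}$-modules to invoke Lemma~\ref{module-semisimple}, and read off the M\"obius value from Lemma~\ref{L:muvalues}; the local commutative case is handled identically by observing that bi-modules collapse to modules and the unique simple module is $\kappa$.
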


\begin{proof} Any ring $T \in [S,R]$ defines a sub-$R$-bi-module of $\ker \pi$, its kernel, and conversely, any sub-$R$-bi-module of $\ker \pi$ gives a two-sided ideal of $R$, whose quotient is a ring in $[S,R]$. This bijection sends inclusion of submodules to the $\geq$ relation on $[S,R]$, so $\pi$ is semisimple if and only if the lattice of sub-bi-modules of $\ker \pi$ is complemented. Since a bi-module is simply an $R \otimes R^{op}$-module, we can apply Lemma \ref{module-semisimple} to show that this happens if and only if $\ker \pi$ is a product of simple bi-modules. The lattice $[S,R]$ is dual to the lattice of submodules of $\ker \pi$, which by Lemma \ref{module-semisimple} has the stated description, and the M\"obius formula then follows from Lemma \ref{L:muvalues}

For commutative rings, the left and right actions of $R$ on $R$ are identical, so the same is true for $\ker \pi$, so we can express $\ker \pi$ and every submodule of it as modules rather than bimodules. For local commutative rings with residue field $\kappa$, the unique simple module is $\kappa$, and so the expression $\prod_{i=1}^\omega M_i^{n_i}$ simplifies to $\kappa^n$, and similarly for the M\"obius expression.\end{proof}

\begin{lemma}\label{group-semisimple-description} Let $\category$ be the category of finite groups. Then a morphism $\pi \colon G \to H$ of finite groups is semisimple if and only if $\ker \pi$ is a product of finite simple $G$-groups.

In this case, there must exist pairwise non-isomorphic finite simple abelian $G$-groups, $V_1,\dots, V_n$,  and finite simple non-abelian $G$-groups $N_1,\dots, N_m$,  as well as  exponents $e_1,\dots, e_n \geq 1$, such that $\ker \pi \cong \prod_{i=1}^n  V_i^{e_i} \times \prod_{i=1}^m N_i$. Let $q_i$ be the field of endomorphisms of $V_i$ for $i$ from $1$ to $n$. Then the lattice $[H,G]$ is the product for $i$ from $1$ to $n$ of the lattice of subspaces of $\mathbb F_{q_i}^{e_i}$ together with $m$ copies of the two-element lattice.

Thus $\mu(H, G) = (-1)^m \prod_{i=1}^n (-1)^{e_i} q_i^{ \binom{e_i}{2}}$.\end{lemma}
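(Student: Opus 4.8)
The plan is to follow the same template as the proofs of Lemma~\ref{module-semisimple-description} and Lemma~\ref{ring-semisimple-description}: identify the interval $[H,G]$ with a lattice of subobjects of $\ker\pi$, characterize when it is complemented, and then read off $\mu(H,G)$ from Theorem~\ref{fcml-classification} via Lemma~\ref{L:muvalues}. First I would set up the correspondence. A quotient of $G$ lying over $H$ is the same thing as a normal subgroup $M\trianglelefteq G$ with $M\subseteq\ker\pi$, and such $M$ are exactly the subgroups of $\ker\pi$ that are stable under conjugation by $G$, i.e.\ the $G$-invariant normal subgroups of $\ker\pi$; the assignment $M\mapsto G/M$ is order-reversing, so $[H,G]$ is anti-isomorphic to the lattice $\mathcal N$ of $G$-invariant normal subgroups of $\ker\pi$ ordered by inclusion. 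By Dedekind's theorem the lattice of normal subgroups of $\ker\pi$ is modular, and $\mathcal N$ is a finite sublattice of it, hence finite and modular. Complementedness is self-dual, so $\pi$ is semisimple precisely when $\mathcal N$ is complemented. The atoms of $\mathcal N$ are exactly the minimal nontrivial $G$-invariant normal subgroups of $\ker\pi$, which are precisely those $G$-invariant normal subgroups that are simple as $G$-groups, so by Lemma~\ref{complemented-equivalence} $\mathcal N$ is complemented if and only if $\ker\pi$ is the join of its atoms. If $\ker\pi$ is already a product of simple $G$-groups this clearly holds; conversely, if $\ker\pi=M_1\cdots M_k$ with each $M_i$ a simple $G$-group, then choosing $J\subseteq\{1,\dots,k\}$ minimal with $\prod_{j\in J}M_j=\ker\pi$, each $M_j\cap\prod_{j'\in J\setminus\{j\}}M_{j'}$ is a proper and hence trivial $G$-invariant normal subgroup of $M_j$, and normal subgroups meeting trivially commute, so $\ker\pi$ is an internal direct product of simple $G$-groups. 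This establishes the equivalence in the statement, making precise the sketch in Subsection~\ref{ss-translation}.

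Next I would extract the structure and identify the lattice. A finite abelian simple $G$-group must be elementary abelian of a single prime exponent $p$ (for any prime $p$ dividing its order, its $p$-power torsion and the image of multiplication by $p$ are $G$-invariant subgroups), hence a simple $\mathbb{F}_p[G]$-module; its endomorphism ring is a finite division ring, so by Wedderburn a finite field $\mathbb{F}_{q_i}$. Grouping the abelian factors of $\ker\pi$ by isomorphism type, with multiplicities $e_i$, and leaving the non-abelian ones as a list $N_1,\dots,N_m$, gives $\ker\pi\cong\prod_{i=1}^n V_i^{e_i}\times\prod_{j=1}^m N_j$ as in the statement. I then claim $\mathcal N$ is the product of the lattices of $G$-invariant normal subgroups of the individual pieces $V_i^{e_i}$ and $N_j$. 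The key point is that a $G$-invariant normal subgroup of a direct product $A\times B$ of two $G$-groups having no common nontrivial simple $G$-subquotient is the product of its intersections with $A$ and $B$; this is a Goursat-lemma argument, using for non-abelian factors that if a normal subgroup $M$ of a product of copies of finite simple non-abelian groups surjects onto a factor $N_{j_0}$ then $M\supseteq N_{j_0}$ (otherwise the commutator of $M$ and $N_{j_0}$ lies in $M\cap N_{j_0}=1$, and since $M$ surjects onto $N_{j_0}$ this forces $N_{j_0}$ abelian). Distinct $V_i^{e_i}$'s, and an abelian piece versus a non-abelian piece, share no common nontrivial simple $G$-subquotient, so the decomposition applies. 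On a piece $V_i^{e_i}$ the $G$-invariant normal subgroups are the $\mathbb{F}_p[G]$-submodules, which by Lemma~\ref{module-semisimple} form the lattice of $\mathbb{F}_{q_i}$-subspaces of $\mathbb{F}_{q_i}^{e_i}$; on a non-abelian simple piece $N_j$ the only $G$-invariant normal subgroups are $1$ and $N_j$, a two-element lattice. Since subspace lattices and the two-element lattice are self-dual (Lemma~\ref{module-semisimple} for the former), $[H,G]$ itself is isomorphic to the product over $i$ of the subspace lattice of $\mathbb{F}_{q_i}^{e_i}$ together with $m$ copies of the two-element lattice.

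Finally, this exhibits $[H,G]$ in the product form of Theorem~\ref{fcml-classification}, so Lemma~\ref{L:muvalues} gives the M\"obius value: a subspace lattice of $\mathbb{F}_{q_i}^{e_i}$ contributes $(-1)^{e_i}q_i^{\binom{e_i}{2}}$ and each two-element lattice, being the subspace lattice of a line ($n_i=1$), contributes $-1$, and multiplicativity of $\mu$ over products yields $\mu(H,G)=(-1)^m\prod_{i=1}^n(-1)^{e_i}q_i^{\binom{e_i}{2}}$. I expect the main obstacle to be the lattice decomposition along isomorphism types in the previous paragraph: pinning down the Goursat/commutator argument that forces a $G$-invariant normal subgroup to respect the direct-product decomposition of $\ker\pi$, together with the fact that diagonal-type subgroups across isomorphic non-abelian factors fail to be normal, so that repeated non-abelian factors still contribute only two-element lattices (this is why, unlike the $V_i$, the $N_j$ need not be grouped by isomorphism type). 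Everything else is either routine bookkeeping or a direct appeal to Lemma~\ref{complemented-equivalence}, Lemma~\ref{module-semisimple}, or Lemma~\ref{L:muvalues}.
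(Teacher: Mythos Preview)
Your proposal is correct and follows essentially the same route as the paper: identify $[H,G]$ with the lattice of $G$-invariant normal subgroups of $\ker\pi$, invoke Lemma~\ref{complemented-equivalence} for the semisimplicity criterion, decompose the lattice as a product over isotypic pieces (citing Lemma~\ref{module-semisimple} for the abelian blocks), and read off $\mu$ from Lemma~\ref{L:muvalues}.

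The one substantive difference is in how the non-abelian factors are handled. The paper proves that an isomorphism class of non-abelian simple $G$-group can occur at most once in the product (if $K_1\times K_2\subseteq\ker\pi$ with $K_1\cong K_2$ as $G$-groups, then $K_1$ acts trivially on $K_2$ by conjugation, hence trivially on $K_1$, forcing $K_1$ abelian). You instead argue directly that, regardless of repetition, every $G$-invariant normal subgroup of a product of non-abelian simple $G$-groups is a product of a subset of the factors, via the commutator argument you sketch. Both are correct; your route proves the lattice claim without the non-repetition fact, while the paper's route gives the extra structural information that the $N_j$ are in fact pairwise non-isomorphic. For the decomposition of the lattice across the full product, the paper simply cites \cite[Lemma~5.5]{Liu2020} where you give the Goursat/commutator outline.
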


\begin{proof} 
The first claim is shown in Section~\ref{ss-translation}.

To obtain an expression in terms of a product $\prod_{i=1}^n  V_i^{e_i} \times \prod_{i=1}^m N_i$, we take one representative from each isomorphism class of simple $G$-groups appearing in the product, and divide into abelian and nonabelian. The nontrivial fact to check is that each isomorphism class of nonabelian $G$-groups can only occur once. Indeed, if $K_1 \times K_2$ is a sub $G$-group of $\ker \pi$ with $K_1,K_2$ isomorphic, then since the action of $K_1\subset G$ by conjugation on $K_2$ is trivial, then the action of $K_1$ on $K_1$ by conjugation must be trivial, forcing $K_1$ to be abelian, so $K_2$ is abelian as well. 

To calculate the lattice, we note that every sub-$G$-group of  $\prod_{i=1}^n  V_i^{e_i} \times \prod_{i=1}^m N_i$ is the product for each $i$ from $1$ to $n$ of a sub $G$-group of $V_i^{e_i} $ with the product for each $i$ from $1$ to $m$ of either $N_i$ or the trivial group \cite[Lemma 5.5]{Liu2020}. It follows the lattice is the product of, for each $i$ from $1$ to $n$, the lattice of sub $G$-groups of $V_i^{e_i} $, with $m$ copies of the two-element lattice. By Lemma \ref{module-semisimple} applied to $\mathbb Z[G]$, the lattice of sub-$G$-groups of $V_i^{e_i} $ is the lattice of subspaces of $\mathbb F_{q_i}^{e_i}$, verifying the stated description of the lattice.

The M\"obius formula then follows from Lemma \ref{L:muvalues}.
\end{proof}

We now explain the meaning of $Z(\pi)$ and how it can be used to bound the sum we will need.

\begin{lemma}\label{Z-omega}  For $\pi \colon G \to H$ a semisimple epimorphism, we have $Z(\pi) = 2^{\omega(\pi)}$. \end{lemma}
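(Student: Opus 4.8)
The plan is to read off $Z(\pi)$ lattice-theoretically and then exploit the product decomposition of Theorem~\ref{fcml-classification}. Call an element $K$ of a lattice \emph{distributive} if $K\vee(K_1\wedge K_2)=(K\vee K_1)\wedge(K\vee K_2)$ for all $K_1,K_2$; by definition $Z(\pi)$ is the number of distributive elements of $L:=[H,G]$. Since $\pi$ is semisimple, $L$ is a finite complemented modular lattice, so Theorem~\ref{fcml-classification} lets us write $L=\prod_{i=1}^{\omega(\pi)}L_i$ with each $L_i$ one of the three irreducible types and with joins and meets computed coordinatewise.

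First I would prove that distributivity is a coordinatewise condition: $K=(K_1,\dots,K_{\omega(\pi)})$ is distributive in $L$ if and only if each $K_i$ is distributive in $L_i$. The ``if'' direction is immediate because joins and meets are coordinatewise. For the ``only if'' direction, fix a coordinate $i$ and arbitrary $a,b\in L_i$, and apply the distributive law for $K$ to the two elements of $L$ that agree with $K$ in every coordinate except the $i$-th, where they take the values $a$ and $b$; comparing $i$-th coordinates of the resulting identity yields $K_i\vee(a\wedge b)=(K_i\vee a)\wedge(K_i\vee b)$. Hence the number of distributive elements is multiplicative over the product, and it suffices to show that each of the three irreducible lattices has exactly two distributive elements. (Equivalently, one could invoke the classical fact that the distributive elements of a complemented modular lattice are exactly its central elements, whose number for a product of directly indecomposable factors is $2^{\omega}$; but the direct argument is short.)

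In any lattice the bottom and top are distributive, so it remains to check that an element $K$ of an irreducible $L_i$ with $0<K<1$ is never distributive, which I would do by the following explicit choices of $K_1,K_2$. If $L_i$ is the subspace lattice of $\mathbb F_q^n$ with $n\ge 2$: take a line $a\subseteq K$ and a line $b\not\subseteq K$, set $W=a+b$, observe that every line of $W$ other than $a$ is not contained in $K$, and let $K_1,K_2$ be two distinct lines of $W$ other than $a$ (possible since $q+1\ge 3$); then $K_1\wedge K_2=0$ but $K\vee K_1=K\vee K_2=K+W\supsetneq K$. (For $n=1$ there is no such $K$.) If $L_i$ is the length-two lattice with $q+1\ge 3$ atoms, then for $K$ an atom take $K_1,K_2$ two other distinct atoms: $K_1\wedge K_2=0$ while $K\vee K_1=K\vee K_2=1$. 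If $L_i$ comes from a finite projective plane of order $q\ge 2$, then for $K$ a point take $K_1,K_2$ to be two further points on a line through $K$ (a line has $q+1\ge 3$ points): $K_1\wedge K_2=0$ while $K\vee K_1=K\vee K_2$ equals that line; dually, for $K$ a line take two further lines through a common point of $K$. In every case the distributive law fails, so each $L_i$ has exactly the two distributive elements $0,1$, and therefore $Z(\pi)=\prod_{i=1}^{\omega(\pi)}2=2^{\omega(\pi)}$.

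The only step needing genuine care is the coordinatewise characterization of distributive elements --- specifically the ``only if'' direction, where one must feed the product distributive law exactly the auxiliary tuples that isolate a single coordinate. The remaining case analysis is elementary, so I do not expect a real obstacle beyond keeping the projective-plane incidences and the degenerate $n=1$ subspace lattice straight.
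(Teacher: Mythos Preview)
Your proof is correct and follows essentially the same approach as the paper: reduce to the irreducible factors via the coordinatewise characterization of distributive elements, then verify each irreducible factor has exactly two distributive elements. The only difference is that the paper handles all three irreducible cases uniformly by observing that any non-extremal element $H$ in a complemented modular lattice of one of these types has at least two distinct complements $K_1,K_2$, and then $H\vee(K_1\wedge K_2)\neq 1=(H\vee K_1)\wedge(H\vee K_2)$; this avoids your case-by-case constructions but is otherwise the same idea.
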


\begin{proof}
In a product of lattices $L_1 \times L_2$, an element $(F_1,F_2) \in L_1 \times L_2$ satisfies the distributive law in the definition of $Z(\pi)$ if and only if $F_1$ and $F_2$ do, since both joins and meets in a product are computed elementwise.   Thus it suffices to show that $Z(\pi)=2$ for each of the lattices appearing in Theorem~\ref{fcml-classification}.  This can easily be checked as $0$ and $1$ satisfy the law, but for any other $H$ we can take $K_1$ and $K_2$ to be two different complements of $H$ and this violates the distributive law.
\end{proof}

\begin{lemma}\label{Z-to-convolution}

Let $\category$ be a \Diamantine category.

 \begin{enumerate}
\item \label{I:ssuseful}
 For $\pi \colon G \to H$ semisimple,    
we have
\[  \sum_{K \in [H,G]} \abs{\mu(K, G) }  \leq \abs{\mu(H, G) } Z(\pi)^3.\]

\item  If $\W$ is a formation such that for any semisimple morphism $G \to H$ from $G\in\W$,  the number of simple factors in the lattice $[H,G]$ is bounded by a bound depending only on $H$ and $\W$, then for $\pi \colon G \to H$ with $G\in \W$, we have 
\[  \sum_{K \in [H,G]} \abs{\mu(K, G) }  \ll \abs{\mu(H, G) } \]
with the implicit constant depending only on $H$ and $\W$.

\item  If $\W$ is a formation such that for any semisimple morphism $G \to H$ from $G\in W$, the number of simple factors in the lattice $[H,G]$ which are not isomorphic to the two-element lattice is bounded by a bound depending only on $H$ and $\W$, then for $\pi \colon G \to H$ with $G\in W$, we have 
\[  \sum_{K \in [H,G]} \abs{\mu(K, G) }  \ll \abs{\mu(H, G) } Z(\pi)  \]
with the implicit constant depending only on $H$ and $\W$.

\end{enumerate}
\end{lemma}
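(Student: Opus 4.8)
The plan is to reduce all three statements, via the classification of finite complemented modular lattices (Theorem~\ref{fcml-classification}), to an estimate on a single simple factor. Since $\mu(H,G)=0$ unless $\pi$ is semisimple (Lemma~\ref{non-semisimple-vanishing}), we may assume $\pi$ is semisimple, so that $[H,G]$ is a finite complemented modular lattice; write $[H,G]\cong\prod_{i=1}^{\omega}L_i$ as in Theorem~\ref{fcml-classification}, with $\omega=\omega(\pi)$, and let $0_i,1_i$ be the bottom and top of $L_i$. For $K=(K_1,\dots,K_\omega)\in[H,G]$ the interval $[K,G]$ equals $\prod_i[K_i,1_i]$, and since the M\"obius function is multiplicative over products of posets, $\mu(K,G)=\prod_i\mu(K_i,1_i)$ and $\mu(H,G)=\prod_i\mu(0_i,1_i)$. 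Hence
\[ \sum_{K\in[H,G]}\abs{\mu(K,G)}=\prod_{i=1}^{\omega}\ \sum_{K_i\in L_i}\abs{\mu(K_i,1_i)}, \]
so everything reduces to bounding the ratio $R(L):=\sum_{K\in L}\abs{\mu(K,1_L)}\,/\,\abs{\mu(0_L,1_L)}$ for each type of lattice $L$ occurring in the classification.

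For part~(1) I would show $R(L)\le 8$ in each of the three cases. If $L$ is the lattice of subspaces of $\F_q^{n}$, then the interval above a $d$-dimensional subspace is the subspace lattice of $\F_q^{\,n-d}$, so by Lemma~\ref{L:muvalues} it contributes $q^{\binom{n-d}{2}}$ to the sum; grouping by dimension and using the $q$-binomial theorem $\sum_{k=0}^{n}\binom{n}{k}_q q^{\binom{k}{2}}=\prod_{j=0}^{n-1}(1+q^{j})$ (with $\binom{n}{k}_q$ the Gaussian binomial coefficient), one finds $\sum_{K}\abs{\mu(K,1)}=\prod_{j=0}^{n-1}(1+q^{j})$, hence $R(L)=\prod_{j=0}^{n-1}(1+q^{-j})=2\prod_{j=1}^{n-1}(1+q^{-j})\le 2\prod_{j\ge 1}(1+2^{-j})<2e<8$, uniformly in $n$ and $q$. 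If $L$ has a bottom, a top, and $q+1$ incomparable elements, a direct enumeration (the whole lattice has $\abs{\mu}=q$, each interval above an atom is two-element with $\abs{\mu}=1$, and the top has $\abs{\mu}=1$) gives $R(L)=(q+(q+1)+1)/q=2+2/q\le 3$. If $L$ comes from a non-Desarguesian plane of order $q$, then the interval above a point is a lattice of the previous type with parameter $q$ and the interval above a line is two-element, so $\sum_K\abs{\mu(K,1)}=q^{3}+q(q^{2}+q+1)+(q^{2}+q+1)+1=2(q^{3}+q^{2}+q+1)$ and $R(L)=2(1+q^{-1}+q^{-2}+q^{-3})\le 3.75$. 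Multiplying over $i$ and invoking $Z(\pi)=2^{\omega(\pi)}$ (Lemma~\ref{Z-omega}) then gives $\sum_{K\in[H,G]}\abs{\mu(K,G)}\le 8^{\omega}\abs{\mu(H,G)}=Z(\pi)^{3}\abs{\mu(H,G)}$.

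Part~(2) is immediate from part~(1): the hypothesis bounds $\omega(\pi)$, and hence $Z(\pi)^3=2^{3\omega(\pi)}$, by a constant depending only on $H$ and $\W$. For part~(3) I would separate the factors $L_i$ that are the two-element lattice from the rest: such a factor has $R(L_i)=2$ exactly (with $\abs{\mu(0_i,1_i)}=1$), while each of the remaining $b$ factors — with $b$ bounded by a constant depending only on $H$ and $\W$ by hypothesis — has $R(L_i)\le 8$ by part~(1). Writing $a$ for the number of two-element factors, so $\omega=a+b$, we get $\sum_{K}\abs{\mu(K,G)}\le 2^{a}8^{b}\abs{\mu(H,G)}$, and since $Z(\pi)=2^{a+b}$ this equals $4^{b}Z(\pi)\abs{\mu(H,G)}$, with $4^b$ bounded in terms of $H$ and $\W$.

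I expect the only delicate point to be the subspace-lattice case of part~(1): obtaining a clean bound strictly below $8$ that is uniform in both $n$ and $q$ really needs the exact $q$-binomial identity rather than a crude estimate on Gaussian binomials, together with $\prod_{j\ge 1}(1+q^{-j})\le\prod_{j\ge 1}(1+2^{-j})<e$ (from $\ln(1+x)<x$). The other two lattice types are finite computations, and the one substantive idea in part~(3) is to isolate the two-element factors so as to give away only a single power of $Z(\pi)$ instead of three.
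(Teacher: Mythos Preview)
Your proof is correct and follows essentially the same approach as the paper: reduce via multiplicativity to a single factor from Theorem~\ref{fcml-classification}, bound the ratio $R(L)\le 8$ in each case (using the $q$-binomial identity for the subspace lattice), and then deduce (2) and (3) by bounding the number of factors or separating out the two-element ones. The only cosmetic difference is that the paper handles the two non-subspace cases by observing that the dimension-stratified point counts and M\"obius values obey the same formulas as in the subspace case (with $n=2,3$), so the same computation applies verbatim, whereas you do those two cases by direct enumeration; your explicit numbers $R(L)=2+2/q$ and $R(L)=2(1+q^{-1}+q^{-2}+q^{-3})$ are of course consistent with that.
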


From Lemmas \ref{module-semisimple-description} and \ref{mod-diamond}, and 
Lemma  \ref{ring-semisimple-description}, 
one can check that the condition of case (2) happens for finite modules over a ring 
 and finite local commutative rings. Similarly, we will check below in Lemmas \ref{weird-fg} and \ref{weird-fr} that case (3) holds for arbitrary finite rings and for finite groups:

The advantage of these statements is that they allow us to lower the exponent of $Z(\pi)$ in the hypotheses of our existence result.

\begin{definition}
For a formation $\W$, we say a tuple $(M_G)_{G}\in \R^{\W}$   is \emph{behaved} at $\W$ if for all $F \in \C$, 
$$
\sum_{G \in \W}     \sum_{ \pi \in\Epi(G,F) }\frac{|{\mu}(F,G)|}{|\Aut(G)|} Z ( \pi)^a |M_G| <\infty,$$ 
where either $a=3$ (in which case we say it it also \emph{well-behaved});  $\W$ satisfies the hypotheses of Lemma~\ref{Z-to-convolution} (2) and $a=0$; 
or $\W$ satisfies the hypotheses of Lemma~\ref{Z-to-convolution} (3) and $a=1$.
\end{definition}

\begin{proof}We handle case (1) first. Both sides depend only on the lattice $[H,G]$ and are multiplicative under direct products of lattices. Since $[H,G]$ is complemented modular, we may reduce to checking each of the three forms of Theorem \ref{fcml-classification}.

In the case of the lattice of subspaces of a $n$-dimensional vector space over $\mathbb F_q$, the left side is the sum over $d$ from $0$ to $n$ of the number of $(n-d)$-dimensional subspaces of $\mathbb F_q$ times $q^{ \binom{d}{2}}$, and the right side is $2^3 q^{\binom{d}{2}}$ by Lemmas \ref{L:muvalues} and \ref{Z-omega}. Since the number of subspaces of dimension $d$ is $\binom{n}{d}_q$, using the $q$-binomial theorem, the left side is

\[ \sum_{d=0}^n \binom{n}{d}_q q^{ \binom{d}{2}} = q^{ \binom{n}{2} } \prod_{j=0}^{n-1}  (1 + q^{-j}) \leq q^{ \binom{n}{2} } \prod_{j=0}^{\infty}  (1 + q^{-j}) \leq   q^{ \binom{n}{2} } \prod_{j=0}^{\infty}  (1 + 2^{-j})  <   q^{ \binom{n}{2} } 8 ,\]
giving the desired inequality. 

For the other two cases, we note that the number of points of the lattice of each dimension is given by the same formula in terms of $n=2,3$ and $q$, and the M\"obius function is given by the same formula in terms of $q$ and the dimension, so the same calculation applies. This finishes case (1).

In case (2), our assumption implies that $Z(\pi)$ is bounded in terms of $H$ and $\C$, and so (2) follows immediately from (1).

In case (3), we return to the argument of (1). We above showed that for any simple complemented modular lattice,
$$\sum_{x\in[0,1]} |\mu(x,1)|\leq 8 \abs{\mu(0,1)}.$$
For the two-element lattice,
 the  statement above is true with a constant of $2$ instead of $8$. Multiplying these two statements, we see that
$$  \sum_{K \in [H,G]} \abs{\mu(K, G) }  \leq \abs{\mu(H, G) } Z(\pi) 4^b,  $$ 
where $b$ is number of simple factors of $[H,G]$ with more than two elements.  By assumption, $4^b$ can be absorbed into our implicit constant.
\end{proof}

 \begin{lemma}\label{weird-fg} Case (3) of Lemma \ref{I:ssuseful} holds for the category of finite groups. \end{lemma}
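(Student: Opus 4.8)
The plan is to verify the hypothesis of Lemma \ref{Z-to-convolution} (3): for a semisimple epimorphism $\pi\colon G\to H$ of finite groups with $H$ and the formation $\W$ (here a level $\C$) fixed, I must bound the number of simple factors of the lattice $[H,G]$ that are \emph{not} the two-element lattice, with a bound depending only on $H$ and $\C$. By Lemma \ref{group-semisimple-description}, writing $\ker\pi\cong\prod_{i=1}^n V_i^{e_i}\times\prod_{i=1}^m N_i$ with the $V_i$ pairwise non-isomorphic simple abelian $G$-groups and the $N_i$ simple nonabelian $G$-groups, the lattice $[H,G]$ is the product over $i$ of the subspace lattice of $\mathbb F_{q_i}^{e_i}$ together with $m$ copies of the two-element lattice. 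A subspace lattice of $\mathbb F_{q_i}^{e_i}$ contributes factors with more than two elements exactly when $e_i\geq 2$ (when $e_i=1$ it \emph{is} the two-element lattice), and then it decomposes into $e_i$ simple factors each of which is the subspace lattice of $\mathbb F_{q_i}^{e_i}$ — wait, more precisely its $\omega$-value is $1$ when written in the form of Theorem \ref{fcml-classification}, since the full subspace lattice of $\mathbb F_{q_i}^{e_i}$ is itself one of the irreducible pieces. So the number of non-two-element simple factors of $[H,G]$ equals $\#\{i : e_i\geq 2\}$, which is at most $n$. Thus it suffices to bound $n$, the number of distinct isomorphism classes of simple abelian $G$-groups that can appear in the kernel of a semisimple map out of some $G\in\C$, in terms of $H$ and $\C$.

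First I would reduce to a finiteness statement about $\C$. Since $\C$ is a level, it is generated by finitely many objects $G_1,\dots,G_r$, and every $G\in\C$ has complexity bounded by some $k$ (Corollary \ref{C:levelcom}); in particular the dimension of every $G\in\C$ is bounded. The kernel of $\pi$ has dimension equal to $\dim G - \dim H$, so $\dim\ker\pi$ is bounded in terms of $\C$ and $H$. Hence $n\leq \dim\ker\pi$ is already bounded purely in terms of $\C$ and $H$ — in fact in terms of $\C$ alone — and we are done. The key point being exploited is simply that a semisimple $G$-group of bounded dimension is a product of boundedly many simple $G$-groups, so certainly has boundedly many distinct abelian simple constituents.

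The main obstacle, and the thing to state carefully, is the bookkeeping in Theorem \ref{fcml-classification}: I should make sure that "simple factor of $[H,G]$ which is not the two-element lattice" is correctly matched to "index $i$ with $e_i\geq 2$," using that the subspace lattice of $\mathbb F_q^e$ is indecomposable in the product decomposition of Theorem \ref{fcml-classification} for every $e\geq 1$ (it equals the two-element lattice precisely when $e=1$). Given Lemma \ref{group-semisimple-description} this is immediate. So the proof is short:
\begin{proof}
Let $\C$ be a level and let $\pi\colon G\to H$ be a semisimple epimorphism with $G\in\C$. By Lemma \ref{group-semisimple-description}, $\ker\pi\cong\prod_{i=1}^n V_i^{e_i}\times\prod_{i=1}^m N_i$ with the $V_i$ pairwise non-isomorphic simple abelian $G$-groups and the $N_i$ simple nonabelian $G$-groups, and $[H,G]$ is the product of the subspace lattices of $\mathbb F_{q_i}^{e_i}$ ($i=1,\dots,n$) with $m$ copies of the two-element lattice. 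In the decomposition of Theorem \ref{fcml-classification}, the subspace lattice of $\mathbb F_{q_i}^{e_i}$ is a single simple factor, which is the two-element lattice exactly when $e_i=1$. Hence the number of simple factors of $[H,G]$ that are not the two-element lattice is $\#\{i : e_i\geq 2\}\leq n\leq \dim\ker\pi=\dim G-\dim H$. Since $\C$ is generated by finitely many objects, every $G\in\C$ lies in $\V_k$ for some fixed $k$ by Corollary \ref{C:levelcom}, so $\dim G$ is bounded in terms of $\C$; therefore $\dim\ker\pi$ is bounded in terms of $\C$ and $H$. Thus the hypothesis of Lemma \ref{Z-to-convolution} (3) is satisfied for the category of finite groups.
\end{proof}
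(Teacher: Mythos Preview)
Your proof has a genuine gap: the implication ``bounded complexity $\Rightarrow$ bounded dimension'' is false. Corollary \ref{C:levelcom} says every level has finite complexity, but complexity and dimension are very different invariants. For instance, in the category of finite groups, the level $\C$ generated by $\mathbb Z/p\mathbb Z$ contains $(\mathbb Z/p\mathbb Z)^n$ for every $n$; each such group has complexity $1$ (the map to the trivial group is already semisimple) but dimension $n$. So $\dim G$ is unbounded over $G\in\C$, and your bound $n\leq\dim\ker\pi=\dim G-\dim H$ is useless.

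The paper's argument works differently and is the key idea you are missing. Rather than bounding $n$ by a dimension count, one observes that each abelian simple factor $V_i$ is actually an $H$-module, not merely a $G$-module: since $V_i$ is an abelian direct factor of $\ker\pi$, the conjugation action of $\ker\pi$ on $V_i$ is trivial, so the $G$-action descends to $H$. Moreover the characteristic of $V_i$ must divide the order of some generator of $\C$ (since $V_i\leq G\in\C$), hence lies in a finite set depending only on $\C$. The number of isomorphism classes of irreducible $\mathbb F_p[H]$-modules for $p$ in a fixed finite set is bounded in terms of $H$ and that set, which bounds $n$ as required.
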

 
 \begin{proof} Applying Lemma \ref{group-semisimple-description} and adopting its notation, one sees that the number of simple factors of $[H,G]$ which are not isomorphic to the two-element lattice is at most the number $n$ of isomorphism classes of simple abelian $G$-groups $V_i$ arising as direct factors of $\ker \pi$. The action of $G$ on $V_i$ is by conjugation which since $V_i$ is an abelian direct factor of $\ker \pi$ implies that $\ker \pi$ acts trivially on $V_i$ and thus $V_i$ is an $H$-group. Since $V_i$ is a subgroup of $G$ and $G\in \C$, the characteristic of $V_i$ must divide the order of some generator of $\C$ and thus must lie in a finite set. Since the number of isomorphism classes of irreducible finite simple abelian $H$-groups with characteristics in a finite set is bounded depending only on $H$ and that set of characteristics, condition (3) is satisfied. \end{proof}
 
  \begin{lemma}\label{weird-fr} Case (3) of Lemma \ref{I:ssuseful} holds for the category of finite rings. \end{lemma}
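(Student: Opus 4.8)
The plan is to mirror the proof of Lemma~\ref{weird-fg}, replacing the abelian/non-abelian dichotomy for simple $G$-groups by a square-zero/idempotent dichotomy for simple bimodules. Let $\C$ be a level and $\pi\colon R\to S$ a semisimple morphism with $R\in\C$. First I would apply Lemma~\ref{ring-semisimple-description} to write $\ker\pi\cong\prod_{i=1}^{\omega}M_i^{n_i}$ as $R$-bimodules, with the $M_i$ pairwise non-isomorphic simple $R$-bimodules, so that $[S,R]$ is the product of the lattices of subspaces of $\F_{q_i}^{n_i}$. The $i$-th factor is isomorphic to the two-element lattice exactly when $n_i=1$, so it is enough to bound $\#\{i: n_i\geq 2\}$; since the $M_i$ are pairwise non-isomorphic, this is at most the number of isomorphism classes of simple $R$-bimodules occurring in $\ker\pi$ with multiplicity at least $2$.

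I expect the main obstacle to be the next step: showing that such an $M=M_i$ (one with $n_i\geq 2$) is a square-zero two-sided ideal of $R$, hence has $R$-bimodule structure factoring through $S$. As a bimodule direct summand of $\ker\pi\subseteq R$, $M$ is a two-sided ideal with a complementary two-sided ideal $C$ (the sum of the other summands); since $MC\subseteq M\cap C=0$ and $CM\subseteq M\cap C=0$, we get $(\ker\pi)M=M(\ker\pi)=M^2$, and as $M^2$ is a sub-bimodule of the simple bimodule $M$ it is either $0$ or $M$. To rule out $M^2=M$ I would use $n_i\geq 2$ to choose another summand $M'\cong M$ of $\ker\pi$ with $M\cap M'=0$; then $MM'\subseteq M\cap M'=0$, and for any $R$-bimodule map $\phi\colon M\to M'$ and $m=\sum_k a_k b_k\in M=M^2$ (with $a_k,b_k\in M$) left $R$-linearity gives $\phi(m)=\sum_k a_k\phi(b_k)\in MM'=0$, so $\phi=0$, contradicting $M\cong M'\neq 0$. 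Hence $M^2=0$ and $M$ is a simple $S$-bimodule.

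The rest is routine and parallel to Lemma~\ref{weird-fg}. As an abelian group $M$ is a vector space over $\F_p$ for the prime $p$ with $pM=0$ (again $pM$ is a sub-bimodule of the simple $M$), and $p$ divides $|R|$ since $M$ embeds in the additive group of $R$; exactly as in Lemma~\ref{weird-fg}, $R\in\C$ then forces $p$ to divide the order of one of the finitely many generators of $\C$, so $p$ lies in a finite set depending only on $\C$. For each such $p$, the simple $S$-bimodules killed by $p$ are the simple modules of the finite-dimensional $\F_p$-algebra $(S/pS)\otimes_{\F_p}(S/pS)^{op}$, of which there are finitely many; summing over the admissible primes bounds $\#\{i:n_i\geq 2\}$ in terms of $S$ and $\C$ alone, which is precisely the hypothesis of case~(3).
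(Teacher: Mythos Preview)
Your proof is correct and follows essentially the same line as the paper's: both show that any simple bimodule summand $M_i$ of $\ker\pi$ with multiplicity $n_i\geq 2$ must have its $R$-bimodule structure factor through $S$, using that products of complementary two-sided-ideal summands of $\ker\pi$ vanish. The paper phrases the key step as an annihilator argument (the annihilator of the abstract module $M$ contains the complement of each of its two embedded copies, hence all of $\ker\pi$), whereas you deduce $M^2=0$ by contradiction via a bimodule isomorphism; your final prime-by-prime count is correct but unnecessary, since $S$ finite already makes $S\otimes_\Z S^{\mathrm{op}}$ a finite ring with only finitely many simple modules, giving a bound depending on $S$ alone.
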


\begin{proof} Applying Lemma \ref{ring-semisimple-description} and adopting its notation, one observes that the number of isomorphism classes of finite simple $R$-bimodules that factor through $S$ is the number of finite simple $S$-bimodules and thus is bounded depending only on $S$.  The finite simple $R$-bimodules $M_i$ that occur with multiplicity $1$, i .e. satisfy $n_i=1$, contribute the lattice of subspaces of $\mathbb F_{q_i}$ which is a two-element lattice and can be ignored. So it suffices to show that these two classes contain all the $M_i$. 
 
 Observe that if we write $\ker \pi$ as a direct product $A_1 \times B$ of $R$-bimodules then for $a\in A, b\in B$, we have $ab \in A$ since $A \times B$ is a direct product of right modules and $ab\in B$ since $A \times B$ is a direct product of right modules so $ab=0$. Thus the action of $B$ on the module $A$ is trivial. So if we write $\ker \pi$ as a direct product $A_1 \times A_2 \times B$, the action of $A_2 \times B$ on $A_1$ is trivial and the action of $A_1 \times B$ on $A_2$ is trivial. But now if $A_1$ are both isomorphic to a fixed module $M$, then the actions of $A_1 \times B$ and $A_2 \times B $   on $ M$ are both trivial, so the action of $A_1 \times A_2 \times B= \ker \pi$ on $M$ is trivial, i.e. $M$ factors through $S$. It follows that each $M_i$ with $n_i \geq 2$ factors through $S$, as desired. \end{proof}

The assumption of part (3) of Lemma \ref{Z-to-convolution} is satisfied for $\W$ a level in
 most examples arising in ordinary mathematics, such as groups, rings, and modules. However, there are cases where it is not satisfied. We give an example:

\begin{example} Let $\category$ be the category whose objects are pairs consisting of a finite-dimensional vector space $V$ over $\mathbb F_2$ and a set $S$ of nontrivial subsets of $V$ such that the natural map $\bigoplus_{W\in S} W \to V$ is an isomorphism, where morphisms $(V_1,S_1) \to (V_2, S_2)$ are given by maps $f \colon V_1 \to V_2$ such that $f(W) \in S_2 \cup\{ 0\}$ for all $W \in S$ and if $f(W)=f(W')$ then $W=W'$ or $f(W) =0$.

We can think of these as graded vector spaces, but instead of graded by integers, they are graded by an arbitrary finite set. 

It is not so hard to check a morphism is an epimorphism if and only if $f$ is surjective, and the poset of quotients of $(V,S)$ is given by the product over $W \in S$ of the poset of quotients of $W$, and thus is a finite modular lattice. One can similarly check the other assumptions: 
 automorphism groups are subgroups of $GL_n(\mathbb F_2)$ and thus finite, and a simple morphism either raises the dimension of one element of $S$ by one or adds a new one-dimensional summand in $S$, so there are finitely many simple morphisms.

By our description of the lattice of quotients, we say every object is a meet of its one-dimensional quotients, and every one-dimensional object of $\category$ is isomorphic to $(\mathbb F_2,\{\mathbb F_2\})$, so every object is in $\C$ for $\C = \{ (\mathbb F_2,\{\mathbb F_2\})\}$. Similarly, this meet property implies every morphism is semisimple.

Thus the assumption of part (3) of Lemma \ref{Z-to-convolution} is not satisfied even for $H = (0,\emptyset)$, since $[G,H]$ can be a product of an arbitrary finite number of lattices of quotients of finite-dimensional vector spaces over $\mathbb F_2$. 

\end{example}

 \section{Proof of the main theorem}\label{S:Main}
 
 In this section, we prove Theorem \ref{T:general}.  For some parts of the theorem, we state and prove slightly stronger versions.   
 
Throughout this section, we assume $C$ is a \Diamantine category.  Any further assumptions will be stated explicitly in our results, and in the  theorems we will remind the reader of the  \Diamantine assumption.

To compactify our formulas below, we define $T_{F,H}:=\hat{\mu}(H,F)$ (note the order switches), 
and for $\pi \in \Sur(G,F)$ we define $T_\pi:=\mu(\pi)/|\Aut(F)|$.
We also let $S_{G,F}:=|\Sur(G,F)|/|\Aut(F)|$, and $I:=\category/_\isom$ (isomorphism classes).
Finally, whenever we consider $(M_G)_G\in \R^I$, we let $m_G:=M_G/|\Aut(G)|$.

The main step in the proof of (Uniqueness) is the following lemma.
\begin{lemma}\label{L:IE}
Let $G,H\in \category$, then 
$$\sum_{F\in I} T_{F,H}S_{G,F}=
\sum_{F\in I} T_{G,F}S_{F,H}=
\begin{cases}
1 &\textrm{if $G=H$}\\
0 &\textrm{if $G\ne H$}.
\end{cases}
$$
\end{lemma}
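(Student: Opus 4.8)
The plan is to prove this by a double-counting / M\"obius-inversion argument, exploiting that both $T$ and $S$ are ``triangular'' with respect to dimension. Because $|\Sur(G,F)|=0$ unless $F$ is a quotient of $G$ (up to isomorphism), and similarly $\hat\mu(H,F)\ne 0$ only when $H$ is a quotient of $F$, both sums are finite, and only finitely many terms are nonzero; I will note this first so the manipulations are legitimate. It also suffices to prove the first identity, since the second follows by a dual/transpose argument (or by the same computation reading the poset of quotients in the opposite order).

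\medskip

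First I would unwind the definitions. We have $T_{F,H}=\hat\mu(H,F)=\sum_{\sigma\in\Sur(F,H)/\Aut(H)}\mu(\sigma)$ and $S_{G,F}=|\Sur(G,F)|/|\Aut(F)|$. The natural move is to pass from $\Aut$-orbits of surjections to honest surjections. Group-theoretically, $|\Sur(G,F)|/|\Aut(F)|$ counts quotients of $G$ isomorphic to $F$ (an orbit-stabilizer computation, since $\Aut(F)$ acts freely on $\Sur(G,F)$ by post-composition, each orbit being a single isomorphism class of quotient). So $\sum_{F\in I}(\text{stuff})\,S_{G,F}$ becomes a sum over \emph{quotients} $F$ of $G$, i.e. over $F\in[M_G,G]$ where $M_G$ is the minimal quotient. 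Likewise $T_{F,H}S_{G,F}$, after expanding $\hat\mu$, becomes a sum over quotients $F$ of $G$ together with a surjection $F\to H$ taken up to $\Aut(H)$; but a quotient $F$ of $G$ together with a compatible map $F\to H$ is the same as a quotient $F$ of $G$ lying in the interval $[H,G]$ once we fix the quotient structure. So the whole left-hand side should collapse to $\sum_{F\in[H,G]}\mu(H,F)$, where $H$ is regarded as the minimal element of the interval $[H,G]$ in the poset of quotients of $G$ (this uses that $H$ is a quotient of $G$; if it isn't, every term vanishes and the sum is $0=$ the claimed value since then $G\ne H$).

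\medskip

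Once the left-hand side is rewritten as $\sum_{F\in[H,G]}\mu(H,F)$, the result is immediate from \eqref{E:Mobdef}: that sum is $1$ if $H=G$ and $0$ otherwise, which is exactly the claim. The one subtlety to be careful about is the bookkeeping with automorphisms: I need to check that when I convert ``$\Aut(H)$-orbits of surjections $F\to H$'' summed against ``surjections $G\to F$ modulo $\Aut(F)$'' into ``quotients of $G$ in $[H,G]$,'' the factors of $|\Aut(F)|$ and $|\Aut(H)|$ cancel correctly and nothing is over- or under-counted. Concretely: a pair $(\pi\colon G\to F,\ \sigma\colon F\to H)$ up to the action of $\Aut(F)$ (acting on $\pi$ by post-composition and on $\sigma$ by pre-composition, diagonally) and up to $\Aut(H)$ (acting on $\sigma$ by post-composition) corresponds bijectively to a subquotient, namely a point of $[H',G]$ together with an identification $H'\cong H$ — and summing $\mu$ over these, the identifications are absorbed. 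The value $\mu(\pi)=\mu(F,G)$ only depends on the isomorphism class of the quotient $F$ of $G$, so it is constant on each such equivalence class. I expect the main obstacle to be exactly this automorphism bookkeeping — writing it cleanly enough that the reader is convinced the free actions and orbit counts line up — rather than any conceptual difficulty; the underlying identity is just \eqref{E:Mobdef}. For the second equality, I would run the same argument with the roles of ``quotient of $G$'' and ``object with a surjection from $H$'' interchanged, using $\sum_{F\in[H,G]}\mu(F,G)$ from \eqref{E:Mobdef} instead.
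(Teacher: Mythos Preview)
Your approach is essentially identical to the paper's: expand the definitions, convert the sum over $F\in I$ weighted by $|\Sur(G,F)|/|\Aut(F)|$ into a sum over quotients of $G$, compose with the surjection to $H$, and reduce to the M\"obius identity \eqref{E:Mobdef}. The paper carries this out by writing down an explicit $|\Aut(F)|$-to-1 map
\[
\{F\in I,\ \pi\in\Sur(G,F),\ \rho\in\Sur(F,H)\}\longrightarrow\{\pi^G_H\in\Sur(G,H),\ F\in[H,G]\}
\]
and then summing $\mu(H,F)$ over the right-hand side.

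The one place your sketch is looser than it should be is the sentence ``the whole left-hand side should collapse to $\sum_{F\in[H,G]}\mu(H,F)$.'' As written this fixes a single realization of $H$ as a quotient of $G$, but in general there are several (one for each $\Aut(H)$-orbit in $\Sur(G,H)$), and the interval $[H,G]$ depends on which one you pick. The correct collapsed form is
\[
\sum_{\substack{H'\leq G\\ H'\cong H}}\ \sum_{F\in[H',G]}\mu(H',F),
\]
or equivalently the paper's $\sum_{\pi^G_H\in\Sur(G,H)}\frac{1}{|\Aut(H)|}\sum_{F\in[H,G]}\mu(H,F)$. Each inner sum vanishes unless $H'=G$, so the outer sum doesn't change the answer, but it does need to be there for the bookkeeping to balance---this is exactly the automorphism accounting you flagged as the likely sticking point. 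Once that sum is restored, your argument and the paper's are the same.
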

\begin{proof}

The quotients of $G$ that are (in $C$) isomorphic to $F$
correspond exactly to the $\Aut(F)$ orbits on $\Sur(G,F)$ (which all have size $\abs{\Aut(F)}$).  
Thus, given $G,H\in C$, we have an epimorphism
\begin{align*}
\{F\in I, \pi^G_F\in \Sur(G,F), \rho\in \Sur(F,H)  \} &\ra 
\{\pi^G_H\in \Sur(G,H),  F\in [H,G]  \} \\
(F, \pi^G_F,\rho)&\mapsto (\rho\pi^G_F, (F, \pi^G_F)),
\end{align*}
where the fiber of $(\rho\pi^G_F, (F, \pi^G_F))$ has size $|\Aut(F)|$.
Thus,
\begin{align*}
\sum_{F\in I} S_{G,F} T_{F,H}&=
\sum_{F\in I} \sum_{\pi^G_F\in \Sur(G,F)}  \frac{1}{|\Aut(F)|}  \frac{1}{\abs{\Aut(H)}} \sum_{\rho\in \Sur(F,H)} \mu(\rho) \\
&=
\sum_{\pi^G_H\in \Sur(G,H)}  \frac{1}{|\Aut(H)|} \sum_{F\in [H,G]} \mu(H,F) \\
&=
\begin{cases}
1 &\textrm{if $G=H$}\\
0 &\textrm{if $G\ne H$},
\end{cases}
\end{align*}
where the last equality is by \eqref{E:Mobdef}.
The second statement follows similarly, using the other part of \eqref{E:Mobdef}.
\end{proof}

We are now ready to prove (Uniqueness) with a slightly weaker hypothesis, and allowing us to work with formations more general than levels.

\begin{theorem}\label{amc-1}
Let $\category$ be a \Diamantine category, $\W$ be a downward-closed subset of isomorphism classes of $\category$, and
$(M_G)_{G}\in \R^I$ be so that for all $F\in \W$, we have 
\begin{equation}\label{E:weakwell}
\sum_{G \in \W}     \sum_{ \pi \in\Sur(G,F) }{|{\mu}(F,G)m_G|}<\infty.
\end{equation}

Suppose for all $F\in \W$ we have values $p_{\W,F} \geq 0$, such that for all $G\in {\W}$ we have
$$
\sum_{F\in \W} S_{F,G} p_{\W,F} =m_G
$$
Then, for all $F\in C$,
$$
p_{\W,F} = \sum_{G\in \W} T_{G,F} m_G
$$

\end{theorem}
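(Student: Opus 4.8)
The plan is to read the asserted identity as a M\"obius inversion and deduce it from Lemma~\ref{L:IE}. Starting from the right-hand side, I would substitute the hypothesized relation $m_G = \sum_{H \in \W} S_{H,G}\, p_{\W,H}$, which holds for every $G \in \W$, to obtain
\[
\sum_{G \in \W} T_{G,F}\, m_G \;=\; \sum_{G \in \W} T_{G,F} \sum_{H \in \W} S_{H,G}\, p_{\W,H},
\]
and then exchange the order of the two sums to reach $\sum_{H \in \W} p_{\W,H} \bigl( \sum_{G \in \W} T_{G,F}\, S_{H,G} \bigr)$. Granting the exchange, the inner sum is handled by Lemma~\ref{L:IE}: since $S_{H,G} = |\Sur(H,G)|/|\Aut(G)|$ vanishes unless $G$ is isomorphic to a quotient of $H$, and $\W$ is downward-closed so every quotient of $H \in \W$ again lies in $\W$, the sum over $G \in \W$ coincides with the sum over all of $I$, which by Lemma~\ref{L:IE} equals $1$ if $H = F$ and $0$ otherwise. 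This collapses the outer sum to $p_{\W,F}$, as wanted. (If $F \notin \W$ there is nothing to prove: no $G \in \W$ admits an epimorphism onto $F$, so $T_{G,F} = \hat\mu(F,G) = 0$ for all such $G$ and the right-hand side is $0$.)

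The only substantive point, and the step I expect to be the main obstacle, is justifying the interchange of summation, i.e.\ absolute convergence of the double sum $\sum_{G,H \in \W} |T_{G,F}|\, S_{H,G}\, p_{\W,H}$. Here I would first note that, because $S_{H,G} \geq 0$ and $p_{\W,H} \geq 0$, the hypothesis already forces $m_G = \sum_{H \in \W} S_{H,G}\, p_{\W,H} \geq 0$, so $m_G = |m_G|$. Next, from $T_{G,F} = \hat\mu(F,G) = \sum_{\pi \in \Sur(G,F)/\Aut(F)} \mu(\pi)$, together with the fact (used already in the proof of Lemma~\ref{L:IE}) that the $\Aut(F)$-orbits on $\Sur(G,F)$ all have size $|\Aut(F)|$, one gets $|T_{G,F}| \leq \frac{1}{|\Aut(F)|} \sum_{\pi \in \Sur(G,F)} |\mu(\pi)|$. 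Then in the nonnegative double sum I would carry out the $H$-summation first, which is legitimate by Tonelli and which by the hypothesis collapses back to $m_G$, leaving
\[
\sum_{G \in \W} |T_{G,F}|\, m_G \;\leq\; \frac{1}{|\Aut(F)|} \sum_{G \in \W} \sum_{\pi \in \Sur(G,F)} \bigl| \mu(F,G)\, m_G \bigr| \;<\; \infty
\]
by the convergence hypothesis~\eqref{E:weakwell}. Hence the double sum converges absolutely and Fubini's theorem for sums permits the interchange made above.

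In summary, the proof is: substitute the relation for $m_G$; observe $m_G \geq 0$ automatically and collapse the inner absolute-value sum back to $m_G$, so that absolute convergence of the double sum reduces exactly to hypothesis~\eqref{E:weakwell}; interchange the two sums; and finish with the orthogonality relation of Lemma~\ref{L:IE}, the passage from the sum over $\W$ to the sum over $I$ being free because $\W$ is downward-closed. No induction on dimension and no lattice-theoretic input beyond Lemma~\ref{L:IE} is needed for this direction of the theorem; that heavier machinery is what will be required for the existence direction, where the analogous double sum fails to be absolutely convergent.
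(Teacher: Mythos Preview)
Your proof is correct and follows essentially the same route as the paper's: substitute the moment relation, invoke Lemma~\ref{L:IE} via the downward-closedness of $\W$, and justify the interchange of sums by reducing absolute convergence to hypothesis~\eqref{E:weakwell}. You are in fact slightly more explicit than the paper in two places: you spell out the triangle-inequality bound $|T_{G,F}| \le \tfrac{1}{|\Aut(F)|}\sum_{\pi\in\Sur(G,F)}|\mu(\pi)|$ needed to pass from ``$\sum_G T_{G,F}m_G$ absolutely convergent'' to the literal hypothesis~\eqref{E:weakwell}, and you note that $m_G\ge 0$ is automatic from the moment relation, so that $|m_G|=m_G$.
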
 

\begin{proof} If we can exchange the order of summation, we have
\begin{align*}
\sum_{G\in \W} T_{G,F}m_G= \sum_{G\in \W} T_{G,F} \sum_{H\in \W} S_{H,G} p_{H,\W}  =\sum_{H\in\W}  p_{H,\W} \sum_{G\in \W} T_{G,F}  S_{H,G} = p_{\W,F}
\end{align*} where the first equality is by assumption, the last is from Lemma~\ref{L:IE} (using that anything with an epimorphism from $H$ is in $\W$).
 To verify that we can exchange the order of summation, as in the middle equality, it suffices to check that the sum $\sum_{G\in \W} T_{G,F} \sum_{H\in \W} S_{H,G} p_{H,\W} $ is absolutely convergent.
Since $S_{H,G} p_{H,\W}\geq 0$, it suffices to show that $ \sum_{G\in \W} T_{G,F}m_G$ is absolutely convergent, which is precisely our assumption. 
\end{proof} 

Note that (Uniqueness) implies the ``only if'' direction of (Existence).
Similarly, we could prove the ``if'' direction of  (Existence),  conditional on absolute convergence of the below sums,
\begin{align}\label{E:FakeExist}
 \sum_{F\in \W}  S_{F,H} \left(\sum_{G\in \W} T_{G,F}m_G \right) =\sum_{G\in {\W}} 
m_G
 \sum_{F\in \W} T_{G,F}S_{F,H} =m_H
 .
\end{align}

Unfortunately,  in most cases of interest the absolute convergence of the above sums will not hold (see Example~\ref{no-absolute-convergence}).  This requires us to make more delicate arguments to exchange the order of summation one piece at a time, relying on further identities
satisfied by the $T_{G,F}$ as well as bounds on their size.

\begin{example}\label{no-absolute-convergence} 
One of the nicest examples of moments are that of the distribution $\mu$ on profinite groups from \cite{Liu2020}, which has $G$-moment $M_G=1$ for every finite group $G$.  (We will see in Lemma \ref{group-card-wb} that these are well-behaved.) 
The category $\category$ here is the  category of finite groups.
However, even in this case, the sum
 \[ \sum_{F \in {\C}} \sum_{G \in {\C}} T_{G,F}  m_G=
 \sum_{F \in {\C}} \sum_{G \in {\C}} T_{G,F}  \frac{1}{|\Aut(G)|} \] 
 does not converge absolutely when $\C$ is the level generated by $\{\F_3,S_3\}$
(we write $\F_2,\F_3$ for the cyclic groups of order 2,3, respectively). (This sum is the left side of \eqref{E:FakeExist} in the special case $H=1$.)
Indeed, to check that \[\sum_{F \in {\C}} \sum_{G \in {\C}}  |T_{G,F}|  |\Aut(G)|^{-1} =\infty,\] it suffices to prove a restricted sum is infinite, over only $F = \mathbb F_2^n$ and $G = \mathbb F_3^m \rtimes \mathbb F_2^n$ for some $n$ and $m$ and any action of $\mathbb F_2^n$ on $\mathbb F_3^m$.  
Every representation of $\mathbb F_2^n$ over $\mathbb F_3$ is a sum of the $2^n$ one-dimensional characters, and from this we can check every such $G$ is in $\C$.

For such a $G$ and $F$, any semisimple morphism $G\to F$ is the obvious projection  $\mathbb F_3^m \rtimes \mathbb F_2^n \ra \mathbb F_2^n$ composed with an automorphism of $\mathbb F_2^n$, so \[T_{G,F} =  
  \mu(F,G) .\]

For a given $n$, we can consider the set  $\Hom ( \mathbb F_2^n, \GL_m (\mathbb F_3))$.  We have that $\GL_n(\F_2)\times \GL_m( \F_3)$ acts on this set,
by precomposition for the first factor, and conjugation for the second factor.  
The orbits correspond to isomorphism classes of groups of the form $\F_3^m \rtimes \F_2^n$, and the stabilizers $S_a$ correspond to automorphisms of 
$\F_3^m \rtimes_a \F_2^n$
that fix the subgroup $\F_2^n$ (setwise). 
For $a\in \Hom ( \mathbb F_2^n, \GL_m (\mathbb F_3))$,
the automorphisms of $\F_3^m \rtimes_a \F_2^n$ act on the Sylow $2$-subgroups transitively, and there are $[\F_3^m: (\F_3^m)^{\im a}] $ Sylow $2$-subgroups.
Thus the stabilizer $S_a$ has size $|\Aut(\F_3^m \rtimes \F_2^n)|/[\F_3^m: (\F_3^m)^{\im a}]$.
It follows by the orbit-stabilizer theorem that for each $n$,
\[ \sum_{\substack{ G \in {\C}\\ G \cong \mathbb F_3^m \rtimes \mathbb F_2^n\\
\textrm{ for some $m$}}}  |T_{G,F}|  |\Aut(G)|^{-1}=  \sum_{m=0}^{\infty} \sum_{ a \in \Hom ( \mathbb F_2^n, \GL_m (\mathbb F_3))} \frac{ |\mu(F_2^n , F_3^m \rtimes_a F_2^n)|}{[\F_3^m: (\F_3^m)^{\im a}] | \GL_m( \mathbb F_3)| | \GL_n(\mathbb F_2)|}  
. \]
We add the subscript $a$ to the $\rtimes$ to clarify which action we mean.

The number elements of $\Hom ( \mathbb F_2^n, \GL_m (\mathbb F_3))$
giving multiplicity vector $m_1,\dots, m_{2^n}$ when decomposed into irreducible representations of $\mathbb F_2^n$ over $\mathbb F_3$ 
is $ | \GL_m (\mathbb F_3)|/|Z_{\GL_m (\mathbb F_3)} (\im a) |,$
where $a\in \Hom ( \mathbb F_2^n, \GL_m (\mathbb F_3))$ is any element with that multiplicity vector and 
$Z_{\GL_m (\mathbb F_3)} (\im a)$ is the centralizer of  $\im a$ in $\GL_m (\mathbb F_3)$.  Further, $Z_{\GL_m (\mathbb F_3)} (\im a)$ has size
${\prod_{i=1}^{2^n} \prod_{j=0}^{m_i-1} (3^{m_i} - 3^j) }$.
The  value of $\mu(F_2^n , F_3^m \rtimes_a F_2^n)$ for such an action is given by $\prod_{i=1}^{2^n} (  3^{ \frac{m_i (m_i-1)}{2} } ) $ by Lemma \ref{group-semisimple-description}. Thus 
\begin{align*}
\sum_{\substack{ G \in {\C}\\ G \cong \mathbb F_3^m \rtimes \mathbb F_2^n\\
\textrm{ for some $m$}}}  |T_{G,F}|  |\Aut(G)^{-1}|
&= \sum_{m_1,\dots, m_{2^n}=0}^{\infty} \frac{1}{ | \GL_n(\mathbb F_2)| 3^{\sum_{i>1} m_i}}   \frac{\prod_{i=1}^{2^n} (  3^{ \frac{m_i (m_i-1)}{2} } )  }{\prod_{i=1}^{2^n} \prod_{j=0}^{m_i-1} (3^{m_i} - 3^j) } \\
&> \frac{1}{| \GL_n(\mathbb F_2)|}    \left( \sum_{m=0}^{\infty}  \frac {   3^{ \frac{m (m-1)}{2} }  }{3^m \prod_{j=0}^{m-1} (3^{m} - 3^j) } \right)^{2^{n}} ,
\end{align*}
 so
\[\sum_{F \in {\C}} \sum_{G \in {\C}}  |T_{G,F}|  |\Aut(G)|^{-1}  > \sum_{n=0}^{\infty}   \frac{1}{ | \GL_n(\mathbb F_2)|}    \left( \sum_{m=0}^{\infty}  \frac {  3^{ \frac{m (m-1)}{2} }  }{3^m\prod_{j=0}^{m-1} (3^{m} - 3^j) } \right)^{2^n}  = \infty\] since 
\[ c:=\sum_{m=0}^{\infty}  \frac {  3^{ \frac{m (m-1)}{2} }   }{3^m\prod_{j=0}^{m-1} (3^{m} - 3^j) }  = 1+ \frac{1}{6} + \dots > 1\] and 
$| \GL_n(\mathbb F_2)|\leq 2^{n^2}$, and hence for large $n$ we have that $c^{2^n}/| \GL_n(\mathbb F_2)| \geq 1$.
\end{example}

\subsection{Proof of (Existence)}
Example~\ref{no-absolute-convergence} shows that our proof of (Existence) will need to be somewhat more complicated
than the easy argument in \eqref{E:FakeExist} that requires interchanging the order of summation between a sum over $F\in \W$ and a sum over $G\in \W$.
We will prove (Existence)
as a sequence of smaller identities, each based on rearranging a shorter sum that does converge absolutely for well-behaved moments.
The key is not to exchange a sum in $\Ggp$ with a sum over all $\Fgp$,  but only a sum over $\Fgp$ with a fixed $\bar{\W}$-quotient, for a level $\bar{\W}$ just barely smaller than the level $\W$ under consideration.
  In Example~\ref{no-absolute-convergence}, this would mean we only consider the sums  for a fixed $n$, instead of summing over all $n$ at once.
After we do this more limited exchange of summation, we will be able to cancel some terms before proceeding further with the exchange of order of summation.
We will see that our notion of ``well-behaved'' moments is sufficient to allow us to do these more restrained exchanges of the order of summation.

We will give an existence result that is stronger than given in the introduction, allowing us to use formations other than levels for more flexibility.
We say that an epimorphism $\Ggp \ra E$ is 
\emph{relative-$\W$} if the induced map $\Ggp\ra \Ggp^\W\vee_\Ggp E$ is an isomorphism.  

\begin{lemma}\label{L:relE}
If $\W$ is a formation  and $E\in \W$, then $\Ggp\ra E$ is relative-$\W$ if and only if $\Ggp\in \W$.
\end{lemma}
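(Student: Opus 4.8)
The plan is to unwind the definition of relative-$\W$ and use only the two defining closure properties of a formation (downward-closed and join-closed), together with the fact that in a \Diamantine category the quotients of any object form a finite modular lattice, so all joins exist.

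First I would treat the ``if'' direction. Suppose $G \in \W$. Then $G$ is a quotient of itself lying in $\W$, so by definition of the $\W$-quotient we have $G^\W = G$. Since $E$ is a quotient of $G$, we have $E \leq G$ in the lattice of quotients of $G$, hence $G^\W \vee_G E = G \vee_G E = G$, and the induced map $G \to G^\W \vee_G E$ is the identity, in particular an isomorphism. Thus $G \to E$ is relative-$\W$.

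For the ``only if'' direction, suppose $G \to E$ is relative-$\W$, i.e.\ the induced map $G \to G^\W \vee_G E$ is an isomorphism. Both $G^\W$ and $E$ are quotients of $G$ lying in $\W$ (the former by definition of $G^\W$, the latter by hypothesis), and their join $G^\W \vee_G E$ exists in the (finite modular) lattice of quotients of $G$. Since $\W$ is join-closed, $G^\W \vee_G E \in \W$. As $G \cong G^\W \vee_G E$ and $\W$ is stable under isomorphism, we conclude $G \in \W$.

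There is essentially no serious obstacle here; the only point requiring a moment's care is the bookkeeping around the lattice-of-quotients interpretation of $\vee_G$ — namely that $G^\W \vee_G E$ is literally a join of two elements of $\W$ inside the quotient lattice of $G$, so that join-closedness applies — and the observation that $G^\W = G$ precisely when $G \in \W$.
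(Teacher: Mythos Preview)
Your proof is correct and follows essentially the same approach as the paper's own proof, which is the terse two-line argument: if $G\in\W$ then $G^\W\vee_G E = G\vee E = G$; conversely if $G = G^\W\vee_G E$ then since $E\in\W$ (and $G^\W\in\W$) join-closedness gives $G\in\W$. You have simply spelled out the routine details a bit more carefully.
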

\begin{proof}
If $\Ggp\in \W$, then $\Ggp^\W\vee_\Ggp E=\Ggp \vee E=\Ggp.$  If $\Ggp=\Ggp^\W\vee_\Ggp E$, since $E\in \W$,
we have that $\Ggp\in\W$. 
\end{proof}

We now give an identity that we will use for the canceling of terms mentioned above.
\begin{lemma}\label{L:relC}
Let $ \W$ be a  formation.  
Let $\Ggp,E\in \category$ and let $\phi:\Ggp\ra E$ be an epimorphism that is not relative-$\W$.
Let $\barHornot\in {\W}$. Then
$$
\sum_{\substack{\Fgp\in I \\ \Fgp^{\W}\isom \barHornot  }} \sum_{\rho\in \Sur(\Fgp,E)} \sum_{\substack{\pi\in \Sur(\Ggp,\Fgp)\\ \rho\pi=\phi}}T_\pi=
\sum_{\substack{\Fgp\in I \\ \Fgp^{\W}\isom \barHornot  }} \sum_{\rho\in \Sur(\Fgp,E)} \sum_{\substack{\pi\in \Sur(\Ggp,\Fgp)\\ \rho\pi=\phi}} \frac{\mu(\pi)}{|\Aut(\Fgp)|}=0.
$$
\end{lemma}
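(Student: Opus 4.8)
The plan is to fix $\barHornot \in \W$ and analyze the triple sum by first isolating, for each quotient $\Fgp$ of $\Ggp$ with $\Fgp^\W \isom \barHornot$, the set of pairs $(\rho,\pi)$ with $\rho\pi = \phi$. Given such $\Fgp$, the map $\pi$ is determined by $\rho$ (since $\pi$ is a quotient of $\Ggp$ and $\rho\pi = \phi$ forces $\pi$ to be the quotient $\Ggp \to \Ggp / \ker\rho\phi^{-1}$... more precisely, $\pi$ ranges over quotients of $\Ggp$ that dominate $\phi$-plus-the-kernel-of-$\rho$). The cleanest way to organize this: a pair $(\Fgp, \rho, \pi)$ with $\rho\pi = \phi$ corresponds, up to the $\Aut(\Fgp)$-action, to an element $\Fgp'$ of the interval $[E, \Ggp]$ (the poset of quotients of $\Ggp$ lying over $E$ via $\phi$), namely $\Fgp' = \pi$ viewed as a quotient of $\Ggp$. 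Summing $T_\pi = \mu(\pi)/|\Aut(\Fgp)|$ over the $\Aut(\Fgp)$-orbit of size $|\Aut(\Fgp)|$ just contributes $\mu(\Fgp', \Ggp)$ once (here I use the identification of quotients of $\Ggp$ isomorphic to $\Fgp$ with $\Aut(\Fgp)$-orbits on $\Sur(\Ggp,\Fgp)$, exactly as in the proof of Lemma~\ref{L:IE}). So the whole left-hand side collapses to
\[
\sum_{\substack{\Fgp' \in [E,\Ggp] \\ (\Fgp')^\W \isom \barHornot}} \mu(\Fgp', \Ggp).
\]
The condition $(\Fgp')^\W \isom \barHornot$ should be rewritten using Lemma~\ref{L:Cvee}: within the interval $[E,\Ggp]$, taking $\W$-quotients is monotone and interacts well with joins and meets, so the set of $\Fgp' \in [E,\Ggp]$ with $(\Fgp')^\W \geq$ (some fixed thing) is itself an interval, and the set with $(\Fgp')^\W \isom \barHornot$ is a difference of intervals in $[E,\Ggp]$.

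Next I would reduce to a Möbius-function identity on the lattice $[E,\Ggp]$. The key point is that $\phi$ being \emph{not} relative-$\W$ means, by the definition, that $\Ggp \to \Ggp^\W \vee_\Ggp E$ is not an isomorphism, i.e. $\Ggp^\W \vee E < \Ggp$ in the lattice of quotients of $\Ggp$; equivalently, inside $[E,\Ggp]$ the element $\Ggp^\W \vee E$ (which is the $\W$-quotient "seen from $E$", matching the top of the sublattice of $\W$-flavored quotients) is strictly below the top element $\Ggp$. Now I want to sum $\mu(\Fgp',\Ggp)$ over all $\Fgp' \in [E,\Ggp]$ with $(\Fgp')^\W$ in a prescribed isomorphism class. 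The map $\Fgp' \mapsto (\Fgp')^\W$ (taken inside $[E,\Ggp]$, i.e. $\Fgp' \wedge (\Ggp^\W \vee E)$ by Lemma~\ref{L:Cvee}(3)) is a lattice retraction onto the sublattice $[E, \Ggp^\W\vee E]$, and its fibers are the sets of $\Fgp'$ with $(\Fgp')^\W$ fixed. I would use the standard fact (the same rearrangement trick used in the proof of Lemma~\ref{non-semisimple-vanishing}) that summing $\mu(\cdot,\Ggp)$ over a fiber of such a retraction, or more precisely over $\{\Fgp' : (\Fgp')^\W = \barHornot'\}$ for fixed $\barHornot' \in [E,\Ggp^\W\vee E]$, and then over the finitely many $\barHornot'$ in a given isomorphism class, reduces to $\sum_{\Fgp' \in [K, \Ggp]} \mu(\Fgp', \Ggp)$-type sums where $K$ ranges over an up-set; since $\Ggp^\W \vee E < \Ggp$, the element $\Ggp$ is never in the relevant fiber, and by \eqref{E:Mobdef} each such sub-sum vanishes. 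Assembling these gives $0$.

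The alternative, possibly cleaner, route I would actually write up: apply the rearrangement directly. Set $P = [E,\Ggp]$, $R = \Ggp^\W \vee E \in P$, so $R < \Ggp$. For $\Fgp' \in P$, $(\Fgp')^\W$ computed in $P$ equals $\Fgp' \wedge R$ by Lemma~\ref{L:Cvee}(3). The left-hand side of the lemma, after the collapse above, is $\sum_{\Fgp' \in P,\ \Fgp' \wedge R \in [\barHornot']} \mu(\Fgp',\Ggp)$ summed over the finitely many $\barHornot' \in [E,R]$ isomorphic to $\barHornot$. For a single $\barHornot'$, write the indicator of $\{\Fgp' \wedge R = \barHornot'\}$ via Möbius inversion on $[E,R]$ against the indicator of $\{\Fgp' \wedge R \geq \barHornot'\} = \{\Fgp' \geq \barHornot'\}$ (using modularity: $\Fgp' \wedge R \geq \barHornot' \iff \Fgp' \geq \barHornot'$ since $\barHornot' \leq R$). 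So the sum becomes a finite $\Z$-linear combination of $\sum_{\Fgp' \in [\barHornot'',\Ggp]} \mu(\Fgp',\Ggp)$ over various $\barHornot'' \in [E,R]$, each of which is $0$ by \eqref{E:Mobdef} because $\barHornot'' \leq R < \Ggp$ so $\barHornot'' \neq \Ggp$. Hence the total is $0$. The second displayed equality in the lemma is immediate since $T_\pi$ is by definition $\mu(\pi)/|\Aut(\Fgp)|$, so no separate argument is needed.

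The main obstacle I expect is bookkeeping the translation between the three-fold sum over $(\Fgp,\rho,\pi)$ and the single sum over the interval $[E,\Ggp]$ — in particular being careful that (i) the $\Aut(\Fgp)$ in the denominator of $T_\pi$ exactly cancels the orbit size so that each quotient $\Fgp' \in [E,\Ggp]$ is counted with weight precisely $\mu(\Fgp',\Ggp)$, and (ii) the condition $\Fgp^\W \isom \barHornot$ on the abstract isomorphism class matches $(\Fgp')^\W \isom \barHornot$ computed inside the quotient poset of $\Ggp$, which is where I invoke the "interval $[F,H]$ computed two ways agree" convention from the Further notation subsection together with Lemma~\ref{L:Cvee}. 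Once that dictionary is set up, the vanishing is a one-line consequence of \eqref{E:Mobdef} exactly as in Lemma~\ref{non-semisimple-vanishing}, with the hypothesis "not relative-$\W$" used precisely to guarantee $R = \Ggp^\W\vee E \neq \Ggp$.
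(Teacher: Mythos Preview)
Your approach is essentially the paper's, and both your collapse of the triple sum to $\sum_{F' \in [E,G],\, (F')^\W \isom H} \mu(F',G)$ and your M\"obius computation showing $\sum_{F' \wedge R = H'} \mu(F', G) = 0$ for each fixed $H' \in [E, R]$ are correct. There is one slip: Lemma~\ref{L:Cvee}(3) gives $(\Fgp')^\W = \Fgp' \wedge \Ggp^\W$, not $\Fgp' \wedge R$; modularity (using $E \leq \Fgp'$) instead yields $\Fgp' \wedge R = (\Fgp')^\W \vee E$, which in general is neither equal nor isomorphic to $(\Fgp')^\W$. So the fibers of $\Fgp' \mapsto \Fgp' \wedge R$ that actually make up $\{\Fgp' : (\Fgp')^\W \isom H\}$ are those with $H' \wedge \Ggp^\W \isom H$, not those with $H' \isom H$. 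The repair is immediate: since $(\Fgp')^\W = (\Fgp' \wedge R) \wedge \Ggp^\W$ depends only on $\Fgp' \wedge R$, the set $\{\Fgp' : (\Fgp')^\W \isom H\}$ is still a union of fibers of your retraction (just not the ones you named), and your argument already shows every fiber contributes zero. The paper avoids this detour by running the M\"obius inversion directly on $[H', \Ggp^\W]$ for each $\Ggp$-quotient $H'$ isomorphic to $H$, rather than on $[E, R]$; under the diamond isomorphism $[E^\W, \Ggp^\W] \cong [E, R]$ this is the same calculation you wrote.
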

\begin{proof}
Note that the sums here are finite because they are all over quotients of a fixed $G$.
We have that $\Ggp\ra E$ not relative-$\W$ implies that $\Ggp> \Ggp^\W \vee E$, and hence for  $K\leq \Ggp^\W$, also that $\Ggp>K \vee E$.
Also, by Lemma~\ref{L:Cvee} (3), for $\Fgp\leq \Ggp$, we have $\Fgp\wedge \Ggp^\W=\Fgp^\W$.
Thus, for any epimorphism $\Ggp^\W\ra \barHornot $,
\begin{align*}
0=&\sum_{\substack{K\in [\barHornot ,\Ggp^{\W}] }} \mu(\barHornot ,K) \sum_{\Fgp\in [K\vee E,\Ggp]} \mu(\Fgp,\Ggp) &\quad \textrm{ (inner sums vanish since $\Ggp>K\vee E$)}\\
=&\sum_{\substack{\Fgp\in [\barHornot \vee E,\Ggp] }} \mu(\Fgp,\Ggp) \sum_{\substack{K\in [\barHornot ,\Fgp\wedge \Ggp^\W] }} \mu(\barHornot ,K) &\quad \textrm{ (change order of summation)}\\
=&\sum_{\substack{\Fgp\in [\barHornot \vee E,\Ggp] \\\Fgp^\W=\barHornot  }} \mu(\Fgp,\Ggp) &\quad \textrm{ (inner sum vanishes unless $\Fgp^\W=\barHornot $)}.
\end{align*}
We will see that, summed over the structures on $\barHornot $ as a quotient of $\Ggp$ (i.e. over $\Sur(\Ggp,\barHornot )/\Aut(\barHornot )$), this final sum is precisely the sum in the lemma.

There is an $|\Aut(\Fgp)|-1$ correspondence between pairs $(\Fgp,\pi)$ of $\Fgp\in I$ and $\pi\in \Sur(\Ggp,\Fgp)$ and $\Fgp \leq \Ggp$.
Given $\pi$ and $\phi$, when $\Fgp\geq E$ then there is one choice of $\rho\in \Sur(\Fgp,E)$ such that $\rho\pi=\phi$, and otherwise there is no possible $\rho$ such that $\rho\pi=\phi$.  Finally, $\Fgp^\W=\barHornot $ in the sum above means that $\Fgp^\W$, as a quotient of $\Ggp$, is isomorphic to $\barHornot $, and so obtain the sum over all
$\Fgp\in I$ with $\Fgp^\W$ isomorphic to $\barHornot $ in $\category$, as in the lemma, we need to sum over all $\Ggp$-quotient structures on $\barHornot $ up to isomorphism.
\end{proof}

Now we will see that well-behaved moments will provide absolute convergence on more limited sums.
\begin{proposition}\label{P:makelocal}
Let $\bar{\W}\sub \W$ be narrow formations such that for every $G\in \W$, we have that $G\ra G^{\bar{\W}}$ is semisimple.
Let $E\in \category$ 
  and $\barHornot \in {\bar{\W}}$.
If  $(M_\Ggp)_\Ggp$ is behaved 
at a narrow formation $\tilde{\W}$ containing $\W$ and $E$, then
$$
\sum_{\Ggp\in I} \sum_{\substack{\Fgp\in I \\ \Fgp^{\bar{\W}}\isom\barHornot   }} \sum_{\pi\in \Sur(\Ggp,\Fgp)} \sum_{\substack{ \rho\in \Sur(\Fgp,E) \\ \rho\pi \textrm{ relative-$\W$} }}
 |T_\pi   m_G|
<\infty.
$$  
\end{proposition}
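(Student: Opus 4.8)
The plan is to reduce the sum to its semisimple part, transport the inner M\"obius-weighted sum into the lattice of quotients of $G^\W$ via the Diamond Isomorphism Theorem (where it decouples from the auxiliary choices), and then bound what is left against the behavedness of $(M_G)_G$ at $\tilde{\W}$.

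\emph{Reduction to $\tilde{\W}$ and to semisimple morphisms.} By Lemma~\ref{non-semisimple-vanishing}, $\mu(\pi)=0$ unless $\pi$ is semisimple, so only semisimple $\pi$ contribute. If a term is nonzero, then some $\rho$ makes $\rho\pi\colon G\to E$ relative-$\W$, which by definition says that in the lattice of quotients of $G$ the join of $G^\W$ with the quotient $E$ defined by $\rho\pi$ equals $G$; since $G^\W\in\W\subseteq\tilde{\W}$, that quotient is isomorphic to $E\in\tilde{\W}$, and $\tilde{\W}$ is join-closed, we conclude $G\in\tilde{\W}$, and hence $F\in\tilde{\W}$ as well. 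As $\tilde{\W}$ is a narrow formation it has only countably many isomorphism classes (Lemma~\ref{dim-C-finite} and Definition~\ref{D:diamond}(4)), so only countably many terms of the sum are nonzero.

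\emph{Descent to $G^\W$.} Fix $G\in\tilde{\W}$. Grouping the $\pi\in\Sur(G,F)$ into $\Aut(F)$-orbits --- equivalently, into isomorphism classes of quotients $F'\le G$, on which $\mu(\pi)$ is constant --- and recording each $\rho$ by the quotient $E':=\Img(\rho)\le F'$, the sum over $(F,\pi,\rho)$ with this $G$ becomes, up to the harmless constant $|\Aut(E)|$,
\[
\frac{|M_G|}{|\Aut(G)|}\sum_{\substack{E'\le G,\ E'\isom E\\ G\to E'\ \text{rel-}\W}}\ \sum_{\substack{F'\in[E',G],\ G\to F'\ \text{ss}\\ (F')^{\bar{\W}}\isom\barHornot}}|\mu(F',G)|.
\]
Since $G\to E'$ being relative-$\W$ means $G=G^\W\vee_G E'$, Lemma~\ref{dit} gives a lattice isomorphism $[E',G]\isom[(E')^\W,G^\W]$ carrying $F'\mapsto K:=F'\wedge G^\W=(F')^\W$ (using Lemma~\ref{L:Cvee}), under which $\mu(F',G)=\mu(K,G^\W)$, $(F')^{\bar{\W}}=K^{\bar{\W}}$, and "$G\to F'$ semisimple" becomes "$G^\W\to K$ semisimple". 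Dropping the lower bound, the inner sum is at most
\[
Z_G:=\sum_{\substack{K\le G^\W,\ G^\W\to K\ \text{ss}\\ K^{\bar{\W}}\isom\barHornot}}|\mu(K,G^\W)|,
\]
which is independent of $E'$; bounding $\#\{E'\le G:E'\isom E\}$ by $|\Sur(G,E)|$ then reduces the proposition to proving $\sum_{G\in\tilde{\W}}\frac{|M_G|\,|\Sur(G,E)|}{|\Aut(G)|}\,Z_G<\infty$.

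\emph{The main obstacle.} This last bound is where all the real work lies. Each $K$ contributing to $Z_G$ lies in $\W$, so $K\to K^{\bar{\W}}$ is semisimple by hypothesis, $K^{\bar{\W}}$ is a quotient of $G^{\bar{\W}}$ isomorphic to $\barHornot$, and $G^\W\to(G^\W)^{\bar{\W}}=G^{\bar{\W}}$ is semisimple since $G^\W\in\W$. Fixing a quotient $\barHornot'\le G^{\bar{\W}}$ with $\barHornot'\isom\barHornot$, the $K$'s with $K^{\bar{\W}}=\barHornot'$ form a subset of $\{K\ge\barHornot':G^\W\to K\ \text{ss}\}$, a nonempty meet-closed set (semisimple morphisms are closed under meet by Lemma~\ref{semisimple-equivalence}), which therefore has a least element $K_0$, with $G^\W\to K_0$ semisimple; Lemma~\ref{Z-to-convolution}(1) then bounds $\sum_{K\in[K_0,G^\W]}|\mu(K,G^\W)|$ by $|\mu(K_0,G^\W)|\,Z(G^\W\to K_0)^3$. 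The remaining and most delicate point is to assemble these estimates --- summed over $G\in\tilde{\W}$ against $|M_G|\,|\Sur(G,E)|/|\Aut(G)|$ --- into a finite linear combination of the sums $\sum_{G\in\tilde{\W}}\sum_{\pi^*\in\Epi(G,F^*)}\frac{|\mu(F^*,G)|}{|\Aut(G)|}Z(\pi^*)^a|M_G|$ that the behavedness of $(M_G)_G$ at $\tilde{\W}$ controls for each fixed $F^*\in\tilde{\W}$. The structural fact that makes this feasible is that the new data has bounded size: the object $D:=(F')^{\bar{\W}}\vee_{F'}E'$ satisfies $\dim D\le\dim\barHornot+\dim E$ and admits an epimorphism of dimension $\le\dim E$ onto a copy of $\barHornot$, so by Lemma~\ref{dim-C-finite} it ranges over a finite set $\mathcal{D}\subseteq\tilde{\W}$; refining the whole sum by the isomorphism class of $D$ localizes the problem to each of the finitely many $D\in\mathcal{D}$, after which a second application of Lemma~\ref{Z-to-convolution}(1), passing from the interval above $D$ (respectively above $K_0$) to its M\"obius endpoint, together with a comparison of $\mu(\,\cdot\,,G)$ with $\mu(\,\cdot\,,G^\W)$, reduces everything to the behavedness sums. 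It is precisely these two applications of Lemma~\ref{Z-to-convolution} that consume the $Z(\pi)^3$ in the definition of well-behaved; the improved exponents in Lemma~\ref{Z-to-convolution}(2),(3) are what allow the weaker "behaved" hypothesis to suffice in those cases.
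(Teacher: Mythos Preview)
Your reduction to semisimple $\pi$ and to $G\in\tilde{\W}$ is fine, and the lattice isomorphism $[E',G]\cong[(E')^\W,G^\W]$ via Lemma~\ref{dit} is correct. The difficulty is that the subsequent moves do not close.

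Once you drop the lower bound $K\ge(E')^\W$ and bound the number of admissible $E'$ by $|\Sur(G,E)|$, you have thrown away the coupling between the $E'$-sum and the M\"obius sum, and the resulting quantity $\sum_{G}\frac{|M_G|\,|\Sur(G,E)|}{|\Aut(G)|}Z_G$ is genuinely larger than what behavedness at $\tilde{\W}$ controls: the factor $|\Sur(G,E)|$ grows with $G$ and is not absorbed by any $Z(\pi)^a$. Your ``main obstacle'' paragraph then tries to recover by reintroducing $D=(F')^{\bar{\W}}\vee E'$, but $F'$ and $E'$ have already been integrated out; you cannot refine $Z_G$ by the isomorphism class of $D$ after the fact. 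Moreover, the step you call ``a second application of Lemma~\ref{Z-to-convolution}'' at $D$ presupposes that $G\to D$ is semisimple, which you never establish.

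The paper's argument avoids this by never decoupling $E'$ from $F'$. It works directly in the lattice of quotients of $G$ (not $G^\W$): the key structural claim is that $G\to F^{\bar{\W}}\vee_G E$ is semisimple, proved by combining the relative-$\W$ condition $G=G^\W\vee E$ with the hypothesis that $G^\W\to G^{\bar{\W}}$ is semisimple, via Lemma~\ref{dit} and Lemma~\ref{semisimple-equivalence}. Then $F^{\bar{\W}}\vee_G E$ ranges over finitely many isomorphism classes $K_1,\dots,K_t$ (by Lemma~\ref{dim-C-finite}), and for each $K_i$ and each $\psi\in\Sur(K_i,E)$ the datum $\rho$ is \emph{determined} by $\psi$ together with the factorization $G\to F\to K_i$, so the entire $(F,\pi,\rho)$-sum is bounded by $\sum_{\phi\in\Sur(G,K_i),\,\phi\text{ ss}}\sum_{F\in[K_i,G]}|\mu(F,G)|$. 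A single application of Lemma~\ref{Z-to-convolution} to the interval $[K_i,G]$ then yields exactly the behavedness sum at $K_i$. Your $D$ is the right object, but it must organize the sum from the outset rather than appear after a lossy reduction; and the proof that $G\to D$ is semisimple is the substantive step you are missing.
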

\begin{proof}
Given $\Ggp,\Fgp,\pi,\rho$ as in the sum,
by Lemma~\ref{non-semisimple-vanishing},  $\mu(\pi)=0$ unless $\pi$ is semi-simple, so we only consider terms with $\pi$ semi-simple.
We will work in the lattice of (isomorphism classes of) quotients of $\Ggp$, and our first goal is to show that $\Ggp \ra \Fgp^{\bar{\W}} \vee E$ is semisimple. 
By assumption, 
$\Ggp^\W\ra \Ggp^{\bar{\W}}$ is semisimple, and  Lemma~\ref{semisimple-equivalence} then implies
$\Ggp^\W\ra (E\vee  \Ggp^{\bar{\W}})\wedge \Ggp^\W$ is semisimple, since $\Ggp^{\bar{\W}}\leq (E\vee  \Ggp^{\bar{\W}})\wedge \Ggp^\W$.
We have $(E\vee \Ggp^{\bar{\W}}) \vee \Ggp^\W= E\vee \Ggp^\W=\Ggp$, where the last equality is from $\rho\pi$  relative-$\W$.
Then by the Diamond Isomorphism Theorem applied to $E\vee \Ggp^{\bar{\W}}$ and $\Ggp^\W$, 
we have $[ E\vee  \Ggp^{\bar{\W}},\Ggp]=[(E\vee  \Ggp^{\bar{\W}})\wedge \Ggp^\W,\Ggp^\W ]$, and thus $\Ggp\ra E\vee  \Ggp^{\bar{\W}}$ is semisimple.
Since $E\vee  \Ggp^{\bar{\W}}$ and $\Fgp$ are both (as $\Ggp$-quotients) the meet of quotients whose morphism from $\Ggp$ is simple
(by  Lemma~\ref{semisimple-equivalence}), the same is true for $(E\vee  \Ggp^{\bar{\W}}) \wedge \Fgp$.
We have
\begin{align*}
\Fgp^{\bar{\W}}  \vee E &= (\Ggp^{\bar{\W}} \wedge \Fgp) \vee E &\textrm{(Lemma~\ref{L:Cvee} (3))}\\
&= (E \vee \Ggp^{\bar{\W}}) \wedge \Fgp &\textrm{(modular property)}.
\end{align*}
Thus $\Ggp \ra \Fgp^{\bar{\W}} \vee E$ is semisimple.

We claim that, given $E$ and $\barHornot $, there are only finitely many (isomorphism classes) of objects that can be $\Fgp^{\bar{\W}} \vee_\Ggp E$ for some $G$, 
with $G,F,\pi,\rho$ as in the sum.
We can consider the level $\C$ generated by $\{\barHornot ,E\}$, and note any $\Fgp^{\bar{\W}} \vee_\Ggp E\in \C$.
By the Diamond Isomorphism Theorem, we have that $\dim (\Fgp^{\bar{\W}} \vee_\Ggp E \ra E) \leq
\dim \Fgp^{\bar{\W}} =
 \dim \barHornot $.
By Lemma~\ref{dim-C-finite}, there are at most finitely many (isomorphism classes of) $K\in  \C$ with 
an epimorphism $K\ra  E$ of dimension at most $\dim \barHornot $, which proves the claim. 
Let $K_1,\dots,K_t$ be objects of $\category$, ones from each such isomorphism class of $K$, and note that $\Sur(K_i,E)$ is finite.

Since $\rho\pi$ is relative-$\W$, we have that  $\Ggp\in \tilde{\W}.$
Then
{\allowdisplaybreaks\begin{align*}
&\sum_{\Ggp\in I} \sum_{\substack{\Fgp\in I \\ \Fgp^{\bar{\W}}\isom\barHornot   }} \sum_{\pi\in \Sur(\Ggp,\Fgp)} \sum_{\substack{ \rho\in \Sur(\Fgp,E) \\ 
\rho\pi \textrm{ relative-$\W$} }}
|T_\pi m_G| \\
\leq &
\sum_{i=1}^t 
\sum_{\psi \in \Sur(K_i,E)}
\sum_{\Ggp\in \tilde{\W}} 
\sum_{\substack{\Fgp\in I   \\ \Fgp^{\bar{\W}}\isom\barHornot  }} 
\sum_{\pi\in \Sur(\Ggp,\Fgp)} \sum_{\substack{ \rho\in \Sur(\Fgp,E) \\
G\ra \Fgp^{\bar{\W}} \vee_G E \textrm{ semisimple}}}
\sum_{\substack{\phi \in \Sur(G, K_i) 
\\ G\ra \Fgp^{\bar{\W}} \vee_G E \textrm{ and } \phi
\\ \textrm{isom $G$-quotients},\\
\psi \phi =\rho\pi
}}
|T_\pi m_G| \\
\leq &
\sum_{i=1}^t 
\sum_{\psi \in \Sur(K_i,E)}
\sum_{\Ggp\in \tilde{\W}} 
\sum_{\substack{\phi \in \Sur(G, K_i) \\
\phi \textrm{ semisimple}
}}
\sum_{\substack{\Fgp\in I    }} 
\sum_{\substack{\pi\in \Sur(\Ggp,\Fgp)\\
\phi=\kappa\pi \textrm{ for some } \kappa}} 
|T_\pi m_G| \\
= &
\sum_{i=1}^t 
\sum_{\psi \in \Sur(K_i,E)}
\sum_{\Ggp\in \tilde{\W}} 
\sum_{\substack{\phi \in \Sur(G, K_i) \\
\phi \textrm{ semisimple}
}}
\sum_{\substack{\Fgp\in [K_i,G]    }} 
\frac{ \left| \mu(F,G)M_\Ggp\right|}{|\Aut(\Ggp)|}\\
\ll &
\sum_{i=1}^t 
\sum_{\psi \in \Sur(K_i,E)}
\sum_{\Ggp\in \tilde{\W}} 
\sum_{\substack{\phi \in \Sur(G, K_i) \\
\phi \textrm{ semisimple}
}}
\frac{ \left| \mu(K_i,G)Z(\phi)^aM_\Ggp\right|}{|\Aut(\Ggp)|}.
\end{align*}}
In the second inequality, we can drop the sum over $\rho$ because $\rho$ is determined as $\psi \kappa$ (and $\kappa$ is unique since $\pi$ is an epimorphism).
The last inequality is by Lemma~\ref{Z-to-convolution}, where $a$ is as in the definition of behaved.
The implicit constant in the $\ll$ depends on the $K_i$ and $\tilde{\W}$, which in turns depends on only $\W$, $H$, and $E$.
The final sum converges by the definition of behaved and the fact that the first two sums are finite.
\end{proof}

We combine Lemma~\ref{L:relC} and Proposition~\ref{P:makelocal} to give a general result from which (Existence) will follow.
\begin{proposition}\label{P:twist}
Let $\W$ be a narrow formation of finite complexity.

  Let $E\in \W$.
If  $(M_G)_G$ is behaved at $\W$ and  $v_{{\W},H}\geq 0$  for all $H\in \W$, then
$$
 \sum_{\substack{H\in \W}} |\Sur(H,E)| v_{\W,H}=M_E.
$$
\end{proposition}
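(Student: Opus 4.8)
Here is how I would prove Proposition~\ref{P:twist}.

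The plan is to induct on the complexity $n$ of $\W$ (finite by hypothesis), where $\W_k := \V_k \cap \W$ runs through the chain of subformations $\W_0 \subset \W_1 \subset \dots \subset \W_n = \W$ guaranteed by Lemma~\ref{L:complexity}. Writing $m_G = M_G/|\Aut(G)|$ and $T_\pi = \mu(\pi)/|\Aut(F)|$ for $\pi\colon G\to F$, the assertion is $\sum_{H\in\W}|\Sur(H,E)|\,v_{\W,H} = M_E$, where $v_{\W,H} = \sum_{G\in\W}\sum_{\pi\in\Sur(G,H)}T_\pi m_G$. The one soft fact used throughout is the hypothesis $v_{\W,H}\ge 0$: once a sum has been grouped over $H$ (or over $H^{\bar\W}$, for a subformation $\bar\W$) all its terms are nonnegative, so regrouping within it is unconstrained. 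The genuinely dangerous manipulations are those interchanging the $G$-variable with the $H$-variable; these are legitimate only in cut-down form, via Proposition~\ref{P:makelocal} --- the unrestricted interchange really does diverge, by Example~\ref{no-absolute-convergence}. In the base case $n=0$, $\W$ consists of minimal objects: $|\Sur(H,E)|$ equals $|\Aut(E)|$ or $0$ according as $H\cong E$ or not, while $\hat\mu(H,G)$ vanishes unless $G\cong H$ and $\hat\mu(H,H)=1$, so $v_{\W,H}=m_H$ and the sum collapses to $|\Aut(E)|\,m_E = M_E$.

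For the inductive step, set $\bar\W := \W_{n-1}$, a narrow formation of complexity $\le n-1$; as a subformation of $\W$ it inherits narrowness and whichever structural condition of Lemma~\ref{Z-to-convolution} governs behavedness, so $(M_G)_G$ is again behaved at $\bar\W$, and by Lemma~\ref{L:complexity} every $G\in\W$ has $G\to G^{\bar\W}$ semisimple. Group $S:=\sum_{H\in\W}|\Sur(H,E)|\,v_{\W,H}$ by the $\bar\W$-quotient: $S = \sum_{\bar F\in\bar\W}C(\bar F)$ with $C(\bar F) := \sum_{H\in\W,\ H^{\bar\W}\cong\bar F}|\Sur(H,E)|\,v_{\W,H}\ge 0$. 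Inside a single chunk, expand $v_{\W,H}$ and reorganize the sum over $(H,\ \rho\in\Sur(H,E),\ G,\ \pi)$ into a sum over $G\in\W$, $\phi\in\Sur(G,E)$, and factorizations $\phi=\rho\pi$; this is valid because Proposition~\ref{P:makelocal}, applied with $\bar\W\subset\W$, $\tilde\W=\W$, and the object $E\in\W$, makes the corresponding sum of absolute values finite. By Lemma~\ref{L:relC} every $\phi$ that is not relative-$\bar\W$ contributes zero. Re-running the double-Möbius identity in the proof of Lemma~\ref{L:relC} in the relative case --- now only the top term $K=G^{\bar\W}$ survives --- gives, for $\phi$ relative-$\bar\W$ and each quotient $\bar F'$ of $G$ lying in $[E^{\bar\W},G^{\bar\W}]$ (with $E$ regarded as a quotient of $G$ via $\phi$),
$$\sum_{\substack{F'\in[E,G],\ F'\ge E\\ F'\wedge G^{\bar\W}=\bar F'}}\mu(F',G)=\mu(\bar F',G^{\bar\W}),$$
so that
$$C(\bar F)=\sum_{G\in\W}m_G\sum_{\substack{\phi\in\Sur(G,E)\\ \phi\ \text{rel.}\ \bar\W}}\ \sum_{\substack{\bar F'\in[E^{\bar\W},G^{\bar\W}]\\ \bar F'\cong\bar F}}\mu(\bar F',G^{\bar\W}),$$
an expression involving only Möbius numbers of quotients of $G^{\bar\W},E^{\bar\W}\in\bar\W$.

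It remains to add up the chunks and recognize $S$ as an instance of the proposition one complexity level lower. Two ingredients: first, Lemma~\ref{L:relC} with $E$ taken to be the minimal quotient (so the $\rho$- and $\phi$-sums are trivial) gives $\sum_{H:\,H^{\bar\W}\cong\bar F}\hat\mu(H,G)=\hat\mu(\bar F,G)$ for $G\in\bar\W$ and $0$ otherwise, and combined with the absolute convergence from Proposition~\ref{P:makelocal} this yields the pushforward identity $v_{\bar\W,\bar F}=\sum_{H\in\W,\ H^{\bar\W}\cong\bar F}v_{\W,H}$, whence $v_{\bar\W,\cdot}\ge 0$ and the inductive hypothesis applies to $\bar\W$; second, the displayed formula for $C(\bar F)$ is exactly the shape of the left-hand side one level down. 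Concretely, I would iterate the reduction of the previous paragraph once per complexity level, producing a chain $S=\Sigma_n=\Sigma_{n-1}=\cdots=\Sigma_0$ in which $\Sigma_k$ is assembled from Möbius numbers of quotients of $G^{\W_k}$ and $E^{\W_k}$, each step $\Sigma_k=\Sigma_{k-1}$ being the one-level reduction above (justified chunk-by-chunk by Proposition~\ref{P:makelocal}); at $k=0$ the objects $G^{\W_0},E^{\W_0}$ are minimal, have only themselves as quotients, and being relative-$\W_0$ forces $G\cong E$ with $\phi\in\Aut(E)$, so $\Sigma_0 = m_E|\Aut(E)| = M_E$.

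The main obstacle is precisely this bookkeeping. Every interchange we want is false in full generality, so each must be performed only after the sum has been restricted --- by fixing a $\bar\W$-quotient and, through Lemma~\ref{L:relC}, discarding all but the relative-$\bar\W$ (equivalently, semisimple-onto-$G^{\bar\W}$) morphisms --- until Proposition~\ref{P:makelocal} certifies absolute convergence; and one must carry the target object $E$ and its successive quotients $E^{\W_k}$ correctly through all $n$ layers of the reduction. Once the correct restricted sums have been isolated, the Möbius computations themselves are routine consequences of the modular-lattice results of Section~\ref{S:prelim} and the identity \eqref{E:Mobdef}.
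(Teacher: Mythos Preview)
Your approach is correct and essentially the same as the paper's: both iterate through the complexity chain $\W_0\subset\cdots\subset\W_n=\W$, at each step using Proposition~\ref{P:makelocal} for absolute convergence inside a chunk, Lemma~\ref{L:relC} to discard non-relative morphisms, and nonnegativity of the chunks (propagated down from $v_{\W,H}\ge 0$) to justify regrouping --- the paper just packages the iterate as the quantities $\sS_{H,i,j}$. One caution on exposition: your ``inductive hypothesis applies to $\bar\W$'' and the pushforward identity $v_{\bar\W,\bar F}=\sum_{H}v_{\W,H}$ are tangential, because $E$ need not lie in $\bar\W$ and so the proposition at $\bar\W$ cannot be invoked with target $E$ (nor is $C(\bar F)$ literally $|\Sur(\bar F,\text{?})|\,v_{\bar\W,\bar F}$); what actually carries the proof is the direct iteration you describe at the end.
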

\begin{proof}
For all $i$, let $\W_i$ be the formation of elements of $\W$ of complexity $\leq i$ (see Lemma~\ref{L:complexity}).
Let $t$ be such that $\W_t=\W$.
For any $i$ and $j=i,i+1$, we let
$$
\sS_{H,i,j}:=\sum_{\ggp\in I} \sum_{\substack{\fgp\in I \\ \fgp^{\W_i}\isom H  }} 
\sum_{\rho\in \Sur(\fgp,E)} \sum_{\substack{ \pi\in \Sur(\ggp,\fgp)\\ \rho\pi \textrm{ relative-}\W_j }}
T_\pi m_G.
$$
We have
\begin{align*}
\sum_{\substack{H\in \W_t\\H^{\W_0}\isom E^{\W_0}}} \sS_{H,t,t}&=
\sum_{\substack{H\in \W\\H^{\W_0}\isom E^{\W_0}}}
\sum_{\ggp\in \W} \sum_{\substack{\fgp\in I \\ \fgp^{\W}\isom H  }} 
\sum_{\rho\in \Sur(\fgp,E)} \sum_{\substack{ \pi\in \Sur(\ggp,\fgp)}}
T_\pi m_G\\
&=
\sum_{\substack{H\in \W}} |\Sur(H,E)|
\sum_{\ggp\in \W}
 \sum_{\substack{ \pi\in \Sur(\ggp,H)}}
T_\pi m_G\\
&= \sum_{\substack{H\in \W}} |\Sur(H,E)| v_{\W,H}.
\end{align*}
by Lemma~\ref{L:relE}, and since $G\in \W$ implies that $F^\W=F$ for a quotient $F$ of $G$, and since $H^{\W_0}\not\isom E^{\W_0}$ implies that $ |\Sur(H,E)|=0$.
One the other hand, 
\begin{align*}
\sum_{\substack{H\in \W_0 \\H^{\W_0}\isom E^{\W_0}}}  \sS_{H,0,0}&= 
\sum_{\ggp\in I} \sum_{\substack{\fgp\in I \\ \fgp^{\W_0}\isom E^{\W_0}  }} 
\sum_{\rho\in \Sur(\fgp,E)} \sum_{\substack{ \pi\in \Sur(\ggp,\fgp)\\ \rho\pi \textrm{ isom}}}
T_\pi m_G=M_E
\end{align*}
since a morphism is relative-$\W_0$ if and only if it is an isomorphism.
Thus to prove the proposition, it suffices to show that
$$
\sum_{\substack{H\in \W_t\\H^{\W_0}\isom E^{\W_0}}} \sS_{H,t,t} =
\sum_{\substack{H\in \W_0 \\H^{\W_0}\isom E^{\W_0}}}  \sS_{H,0,0}.$$

If $G\in \W_{i+1}$, then $G\ra G^{\W_{i}}$ is semisimple by Lemma~\ref{L:complexity}.
Then by Proposition~\ref{P:makelocal}, we have that the sum defining $\sS_{H,i,j}$ is absolutely convergent.
Using Lemma~\ref{L:relC}, we can subtract from $\sS_{H,i,i+1}$ the terms such that 
$\rho\pi $ is not relative-$\W_{i}$ and conclude that 
\begin{equation}\label{E:leveldown}
\sS_{H,i,i+1}=\sS_{H,i,i}.
\end{equation}
Since the sum defining $\sS_{H,i,i+1}$ is absolutely convergent, we can rearrange the order of summation to obtain
\begin{equation*}
\sum_{\ggp\in I} \sum_{\substack{\fgp\in I \\ \fgp^{\W_{i}}\isom H  }} 
\sum_{\rho\in \Sur(\fgp,E)} \sum_{\substack{ \pi\in \Sur(\ggp,\fgp)\\ \rho\pi \textrm{ relative-}\W_{i+1} }}
 T_\pi   m_\ggp=
\sum_{\substack{H'\in {\W_{i+1}}\\{H'}^{\W_{i}}\isom H}}
\sum_{\ggp\in I} \sum_{\substack{\fgp\in I \\ \fgp^{\W_{i+1}}\isom H'  }} 
\sum_{\rho\in \Sur(\fgp,E)} \sum_{\substack{ \pi\in \Sur(\ggp,\fgp)\\ \rho\pi \textrm{ relative-  }\W_{i+1} }}
 T_\pi   m_\ggp,
 \end{equation*}
i.e. 
\begin{equation}\label{E:moveF}
\sS_{H,i,i+1}= \sum_{\substack{H'\in {\W_{i+1}}\\(H')^{\W_{i}}\isom H}} \sS_{H',i+1,i+1}.
\end{equation}
We have
\begin{align*}
\sS_{H,t,t}&=\sum_{\ggp\in {\W}} \sum_{\substack{\fgp\in I \\ \fgp^{\W}\isom H  }} 
\sum_{\rho\in \Sur(\fgp,E)} \sum_{\substack{ \pi\in \Sur(\ggp,\fgp)}}
  T_\pi  m_\ggp  &\textrm{(Lemma~\ref{L:relE})}\\
  &=\sum_{\substack{\fgp\in I \\ \fgp^{\W}\isom H  }} 
\sum_{\rho\in \Sur(\fgp,E)} \sum_{\ggp\in {\W}} 
  T_{\ggp,\fgp}  m_\ggp  &\textrm{(absolute convergence})\\
  &\geq 0 &(v_{{\W},F}\geq 0).
\end{align*}
Using ~\eqref{E:moveF} and \eqref{E:leveldown}, we inductively 
(down from $t$)
see that for all $i$, we have $\sS_{H,i,i},\sS_{H,i,i+1}\geq 0$.
This non-negativity  allows us to rearrange the following sum and obtain, 
\begin{equation}\label{E:delH}
  \sum_{\substack{H'\in {\W_{i+1}}\\(H')^{{\W_0}}\isom E^{\W_0}}}
\sS_{H',i+1,i+1}=
   \sum_{\substack{H\in {\W_{i}}\\H^{{\W_0}}\isom E^{\W_0}}}
\sum_{\substack{H'\in {\W_{i+1}}\\(H')^{\W_{i}}\isom H}}
\sS_{H',i+1,i+1}.
\end{equation}
If we expand  $\sS_{H',i+1,i+1}$ with its definition, the resulting sums in \eqref{E:delH} are not necessarily absolutely convergent, as we can see in Example~\ref{no-absolute-convergence}. This requires us to take a great deal of care in this argument.  However,  the sums in \eqref{E:delH} are sums of the same non-negative terms
$\sS_{H',i+1,i+1}$,
and thus the sums both either equal some non-negative number or $\infty$.
Combining \eqref{E:delH},  \eqref{E:moveF},  \eqref{E:leveldown},  we obtain
\begin{equation}\label{E:indstep}
  \sum_{\substack{H'\in {\W_{i+1}}\\(H')^{{\W_0}}\isom E^{\W_0}}}
\sS_{H',i+1,i+1}=
   \sum_{\substack{H\in {\W_{i}}\\H^{{\W_0}}\isom E^{\W_0}}}
\sS_{H,i,i}.
\end{equation}
From \eqref{E:indstep}, we see inductively that for any $n$, we have 
\begin{equation}\label{E:indcomp}
  \sum_{\substack{H\in {\W_{n}}\\H^{{\W_0}}\isom E^{\W_0}}}
\sS_{H,n,n}=
   \sum_{\substack{H\in {\W_{0}}\\H^{{\W_0}}\isom E^{\W_0}}}
\sS_{H',0,0},
\end{equation}
which completes the proof of the proposition.
\end{proof}

We now can prove the ``if'' part of (Existence), which we generalize here.
\begin{theorem}\label{amc-23}
Let  $\category$ be a \Diamantine category,  $\W$ be a narrow formation of finite complexity, and  $(M_G)_G\in \R^\W$  be behaved at $\W$ 
with $v_{\W,F}\geq 0$ for all $F\in \W$.
Then     $ \nu_\W(\{F\})=v_{\W,F}$ defines a measure $\nu_\W$ on $\W$ (for the discrete $\sigma$-algebra)
 with moments $(M_G)_G$.
 Further, for any formation $\W'\sub \W$,  there exists a measure on $\W'$ (for the discrete $\sigma$-algebra)
 with moments $(M_G)_{G}\in \W'$.
\end{theorem}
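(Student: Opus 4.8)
The first statement should be essentially a repackaging of Proposition~\ref{P:twist}, which has already been proved. Since by hypothesis $v_{\W,F}\ge 0$ for every $F\in\W$, the rule $\nu_\W(\{F\})=v_{\W,F}$ does define a measure on $\W$ for the discrete $\sigma$-algebra (the set $\W_{/\isom}$ is countable, as $\W$ is a narrow formation of finite complexity, and we allow the total mass to be infinite). The $G$-moment of this measure, for $G\in\W$, is by definition $\sum_{F\in\W}|\Sur(F,G)|\,v_{\W,F}$, which is precisely the sum appearing in Proposition~\ref{P:twist} with $E=G$; hence it equals $M_G$. So the only thing to do for the first part is to invoke Proposition~\ref{P:twist}.

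For the second statement, the plan is to obtain the desired measure on $\W'$ as the pushforward of $\nu_\W$ along the $\W'$-quotient map $q\colon\W_{/\isom}\to\W'_{/\isom}$, $F\mapsto F^{\W'}$. First I would check that $q$ is well defined: each $F\in\W$ has only finitely many quotients, so the join $F^{\W'}$ of those lying in $\W'$ exists, and it lies in $\W'$ because $\W'$ is downward- and join-closed; moreover $F^{\W'}$ depends only on the isomorphism class of $F$. One then sets $\nu_{\W'}:=q_*\nu_\W$, i.e.\ $\nu_{\W'}(\{F'\})=\nu_\W(\{F\in\W: F^{\W'}\isom F'\})$ for $F'\in\W'$; being the pushforward of a measure, $\nu_{\W'}$ is automatically a measure on $\W'$ for the discrete $\sigma$-algebra.

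It then remains to compute the moments of $\nu_{\W'}$, and here the only input needed is the elementary observation that for $G\in\W'$ and $F\in\W$, precomposition with the epimorphism $\pi^F_{F^{\W'}}\colon F\to F^{\W'}$ gives a bijection $\Sur(F^{\W'},G)\cong\Sur(F,G)$: injectivity holds because $\pi^F_{F^{\W'}}$ is an epimorphism, and surjectivity because any epimorphism $F\to G$ exhibits $G$ as a quotient of $F$ lying in $\W'$, hence $\le F^{\W'}$ in the lattice of quotients of $F$, so it factors through $\pi^F_{F^{\W'}}$ (this is the same argument used in the introduction to define $|\Sur(X,G)|$ for pro-isomorphism classes). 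Using that all terms are non-negative to rearrange the double sum freely, this yields
\[
\int_{F'\in\W'}|\Sur(F',G)|\,d\nu_{\W'}
=\int_{F\in\W}|\Sur(F^{\W'},G)|\,d\nu_\W
=\int_{F\in\W}|\Sur(F,G)|\,d\nu_\W
=M_G,
\]
the last step by the first part of the theorem, using $G\in\W'\subseteq\W$.

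I do not expect any real obstacle here: all the analytic difficulty is concentrated in Proposition~\ref{P:twist}, which is already in hand, so the main step is simply to quote it correctly with $E=G$. What is left is bookkeeping --- well-definedness and isomorphism-invariance of $q$, the harmless rearrangement of a sum of non-negative terms, and the factoring bijection above. The one mild subtlety to keep in mind is that $\nu_\W$, and hence $\nu_{\W'}$, need not be a finite measure; this causes no difficulty, since the $G$-moments are finite real numbers by Proposition~\ref{P:twist} and the pushforward construction is valid for arbitrary measures.
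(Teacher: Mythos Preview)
Your proposal is correct and follows essentially the same approach as the paper's proof: invoke Proposition~\ref{P:twist} with $E=G$ for the first part, and for the second part take the pushforward of $\nu_\W$ along $F\mapsto F^{\W'}$ using the bijection $\Sur(F^{\W'},G)\cong\Sur(F,G)$ for $G\in\W'$. Your version is somewhat more explicit about the bookkeeping (countability, well-definedness of $q$, the factoring bijection), but the structure is identical to the paper's.
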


\begin{proof} Specifying $\nu_{\W}( \{F\})$ for each $F$ in fact specifies the measure $\nu_\W$ since the underlying set is countable.  
Proposition~\ref{P:twist} immediately implies that $\nu_\W$ has moments $(M_G)_G$.
Since for $E\in \W'$ and $G\in W$, we have $|\Epi(G,E)|=|\Epi(G^{\W'},E)|$, the pushforward of 
$\nu_\W$ from $\W$ to $\W'$ along the map $ G\mapsto G^{\W'}$ gives a measure on $\W'$
with moments $(M_G)_{G}\in \W'$.
\end{proof}

Here is an example showing that the ``finite complexity" hypothesis is necessary in Theorem \ref{amc-23}.

\begin{example} Let $\C$ be the category of finite abelian groups and let $\W$ be the formation of abelian $p$-groups, which is narrow but not of finite complexity. By $\mathbb Z_p$, we mean the $p$-adic integers, viewed as a profinite group under addition. Let $M_G = |\Epi(\mathbb Z_p, G)|$ for all $G$, i.e. $M_{\mathbb Z/p^n} = p^n-p^{n-1}$ and $M_G =0$ for all non-cyclic $G$. Then the tuple $M_G (G)$ is well-behaved (in fact, the sum appearing in the definition of well-behaved is finite). However, $v_{\W, F}=0$ for all $F$, so the measure $ \nu_\W(\{F\})=v_{\W,F}$  has moments vanishing rather than matching $M_G$. \end{example}

\subsection{Proof of (Robustness)}

Now we prove (Robustness), which will eventually follow from (Uniqueness) after some delicate limit switching arguments.  Note that for Robustness
it is necessary that we work at a  formation with some boundedness conditions
and not the whole category (see \cite[Example 6.14]{Wang2021}).

We begin with a simple inequality, which we next use to prove a  technical lemma to provide us bounds we will need for exchanging limits and sums.

\begin{lemma}\label{semisimple-lower-bound} Let $\pi \colon G\to F$ be a semisimple morphism of dimension $d$. For any $k$ from $0$ to $d$, the number of $H\in [F, G]$ with the dimension of $H \to F$ equal to $k$ is at least $\binom{d}{k}$. \end{lemma}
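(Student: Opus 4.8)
The plan is to argue entirely inside the lattice $L := [F,G]$. Since $\pi$ is semisimple, $L$ is a finite complemented modular lattice, of length $d = \dim\pi$. For $H\in L$ write $r(H)$ for the length of the interval $[F,H]$; this is well-defined because $L$ is modular (exactly as in the definition of dimension above), and $r(H) = \dim(H\to F)$, with $r(F)=0$ and $r(G)=d$. So the lemma amounts to the purely lattice-theoretic assertion that a complemented modular lattice of length $d$ has at least $\binom{d}{k}$ elements of height $k$.

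I would use two standard facts. First, $L$ is \emph{atomistic}: for each $x\in L$ the interval $[F,x]$ is complemented (Lemma~\ref{complemented-equivalence}), hence $x$ is the join of the atoms of $L$ lying below it. Second, the height function of a modular lattice satisfies $r(x\vee y) + r(x\wedge y) = r(x) + r(y)$, and in particular $r(x\vee y) \le r(x) + r(y)$. Using these I would construct an independent family of $d$ atoms $a_1,\dots,a_d$ greedily: given atoms $a_1,\dots,a_j$ with $x := a_1\vee\cdots\vee a_j$ of height $j < d$, the element $x$ is not the top $G$, so by atomisticity some atom $a_{j+1}$ satisfies $a_{j+1} \not\le x$; then $x\wedge a_{j+1} = F$, whence the modular identity gives $r(x\vee a_{j+1}) = j+1$. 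After $d$ steps one has $\bigvee_{i=1}^d a_i = G$ and, more importantly, $r\big(\bigvee_{i\in T} a_i\big) = |T|$ for \emph{every} $T\subseteq\{1,\dots,d\}$: the bound $\le$ is immediate from submodularity, and $d = r(G) = r\big(\bigvee_{i\in T}a_i \vee \bigvee_{i\notin T}a_i\big) \le r\big(\bigvee_{i\in T}a_i\big) + (d-|T|)$ gives the reverse.

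Finally, for each $k$-element subset $S\subseteq\{1,\dots,d\}$ I set $H_S := \bigvee_{i\in S} a_i \in L$, so that $\dim(H_S\to F) = r(H_S) = k$. The map $S\mapsto H_S$ is injective on $k$-subsets: if $H_S = H_{S'}$ for distinct $k$-subsets $S \ne S'$, pick $j\in S\setminus S'$; then $a_j \le H_{S'}$, so $\bigvee_{i\in S'\cup\{j\}}a_i = H_{S'}$ has height $k$, contradicting that it should have height $k+1$. Hence there are at least $\binom{d}{k}$ elements $H\in[F,G]$ with $\dim(H\to F) = k$. I expect no genuine obstacle here; the only points requiring care are the identification of $\dim(\,\cdot\,\to F)$ with an honest height function and the bookkeeping in the greedy step. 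If one prefers to avoid this matroid-style argument, an alternative is to apply Theorem~\ref{fcml-classification} to write $L$ as a product of the three listed lattices with lengths summing to $d$, verify in each of the three cases that a factor of length $n$ has at least $\binom{n}{j}$ elements of height $j$ (for subspace lattices this is the inequality $\binom{n}{j}_q\ge\binom{n}{j}$), and combine the factors via Vandermonde's identity.
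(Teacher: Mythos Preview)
Your main argument is correct and genuinely different from the paper's. The paper invokes the classification Theorem~\ref{fcml-classification}: since the bound is preserved under products of lattices (via Vandermonde), one reduces to the three atomic cases, where the count of height-$k$ elements is a $q$-binomial coefficient (or its analogue), and $\binom{n}{k}_q \ge \binom{n}{k}$ for $q\ge 1$. This is exactly the alternative you sketch at the end.

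Your primary route---building a maximal independent family of atoms by a greedy argument and then using the rank identity $r\big(\bigvee_{i\in T}a_i\big)=|T|$ to show that $S\mapsto H_S$ is injective---is more elementary: it uses only that $L$ is complemented (hence atomistic, via Lemma~\ref{complemented-equivalence}) and the modular rank law, and in particular bypasses the structure theorem entirely. The paper's route is shorter because the classification is already in hand; yours would survive even without it, and makes transparent that the lemma is really a statement about simple matroids of rank $d$.
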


\begin{proof} This follows from the classification Theorem \ref{fcml-classification}, because if the lower bound is true for lattices $L_1,L_2$ then it is true for their product, so it suffices to handle the three cases of Theorem \ref{fcml-classification}, which each follow from the fact that the $q$-binomial coefficients bound the ordinary binomial coefficients for $q \geq 1$. 
 \end{proof}

\begin{remark}
 Lemma \ref{semisimple-lower-bound} could in many cases be replaced by a more exact calculation.  For example, in the 
  case of groups one can exactly count the number of normal subgroups intermediate between a kernel of a semisimple morphism $\pi $ and the trivial subgroup. 
In this case, Lemma \ref{semisimple-lower-bound} is sharp when the kernel is a product of nonabelian finite simple $G$-groups, but is usually not sharp in the abelian case.
\end{remark}

\begin{lemma}\label{dominance}  Let $\W$ be a narrow formation.
For all $F \in \W$ and $n \in \mathbb N$,  let $p_{\W,F}^n \geq 0$  be a real number. Suppose that, for all $G \in \W$, we have
\begin{equation}\label{mom-soft-bound}
 \sup_{n\in \mathbb N} \sum_{F\in \W} S_{F,G} p_{\W,F}^n <\infty.
\end{equation}
Then, for all $H\in \W$, we have
$$
 \sum_{d \in \mathbb N}  \sup_{n \in \mathbb N} \Bigl( \sum_{F \in\W}    \sum_{  \substack{ \rho\in \Sur(F , H) \\ \dim  \operatorname{ss}(\rho ) =d }} 
 p_{ \W,F}^n
 \Bigr) < \infty,$$
 where $\operatorname{ss}(\rho)$ denotes the semisimplification of $\rho$, the maximal semisimple $K\ra H$ through which $\rho$ factors.
\end{lemma}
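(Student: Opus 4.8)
The plan is to deduce the claimed convergence from hypothesis \eqref{mom-soft-bound} applied to only finitely many objects, using the binomial lower bound of Lemma~\ref{semisimple-lower-bound} to manufacture the decay in $d$ that lets one sum over $d$. For $G\in\W$ write $\Sigma(G):=\sup_{n}\sum_{F\in\W}S_{F,G}p^n_{\W,F}$, which is finite by \eqref{mom-soft-bound}, and for each $d\in\mathbb N$ set
\[ A_d:=\sup_{n}\Bigl(\sum_{F\in\W}\ \sum_{\substack{\rho\in\Sur(F,H)\\ \dim\operatorname{ss}(\rho)=d}} p^n_{\W,F}\Bigr), \]
so the goal is to show $\sum_{d}A_d<\infty$. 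The case $d=0$ is immediate: $\dim\operatorname{ss}(\rho)=0$ means the semisimplification of $\rho$ is an isomorphism, which forces $F\cong H$ and $\rho\in\Aut(H)$, whence $A_0\le|\Aut(H)|\,\Sigma(H)$.

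Now fix $d\ge 1$, an index $n$, and a pair $(F,\rho)$ with $F\in\W$ and $\dim\operatorname{ss}(\rho)=d$; let $K_\rho$ be the source of $\operatorname{ss}(\rho)$, so $K_\rho\to H$ is semisimple of dimension $d$ and $\rho$ factors as $F\to K_\rho\to H$. Fix $k\le d$. By Lemma~\ref{semisimple-lower-bound}, the interval $[H,K_\rho]$ contains at least $\binom{d}{k}$ elements $H'$ with $\dim(H'\to H)=k$; each such $H'$ is a quotient of $F$, hence lies in $\W$, the induced map $H'\to H$ is semisimple of dimension $k$ (Lemma~\ref{semisimple-equivalence}), and composition yields an epimorphism $\sigma_{H'}\colon F\to K_\rho\to H'$. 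The assignment $(F,\rho,H')\mapsto(F,\sigma_{H'},H')$ is injective, because from $(F,\sigma,H')$ one recovers $\rho$ as the composite of $\sigma$ with the canonical epimorphism $H'\to H$. Summing the nonnegative quantities $p^n_{\W,F}$ and using this injectivity,
\[ \binom{d}{k}\sum_{\substack{F\in\W,\ \rho\in\Sur(F,H)\\ \dim\operatorname{ss}(\rho)=d}} p^n_{\W,F}\ \le\ \sum_{\substack{H'\in\W,\ H'\to H\text{ semisimple}\\ \dim(H'\to H)=k}}\ \sum_{F\in\W}\ \sum_{\sigma\in\Sur(F,H')}p^n_{\W,F}, \]
and the right-hand side equals $\sum_{H'}|\Aut(H')|\sum_{F\in\W}S_{F,H'}p^n_{\W,F}$. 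By Lemma~\ref{dim-C-finite} there are only finitely many $H'\in\W$ admitting a dimension-$k$ epimorphism to $H$, so taking the supremum over $n$ gives $\binom{d}{k}A_d\le C_k:=\sum_{H'}|\Aut(H')|\,\Sigma(H')<\infty$.

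Taking $k=1$ gives $A_1\le C_1<\infty$, and taking $k=2$ gives $A_d\le C_2\binom{d}{2}^{-1}$ for all $d\ge 2$; since $\sum_{d\ge 2}\binom{d}{2}^{-1}=2$, we conclude $\sum_{d}A_d\le |\Aut(H)|\,\Sigma(H)+C_1+2C_2<\infty$, which is the assertion. The one delicate point — and the place to be careful — is the double-counting step: one must check that the intermediate quotients $H'$ supplied by Lemma~\ref{semisimple-lower-bound} genuinely give distinct epimorphisms from which $\rho$ can be reconstructed, so that the factor $\binom{d}{k}$, which is the sole source of summability in $d$, can be extracted before passing to the supremum over $n$ (note that a naive attempt to swap $\sup_n$ and $\sum_d$ directly fails). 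Everything else is bookkeeping with nonnegative terms together with the finiteness statement of Lemma~\ref{dim-C-finite}.
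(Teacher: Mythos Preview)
Your proof is correct and follows essentially the same approach as the paper's: both use Lemma~\ref{semisimple-lower-bound} (with $k\le 2$) to extract a $\binom{d}{k}$ factor from the count of intermediate semisimple quotients $H'\in[H,K_\rho]$, and Lemma~\ref{dim-C-finite} to ensure the resulting sum over $H'$ is finite. The only organizational difference is that the paper sums over all semisimple $G\to H$ of dimension $\le 2$ at once to produce a single constant $N$ and the bound $A_d\le N/\bigl(\binom{d+1}{2}+1\bigr)$, whereas you treat $k=0,1,2$ separately to obtain $A_d\le C_k/\binom{d}{k}$; the underlying mechanism is identical.
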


\begin{proof} 
Let $H\in \W$.
Summing \eqref{mom-soft-bound} over the set of $G \in {\W}$ and maps $\phi \in \Sur(G,H)$ which are semisimple of dimension $\leq 2$ (which is finite by Lemma \ref{dim-C-finite}), we obtain
\[ \infty>    \sum_{G \in \W} \sum_{\substack{\phi\in \Sur(G,H) \\ \textrm{semisimple} \\ \dim \phi \leq 2}} \sup_{n\in \mathbb N}   \sum_{F\in \W} S_{F,G} p_{\W,F}^n\]
\[ \geq \sup_{n \in \mathbb N}  \sum_{G \in \W} \sum_{\substack{\phi\in \Sur(G,H) \\ \textrm{semisimple} \\ \dim \phi \leq 2}} \sum_{F\in \W} S_{F,G} p_{\W,F}^n\]
\[ =  \sup_{n \in \mathbb N}  \sum_{G \in \W}  \sum_{F \in\W} \sum_{\substack{\phi\in \Sur(G,H)\\ \textrm{semisimple} \\ \dim \phi \leq 2}}  \sum_{\substack{ \pi\in\Sur(F,G)}} \frac{ p_{\W,F}^n}{ \abs{\Aut(G)}} \]
\[ =  \sup_{n \in \mathbb N} \sum_{F  \in \W}   \sum_{ \substack{ \rho\in \Sur(F,H)}}  p_{\W,F}^n \sum_{G \in \W}   \sum_{\substack{\phi\in \Sur(G,H)  \\ \textrm{semisimple} \\ \dim \phi \leq 2 }}  \sum_{\substack{ \pi\in\Sur(F,G) \\ \phi  \pi= \rho}} \frac{1}{ \abs{\Aut(G)} }. \]
Note that given $\rho$ as in the sum, if for $\pi\in \Sur(F,G)$ there exists a $\phi$ such that $\phi\pi=\rho$, then that $\phi$ is  unique.
Thus we can evaluate the  inner sum above by
$$ \sum_{G \in \W}  \sum_{\substack{\phi\in \Sur(G,H)  \\ \textrm{semisimple} \\ \dim \phi \leq 2 }}  \sum_{\substack{ \pi\in\Sur(F,G) \\ \phi  \pi= \rho}} \frac{1}{ \abs{\Aut(G)} } =|\{ G\in [H,F]\,|\, G\ra H \textrm{ semisimple of dim. $\leq$ 2} \}.|$$ 

Let $s_\rho:R_\rho \ra H$ be the semisimplification of $\rho$.  Then the  above 
 is equal to the number of $G\in [H, R_\rho]$ of dimension $\leq 2$ over $H$, since an element of $[H,F]$ is semisimple over $H$ if and only if it is $\leq R_\rho$. By Lemma \ref{semisimple-lower-bound}, this is at least \[ \binom{\dim (s_\rho)}{2} +  \binom{\dim (s_\rho)}{1} +   \binom{\dim (s_\rho)}{0} =  \binom{\dim (s_\rho)+1}{2} +1 .\]

This gives
\[ \infty >  \sup_{n \in \mathbb N} \sum_{F  \in \W}   \sum_{ \substack{ \rho\in\Sur(F,H)}}  p_{\W,F}^n  \left(  \binom{\dim (s_\rho)+1}{2} +1 \right) \] \[=\sup_{n \in \mathbb N}\sum_{d \in \mathbb N}   \sum_{F \in \W}    \sum_{  \substack{ \rho\in\Sur(F,H) \\ \dim s_\rho=d }} p_{ \W,F}^n  \left( \binom{d+1}{2} +1 \right) .\]

Let $N$ be the sum above.
Then by restricting the sum over $d$ in the definition of $N$ to a particular value, we obtain for any $d\in \mathbb N$ that
$$ \sup_{n \in \mathbb N}  \sum_{F \in \W}    \sum_{  \substack{\rho\in\Sur(F,H) \\ \dim s_\rho=d }} p_{\W,F}^n  \leq \frac{N }{  \binom{d+1}{2} +1 } .$$

This gives 

$$ \sum_{d \in \mathbb N}  \sup_{n \in \mathbb N} \Bigl( \sum_{F \in \W}    \sum_{  \substack{ \rho\in\Sur(F,H) \\ \dim s_\rho=d }} {p_{\W,F}^n} \Bigr)  \leq  \sum_{d\in \mathbb N }  \frac{N }{\left(  \binom{d+1}{2} +1\right) }  \ll { N} < \infty,$$ as desired.\end{proof}

If we have an epimorphism $F\ra R_0$, we can take its radical $F\ra R_1$, and then take the radical of $F\ra R_1$, etc., which leads to the following definition.
Given a sequence of maps $F\stackrel{\phi}{\ra} R_k \stackrel{\rho_k}{\ra} R_{k-1} \cdots \stackrel{\rho_1}{\ra} R_0$, we say $\phi,\rho_k,\dots,\rho_1$ is a \emph{radical sequence} if
for all $1\leq i\leq k$, we have that $\rho_{i+1} \circ \dots \circ \rho_k \circ \phi: F\ra R_i$ is a radical of $ \rho_{i} \circ \rho_{i+1} \circ \dots \circ \rho_k \circ \phi: F\ra R_{i-1}.$
Lemma~\ref{dominance} will allow us to inductively prove the following result using the  Fatou-Lebesgue theorem, moving the limit further and further past the sum as $k$ increases.

\begin{lemma}\label{radical-exchange-limit} 

 For some narrow formation $\W$, for all $F \in \W$ and $n \in \mathbb N$  let $p_{\W,F}^n \geq 0$  be a real number. Suppose that, for all $G \in \W$, we have that \eqref{mom-soft-bound} holds.
Then, for any $R_0\in \W$,  for all natural numbers $k$ we have
$$
\limsup_{n \ra\infty} \sum_{F\in\W} S_{F,R_0} p_{\W,F}^n
\leq  \sum_{\substack{ d_1,\dots,d_k\in\N}} \limsup_{n \to \infty} \sum_{ F,R_1,\dots,R_k \in \W} \sum_{ \substack { 
 \phi,\rho_k,\dots,\rho_1 
\textrm{ rad. seq.} 
\\\phi\in \Sur(F, R_k)\\
\rho_i \in \Sur(R_i, R_{i-1}) \\  \dim(\rho_i)=d_i
}} \frac{ p_{\W,F}^n }{ \prod_{i=0}^k \abs{ \Aut(R_i) }}.$$
\end{lemma}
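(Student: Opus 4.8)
The plan is induction on $k$. Write $S_{F,R_0}=\abs{\Sur(F,R_0)}/\abs{\Aut(R_0)}$ as usual, and for a tuple $\vec d=(d_1,\dots,d_k)\in\N^k$ abbreviate
\[
T^n_{\vec d}:=\sum_{F,R_1,\dots,R_k\in\W}\ \sum_{\substack{\phi,\rho_k,\dots,\rho_1\textrm{ rad.\ seq.}\\ \phi\in\Sur(F,R_k),\ \rho_i\in\Sur(R_i,R_{i-1})\\ \dim(\rho_i)=d_i}}\frac{p^n_{\W,F}}{\prod_{i=0}^k\abs{\Aut(R_i)}},
\]
so the assertion is $\limsup_{n}\sum_F S_{F,R_0}p^n_{\W,F}\le\sum_{\vec d\in\N^k}\limsup_n T^n_{\vec d}$. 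Every term in sight is non-negative, so all rearrangements of sums below are justified by Tonelli. For $k=0$ a radical sequence is just an epimorphism $F\to R_0$ and $\prod_{i=0}^0\abs{\Aut(R_i)}=\abs{\Aut(R_0)}$, so the right-hand side equals $\limsup_n\sum_F S_{F,R_0}p^n_{\W,F}$ and the base case is an equality.

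For the step $k\to k+1$ the first ingredient is a \emph{radical-expansion identity}: for each fixed $d_1,\dots,d_k$ one has $T^n_{d_1,\dots,d_k}=\sum_{d_{k+1}\in\N}\widetilde T^n_{d_1,\dots,d_{k+1}}$, where $\widetilde T^n$ is the analogous sum over length-$(k{+}1)$ radical sequences. This comes from the weighted bijection that replaces the top morphism $\phi\colon F\to R_k$ of a radical sequence by the pair $\bigl(\operatorname{rad}(\phi)\colon F\to R_{k+1},\ \operatorname{ss}(\phi)\colon R_{k+1}\to R_k\bigr)$: one checks directly that $(\operatorname{rad}(\phi),\operatorname{ss}(\phi),\rho_k,\dots,\rho_1)$ is again a radical sequence, and that for a fixed target isomorphism class $R_{k+1}$ there are exactly $\abs{\Aut(R_{k+1})}$ realizations of $\operatorname{rad}(\phi)$, each determining $\operatorname{ss}(\phi)$ uniquely; this is precisely the extra factor $\abs{\Aut(R_{k+1})}^{-1}$ in the denominator of $\widetilde T^n$. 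Partitioning by $d_{k+1}=\dim\operatorname{ss}(\phi)$ gives the identity. Granting in addition the interchange $\limsup_n\sum_{d_{k+1}}\widetilde T^n_{d_1,\dots,d_{k+1}}\le\sum_{d_{k+1}}\limsup_n\widetilde T^n_{d_1,\dots,d_{k+1}}$ for each fixed $d_1,\dots,d_k$, the inductive hypothesis together with summing these bounds over $\vec d\in\N^k$ yields the statement for $k+1$.

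To justify that interchange it suffices, by the Fatou--Lebesgue theorem for the counting measure on $\N$ (dominating function $d_{k+1}\mapsto\sup_n\widetilde T^n_{d_1,\dots,d_{k+1}}$), to prove $\sum_{d_{k+1}\in\N}\sup_n\widetilde T^n_{d_1,\dots,d_{k+1}}<\infty$ for fixed $d_1,\dots,d_k$; this is where Lemma~\ref{dominance} is used. Group the length-$(k{+}1)$ radical-sequence sum by its bottom $k$ steps, i.e.\ by a chain $R_k\xrightarrow{\rho_k}R_{k-1}\to\cdots\xrightarrow{\rho_1}R_0$ in $\W$ with $\dim(\rho_i)=d_i$; since each $R_i$ is a quotient of the next and a quotient of some $F\in\W$, Lemma~\ref{dim-C-finite} together with the finiteness of quotient lattices and of automorphism groups shows there are only finitely many such chains. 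Discarding every radical-sequence constraint on the remaining top pair except that it form a single radical step, and running the weighted bijection in that top pair with $\tau$ playing the role of the composite $F\to R_k$, one gets
\[
\widetilde T^n_{d_1,\dots,d_{k+1}}\ \le\ \sum_{b}\Bigl(\prod_{i=0}^k\frac{1}{\abs{\Aut(R_i(b))}}\Bigr)\sum_{F\in\W}\ \sum_{\substack{\tau\in\Sur(F,R_k(b))\\ \dim\operatorname{ss}(\tau)=d_{k+1}}}p^n_{\W,F},
\]
where $b$ runs over the finitely many chains and $R_k(b)$ is its top object. Pulling $\sup_n$ and $\sum_{d_{k+1}}$ past the finite sum over $b$, each resulting summand $\sum_{d_{k+1}}\sup_n\sum_{F}\sum_{\tau\in\Sur(F,R_k(b)),\,\dim\operatorname{ss}(\tau)=d_{k+1}}p^n_{\W,F}$ is finite by Lemma~\ref{dominance} with $H=R_k(b)\in\W$, using hypothesis \eqref{mom-soft-bound}. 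This gives the required finiteness and closes the induction. The main obstacle will be the two bookkeeping steps---establishing the radical-expansion identity with the correct automorphism weights, and the reorganization above that peels off the bottom layers so the estimate collapses to finitely many instances of the single-layer bound of Lemma~\ref{dominance}; once these are in place the limit exchange is a routine reverse-Fatou argument.
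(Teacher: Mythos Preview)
Your proof is correct and follows essentially the same route as the paper's: induction on $k$, the radical-expansion identity (replacing $\phi\colon F\to R_k$ by its radical/semisimplification pair, with the $\abs{\Aut(R_{k+1})}$ bookkeeping), finiteness of the bottom chains via Lemma~\ref{dim-C-finite}, and then Lemma~\ref{dominance} to verify the hypothesis of reverse Fatou. The only organizational difference is that the paper first pulls the finite sum over chains $b$ past the $\limsup$ and then applies Fatou--Lebesgue for each fixed $b$, whereas you apply Fatou--Lebesgue to $\widetilde T^n$ directly and check the domination hypothesis by bounding $\sup_n\widetilde T^n$ via the finite sum over chains; both orderings work.
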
 

\begin{proof}
The proof is by induction on $k$.
The case $k=0$ is trivial.  Now we assume the lemma is true for $k$.
An epimorphism $F\ra R_k$ has a radical $F\ra R_{k+1}$, where the object $R_{k+1}$ is unique up to isomorphism,  the
map $F\ra R_{k+1}$ is unique up to composition with elements of $\Aut(R_{k+1})$, and the resulting map $R_{k+1}\ra R_k$ is semisimple.  Thus we have
$$\sum_{ F,R_1,\dots,R_k \in \W} \sum_{ \substack {\phi,\rho_k,\dots,\rho_1 
\textrm{ rad. seq.} \\ \phi\in \Sur(F, R_k)\\
\rho_i \in \Sur(R_i, R_{i-1})  \\   \dim(\rho_i)=d_i
}}\frac{ p_{\W,F}^n }{  \prod_{i=0}^k \abs{ \Aut(R_i) }}=  \sum_{d_{k+1} \in \N}   \sum_{ F,R_1,\dots,R_{k+1} \in \W} \sum_{ \substack { 
 b,\rho_{k+1},\dots,\rho_1 
\textrm{ rad. seq.} 
\\\phi' \in \Sur(F, R_{k+1})\\
\rho_i \in \Sur(R_i, R_{i-1}) \\  \dim(\rho_i)=d_i
}} \frac{ p_{ \W,F}^n }{  \prod_{i=0}^{k+1} \abs{ \Aut(R_i) }} .$$
To complete the induction, it suffices to show that, for fixed $d_1,\dots,d_k$, 
$$
\limsup_{n \to \infty}
 \sum_{d_{k+1} \in \N}
\sum_{\substack{F,R_1,\dots,R_{k+1}\\
 \phi',\rho_{k+1},\dots,\rho_1 }} 
  \frac{ p_{\W,F}^n }{  \prod_{i=0}^{k+1} \abs{ \Aut(R_i) }}
\leq  \sum_{d_{k+1} \in \N} \limsup_{n \to \infty} \sum_{\substack{F,R_1,\dots,R_{k+1}\\
 \phi',\rho_{k+1},\dots,\rho_1 }}   \frac{ p_{\W,F}^n }{  \prod_{i=0}^{k+1} \abs{ \Aut(R_i) }},
$$
(where the sums are over the same sets as in the previous equation).
Given $d_1,\dots,d_k$, Lemma~\ref{dim-C-finite} says there are only finitely many possible isomorphism classes of objects that can be $R_1,\dots,R_k$
in the above sums, as $\dim(\rho_i)=d_i$.  Thus the sum over $R_1,\dots,R_k,\rho_1,\dots,\rho_k$ can be freely exchanged with the $\limsup$.  
Fixing $R_1,\dots,R_k,\rho_1,\dots,\rho_k$, we apply the Fatou-Lebesgue Theorem to the remaining sum, and the hypothesis is checked by applying
Lemma~\ref{dominance} with $\rho=\rho_{k+1} \phi' $ and $H=R_{k}$.
The condition that $\phi'$ is the radical of $\rho_{k+1}\phi'$ determines $R_{k+1}$, $\phi'$
(up to simply transitive $\Aut(R_{k+1})$ orbit), and then $\rho_{k+1}$
 from $\rho$.  
The Fatou-Lebesgue theorem thus gives the inequality above, and we conclude the  lemma by induction.
\end{proof}

Lemma~\ref{radical-exchange-limit} will be useful because it turns out that at finite complexity, radical sequences cannot go on arbitrarily long without stabilizing.  

\begin{lemma}\label{radical-tower-bound} 
Let $\W$ be a formation of finite complexity $k$. 
 For all tuples $R_0,\dots, R_k \in \W$ with semisimple $\rho_i\colon R_i \ra R_{i-1}$ and $F \in \W$ with $\phi \in \Sur(F, R_k)$, if $\phi,\rho_k\dots\rho_1$ is a radical sequence, then $\phi$ is an isomorphism. 
\end{lemma}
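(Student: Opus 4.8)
The plan is to track, along the radical sequence, how much of $F$ each term $R_i$ remembers, measured against the formation $\V_i$ of objects of complexity $\le i$ from Lemma~\ref{L:complexity}. Concretely, I would prove by induction that $R_i \ge F^{\V_i}$ in the lattice of quotients of $F$, and then take $i=k$: since $F$ lies in a formation of complexity $k$ we have $F\in\V_k$, hence $F^{\V_k}=F$, forcing $R_k=F$ and $\phi$ to be an isomorphism.

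First I would set $\psi_i := \rho_{i+1}\circ\cdots\circ\rho_k\circ\phi \colon F \to R_i$ for $0 \le i \le k$, so that $\psi_k = \phi$ and, by the definition of a radical sequence, $\psi_i \colon F \to R_i$ is the radical of $\psi_{i-1} \colon F \to R_{i-1}$ for $1 \le i \le k$. Each $R_i$ is a quotient of $F$, so everything happens inside the finite modular lattice $L$ of quotients of $F$, and the defining property of the radical (noted immediately after its definition: it is the maximal semisimple morphism through which the given epimorphism factors) says that $R_i$ is the \emph{largest} element of the interval $[R_{i-1},F] \subseteq L$ whose morphism down to $R_{i-1}$ is semisimple. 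The base case $R_0 \ge F^{\V_0}$ is immediate, since $F^{\V_0}$ is the minimal quotient of $F$, i.e. the bottom of $L$. For the inductive step, assume $R_{i-1} \ge F^{\V_{i-1}}$ and put $E := R_{i-1} \vee F^{\V_i}$. I claim $E \to R_{i-1}$ is semisimple: by the Diamond Isomorphism Theorem (Lemma~\ref{dit}), $[R_{i-1},E]$ is isomorphic to $[\,R_{i-1}\wedge F^{\V_i},\ F^{\V_i}\,]$, which is a subinterval of $[\,F^{\V_{i-1}},F^{\V_i}\,]$ because the inductive hypothesis together with $F^{\V_{i-1}} \le F^{\V_i}$ (as $\V_{i-1}\subseteq\V_i$) gives $R_{i-1}\wedge F^{\V_i} \ge F^{\V_{i-1}}$. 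Since $F^{\V_i}\in\V_i$, Lemma~\ref{L:complexity} shows $F^{\V_i} \to (F^{\V_i})^{\V_{i-1}}$ is semisimple, and $(F^{\V_i})^{\V_{i-1}} = F^{\V_{i-1}}$ by the elementary compatibility of iterated $\W$-quotients for $\V_{i-1}\subseteq\V_i$; hence $[\,F^{\V_{i-1}},F^{\V_i}\,]$ is a complemented lattice, so by Lemma~\ref{complemented-equivalence} its subinterval $[\,R_{i-1}\wedge F^{\V_i},F^{\V_i}\,]$ is complemented, and therefore so is $[R_{i-1},E]$, i.e. $E\to R_{i-1}$ is semisimple. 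Now $F \to E \to R_{i-1}$ is a factorization of $\psi_{i-1}$ through a semisimple morphism, so maximality of the radical gives $R_i \ge E \ge F^{\V_i}$, completing the induction. Taking $i=k$ yields $R_k \ge F^{\V_k} = F$; since $R_k$ is a quotient of $F$ we also have $R_k \le F$ in $L$, so $R_k$ is the top element of $L$ and $\phi$ is an isomorphism.

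I do not expect a genuine obstacle here; the only points requiring care are (i) correctly translating ``$\psi_i$ is the radical of $\psi_{i-1}$'' into the lattice statement that $R_i$ is the largest element of $[R_{i-1},F]$ with semisimple morphism to $R_{i-1}$, and (ii) checking the small identity $(F^{\V_i})^{\V_{i-1}} = F^{\V_{i-1}}$, without which Lemma~\ref{L:complexity} would only yield semisimplicity of an a priori different morphism. Everything else is a routine application of the Diamond Isomorphism Theorem and of the fact (Lemma~\ref{complemented-equivalence}) that intervals in complemented modular lattices are complemented.
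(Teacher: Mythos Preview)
Your proof is correct and essentially identical to the paper's: both prove $R_i \ge F^{\V_i}$ (the paper writes $\W_i$ for the objects of $\W$ of complexity $\le i$, but since $F\in\W$ and $\W$ is downward-closed these coincide as quotients of $F$) by induction, using the Diamond Isomorphism Theorem together with Lemma~\ref{L:complexity} to show $R_{i-1}\vee F^{\V_i} \to R_{i-1}$ is semisimple and then invoking maximality of the radical. The two points you flag as needing care are exactly the steps the paper glosses over.
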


\begin{proof} 
Let $\W_i$ be the formation of elements of $\W$ with complexity $\leq i$.
We will show that $R_i \geq F^{\W_i}$ (as quotients of $F$) for all $i$, by induction on $i$. It will follow that $R_k \geq F^{\W_k} = F$ so $R_k \isom F$, as desired.

The case $i=0$ is automatic because $F^{\W_0} $ is the minimal quotient of $F$ and thus is $\leq R_0$.  

For the induction step, assume $R_i \geq F^{\W_i}$.  By Lemma \ref{dit}, $[R_i, F^{\W_{i+1}} \vee R_i]$ is isomorphic to $[F^{\W_{i+1}} \wedge R_i, F^{\W_{i+1}}]$, and the latter is a subinterval of $[F^{\W_{i}} , F^{\W_{i+1}}]$ since $F^{\W_{i}} \leq F^{\W_{i+1}}$ and $F^{\W_{i}} \leq R_i$.   By Lemma \ref{L:complexity}, the natural morphism $F^{ \W_{i+1}} \to F^{\W_{i}}$ is semisimple for all $i$, so $[F^{\W_{i}} , F^{\W_{i+1}}]$ is a complemented modular lattice, and thus $[F^{\W_{i+1}} \wedge R_i, F^{\W_{i+1}}]$ is complemented by Lemma \ref{complemented-equivalence}, and hence $[R_i, F^{\W_{i+1}} \vee R_i]$ is as well.  Because $F^{\W_{i+1}} \vee R_i \to R_i$ is semisimple, we have $R_{i+1} \geq F^{\W_{i+1}} \vee R_i$,  since $R_{i+1}$ is the maximal quotient that $F\ra R_i$ factors through that is semisimple over $R_i$. 
Thus, $R_{i+1} \geq F^{\W_{i+1}}$, verifying the induction step.
\end{proof}

\begin{remark}
With the notation of the above proof,  if $R_0$ is a minimal object then  $R_i\in \W_i$ and thus
$ F^{\W_i}\geq R_i,$ implying that $F^{\W_i}=R_i$, as quotients of $F$.
Thus,  when $\phi: F \ra R_0$ is the minimal quotient of $F$, the complexity of $F$ is the minimal $k$ such that 
$\phi,\rho_k,\dots\rho_1$ is a radical sequence with $\phi$ an isomorphism,  i.e.  $R_k=F$ as quotients of $F$.
\end{remark}

Combining Lemmas \ref{radical-exchange-limit} and \ref{radical-tower-bound}, we show that under the assumptions  \eqref{mom-soft-bound} 
(``sup of moments is finite'') and that a limit distribution exists, we can take a limit before or after taking moments (``limit moments exist and are moments of the limit'').

\begin{lemma}\label{robustness-exchange}  
Let $\W$ be a narrow formation of finite complexity and for all $F \in \W$ and $n \in \mathbb N$  let $p_{\W,F}^n \geq 0$  be a real number. 
Suppose that,  for each $F\in \W$, we have that the limit $\lim_{n\ra\infty} p_{\W,F}^n$ exists and
for all $G \in \W$, we have
\begin{equation}
 \sup_{n}  \sum_{F\in \W} S_{F,G} p_{\W,F}^n <\infty
\end{equation}
Then for all $G\in \W$ we have
\begin{equation}
  \lim_{n \to \infty}  \sum_{F\in \W} S_{F,G} p_{\W,F}^n = \sum_{F\in \W} S_{F,G}  \lim_{n \to \infty} p_{\W,F}^n.
\end{equation}
\end{lemma}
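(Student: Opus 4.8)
The plan is to show that both $\liminf_{n\to\infty}$ and $\limsup_{n\to\infty}$ of $\sum_{F\in\W}S_{F,G}p_{\W,F}^n$ equal $\sum_{F\in\W}S_{F,G}q_{\W,F}$, where $q_{\W,F}:=\lim_{n\to\infty}p_{\W,F}^n$. The $\liminf$ bound $\liminf_{n\to\infty}\sum_{F\in\W}S_{F,G}p_{\W,F}^n\ge\sum_{F\in\W}S_{F,G}q_{\W,F}$ is immediate from Fatou's lemma for series of non-negative terms. It also records that $\sum_{F\in\W}S_{F,G}q_{\W,F}\le\sup_n\sum_{F\in\W}S_{F,G}p_{\W,F}^n<\infty$, a finiteness fact used below.

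For the $\limsup$ bound, fix $G\in\W$ and let $k$ be the complexity of $\W$, finite by hypothesis. First I would apply Lemma~\ref{radical-exchange-limit} with $R_0=G$ and this $k$ (its hypothesis is the assumed bound \eqref{mom-soft-bound}), which bounds $\limsup_n\sum_{F\in\W}S_{F,G}p_{\W,F}^n$ by $\sum_{(d_1,\dots,d_k)\in\N^k}$ of $\limsup_n$ of an inner sum over radical sequences $\phi,\rho_k,\dots,\rho_1$ with $\dim(\rho_i)=d_i$. The key point is that, for each fixed $(d_1,\dots,d_k)$, this inner sum is a \emph{finite} sum whose index set does not depend on $n$: by Lemma~\ref{radical-tower-bound} the map $\phi$ in a length-$k$ radical sequence is forced to be an isomorphism, so $F\cong R_k$; applying Lemma~\ref{dim-C-finite} repeatedly, starting from $R_0=G$ and using $\dim\rho_i=d_i$, leaves only finitely many possibilities for $R_1,\dots,R_k$ and hence for $F$; and for fixed objects each of $\Sur(R_i,R_{i-1})$ and $\Sur(F,R_k)$ is finite, since in a diamond category an object has finitely many quotients. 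Thus the inner sum is a finite non-negative combination $\sum_j c_j p_{\W,F_j}^n$ with $c_j,F_j$ independent of $n$, so its $\limsup$ equals its limit $\sum_j c_j q_{\W,F_j}$, i.e.\ the same expression with each $p_{\W,\bullet}^n$ replaced by $q_{\W,\bullet}$.

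Finally I would identify the sum over $(d_1,\dots,d_k)$ of these limiting inner sums with $\sum_{F\in\W}S_{F,G}q_{\W,F}$, via the following identity, valid for any non-negative tuple $(a_F)_F$ with $\sum_{F}S_{F,G}a_F<\infty$:
\[
\sum_{(d_1,\dots,d_k)\in\N^k}\ \sum_{F,R_1,\dots,R_k\in\W}\ \sum_{\substack{\phi,\rho_k,\dots,\rho_1\ \textrm{rad.\ seq.}\\ \phi\in\Sur(F,R_k),\ \rho_i\in\Sur(R_i,R_{i-1})\\ \dim\rho_i=d_i}}\ \frac{a_F}{\prod_{i=0}^k|\Aut(R_i)|}\ =\ \sum_{F\in\W}S_{F,G}a_F .
\]
Summing over all $(d_1,\dots,d_k)$ simply removes the dimension constraints, and composition $(\phi,\rho_k,\dots,\rho_1)\mapsto\rho_1\circ\cdots\circ\rho_k\circ\phi$ maps the radical sequences from $F$ to $R_0=G$ onto $\Sur(F,G)$: each $\rho\in\Sur(F,G)$ has a canonical radical tower obtained by iterating the radical operation, which by Lemma~\ref{radical-tower-bound} reaches $F$ within $k$ steps (the remaining maps being forced isomorphisms), and the radical sequences over $\rho$ correspond bijectively to choices of an isomorphism from each intermediate quotient of $F$ to the chosen representative of its isomorphism class, so the fibre over $\rho$ has exactly $\prod_{i=1}^k|\Aut(R_i)|$ elements---this is the same bookkeeping already carried out in the proof of Lemma~\ref{radical-exchange-limit}. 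Hence the fibre contributes $\frac{a_F}{|\Aut(G)|}$, and summing over $\rho\in\Sur(F,G)$ and then over $F$ gives $\sum_F S_{F,G}a_F$. Applying this with $a_F=q_{\W,F}$ (legitimate by the finiteness noted above) finishes the $\limsup$ bound; together with the Fatou bound this gives the claimed equality, and in particular the existence of the limit. I expect the only genuinely delicate step to be the finiteness reduction in the second paragraph---recognizing that, precisely because $\W$ has finite complexity, Lemma~\ref{radical-exchange-limit} produces limsups of finite sums rather than of infinite series; the remaining ingredients are Fatou's lemma and counting of the sort already done in the paper.
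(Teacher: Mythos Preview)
Your proposal is correct and follows essentially the same approach as the paper: Fatou for the $\liminf$ bound, then Lemma~\ref{radical-exchange-limit} combined with Lemma~\ref{radical-tower-bound} and Lemma~\ref{dim-C-finite} to reduce the $\limsup$ to finite inner sums, followed by the fibre-counting argument identifying the result with $\sum_F S_{F,G}q_{\W,F}$. The only minor remark is that the bookkeeping you attribute to the proof of Lemma~\ref{radical-exchange-limit} is actually carried out in the paper's proof of the present lemma itself (though the same uniqueness-up-to-$\Aut$ reasoning does appear in the induction step there), but your description of it is accurate.
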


\begin{proof} Let $k$ be the complexity of $\W$.
Lemma \ref{radical-exchange-limit} with $R_0=G$ gives
$$
\limsup_{n \ra\infty} \sum_{F\in\W} S_{F,G} p_{\W,F}^n
\leq  \sum_{\substack{ d_1,\dots,d_k\in\N}} \limsup_{n \to \infty} \sum_{ F,R_1,\dots,R_k \in \W} \sum_{ \substack { 
 \phi,\rho_k,\dots,\rho_1 
\textrm{ rad. seq.} 
\\\phi\in \Sur(F, R_k)\\
\rho_i \in \Sur(R_i, R_{i-1}) \\  \dim(\rho_i)=d_i
}} \frac{ p_{\W,F}^n }{ \prod_{i=0}^k \abs{ \Aut(R_i) }}.$$
Lemma \ref{radical-tower-bound} says that is in the above sum,  $F$ is determined by $R_k$ and $\phi$ must be an isomorphism.
Then, Lemma~\ref{dim-C-finite} implies that, given $d_1,\dots,d_k$,  the remaining sums over $F,R_1,\dots,R_k,\phi,\rho_k,\dots,\rho_1$ only have finitely many terms and can be exchanged with the $\limsup$.
Thus the right-hand side above is equal to
$$
\sum_{\substack{ d_1,\dots,d_k\in\N}}  \sum_{ F,R_1,\dots,R_k \in \W} \sum_{ \substack { 
 \phi,\rho_k,\dots,\rho_1 
\textrm{ rad. seq.} 
\\\phi\in \Sur(F, R_k)\\
\rho_i \in \Sur(R_i, R_{i-1}) \\  \dim(\rho_i)=d_i
}} 
\frac{1}{ \prod_{i=0}^k \abs{ \Aut(R_i) }}
\lim_{n\ra\infty} { p_{\W,F}^n }=\sum_{F\in \W} \sum_{\rho\in \Sur(F,G)} \frac{\lim_{n\ra\infty} { p_{\W,F}^n }}{ \abs{ \Aut(G) }}.
$$
The equality above follows because if we take $\rho=\rho_1\circ\dots\circ\rho_k\circ\phi$, then, inductively each
$R_{i-1}$ and $\rho_i\circ\dots\circ\rho_k\circ\phi$ for $2\leq i \leq k+1$ is determined, up to simply transitive $\Aut(R_{i-1})$ orbit, from the radical sequence condition, which then determines $\rho_{i-1}$ (from $\rho$ and $\rho_i\circ\dots\circ\rho_k\circ\phi$).
Combining, we obtain
$$
\limsup_{n \ra\infty} \sum_{F\in\W} S_{F,G} p_{\W,F}^n
\leq  \sum_{F\in \W} S_{F,G} \lim_{n\ra\infty} p_{\W,F}^n  .
$$
Fatou's lemma gives 
\begin{align*}
 \liminf_{n \to \infty} \sum_{F\in\W} S_{F,G} p_{\W,F}^n &\geq \sum_{F\in \W} S_{F,G}  \lim_{n \to \infty} p_{\W,F}^n,
\end{align*}
and the lemma follows.
\end{proof}

Finally, this allows us to prove (Robustness) conditional on (Uniqueness).

\begin{theorem}\label{T:Robust}
Let $\category$ be a \Diamantine category and $\W$ be a narrow formation of finite complexity.
 Let $(M_G)_G\in \R^\W$ such that the following implication holds:
\begin{itemize}
\item For any $(q_{\W,F})_F\in [0,\infty)^\W$, if for all $G\in \W$
we have $\sum_{F\in \W} S_{F,G} q_{\W,F} =m_G$, then for all $F\in C$, we have
$
q_{\W,F} = \sum_{G\in \W} T_{G,F} m_G.
$
\end{itemize}
For all $F \in \W$ and $n \in \mathbb N$  let  $p_{\W,F}^n \in [0,\infty)$. 
Suppose that
for each $G \in \W$, we have
\begin{equation}\label{E:Momassump}
  \lim_{n \to \infty} \sum_{F\in \W} S_{F,G} p_{\W,F}^n =m_G.
\end{equation}
Then for all $F\in \W$ we have
\begin{equation}\label{E:RobustCon}
  \lim_{n \to \infty} p_{\W,F}^n = \sum_{G\in \W} T_{G,F} m_G.
\end{equation}
\end{theorem}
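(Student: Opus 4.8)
The plan is to derive (Robustness) from the assumed (Uniqueness)-type implication together with Lemma~\ref{robustness-exchange}, via a compactness/subsequence argument. The one delicate point is that Lemma~\ref{robustness-exchange} requires the pointwise limits $\lim_{n} p_{\W,F}^n$ to exist, which is not among our hypotheses, so the argument must first produce such limits along a suitable subsequence, then use (Uniqueness) to identify them, and finally upgrade subsequential convergence to convergence.

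First I would establish the boundedness needed to feed into Lemma~\ref{robustness-exchange}. For each $G\in\W$ the sequence $\sum_{F\in\W} S_{F,G}\, p_{\W,F}^n$ converges to $m_G$ by \eqref{E:Momassump}, hence is bounded in $n$, so $\sup_n \sum_{F\in\W} S_{F,G}\, p_{\W,F}^n<\infty$; this is exactly \eqref{mom-soft-bound}. Taking $G=F$ and using that $S_{F,F}=|\Sur(F,F)|/|\Aut(F)|\geq 1$ (since $\Aut(F)\subseteq\Sur(F,F)$), together with nonnegativity of all summands, gives $p_{\W,F}^n\leq \sup_n\sum_{F'\in\W} S_{F',F}\, p_{\W,F'}^n<\infty$; so $(p_{\W,F}^n)_n$ is a bounded sequence for each fixed $F$. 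Next, note that $\W$ is countable: every object has finite dimension by Definition~\ref{D:diamond}\eqref{D:modular}, there are at most countably many minimal objects, and narrowness together with induction on dimension (as in Lemma~\ref{dim-C-finite}) shows each dimension contributes only countably many objects. Consequently, given any subsequence of $(n)$, a diagonal argument extracts a further subsequence $(n_k)$ along which $q_{\W,F}:=\lim_k p_{\W,F}^{n_k}$ exists simultaneously for all $F\in\W$, with $q_{\W,F}\in[0,\infty)$ by boundedness.

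Then I would apply Lemma~\ref{robustness-exchange} to the subsequence $(p_{\W,F}^{n_k})_k$: its hypotheses now hold, namely $\W$ is narrow of finite complexity, the pointwise limits $q_{\W,F}$ exist, and \eqref{mom-soft-bound} holds (it persists along subsequences). We conclude that for every $G\in\W$,
\[
\sum_{F\in\W} S_{F,G}\, q_{\W,F}=\lim_{k\to\infty}\sum_{F\in\W} S_{F,G}\, p_{\W,F}^{n_k}=m_G,
\]
where the last equality is \eqref{E:Momassump}. Thus $(q_{\W,F})_F\in[0,\infty)^\W$ satisfies $\sum_{F\in\W} S_{F,G} q_{\W,F}=m_G$ for all $G\in\W$, so the assumed implication forces $q_{\W,F}=\sum_{G\in\W} T_{G,F}\, m_G$ for every $F$. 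We have therefore shown: every subsequence of $(p_{\W,F}^n)_n$ admits a further subsequence converging to the fixed value $\sum_{G\in\W} T_{G,F}\, m_G$, independently of the starting subsequence. Since $(p_{\W,F}^n)_n$ is bounded, the standard subsequence principle yields $\lim_n p_{\W,F}^n=\sum_{G\in\W} T_{G,F}\, m_G$, which is \eqref{E:RobustCon}.

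The main obstacle is not in this wrapper argument but upstream, in Lemma~\ref{robustness-exchange} and the lemmas it relies on (Lemmas~\ref{dominance}, \ref{radical-exchange-limit}, and \ref{radical-tower-bound}), where the finite-complexity radical-tower bookkeeping and the Fatou--Lebesgue interchanges of limit and sum are carried out. At the level of the present theorem, the only subtlety is the logical order: one cannot invoke Lemma~\ref{robustness-exchange} directly because pointwise convergence of $(p_{\W,F}^n)_n$ is part of its hypothesis, so one must pass to a subsequence where that convergence holds, pin the limit down using (Uniqueness), and then conclude.
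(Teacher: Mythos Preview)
Your proposal is correct and follows essentially the same approach as the paper's proof: extract a convergent subsequence via a diagonal argument, apply Lemma~\ref{robustness-exchange} to identify the moments of the limit, invoke the assumed uniqueness implication to pin down the limit, and then use the ``every subsequence has a further subsequence'' principle to conclude. The only cosmetic differences are that the paper first reduces to the case of a single minimal object and obtains uniform boundedness of $p_{\W,F}^n$ from the moment at the minimal object, whereas you obtain per-$F$ boundedness directly from $S_{F,F}\geq 1$ and argue countability of $\W$ explicitly; both routes are fine.
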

\begin{proof}
Each object of $C$ only maps to one minimal object, since any two quotients have a greatest lower bound.
We can easily reduce to the case when $C$ has only 1 minimal object, since all of the sums in the theorem ever only involve at once objects all of whom have the same minimal object as a quotient.  

If $H$ is the minimal object of $C$, then \eqref{E:Momassump} with $G=H$ implies that there is some
upper bound for the $p_{\W,F}^n$ over all $F\in\W$ and all $n$. 
By a diagonal argument the $p_{\W,F}^n$ have a weak limit $p_{\W,F}^\infty$ in $n$, i.e. there is a subsequence $n_j$ of $\N$ such that for all $F\in \W^*$,
we have $\lim_{j\ra\infty }p_{\W,F}^{n_j}=p_{\W,F}^\infty.$ By Lemma~\ref{robustness-exchange} and \eqref{E:Momassump},
  for all $G\in \W$, 
$$\sum_{F\in \W} S_{F,G} p_{\W,F}^\infty=m_G.$$  By the assumed implication, this implies  that for all $F\in \W$,
$$
p_{\W,F}^\infty= \sum_{G\in \W} T_{G,F} m_G.
$$
The fact that in every weak limit of $p_{\W,F}^n$ has $p_{\W,F}^\infty$ equal to the same  value $\sum_{G\in \W} T_{G,F} m_G$ implies that
$$
  \lim_{n \to \infty} p_{\W,F}^n = \sum_{G\in \W} T_{G,F} m_G.
$$
\end{proof}

Finally, we record how Theorem \ref{T:general} follows from the three theorems of this section.

\begin{proof}[Proof of Theorem \ref{T:general}]
By definition and Corollary~\ref{C:levelcom}, every level is a narrow formation of finite complexity.
Well-behavedness implies the assumption \eqref{E:weakwell} of Theorem~\ref{amc-1}, which gives (Uniqueness) and the ``only if'' part of (Existence). For the (Uniqueness) step, we use that the underlying set is countable so the measure is determined by its value on each singleton.
Theorem~\ref{amc-23} gives the ``if'' part of (Existence).  Theorem~\ref{T:Robust}, using (Uniqueness) to satisfy the hypothesis, gives (Robustness). 
\end{proof}

Since well-behavedness is a crucial input to our main results, we sometimes want to be able to check it more flexibly.  For that we have the following.
\begin{lemma}\label{la-input} Let $\mathcal T$ be a collection of  formations in $\category$. Assume that for every finite tuple of objects $G_1,\dots, G_n \in \category$, there exists $\W\in \mathcal T$ containing $G_1,\dots, G_n$.  

If the tuple $(M_G)_{G}\in \R^{\category_{/\isom}}$ is well-behaved at $\W$ for all $\mathcal W\in \mathcal T$, then
 $(M_G)_{G}\in \R^{\category_{/\isom}}$  is well-behaved.  \end{lemma}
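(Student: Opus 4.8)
The plan is to unwind the definition of well-behaved and reduce the statement to a single set-containment together with the non-negativity of the summands, so that no analytic work is actually needed. Recall that $(M_G)_G$ is well-behaved exactly when it is well-behaved at every level $\C$, so it suffices to fix an arbitrary level $\C$ and an arbitrary $F \in \C$ and to bound
\[
\sum_{G \in \C} \sum_{ \pi \in \Epi(G,F) } \frac{|\mu(F,G)|}{|\Aut(G)|}\, Z(\pi)^3 \, |M_G|.
\]

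First I would choose a finite set $B$ of isomorphism classes generating $\C$, which exists by the definition of a level, and pick representatives $G_1,\dots,G_n$ of the classes in $B$. By the hypothesis on $\mathcal T$, there is some $\W \in \mathcal T$ containing $G_1,\dots,G_n$. Now $\W$ is a formation, i.e.\ a downward-closed, join-closed set of isomorphism classes containing every minimal object, while $\C$ is by definition the \emph{smallest} downward-closed join-closed set containing $B$ and every minimal object; hence $\C \subseteq \W$. In particular $F \in \W$, so the definition of "well-behaved at a formation" (which is stated for arbitrary formations, not merely levels) legitimately applies to $\W$ at the object $F$.

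The final step is the observation that every summand above is non-negative, so enlarging the outer index set from $\C$ to $\W$ can only increase the sum:
\[
\sum_{G \in \C} \sum_{ \pi \in \Epi(G,F) } \frac{|\mu(F,G)|}{|\Aut(G)|}\, Z(\pi)^3 \, |M_G| \;\leq\; \sum_{G \in \W} \sum_{ \pi \in \Epi(G,F) } \frac{|\mu(F,G)|}{|\Aut(G)|}\, Z(\pi)^3 \, |M_G| \;<\;\infty,
\]
the finiteness of the right-hand side being precisely well-behavedness of $(M_G)_G$ at $\W$. One should also note that since the nonzero terms of the left sum form a subset of those of the right sum, the convention that a finite sum has only countably many nonzero terms is inherited by the $\C$-sum. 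As $\C$ and $F \in \C$ were arbitrary, $(M_G)_G$ is well-behaved. I do not expect any genuine obstacle here; the only points requiring a moment's care are that "well-behaved at a formation" is defined for general formations (so it may be invoked for $\W \in \mathcal T$) and the countability convention just mentioned.
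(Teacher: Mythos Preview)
Your proof is correct and essentially identical to the paper's own argument: pick generators of the level $\C$, find $\W\in\mathcal T$ containing them so that $\C\subseteq\W$, and bound the $\C$-sum by the $\W$-sum using non-negativity of the summands. Your extra remarks about the countability convention and the applicability of the definition to general formations are accurate and slightly more careful than the paper's version.
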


\begin{proof} Each level $\C$ of $\category$ is generated by finitely many objects $G_1,\dots, G_n$. Let $\W\in \mathcal T$ contain those objects. Then since $\W$ is downward-closed, join-closed, and contains the minimal objects, $\C \subseteq \W$ so for $F\in \C$,
$$
\sum_{G \in \C}     \sum_{ \pi \in\Epi(G,F) }\frac{|{\mu}(F,G)|}{|\Aut(G)|} Z ( \pi)^3 M_G\leq \sum_{G \in \W}     \sum_{ \pi \in\Epi(G,F) }\frac{|{\mu}(F,G)|}{|\Aut(G)|} Z ( \pi)^3 M_G< \infty.$$

Thus $(M_G)_{G}\in \R^{\category_{/\isom}}$ is well-behaved at $\C$. Since this holds for each level $\C$, the tuple $(M_G)_{G}\in \R^{\category_{/\isom}}$ is well-behaved. \end{proof}

\section{The pro-object measure}\label{S:Pro}
This section has three parts. In the first, we  prove Theorem \ref{analytic-main-measure} and Theorem~\ref{T:weak-convergence} on measures of pro-isomorphism classes. In the second, we connect pro-isomorphism classes to the classical notion of pro-objects. In the third, we discuss pro-objects concretely in concrete categories such as rings and groups. 

\subsection{The pro-isomorphism class case}

 The following result is very useful for constructing measures on inverse limits of discrete spaces such as $\Prf$.
  
  \begin{lemma}\label{L:countadd}
Let $\nu$ be a finitely  additive non-negative function on  an algebra $\mathcal{A}$ of subsets of a set $X$ (with $\nu(X)=b$ finite).  
For each $\ell \in \mathbb{N}$,  let  $\mathcal{L}_\ell$  be a countable set of 
elements of $\mathcal{A}$ whose disjoint union is $X$.

(1) Suppose for every element $A\in \mathcal{A}$, there is some $\ell$ such that for all $\ell'\geq \ell$,  the set $A$ is a countable disjoint union of 
elements of $\mathcal{L}_{\ell'}$.

(2) Suppose any sequence $L_n\in \mathcal{L}_n$ with $L_j \cap L_i$ non-empty for all $i,j$ has $\cap_j L_j$ non-empty.  

(3) For $i\leq j$, and $U_i\in \mathcal{L}_i$, and $U_j\in \mathcal{L}_j$, then $U_i\cap U_j$ is either empty or $U_j$.

(4) Suppose for each $\ell$, we have $\sum_{L\in \mathcal{L}_\ell} \nu(L)=b$.

Then $\nu$ is countably additive on $\mathcal{A}$.
\end{lemma}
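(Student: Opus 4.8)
The plan is to deduce countable additivity of $\nu$ from the standard fact that a finitely additive, non-negative set function on an algebra (with $\nu(X)<\infty$) is countably additive if and only if it is continuous from above at $\emptyset$, i.e.\ $\nu(A_n)\to 0$ whenever $A_1\supseteq A_2\supseteq\cdots$ in $\mathcal{A}$ with $\bigcap_n A_n=\emptyset$. So I would assume $A_1\supseteq A_2\supseteq\cdots$ with $\nu(A_n)\ge\epsilon>0$ for all $n$ and prove the contrapositive conclusion $\bigcap_n A_n\ne\emptyset$.

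First I would establish a lemma: $\nu$ is \emph{countably} additive on every partition of $X$ into $\mathcal{L}_\ell$-pieces. Given $A\in\mathcal{A}$, choose $\ell$ (using (1)) large enough that both $A=\bigsqcup_i L_i$ and $X\setminus A=\bigsqcup_j L'_j$ with all $L_i,L'_j\in\mathcal{L}_\ell$. Since $\mathcal{L}_\ell$ is a partition of $X$, the $L_i$ together with the $L'_j$ account for all of $\mathcal{L}_\ell$ up to empty sets, so (4) gives $\sum_i\nu(L_i)+\sum_j\nu(L'_j)=b$. Finite additivity and non-negativity give monotonicity, hence $\sum_i\nu(L_i)\le\nu(A)$ and $\sum_j\nu(L'_j)\le\nu(X\setminus A)$; as these two upper bounds sum to $b=\nu(A)+\nu(X\setminus A)$, both are equalities, so $\nu(A)=\sum_i\nu(L_i)$. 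In particular this series converges, which yields an inner-approximation statement: for every $\delta>0$ there is a \emph{finite} union $C$ of $\mathcal{L}_\ell$-pieces with $C\subseteq A$ and $\nu(A\setminus C)<\delta$.

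Then I would run the main construction. Choose $\ell_1<\ell_2<\cdots$ with $\ell_n\ge n$ and large enough (by (1)) that each $A_k$ and $X\setminus A_k$ with $k\le n$ is a countable disjoint union of $\mathcal{L}_{\ell_n}$-pieces, and by inner approximation pick $C_n\subseteq A_n$, a finite union of $\mathcal{L}_{\ell_n}$-pieces, with $\nu(A_n\setminus C_n)<\epsilon\,2^{-n-1}$. Put $D_n:=C_1\cap\cdots\cap C_n$. Since $A_n\subseteq A_k$ for $k\le n$, one gets $\nu(A_n\setminus D_n)\le\sum_{k\le n}\nu(A_k\setminus C_k)<\epsilon/2$, so $\nu(D_n)>\epsilon/2>0$ and hence $D_n\ne\emptyset$. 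By (3) the partitions $\mathcal{L}_{\ell_1},\mathcal{L}_{\ell_2},\dots$ refine one another, so every $C_k$ with $k\le n$ is a union of $\mathcal{L}_{\ell_n}$-pieces, and therefore $D_n$ is a \emph{finite} union of $\mathcal{L}_{\ell_n}$-pieces; write $D_n=\bigsqcup_{L\in T_n}L$ with $T_n\subseteq\mathcal{L}_{\ell_n}$ finite and non-empty. Refinement gives ``parent'' maps $T_{n+1}\to T_n$ (send a piece to the unique $\mathcal{L}_{\ell_n}$-piece containing it, which lies in $T_n$ because $D_{n+1}\subseteq D_n$), and $\varprojlim_n T_n$ is non-empty by K\"onig's lemma, producing a decreasing chain $L^{(1)}\supseteq L^{(2)}\supseteq\cdots$ of non-empty sets with $L^{(n)}\in T_n\subseteq\mathcal{L}_{\ell_n}$ and $L^{(n)}\subseteq D_n\subseteq A_n$. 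Interpolating (for $\ell_n\le j<\ell_{n+1}$ let $U_j\in\mathcal{L}_j$ be the piece containing $L^{(n+1)}$, and likewise for $j<\ell_1$) gives a chain $U_1\supseteq U_2\supseteq\cdots$ of non-empty pieces with $U_j\in\mathcal{L}_j$, so every $U_i\cap U_j$ is non-empty; hypothesis (2) then forces $\bigcap_j U_j\ne\emptyset$, and since $\bigcap_j U_j\subseteq\bigcap_n L^{(n)}\subseteq\bigcap_n A_n$ we obtain $\bigcap_n A_n\ne\emptyset$, contradicting $\bigcap_n A_n=\emptyset$.

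The hard part will be exactly this last jump — from ``each $D_n$ has positive $\nu$-measure, so is non-empty'' to ``$\bigcap_n A_n$ is non-empty'' — since finite additivity alone never delivers such a conclusion; it forces us to extract an honest compact core (the chain $L^{(n)}$, equivalently a point of $\varprojlim_n T_n$) and invoke the completeness hypothesis (2). Pulling this off uses every hypothesis at once: (4) to make $\nu$ countably additive on $\mathcal{L}_\ell$-partitions and hence supply inner approximation, (1) so that all relevant members of $\mathcal{A}$ really are assembled from $\mathcal{L}_\ell$-pieces, (3) so the pieces form a refining tower — which is what makes the $D_n$ finite unions of pieces, provides the parent maps for K\"onig's lemma, and allows interpolation to consecutive levels — and (2) to convert the nested chain of non-empty pieces into an actual point common to all the $A_n$.
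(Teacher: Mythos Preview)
Your proof is correct and follows essentially the same approach as the paper: reduce to continuity from above at $\emptyset$, establish that $\nu$ is countably additive on each $\mathcal{L}_\ell$-partition (the paper's claim \eqref{E:97}), use this for inner approximation by finite unions of pieces, intersect to get nonempty $D_n$, and then extract a nested chain of pieces so that hypothesis (2) applies. The only stylistic difference is in the extraction step: the paper chooses points $x_\ell\in C_\ell$ and runs a pigeonhole/diagonal argument on which $\mathcal{L}_\ell$-piece captures infinitely many of them, whereas you phrase the same compactness as K\"onig's lemma on the finite sets $T_n$ of pieces with the parent maps from (3); these are two formulations of the same idea.
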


\begin{proof}
  This proof follows the arguments proving  \cite[Theorem 9.1]{Liu2020} from \cite[Theorem 9.2]{Liu2020}, but we spell out the argument here since we are working in a much more general setting.

First we show the following claim, the analog of \cite[Corollary 9.7]{Liu2020}.
Claim: If $B\in \mathcal{A}$ and $B$ is a disjoint union of $U_{\ell,j} \in L_\ell$ (for a fixed $\ell$), then
\begin{equation}\label{E:97}
\nu(B)=\sum_j \nu(U_{\ell,j}).
\end{equation}
Since the elements of $L_\ell$ are a countable partition of $X$, we also have that $X\setminus B$ 
is a countable disjoint union of the $V_{\ell,i}\in L_\ell$ not among the $U_{\ell,j}$.  
Hence for every $M$,  by finite additivity and the non-negativity of $\nu$, we have
$$
\sum_{j=1}^M \nu(U_{\ell,j}) \leq \nu(B) \leq  \nu(X\setminus \bigcup_{i=1}^M V_{\ell,i})=b-\sum_{i=1}^M \nu(V_{{\ell},i}).
$$
Taking limits as $M\ra\infty$ gives
$$
\sum_{j=1}^\infty \nu(U_{\ell,j}) \leq \nu(B) \leq  b-\sum_{i=1}^\infty \nu(V_{{\ell},i}) =\sum_{j=1}^\infty \nu(U_{\ell,j})
$$
where the last equality is by assumption (4).  This proves \eqref{E:97}.

 If we have disjoint sets $A_n \in\mathcal{A}$ with $A=\cup_{n\geq 1 } A_n \in \mathcal{A}$, by taking $B_n=A\setminus \cup_{j=1}^n A_j$, 
to show countable additivity, 
 it suffices to show that for $B_1 \supset B_2 \supset \dots$ (with $B_n\in \mathcal{A}$) with $\cap_{n\geq 1} B_n=\emptyset$ we have $\lim_{n\ra\infty} \nu(B_n)=0$.

We can assume, without loss of generality, that 
 for each $\ell\geq 1$, we have $B_\ell=\cup_{j} U_{\ell,j}$, with $U_{\ell,j}\in L_\ell$ (using (1)).
 (We can always insert redundant $B_i$'s if the required $\ell$'s  increase too quickly.) 
We will show by contradiction that $\lim_{\ell \ra \infty} \nu(B_\ell)=0$.

Suppose, instead that there is an $\epsilon>0$ such that for all $\ell$, we have $\nu(B_\ell)\geq \epsilon$.
It follows from \eqref{E:97}  that for each $\ell$ we have a subset $K_\ell\sub B_\ell$ such that $\nu(B_\ell\setminus K_\ell)<\epsilon/2^{\ell+1}$ and $K_\ell$ is a \emph{finite} union of $U_{\ell,j}$. 

Next, let $C_\ell=\cap_{j=1}^\ell K_j$.    Then 
$\nu(B_\ell\setminus C_\ell)<\epsilon/2$, since
\begin{align*}
\nu(B_\ell\setminus C_\ell)=& \nu(B_\ell\setminus K_\ell)+\nu(K_\ell\setminus K_\ell \cap K_{\ell-1})+\cdots 
+\nu(K_\ell \cap \cdots \cap K_2 \setminus K_\ell \cap \cdots \cap K_1) \\
< &\epsilon/2^{\ell+1} + \nu(B_{\ell-1} \setminus K_{\ell-1}) + \cdots   +\nu(B_1\setminus K_{1}) \\
< &\epsilon/2^{\ell+1} + \epsilon/2^{\ell} + \cdots   +\epsilon/2^{2}.
\end{align*}
So $\nu(C_\ell)\geq \epsilon/2$ for each $\ell$ and in particular it is non-empty.  Note $C_{\ell+1}\sub C_\ell$ for all $\ell$.  Pick $x_\ell\in C_\ell$ for all $\ell$.

Note that $C_\ell$ is a finite union of the $U_{\ell,j}$ (using (3)).
Pick an $H_1\in \mathcal{L}_1$ so that infinitely many of the $x_\ell$ are in $H_1$ (this is possible since all $x_\ell$ are in $C_1$ and  $C_1$ is a finite union of elements of $\mathcal{L}_1$), and then disregard the $x_\ell$ that are not in $H_1$.  In particular note $H_1\subset C_1$. Then pick $H_2\in \mathcal{L}_2$ so that infinitely many of the remaining $x_\ell$ are in $H_2$, and disregard the $x_\ell$ that are not. Since all of the remaining $x_\ell$ are in $H_1$, we have $H_1\cap H_2$ is non-empty,  and also $H_2\subset C_2$.
We continue this process and at each stage $H_1\cap H_2\cap \cdots H_N$ is non-empty and a subset of $C_N$, and hence $\cap_j H_j$ is non-empty (by (2))
and gives an element of $\cap_j C_j \subset \cap_j B_j$, which is a contradiction.  
 \end{proof} 

We will need one more result before we can apply Lemma \ref{L:countadd} to prove Theorem \ref{analytic-main-measure} and Theorem~\ref{T:weak-convergence}.

\begin{lemma}\label{L:Clim}
Let $\category$ be a \Diamantine category and  $\W_0\sub\W_1\sub \cdots$ be formations. Let $G_i$ for $i\in \N$ be objects of $\category$ such that for $j\geq i$, we have
$G_j^{\W_i}\isom G_i$.  Then given any narrow formation $\W$ of finite complexity, there exists an $N$ such that for $i,j\geq N$, we have $G_i^\W\isom G_j^\W$.
\end{lemma}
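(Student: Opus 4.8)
The plan is to induct on the complexity $k$ of the narrow formation $\W$, which is finite by hypothesis; recall $\W\subseteq\V_k$, where $\V_m$ is the formation of objects of complexity $\le m$ (Lemma~\ref{L:complexity}). Throughout I will use that, for any formation $\mathcal U$, the quotients of $G_j^{\mathcal U}$ are exactly the quotients of $G_j$ lying in $\mathcal U$, so that $(G_j^{\mathcal U})^{\mathcal U'}=G_j^{\mathcal U\cap\mathcal U'}$; combined with $G_j^{\W_i}\isom G_i$ for $j\ge i$, this shows that $G_i^{\mathcal U}$, viewed as a quotient of $G_j$, equals $G_j^{\W_i\cap\mathcal U}$, and in particular $G_i^{\mathcal U}\le G_j^{\mathcal U}$ in the lattice of quotients of $G_j$.

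For the base case $k=0$, the formation $\W$ is precisely the set of minimal objects, so $G_i^\W$ is the bottom element of the lattice of quotients of $G_i$; since $G_i$ is a quotient of $G_j$ for $j\ge i$, that bottom element agrees with the bottom element of the lattice of quotients of $G_j$, so $G_i^\W\isom G_j^\W$ for all $i,j$ and $N=0$ works.

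For the inductive step, set $\W^-=\W\cap\V_{k-1}$, a narrow formation of complexity $\le k-1$; by the inductive hypothesis there is $N_0$ with $G_i^{\W^-}\isom G_j^{\W^-}$ for $i,j\ge N_0$. For $N_0\le i\le j$, $G_i^{\W^-}$ is a quotient of $G_j$ with $G_i^{\W^-}\le G_j^{\W^-}$ of equal dimension, hence (using $A<B\Rightarrow\dim A<\dim B$) these are equal as quotients of $G_j$; call this common object $L$. Since the quotients of $G_j^\W$ are the quotients of $G_j$ lying in $\W$, we get $(G_j^\W)^{\V_{k-1}}=G_j^{\W\cap\V_{k-1}}=G_j^{\W^-}=L$, and as $G_j^\W\in\W\subseteq\V_k$, Lemma~\ref{L:complexity} shows $G_j^\W\to L$ is semisimple for $j\ge N_0$; thus $[L,G_j^\W]$ is a finite complemented modular lattice and $G_j^\W$ is the join of those $K\in[L,G_j^\W]$ with $K\to L$ simple (Lemma~\ref{complemented-equivalence}).

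The crux is controlling these simple pieces $K$. Since $\W$ is narrow and $L\in\W$, there are only finitely many isomorphism classes of $K\in\W$ admitting a simple epimorphism to $L$; let $N_1\ge N_0$ be larger than the least index $\ell$ with $K\in\W_\ell$, for each of the finitely many such $K$ that lie in $\bigcup_\ell\W_\ell$. Fix $N_1\le i\le j$ and a simple-over-$L$ element $K$ of $[L,G_j^\W]$. Then $K$ is a quotient of $G_j^\W\in\W$ and of $G_j$ (where $G_j\isom G_{j+1}^{\W_j}\in\W_j$), so, both formations being downward-closed, $K\in\W\cap\W_j$; hence $K$ is one of the finitely many classes above, and, lying in $\W_j$, it lies in some $\W_{j_K}$ with $j_K\le N_1\le i$, so $K\in\W_i$. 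As $K\in\W_i$ is a quotient of $G_j$, the epimorphism $G_j\to K$ factors through $G_j\to G_j^{\W_i}=G_i$, so $K$ is a quotient of $G_i$; being in $\W$, $K\le G_i^\W$ (as quotients of $G_i$, hence of $G_j$). Taking the join over all such $K$ gives $G_j^\W\le G_i^\W$, and with the reverse inequality $G_i^\W\le G_j^\W$ this yields $G_i^\W\isom G_j^\W$; so $N=N_1$ works. I expect the main obstacle to be exactly this last observation — that a simple constituent of the semisimplification $G_j^\W\to L$, living in some $\W_\ell$, is eventually recovered as a quotient of $G_i$ because $G_i=G_j^{\W_i}$ captures all quotients of $G_j$ in $\W_i$ — once one has used the complexity induction to reduce to the case where $G_j^\W\to L$ is semisimple; the remaining manipulations are routine with the lattice facts of Section~\ref{S:prelim}.
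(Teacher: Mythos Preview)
Your proof is correct and follows essentially the same approach as the paper's: induction on the complexity of $\W$, stabilizing the $(\W\cap\V_{k-1})$-quotient to $L$, then using semisimplicity of $G_j^\W\to L$ together with narrowness to reduce to finitely many simple-over-$L$ constituents, each of which is forced into $\W_i$ once $i$ is large (because $G_j\in\W_j$). Your presentation is somewhat more explicit than the paper's in identifying $G_i$ with $G_j^{\W_i}$ as a quotient of $G_j$ and tracking that $G_i^{\W^-}$ and $G_j^{\W^-}$ are equal \emph{as quotients of} $G_j$ (not merely abstractly isomorphic), which makes the final join comparison $G_j^\W\le G_i^\W$ cleaner; the paper's ``$G_j^\W$ is the join of the same elements as $G_i^\W$'' encodes the same thing more tersely.
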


\begin{proof}
We induct on the complexity of $\W$.  If $\W$ has complexity $0$, then $G_m^\W$ is the minimal quotient of $G_m$, and for $n\geq m$, since $G_m$ is a quotient of $G_n$, they have the same minimal quotient.  Suppose the claim is true for any narrow formation  of complexity $k$.  Let $\W$ be a narrow formation of complexity $k+1$ and let $\W'$ be the elements of $\W$ of complexity $\leq k$.

Consider only $i$ such that $G_i^{\W'}$ has stabilized to $G$, and note that there is a semisimple epimorphism $G_i^\W\ra G$ by Lemma~\ref{L:complexity}, and thus $G_i^\W$ is the join of all the simple $X\ra G$ such that $G_i^\W\ra G$ factors through $X\ra G$ by Lemma~\ref{semisimple-equivalence}.  Given $G$, there are only finitely many $X\ra G$ such that $X\in \W$ since $\W$ is narrow.  Some of these $X$ eventually appear in a $\W_j$ and some never do.  Consider only $i$ large enough such that all $X\in \W$ with a simple $X\ra G$ that will ever appear in a $\W_j$ appear in $\W_i$. 
If $X$ is not in any $\W_i$, then it is not ever a quotient of $G_i$ for any $i$.
 If $X\in \W_i$, we have that all epimorphisms $G_j \ra X$ for $j\geq i$ factor through $G_j^{\W_i}\isom G_i$.  Thus $G_j^\W$ is the join of the same elements as $G_i^\W$, which proves the lemma.
\end{proof}

 \begin{proof}[ Proof of Theorem \ref{analytic-main-measure} and Theorem~\ref{T:weak-convergence}] 
Let $\nu$ be a measure on $\Prf$ (for the given $\sigma$-algebra) with the specified moments.
 For a level $\C$ let $\nu_\C$ be the push-forward of $\nu$ along the map $X\mapsto X^\C$.
For $G\in \C$, we have $|\Sur(X,G)|=|\Sur(X^\C,G)|$, and thus
$\nu_\C$ has the same $G$-moments as $\nu$, for $G\in\C$.  
We have $\nu(U_{\C,F})=\nu_\C(\{F\})=\sum_{G \in \C} \hat{\mu}(F,G) m_G$ by Theorem \ref{T:general}, and so the sum is non-negative.   This shows the ``only if'' of (Existence) and the equalities claimed in (Uniqueness).

Note that well-behavedness implies that the  sums $\sum_{G \in \C} \hat{\mu}(F,G) m_G$ are finite.
By definition, $C$ has only countably many isomorphism classes of minimal objects so, given a level $\C$, by Lemma~\ref{dim-C-finite}, we have that $\C$ is at most countable.

Consider the set $\tilde{\mathcal{A}}$ of sets that are a union of $U_{\C,H}$ for a single level $\C$ and varying $H$.
Note such a union is necessarily disjoint and at most countable.
For levels $\C \sub \C'$ we have $U_{\C,H}$ is a union of the $U_{\C',H'}$ for all $H' \in \C'$ with $(H')^\C\isom H$.
So for two sets in $\tilde{\mathcal{A}}$, we can always refine to a level containing the levels at which each was defined, and assume they are defined at the same level.  From this we can see that $\tilde{\mathcal{A}}$ is an algebra of sets. 
Also, $U_{\C,H}$ and the $\tilde{\mathcal{A}}$ generate the same $\sigma$-algebra.
Moverover,  if we have a measure $\nu$ with the specified moments, we have that 
$\nu$ is determined on $\tilde{\mathcal{A}}$.  Since for any level $\C$, we have that $\mathcal{P}$ is a countable union of 
$U_{\C,H}$, each with finite measure, we have that $\nu$ is $\sigma$-finite.  
Carath\'{e}odory's extension theorem then tells us that there is a unique measure 
on the $\sigma$-algebra generated by the $U_{\C,H}$ that agrees with 
that agrees $\nu$ on  $\tilde{\mathcal{A}}$, finishing the proof of (Uniqueness).

For simplicity, we will replace our category $C$ with the category with the same objects, but whose morphisms are the epimorphisms from $C$.  Nothing in the theorems ever considers morphisms that are not epimorphisms. We will now call this new category $C$.  The fact that any two quotients have a meet implies that any two objects with a morphism between them have a morphism to the same minimal object.  
This category $C$ is the disjoint union of at most countably many categories $C_i$ each with a unique isomorphism class of minimal object.   Similarly, $\mathcal{P}$ is an at most countable disjoint union of sets $\mathcal{P}_i$ such that each $U_{\C,H}$ is a subset of one of the $\mathcal{P}_i$.
Each moment only depends on the measure restricted to one of the $\mathcal{P}_i$.  Thus to prove existence of
a measure with the given moments, it suffices to prove existence for each $\mathcal{P}_i$.  Here we crucially use that there are most countably many $\mathcal{P}_i$ so that we can define the measure of measurable subset $S$ of $\mathcal{P}$ to be the sum over $i$ of the measures of $S\cap \mathcal{P}_i$. 

So now we assume that $C$ has a unique isomorphism class of minimal object.  
We assume that for every level $ \C$ and $G\in \C$, we have $\sum_{G \in \C} \hat{\mu}(F,G) m_G\geq 0$, and we will show a measure $\nu$ with the indicated moments actually exists. 
Theorem \ref{T:general} implies that for each level $\C$ a  measure $\nu_\C$ exists with the specified moments.
The uniqueness in Theorem \ref{T:general} implies for $\C\sub \C'$ the measure $\nu_{\C'}$ pushes forward to $\nu_\C$ along the map $F\mapsto F^{\C}$.

We will define a pre-measure $\nu$ on $\tilde{\mathcal{A}}$ by letting, for any subset $V\sub \C$, 
$$\nu\left( \bigcup_{H\in V} U_{\C,H} \right)=\nu_\C\left( V \right).$$ 
If we have $A\in  \tilde{\mathcal{A}}$ that can be defined at two different levels $\C,\C'$, i.e. 
$$
A=\bigcup_{H\in V} U_{\C,H} =\bigcup_{H'\in V'} U_{\C',H'} ,
$$
then for any level  $\C'' \supset \C,\C'$, let $V''=A\cap \C''$.
We have that $X\in A$ if and only if $X^{\C''}\in V''$.  
We have that $V''$ is the preimage of $V$ under the map $F\mapsto F^\C$ from $\C''$ to $\C$ (and similarly for $V'$, $\C'$).
Thus the claim about push-forwards above gives
$$
\nu_\C\left(V \right)=\nu_{\C''}\left( V'' \right)=\nu_{\C'}\left( V' \right),
$$
and our definition above of $\nu$ on $\tilde{\mathcal{A}}$ is well-defined.
A similar argument refining to a common level shows that $\nu$ is finitely additive.

We have $\nu$ a finitely additive non-negative function on the algebra $\tilde{\mathcal{A}}$ of sets.
We now wish to show that it is countably additive.  Suppose we have that $A_0\in \tilde{\mathcal{A}}$ is the disjoint union of $A_1,A_2,\dots \in \tilde{\mathcal{A}}$.
Let $A_i$ be defined at level $\C_i$, and let $\C_{\leq i}$ be the level generated by $\C_j$ for $j\leq i$.
Let ${\mathcal{A}}$ be the algebra of sets that are (necessarily at most countable) disjoint unions of $U_{\C_{\leq i},H}$ for some fixed $i$.
So $A_0,A_1,\dots\in {\mathcal{A}}$.
Then we apply Lemma~\ref{L:countadd} to ${\mathcal{A}}$, with $\mathcal{L}_\ell$ the set $\{U_{\C_{\leq \ell},H}\,|\,H\in \C_{\leq \ell}\}$.
To show hypothesis (2) in Lemma~\ref{L:countadd}, we need to show that 
for a collection $G_i$ with $G_j^{\C_{\leq i}}=G_i$ for $j\geq i$, 
there is $X\in \Prf$ with
$X^{\C_{\leq i}}=G_i.$  We construct such an $X$ by letting $X^\C$
be $G_i^\C$ for $i$ sufficiently large such that this isomorphism class has stabilized (by Lemma~\ref{L:Clim}).
We can see that (3) is satisfied since for $j\geq i$, we have $X^{\C_{\leq i}}=(X^{\C_{\leq j}})^{\C_{\leq i}}$.

Since $\nu_\C$ is a measure, we have that $\nu_\C(\C)=\sum_{G \in \C} \nu_\C(G),$
and the claim about push-forwards above gives that $\nu_\C(\C)=\nu_{\{0\}}(0)$, where $0$ is the minimal object of $C$.  
  Thus hypothesis (4) in Lemma~\ref{L:countadd}
is satisfied and we have that $\nu$ is countably additive on ${\mathcal{A}}$.
Thus $\sum_{i\geq 1} \nu(A_i)=\nu(A_0)$, which implies $\nu$ is countably additive on $\tilde{\mathcal{A}}$.
Hence by Carath\'{e}odory's extension theorem $\nu$ extends to a measure on $\Prf$. 
As above, the moments of $\nu$ are determined by the push-forwards $\nu_\C$, which have the desired moments.  

Finally, we prove Theorem~\ref{T:weak-convergence}.
First, we will show that if $I$ is countable, then every open set in the topology generated by the $U_{\C,F}$ is a disjoint union of sets of the form $U_{\C,F}$.  We consider $I$ as a subset of $\mathcal P$, by mapping $G\in I$ to $\{G^\C\}$.
Since there are countably many finite subsets of $I$, there are countably many levels $\C_1,\dots$, and let $\C_{\leq i}$ be the level generated by $\C_j$ for $j\leq i$.  
We note that the topology on $\mathcal{P}$ is actually generated by the $U_{\C_{\leq i},H}$, since $U_{\C_i,F}$ is a union of $U_{\C_{\leq i},H}$ over varying $H$.
Given an open $V$, around each $F\in V\cap I$ we consider the minimal $i_F$ for which  $U_{\C_{\leq i_F},F^{\C_{\leq i_F}}}\sub V$.
If $X\in V$, then $X\in U_{\C_{\leq i},H} \sub V$ for some $i$ and $H\in \C_{\leq i}$.  Also $i_H\leq i$, so 
$X\in U_{\C_{\leq i_H},H^{\C_{\leq i_H}}} \sub V.$
So $V$ is the union of the $U_{\C_{\leq i_F},F^{\C_{\leq i_F}}}$ over $F\in V\cap I$.   If the two sets $U_{\C_{\leq i_F},F^{\C_{\leq i_F}},}$ and $U_{\C_{\leq i_H},H^{\C_{\leq i_H}},}$ intersect,  then one is contained in the other, and by the minimality of $i_F$ and $i_H$, the two sets must be equal.  
This shows that $V$ is a countable disjoint union of $U_{\C,F}$.  Also, we can put a metric
on $\mathcal{P}$ such that the distance between two objects $X,Y$ is the $1/i$ for the minimal $i$ such that $X^{\C_{\leq i}}\isom Y^{\C_{\leq i}}$, or $2$ if such an $i$ does not exist.  We can check this metric induces the topology that is generated by the $U_{\C_{\leq i},H}$

By Theorem \ref{T:general}, we have, for every level $\C$ and $F\in \C$,
 $$ \lim_{t\to\infty} \nu^t(U_{\C,F}) = v_{\C,F} ,$$
and thus  $v_{\C,F}\geq 0.$ Thus a measure $\nu$ on $\mathcal{P}$ with moments $M_G$ exists and
   $$ \lim_{t\to\infty} \nu^t(U_{\C,F}) =\nu(U_{\C,F})$$
   for every level $\C$ and $F\in \C$.
Since any open set in our topology is a disjoint union of basic opens,  using Fatou's lemma and then the Portmanteau theorem proves Theorem~\ref{T:weak-convergence}.
\end{proof}

\subsection{Pro-isomorphism classes and pro-objects}

We now explain the relationship between pro-isomorphism classes and the usual notion of pro-object.

One defines a \emph{pro-object} of $\category$ to be a functor $\mathcal F$ from a small category $I$ to $\category$, where the category $I$ is \emph{cofiltered} in the sense that for any two objects of $I$ there is an object with a morphism to both, and for any two morphisms $f,g$ between objects $A,B\in I$ there is an object $C\in I$ with a morphism $h\colon C\to A$ such that $f\circ h = g\circ h$ \cite[Definition 6.1.1 and Definition 3.1.1]{KashiwaraSchapira}.
We think of a pro-object as the formal inverse limit of the diagram $(I, \mathcal F)$. 
The morphisms between two pro-objects $(I, \mathcal F) \to (J, \mathcal G)$ is given by the formula \cite[(2.6.4)]{KashiwaraSchapira} \[ \Hom((I,\mathcal F), (J, \mathcal G)) = \varprojlim_{j \in J} \varinjlim_{i\in I}  \Hom ( \mathcal F(i), \mathcal G(j) ) .\] It is not hard to define the identity morphism and composition for morphisms, making the pro-objects into a category (the pro-completion of $C$).  We can view an object of $\category$ as a pro-object using a category $I$ with one object and only the identity morphism.

\begin{definition}\label{def-small} We say a pro-object is \emph{small} if it arises (up to isomorphism) from a diagram $(I, \mathcal F)$ where every $\mathcal F(f)$ is an epimorphism for every morphism $f$ of $I$, and if for each $G\in C$, the set of epimorphisms from $(I, \mathcal F)$ to $G$ is finite. \end{definition}

 This definition is in analogy to the definition of small profinite groups (see \cite[Section 16.10]{Fried2008}, and also Lemma~\ref{small-characterization} below).

We now construct the bijection between isomorphism classes of small pro-objects and pro-isomorphism classes. We first prove a quick lemma describing the quotients of a pro-object
\begin{lemma}\label{quotient-profinite-modular} 
Let $\category$ be a \Diamantine category and $X$ be a small pro-object of $\category$. 
The poset of quotients of $X$ in $\category$ is a modular lattice. \end{lemma}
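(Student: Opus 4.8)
The plan is to show that the poset of quotients of a small pro-object $X$ is a modular lattice by realizing it as a filtered union (direct limit of posets) of the modular lattices of quotients of the objects $\mathcal F(i)$ appearing in a presentation of $X$, and to check that the lattice operations $\vee$ and $\wedge$ are computed ``at finite level'' so that modularity is inherited. First I would fix a presentation $X = (I,\mathcal F)$ as in Definition~\ref{def-small}, so that every transition map $\mathcal F(f)$ is an epimorphism and for each $G \in \category$ there are only finitely many epimorphisms $X \to G$. The key observation is that any epimorphism $X \to Q$ with $Q \in \category$ factors through some $\mathcal F(i)$: indeed, by the formula $\Hom((I,\mathcal F), Q) = \varinjlim_{i} \Hom(\mathcal F(i), Q)$, the map is represented by some morphism $\mathcal F(i) \to Q$, and this morphism must itself be an epimorphism since its composite with the epimorphism $X \to \mathcal F(i)$ (which exists because transition maps are epimorphisms) is the epimorphism $X \to Q$ — here I would use that in a \Diamantine category, if a composite $g \circ h$ of morphisms is an epimorphism then $g$ is (this follows from the lattice structure: the quotients of the target are a sub-poset of the quotients through which things factor). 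Thus the poset of quotients of $X$ is the direct limit, over $i \in I$, of the posets of quotients of $\mathcal F(i)$, where the maps in the direct system send a quotient $\mathcal F(j) \to Q$ to the quotient $\mathcal F(i) \to \mathcal F(j) \to Q$ for a transition map $\mathcal F(i) \to \mathcal F(j)$; these maps are injective on quotients (distinct quotients of $\mathcal F(j)$ pull back to distinct quotients of $\mathcal F(i)$ since $\mathcal F(i) \to \mathcal F(j)$ is epi), and compatibility with the cofiltered structure of $I$ makes this a genuine filtered direct system of posets.

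Next I would verify that joins and meets exist and are computed at a finite stage. Given two quotients $X \to Q_1$ and $X \to Q_2$, by the cofiltered property of $I$ and the factorization observation, I can find a single $i \in I$ and epimorphisms $\mathcal F(i) \to Q_1$, $\mathcal F(i) \to Q_2$ representing both. By Definition~\ref{D:diamond}\eqref{D:modular}, the quotients of $\mathcal F(i)$ form a finite modular lattice, so $Q_1 \vee Q_2$ and $Q_1 \wedge Q_2$ exist there; I must check these are also the join and meet in the (larger) poset of quotients of $X$. For the meet: any quotient $X \to R$ with $R \leq Q_1$ and $R \leq Q_2$ factors through some $\mathcal F(j)$, and after passing to a common refinement $\mathcal F(k) \to \mathcal F(i)$, $\mathcal F(k) \to \mathcal F(j)$, all four quotients $Q_1, Q_2, Q_1 \wedge Q_2, R$ are quotients of $\mathcal F(k)$, and since the transition map preserves the order and the meet (a monotone injection of finite modular lattices arising from an epimorphism in a \Diamantine category respects $\wedge$, because $\cdot \wedge \mathcal F(k)$-images of quotients of $\mathcal F(i)$ behave correctly — or more simply: $R \leq Q_1 \wedge Q_2$ in the lattice of quotients of $\mathcal F(k)$ implies the same relation in quotients of $X$ since the embedding is order-preserving), we get $R \leq Q_1 \wedge Q_2$ as quotients of $X$. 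The argument for joins is symmetric using that the embeddings preserve $\vee$. Hence the poset of quotients of $X$ is a lattice, and it is a filtered union of modular sublattices where the inclusions preserve $\vee$ and $\wedge$, so it is modular: given $a \leq b$ and any $x$, all of $a, b, x$ lie in the image of some finite modular lattice of quotients of $\mathcal F(i)$, where the modular law $a \vee (x \wedge b) = (a \vee x) \wedge b$ holds, and since the embedding into quotients of $X$ preserves $\vee$ and $\wedge$, the identity persists.

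The main obstacle I anticipate is the careful bookkeeping in showing that the direct-system maps (restriction of quotients along a transition epimorphism $\mathcal F(i) \to \mathcal F(j)$) are injective and preserve both lattice operations — in particular that pulling back a meet of quotients of $\mathcal F(j)$ along an epimorphism gives the meet of the pullbacks in quotients of $\mathcal F(i)$, and likewise for joins. Injectivity and preservation of $\vee$ are reasonably direct from the universal property of epimorphisms (a quotient of $\mathcal F(j)$ is determined by which morphisms out of $\mathcal F(j)$ factor through it, and pulling back along a surjection doesn't lose this information). Preservation of $\wedge$ is the subtler point and I would reduce it to the Diamond Isomorphism Theorem (Lemma~\ref{dit}): the lattice of quotients of $\mathcal F(i)$ that are $\geq$ (the pullback of the minimal quotient, i.e. lying ``above'' the kernel of $\mathcal F(i) \to \mathcal F(j)$) is isomorphic via the pullback map to the whole lattice of quotients of $\mathcal F(j)$, and this isomorphism of lattices automatically preserves $\wedge$ and $\vee$; since meets and joins of quotients of $\mathcal F(j)$ land among quotients $\geq \mathcal F(j)$, their pullbacks are computed inside this sublattice and the identity transfers. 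Once these compatibilities are in hand, the conclusion that quotients of $X$ form a modular lattice is immediate; no finiteness of the whole lattice is needed or claimed, only that each pair of elements sits inside a finite modular piece, which is exactly what the cofiltered structure provides.
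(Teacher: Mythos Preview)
Your proposal is correct and follows essentially the same approach as the paper: realize the poset of quotients of $X$ as the filtered union (direct limit) of the finite modular lattices of quotients of the $\mathcal F(i)$, check that the transition maps are inclusions of sublattices, and conclude modularity. Your anticipated ``main obstacle'' about preservation of $\wedge$ and $\vee$ dissolves once you note that the quotients of $\mathcal F(j)$ sit inside the quotients of $\mathcal F(i)$ as the interval $[M,\mathcal F(j)]$ (with $M$ the minimal quotient), and any interval in a lattice is automatically a sublattice---no appeal to the Diamond Isomorphism Theorem is needed.
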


\begin{proof}  Let $X$ arise from a diagram $(I, \mathcal F)$ with all morphisms in the image $\mathcal F$ epimorphisms. We first note that, for $G \in C$, we have  $\Hom (X,G) = \varinjlim_{ i\in I} \Hom(\mathcal F(i), G)$ so every morphism $X \to G$ arises from some morphism $\mathcal F(i) \to G$.  Next observe that a morphism $X \to G$ is an epimorphism if and only if it arises from an epimorphism $\mathcal F(i) \to G$, i.e. that $\Sur( X, G) =  \varinjlim_{ i\in I} \Sur(\mathcal F(i), G)$. The case ``only if" is immediate and ``if" uses that all morphisms in the image of $\mathcal F$ are epimorphisms.

It follows that the poset of quotients of $X$ is the forward limit over $i\in I$ of the poset of quotients of $\mathcal F(i)$. For $\mathcal F(f) \colon \mathcal F(i) \to \mathcal F(j)$ an epimorphism, the induced map from the poset of quotients of $\mathcal F(j)$ to the poset of quotients of $\mathcal F(i)$ is an inclusion of modular lattices (in particular, respects meets and joins). Since $I$ is cofiltered, it follows that the limit is a modular lattice.
\end{proof}

 \begin{lemma}\label{small-finiteness} Let $\category$ be a \Diamantine category and $X$ be a small pro-object of $\category$. Then, for any narrow formation $\W$ of finite complexity the object $X$ has a maximal quotient in $\W$, i.e. an object $X^\W$ of $\W$ that is a quotient of $X$ such that every other such quotient factors through it. \end{lemma}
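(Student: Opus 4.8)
The plan is to show that the maximal quotient $X^\W$ exists by exhibiting it as the join of all quotients of $X$ in $\W$, using Lemma~\ref{quotient-profinite-modular} to work inside a modular lattice, together with a finiteness argument that bounds how many such quotients there are. First I would recall from the proof of Lemma~\ref{quotient-profinite-modular} that if $X$ arises from a cofiltered diagram $(I,\mathcal F)$ with all transition maps epimorphisms, then $\Sur(X,G) = \varinjlim_{i\in I}\Sur(\mathcal F(i),G)$ for every $G\in\category$, and the poset $Q(X)$ of quotients of $X$ is the filtered colimit (union) of the modular lattices $Q(\mathcal F(i))$, hence is itself a modular lattice in which meets and joins are computed inside some $Q(\mathcal F(i))$. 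In particular the subposet $Q(X)\cap\W$ of quotients lying in $\W$ is closed under finite joins, since $\W$ is join-closed and any finite join is realized in some $Q(\mathcal F(i))$ where it agrees with the join in that finite modular lattice.

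The key step is to show $Q(X)\cap\W$ has only finitely many maximal elements, and in fact that it is a \emph{finite} set, so that the join of all its elements exists and lies in $\W$. For this I would argue by induction on the complexity $k$ of $\W$ (finite by hypothesis), using the filtration $\W_0\subset\W_1\subset\cdots\subset\W_k=\W$ of Lemma~\ref{L:complexity}, where $\W_j$ consists of objects of complexity $\leq j$. For the base case, every object of $Q(X)\cap\W_0$ is a minimal object; since $X$ is small, there are only finitely many epimorphisms from $X$ to any fixed minimal object, and the diamond category has at most countably many minimal objects — but more to the point, any two quotients of $X$ have a meet (lattice property), so all quotients of $X$ lie over a single minimal object, and smallness gives finitely many such quotients. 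For the induction step, suppose $Q(X)\cap\W_{j-1}$ is finite; given $F\in Q(X)\cap\W_j$, the morphism $F\to F^{\W_{j-1}}$ is semisimple by Lemma~\ref{L:complexity}, and $F^{\W_{j-1}}$ is one of the finitely many elements of $Q(X)\cap\W_{j-1}$. So it suffices to bound, for each fixed $H\in Q(X)\cap\W_{j-1}$, the set of $F\in Q(X)\cap\W_j$ with $F^{\W_{j-1}}\cong H$; each such $F$ gives an epimorphism $X\to F$ of dimension over $H$ at most the dimension of the simple factors available, and since $\W_j$ is a narrow formation, there are finitely many objects of $\W_j$ with a simple epimorphism to $H$, hence finitely many $F\in\W_j$ with a semisimple epimorphism to $H$ of each dimension; combined with Lemma~\ref{dim-C-finite} and smallness of $X$ (bounding $|\Sur(X,F)|$ and hence the number of distinct quotients isomorphic to $F$), one gets finiteness.

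Once $Q(X)\cap\W$ is known to be finite, I set $X^\W := \bigvee_{F\in Q(X)\cap\W} F$, the join taken in the modular lattice $Q(X)$; this join is realized in some $Q(\mathcal F(i))$, so it is a genuine quotient of $X$, and it lies in $\W$ because $\W$ is join-closed and downward-closed and contains all minimal objects. By construction it is the maximum element of $Q(X)\cap\W$, so every quotient of $X$ lying in $\W$ factors through it, which is exactly the assertion. The main obstacle I anticipate is the finiteness step: one must carefully combine the narrowness of $\W$, the semisimplicity coming from Lemma~\ref{L:complexity}, Lemma~\ref{dim-C-finite}, and the smallness hypothesis on $X$ to rule out infinitely many quotients in $\W$ — the delicate point being that a priori $X$ could surject onto infinitely many pairwise non-isomorphic objects of $\W$ with bounded dimension over a fixed $H$, and it is narrowness of the formation (not just of individual levels) plus the dimension bound from the Diamond Isomorphism Theorem that closes this off.
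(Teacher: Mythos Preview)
Your overall architecture matches the paper's: induct on complexity via the filtration $\W_0\subset\cdots\subset\W_k=\W$ from Lemma~\ref{L:complexity}, use Lemma~\ref{quotient-profinite-modular} to work in a modular lattice, and combine narrowness with Lemma~\ref{dim-C-finite} and smallness. However, the induction step has a genuine gap, which you yourself flag as ``the delicate point'' without resolving.

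Concretely: you fix $H\in Q(X)\cap\W_{j-1}$ and want to bound the $F\in Q(X)\cap\W_j$ with $F\to H$ semisimple. You correctly note that for each fixed dimension $d$ there are finitely many such $F$ (narrowness $+$ Lemma~\ref{dim-C-finite} $+$ smallness). But you never bound $d$, and invoking ``the Diamond Isomorphism Theorem'' does not by itself do it. The paper closes this as follows. First, rather than stratifying by the varying $H=F^{\W_{j-1}}$, it works relative to the fixed $X^{\W_{j-1}}$ (which exists by the inductive hypothesis, being the join of your finite set $Q(X)\cap\W_{j-1}$), and shows via Lemma~\ref{dit} that for any $K\in Q(X)\cap\W_j$ the map $K\vee X^{\W_{j-1}}\to X^{\W_{j-1}}$ is semisimple. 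Second, and this is the missing idea, it bounds the dimension of any semisimple $F\to X^{\W_{j-1}}$ with $F$ a quotient of $X$ by the number $N$ of quotients of $X$ in $\W_j$ with a \emph{simple} morphism to $X^{\W_{j-1}}$: by Lemma~\ref{semisimple-lower-bound} (or equivalently Lemma~\ref{complemented-equivalence}), a semisimple morphism of dimension $d$ dominates at least $d$ distinct simple quotients over $X^{\W_{j-1}}$, all of which are quotients of $X$, so $d\le N$. The number $N$ is finite by narrowness of $\W$ plus smallness of $X$. With $d$ bounded, your per-dimension finiteness then gives the conclusion.

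Once you insert this dimension bound, your argument and the paper's coincide; the only stylistic difference is that the paper produces $X^{\W_j}$ as the maximum of the finite join-closed set of semisimple-over-$X^{\W_{j-1}}$ quotients, rather than first proving $Q(X)\cap\W_j$ is finite and then taking the total join.
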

 
 \begin{proof}  Let $\W_n$ be the formation of elements of $\W$ of complexity at most $n$.
  We will prove for all $n$ that $X$ has a maximal quotient in $\W_n$ by induction on $n$, and then the claim follows from the finite complexity of $\W$. For the base case, we note that  the cofiltered and modular lattice properties imply that all $\mathcal{F}(i)$ have the same minimal quotient, and 
 we see that this is $X^{\W_0}$, the maximal  quotient of $X$ in $\W_0$.
 
 For the induction step, let $X^{\W_{n-1}}$ be a maximal  quotient of $X$ in $\W_{n-1}$. 
  Let $K\in \W_n$ be a quotient of $X$. Then, by Lemma \ref{L:complexity}, $K \to K^{\W_{n-1}}$ is semisimple. 
    Since $K^{\W_{n-1}}$ is in $\W_{n-1}$ and is a quotient of $X$, 
    in the lattice of quotients of $X$ we have
    $K^{\W_{n-1}}\leq X^{\W_{n-1}}$
Thus
$K\ra K\wedge X^{W_{n-1}}$ is semisimple by   Lemma \ref{semisimple-equivalence},  since  
$K\wedge X^{W_{n-1}} \geq K^{\W_{n-1}}$.
  Thus
        $K \vee X^{\W_{n-1}} \to X^{\W_{n-1}} $ is semisimple by Lemma  \ref{dit}. It suffices to show that there is a maximal quotient of $X$ in $\W_n$ with a semisimple morphism to $X^{\W_{n-1}} $, as then $K \vee X^{\W_{n-1}} $, which has those properties, will factor through it, and then $K$ will as well.
 
 To do this, consider a quotient $H\in \W_n$ of $X$ such that $H\ra X^{\W_{n-1}}$ is semisimple of fixed dimension $d$.
   There are finitely many possible isomorphism classes of such objects $H$ by Lemma \ref{dim-C-finite}. Each one admits finitely many morphisms from $X$ by the smallness assumption, so there are finitely many such quotients of $X$. 
   In particular, taking $d=1$, there are finitely many quotients of $X$ in $\W_n$ with a simple epimorphism to $X^{\W_{n-1}}$.  If $H \to X^{\W_{n-1}}$ is semisimple of dimension $d$, then it factors through at least $d$ different simple morphisms $G_i \to X^{\W_{n-1}}$ of quotients of
$H$ (by Lemma~\ref{semisimple-lower-bound}) and hence quotients of  $X$.
  So, the dimension of any semisimple morphism from a quotient of $X$ in $\W_n$ to $X^{\W_{n-1}}$ is bounded in terms of $X$, $\W$, and $n$. 
Consider the quotients of $X$ in $\W_n$ with a semisimple morphism to $X^{\W_{n-1}} $.  There are finitely many possible dimensions, and then finitely many of each dimension. Thus, such quotients form a finite set and the join of two elements of this set is an element of this set
(by Lemmas~\ref{quotient-profinite-modular} and \ref{complemented-equivalence}),
 it must have a maximal member, completing the induction step.
  \end{proof}

\begin{lemma}\label{pro-object-existence} 
Let $\category$ be a \Diamantine category. 
For each pro-isomorphism class $X$ of $\category$, there exists a small pro-object $Y$ of $\category$, unique up to isomorphism, such that $Y^\C \cong X^\C$ for each level $\C$.

In other words, the map from the set of isomorphism classes of small pro-objects to $\Prf$ that sends $Y $ to $(Y^\C)$ is a bijection. \end{lemma}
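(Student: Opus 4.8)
The plan is to construct $Y$ as a cofiltered limit indexed by the poset of quotients of $X$, to read off smallness and the identity $Y^\C\cong X^\C$ from the finiteness properties of diamond categories, and then to prove uniqueness by showing that \emph{any} small pro-object with the prescribed system of $\C$-quotients is canonically the limit of its own quotients.

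For \textbf{existence}: the levels of $\category$ form a directed poset under inclusion, and for $\C\subseteq\C'$ there is an epimorphism $X^{\C'}\to X^\C$ realizing $X^\C$ as the $\C$-quotient of $X^{\C'}$ (such a map is, up to isomorphism of the target, unique, since a quotient of $X^{\C'}$ that lies in $\C$ and is isomorphic as an object to the maximal $\C$-quotient must equal it by comparing dimensions in the modular lattice of quotients). The key observation that makes this into a genuine diagram is that \emph{a compatible morphism between two quotients of a common object is unique}; hence, indexing by the poset $\varinjlim_{\C}\{\text{quotients of }X^\C\}$ --- whose elements are, for $\C$ large, actual quotients of representatives of the $X^\C$, and whose morphisms are the unique quotient morphisms --- produces a cofiltered diagram in $\category$ with all transition maps epimorphisms, and this poset is a modular lattice just as in the proof of Lemma~\ref{quotient-profinite-modular}. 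Let $Y$ be the associated pro-object. To see $Y$ is small, fix $G\in\category$ and let $\C$ be the level it generates; for any $\C'\supseteq\C$, the object $G$ is a quotient of $X^{\C'}$ and lies in $\C$, hence $G\leq (X^{\C'})^\C\cong X^\C$, so every epimorphism $X^{\C'}\to G$ factors through $X^{\C'}\to X^\C$. Since precomposition with an epimorphism is injective, the transition maps $\Sur(X^\C,G)\to\Sur(X^{\C'},G)$ are bijections for $\C'\supseteq\C$, and therefore $\Hom(Y,G)=\varinjlim_{\C'}\Hom(X^{\C'},G)=\varinjlim_{\C'}\Sur(X^{\C'},G)\cong\Sur(X^\C,G)$, a finite set. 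Finally, by Corollary~\ref{C:levelcom} each level is a narrow formation of finite complexity, so Lemma~\ref{small-finiteness} gives that $Y^\C$ exists; the quotients of $Y$ lying in $\C$ are exactly the quotients of $Y$ that are $\leq X^\C$, i.e. the quotients of $X^\C$, so their maximum $Y^\C$ equals $X^\C$.

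For \textbf{uniqueness}, I would let $Z$ be a small pro-object with $Z^\C\cong X^\C$ for all levels $\C$, presented by a diagram $(I,\mathcal F)$ with every $\mathcal F(f)$ an epimorphism. By our convention that all morphisms of $\category$ are epimorphisms, each structure map $Z\to\mathcal F(i)$ is an epimorphism (it arises from $\mathrm{id}_{\mathcal F(i)}$), so each $\mathcal F(i)$ is a quotient of $Z$; and any morphism $Z\to G$ factors through some $\mathcal F(i)$, so the $\mathcal F(i)$ are cofinal among the quotients of $Z$. Hence $Z$ is isomorphic, as a pro-object, to the cofiltered limit of all of its quotients. Now the quotients of $Z$ lying in a fixed level $\C$ are precisely those $\leq Z^\C$, i.e. the quotients of $Z^\C\cong X^\C$; so the poset of quotients of $Z$, realized as a diagram in $\category$ (again using uniqueness of quotient morphisms), is $\varinjlim_{\C}\{\text{quotients of }X^\C\}$ --- the same diagram that defines $Y$. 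Therefore $Z\cong Y$. Together with the elementary check that $(Y^\C)_\C$ is a pro-isomorphism class, i.e. $(Y^{\C'})^\C\cong Y^\C$ (every quotient of $Y$ in $\C$ lies in $\C'$, hence is $\leq Y^{\C'}$, so the maximal $\C$-quotient of $Y^{\C'}$ is again $Y^\C$), this proves that $Y\mapsto(Y^\C)_\C$ is a bijection from isomorphism classes of small pro-objects to $\Prf$.

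I expect the \textbf{main obstacle} to be purely bookkeeping: passing from the ``defined only up to isomorphism'' data of a pro-isomorphism class and of a poset of quotients to honest diagrams in $\category$. The clean way around it --- worth isolating as a small lemma --- is the uniqueness of compatible morphisms between quotients of a common object, which forces the naive transition maps to be strictly coherent and makes the relevant index categories cofiltered with a unique morphism in each nonempty hom-set.
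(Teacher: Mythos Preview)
Your high-level strategy---build $Y$ as the pro-object indexed by the union of the quotient lattices of the $X^\C$, and show that any small $Z$ is canonically the limit of its own quotient lattice---is sound and in some ways more conceptual than the paper's hands-on construction. But the mechanism you invoke to make the index diagram strict does not do what you need.

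The lemma ``a compatible morphism between two quotients of a common object is unique'' is true, but it only applies once you \emph{have} a common object sitting above both quotients. Here the common object is precisely the $Y$ you are trying to build. Concretely: to form $\varinjlim_\C\{\text{quotients of }X^\C\}$ as an honest diagram in $\category$, you must, for each inclusion $\C\subseteq\C'$, embed the quotient lattice of $X^\C$ into that of $X^{\C'}$, and this embedding depends on a choice of epimorphism $X^{\C'}\to X^\C$. Such a map is unique only up to $\Aut(X^\C)$, and different choices give genuinely different embeddings of the lattice (automorphisms of $X^\C$ can permute its quotients nontrivially). For a triple $\C_1\subseteq\C_2\subseteq\C_3$ there is no reason your three chosen maps compose correctly, so the would-be direct system need not commute and the index category need not even be cofiltered. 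The same issue recurs in your uniqueness step: the quotient diagram of $Z$ is well-defined because $Z$ is a fixed object, but identifying it with ``the same diagram that defines $Y$'' again asks for a coherent system of isomorphisms $Z^\C\cong X^\C$, which you have not produced.

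The paper handles exactly this point by a compactness argument: for each pair $\C_1\subseteq\C_2$ the set $\Sur(X^{\C_2},X^{\C_1})$ is finite, the coherence conditions $\pi_{\C_1}^{\C_2}\circ\pi_{\C_2}^{\C_3}=\pi_{\C_1}^{\C_3}$ each involve finitely many variables, and any finite subcollection can be satisfied by working inside a single large level $\overline\C$ (where your uniqueness-of-quotient-maps observation \emph{does} apply, since everything is a quotient of the single object $X^{\overline\C}$). Tychonoff then gives a global coherent choice. The same device is used a second time in the uniqueness proof to pick compatible isomorphisms $X^\C\to Z^\C$. Once you insert this compactness step, the rest of your outline---cofinality of the $X^\C$ among all quotients, finiteness of $\Sur(Y,G)$ via stabilization at the level generated by $G$, and $Z\cong\varprojlim(\text{quotients of }Z)$---goes through.
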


\begin{proof} 
First, for each level $\C$ we chose an object in the isomorphism class $X^\C$, and by slight abuse of notation we will now write $X^\C$ for this object, and we make analogous choices throughout this proof.
We will next show the existence of an assignment to each pair $\C_1,\C_2$ of levels with $\C_1 \sub\C_2$ an epimorphism $\pi_{\C_1}^{\C_2} \colon X^{\C_2} \to X^{\C_1}$ such that whenever $\C_1 \leq \C_2 \leq \C_3$ we have $ \pi_{\C_1}^{\C_2} \circ \pi_{\C_2}^{\C_3} = \pi_{\C_1}^{\C_3}$. Since there are finitely many epimorphisms $X^{\C_2} \to X^{\C_1}$ for each $\C_2, \C_1$, 
to check that these infinitely many conditions can be satisfied, by the compactness of an infinite product of finite sets it suffices to check that every finite subset of them can be satisfied. Any finite set of equations of the form $ \pi_{\C_1}^{\C_2} \circ \pi_{\C_2}^{\C_3} = \pi_{\C_1}^{\C_3}$ involves only finitely many levels $\C_i$, and these finitely many levels $\C_i$ are contained in a single level $\overline{\C}$.

For each $\C_i$, fix an epimorphism
$\pi_i:X^{\overline{\C}} \ra ( X^{\overline{\C}} )^{\C_i}$ realizing the definition of $( X^{\overline{\C}} )^{\C_i}$.
Then for every $i,j$ with $\C_i\sub \C_j$, there is a unique epimorphism 
$\pi^j_i: ( X^{\overline{\C}} )^{\C_j} \ra ( X^{\overline{\C}} )^{\C_i}$ such that $\pi^j_i \pi_j=\pi_i$.
We define $\pi_{\C_i}^{\C_j}$ for each $i,j$ with $\C_i \sub \C_j$ by composing $\pi^j_i$
with  fixed isomorphisms  $( X^{\overline{\C}} )^{\C_k} \isom X^{\C_k}$ chosen for each $k$. One can check these  $\pi_{\C_i}^{\C_j}$ satisfy all of the required equations involving only $\C_i \subseteq \overline{\C}$, completing the  argument for existence of the $\pi_{\C_1}^{\C_2}$.

Any epimorphism $X^{\C_1}\ra X^{\C_1}$ is an isomorphism by the finiteness of quotient posets.  
Thus any epimorphism $\pi_{\C_1}^{\C_2} \colon X^{\C_2} \to X^{\C_1}$ realizes $X^{\C_1}$ as the maximal quotient in $\C_1$ of $X^{\C_2}$,
since it factors through the maximal  quotient in $\C_1$ of $X^{\C_2}$ (which we know, by the definition of a pro-isomorphism class, is abstractly isomorphic to 
$X^{\C_1}$).

Define a category $I$ to have one object for each level $\C$ and a single morphism $\C_2 \to \C_1$ if and only if $\C_1 \sub \C_2$. 
Let $Y$ be the pro-object defined using the index category $I$ and the functor sending $\C $ to $X^\C$, and the unique morphism $\C_2\to \C_1$ to $\pi_{\C_1}^{\C_2}$. Let's check that $Y$ is a small pro-object and that $Y^{\C} \isom X^{\C}$. It suffices to check that $Y^{\C} \isom X^{\C}$ for each level $\C$, since taking $\C$ to be generated by ${H}$, the number of epimorphisms $Y \to H$ is then bounded by the number of epimorphisms $X^{\C}\to H$, which is finite.

To do this, let $F$ be any quotient in $\C$ of $Y$. Then $F$ must be a quotient of $X^{\C'}$ for some $\C'$ by definition of $Y$.
  Let $\tilde{\C}$ be the level generated by $\C'$ and $\C$.  
  Every epimorphism $X^{\tilde{\C}} \ra F$ factors through $\pi_{\C}^{\tilde{\C}}: X^{\tilde{\C}} \ra X^{{\C}}$ by our observation above that this is the maximal
quotient in $\C$ of $X^{\tilde{\C}}$.
  Thus every quotient in $\C$ of $Y$ factors through $Y \ra X^{{\C}}$, which
completes the proof of existence.

It remains to show uniqueness up to isomorphism. In other words, we fix a small pro-object $Z$ with $Z^\C \cong X^\C \cong Y^\C$ for all $\C$ and must construct an isomorphism $Y \to Z$. 
As above, we can  choose compatible maps $\pi^{\C_2}_{\C_1}: X^{\C_2} \to X^{\C_1}$ for all $\C_1\sub \C_2$.
For each level $\C$, we choose a map $\tau_\C : Z \ra Z^\C$ (which realizes $Z^\C$ as the maximal quotient in $\C$ of $Z$), and then define
 $\tau^{\C_2}_{\C_1}: Z^{\C_2} \to Z^{\C_1}$ for all $\C_1\sub \C_2$ so that $\tau_{C_1}=\tau^{\C_2}_{\C_1} \tau_{C_2}$.
 Next we will 
choose a system of isomorphisms $X^\C \to Z^\C$ (for each $\C$) that are compatible in the sense that, for $\C_2 \sub \C_1$, they form a commutative diagram with $\pi^{\C_2}_{\C_1}$ and  $\tau^{\C_2}_{\C_1}$. 
 This again is choosing an element of an infinite product of finite sets satisfying infinitely many conditions that each depend on only finitely many terms of the product, so it suffices to check that any finite subset of the conditions can be satisfied.
To do this, we chose a level $\C$ containing all levels appearing in a finite list of conditions,  fix an isomorphism $\phi: X^\C \to Z^\C$, and note that since $\pi^{\C}_{\C_i}: X^\C \ra X^{\C_i}$and $\tau^{\C}_{\C_i} \phi: X^\C \ra Z^{\C_i}$ both give the maximal quotient in $\C_i$ of $X^\C$, then they must be isomorphic, and there is a unique isomorphism $\phi_{\C_i}: X^{\C_i} \ra Z^{\C_i}$ such that $\phi_{\C_i} \pi^{\C}_{\C_i}=\tau^{\C}_{\C_i} \phi$.  Then these $\phi_{\C_i}$ are compatible in the above sense.

Now we construct the isomorphism $Y \to Z$. 
Writing $Z = (I, \mathcal F)$, we must for each $i\in I$ choose a level $ \C$ and a map $X^\C \to   \mathcal F(i)$, in a way compatible with all the morphisms of $I$. We can choose $\C$ to be the level generated by $\mathcal F(i)$.  Then there is a unique epimorphism $Z^{\C} \to \mathcal F(i)$
commuting with $\tau_\C$ and the natural map $Z\ra F(i)$.  Taking the composition with $\phi_{\C}$ constructed above, we have
 $X^{\C}  \to \mathcal F(i)$, and one can check these are compatible with the morphisms of $I$. 
 
 To map $Z \to Y$, we must for each level $\C$ choose $i\in I$ and a map $\mathcal F(i) \to X^\C$, compatible with all inclusions between levels. We 
have $\phi_C \tau_C: Z \ra X^\C$, so  there exists $i \in I$ with a morphism $\mathcal F(i) \to X^{\C}$, and choose that $i$ and morphism. 

It is straightforward to check that these two maps are inverses, and so $Y\isom Z$.
\end{proof}

Using Lemma~\ref{pro-object-existence}, we can view $\mathcal P$ as a set of small pro-objects.  
For a formation $\W$ of $C$ and
 $G \in \W$, let  $U_{\W, G}$ be the set of $X\in \mathcal P$ such that the maximal quotient of $X$ in $\W$ exists and is isomorphic to $G$.  By Lemma~\ref{small-finiteness}, these maximal quotients will always exist when $\W$ is a narrow formation of finite complexity.
 For a general formation, they may not exist, e.g. $\Z_p=\varprojlim_{n}\Z/p^n\Z$ does not have a maximal quotient in the narrow formation of finite abelian $p$-groups. 
We can deduce more general formulas for our unique measures on these $U_{\W, G}$.

\begin{corollary}\label{la-output} Let $\category$ be a \Diamantine category.
Let $(M_G)_{G}\in \R^{\category_{/\isom}}$ be well-behaved.
Let $\W$ be narrow formation of $C$.
Then $U_{\W,F}$ is open and closed in $\mathcal P$,  and $(M_G)_{G}$ is well-behaved at $\W$, and
the unique measure $\nu$ of Theorem~\ref{analytic-main-measure}, if it exists, 
 satisfies $\nu(U_{\W,F})=v_{\W,F}$ for all $F \in \W$. \end{corollary}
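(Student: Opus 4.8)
The plan is to reduce every assertion to a single level $\C'=\C'_F$ attached to each $F\in\W$. Since $\W$ is narrow there are only finitely many isomorphism classes of objects of $\W$ carrying a simple epimorphism onto $F$; fix representatives $X_1,\dots,X_m$ and let $\C'$ be the level generated by $\{F,X_1,\dots,X_m\}$. Because $\W$ is a downward-closed, join-closed set of isomorphism classes containing all minimal objects and containing $F,X_1,\dots,X_m$, while $\C'$ is the smallest such set, we get $\C'\subseteq\W$. The combinatorial heart of the argument is the claim that \emph{every $G\in\W$ admitting a semisimple epimorphism onto $F$ lies in $\C'$}: by Lemma~\ref{semisimple-equivalence} such a $G$ is a join $F_1\vee\cdots\vee F_n$ of quotients $F_i$ of $G$ with each $F_i\to F$ simple; each $F_i$ lies in $\W$ (downward-closedness) and has a simple epimorphism to $F$, hence $F_i\cong X_{j_i}\in\C'$, and join-closedness applied to the successive joins, all formed in the (finite) lattice of quotients of $G$, forces $G\in\C'$.

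Granting this, the statements ``$(M_G)_G$ is well-behaved at $\W$'' and ``$v_{\W,F}=v_{\C',F}$'' follow immediately: by Lemma~\ref{non-semisimple-vanishing} the coefficients $\mu(F,G)$, and hence $\hat\mu(F,G)$, vanish unless $G\to F$ is semisimple, so only $G\in\C'$ contribute to the sum defining well-behavedness at $\W$ and to the sum defining $v_{\W,F}$; since $\C'\subseteq\W$ these sums coincide with the corresponding sums over the level $\C'$, which are finite, respectively absolutely convergent, because $(M_G)_G$ is well-behaved.

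Next I would prove $U_{\W,F}=U_{\C',F}$, where $X^{\C'}$ always exists because $\C'$ is a level (Lemma~\ref{small-finiteness}). If $X^{\W}\cong F$, then $X^{\W}$ is a quotient of $X$ lying in $\C'$ (as $F\in\C'$), so $X^{\W}\le X^{\C'}$, while $X^{\C'}$ is a quotient of $X$ in $\C'\subseteq\W$, so $X^{\C'}\le X^{\W}$; hence $X^{\C'}\cong F$. Conversely, if $X^{\C'}\cong F$ I claim $X^{\C'}$ is already maximal among quotients of $X$ in $\W$. If not, there is a quotient $K$ of $X$ in $\W$ with $K\not\le X^{\C'}$; then $K\vee X^{\C'}$ — a join in the lattice of quotients of $X$, which exists by Lemma~\ref{quotient-profinite-modular} — lies in $\W$ and strictly dominates $X^{\C'}$, and being an object of $\category$ it has a finite lattice of quotients (Definition~\ref{D:diamond}), so there is a cover $H$ with $X^{\C'}\lessdot H\le K\vee X^{\C'}$. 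Then $H\to X^{\C'}\cong F$ is simple and $H\in\W$, so $H\cong X_j$ for some $j$ and $H\in\C'$; but $H$ is a quotient of $X$ in $\C'$ strictly above $X^{\C'}$, contradicting maximality of $X^{\C'}$. Hence $X^{\C'}=X^{\W}\cong F$. Since $\C'$ is a level, $U_{\C',F}$ is a basic open set, so $U_{\W,F}$ is open, and its complement $\bigsqcup_{G\in\C',\,G\not\cong F}U_{\C',G}$ is open too, so $U_{\W,F}$ is also closed.

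Finally, assuming the measure $\nu$ of Theorem~\ref{analytic-main-measure} exists, the uniqueness clause there gives $\nu(U_{\C',F})=v_{\C',F}$, whence $\nu(U_{\W,F})=\nu(U_{\C',F})=v_{\C',F}=v_{\W,F}$. I expect the main obstacle to be the converse half of $U_{\W,F}=U_{\C',F}$, namely showing that the maximal quotient of $X$ inside the finitely generated level $\C'$ already computes the maximal quotient inside the possibly infinite-complexity formation $\W$; the key observation enabling this is that, although the lattice of quotients of the pro-object $X$ may be infinite, the interval between $X^{\C'}$ and any $K\vee X^{\C'}$ sits inside the finite quotient lattice of an object of $\category$, so covering relations — and hence simple epimorphisms onto $F$ — are available.
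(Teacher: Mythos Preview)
Your proposal is correct and follows essentially the same approach as the paper: construct the level $\C'$ generated by $F$ together with the finitely many objects of $\W$ admitting a simple epimorphism to $F$, then check that $U_{\W,F}=U_{\C',F}$ and $v_{\W,F}=v_{\C',F}$ by reducing to semisimple morphisms via Lemma~\ref{non-semisimple-vanishing} and Lemma~\ref{semisimple-equivalence}. Your treatment of the converse inclusion $U_{\C',F}\subseteq U_{\W,F}$ is slightly more explicit than the paper's (you invoke Lemma~\ref{quotient-profinite-modular} and work with a cover in the finite interval $[X^{\C'},K\vee X^{\C'}]$), but the idea is identical.
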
 

\begin{remark}Even if $\W$ is not narrow, but merely is a union of levels $\C_i$ for $\C_0\sub \C_1\sub\cdots$, a part of the conclusion holds: $U_{\W, F}$ is a closed set because it is the intersection of the $U_{ \C_i, F^{\C_i}}$, and $\nu ( U_{\W, F}) = \lim_i \nu ( U_{\C_i, F^{\C_i}}) = \lim_i v_{\C_i, F^{\C_i}}$.
As long as the sum defining $v_{\W,F}$ is absolutely convergent, we have 
$ \lim_i v_{\C_i, F^{\C_i}} = v_{\W,F}$ and hence $\nu ( U_{\W, F})=v_{\W,F}$.\end{remark}

\begin{proof} Fix $F \in \W$.  Let $\C$ be the level generated by $F$ and all the objects in $\W$ with a simple epimorphism to $F$ (by assumption, a finite set). 
So $\C\subseteq\W$.

We can apply Theorem~\ref{analytic-main-measure} after checking that $U_{\C, F} = U_{\W, F}$ and $v_{\C, F} = v_{\W,F}$.

To check $U_{\C, F} = U_{\W,F}$, let $X$ have maximal quotient in $\C$ isomorphic to $F$. Then $F$ is a quotient of $X$ in $\W$, and if it is not the maximal such, there must be a quotient  $G$ of $X$ with $G>F$ and $G\in \W$. Factoring $G \to F$ into a composition of simple morphisms, we see that there must exist a quotient $G'$ of $X$ with $G' > F$ with $G' \to F$ simple. Then $G'$ lies in $\C$ by construction, contradicting the assumed maximality of $F$. Conversely, if $X$ has a maximal quotient in $\W$ isomorphic to $F$, then any quotient of $X$ in $\C$ must lie in $\W$ unless it is a minimal object, in which case it must be the minimal quotient of $F$ and hence lie in $\W$ anyways. Thus we must also have $F$ the maximal quotient in $\C$ of $F$. Hence $X \in U_{\C, F}$ if and only if $X \in U_{\W, F}$, as desired.  In particular since $U_{\C, F}$ is open and closed so is $U_{\W, F}.$

To check $v_{\C, F} = v_{\W,F}$, it suffices to check that, for $G \in \category$ with a semisimple morphism to $F$, we have $G \in \C$ if and only if $G \in \W$. The ``only if" direction is clear since $\C \subseteq\W$. For ``if", since $G$ is semisimple, it is the join of objects with simple morphisms to $F$. Since these objects are quotients of $G$, they lie in $\W$, so since they have simple morphisms to $F$, they lie in $\C$, and since $G$ is their join it lies in $\C$ also, as desired.   Similarly, each expression that needs to converge absolutely for the well-behavedness of $(M_G)_{G}$ at $\W$ converges absolutely because of the well-behavedness at some level.  Theorem~\ref{analytic-main-measure} says that
$\nu(U_{\C,F})=v_{\C,F}$, which then implies $\nu(U_{\W,F})=v_{\W,F}$.
\end{proof}

\subsection{Pro-objects concretely}

 \begin{lemma}\label{small-characterization} Let  $\category$ be a \Diamantine category such that 
  every morphism can be written as a composition $\iota \pi$, where $\pi$ is an epimorphism and $\iota$ is a monomorphism,
 and every object has finitely many subobjects, up to isomorphism. Then for $(I,\mathcal F)$ a pro-object of $\category$, the following are equivalent
 
 \begin{itemize}
 
 \item $(I,\mathcal F)$ is small.
 
 \item $(I,\mathcal F)$ has finitely many epimorphisms to each object of $\category$.
 
 \item $(I,\mathcal F)$ has finitely many morphisms to each object of $\category$.
 \end{itemize}
 
  \end{lemma}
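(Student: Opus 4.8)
The plan is to prove the three-way equivalence by establishing two implications, since the implication ``finitely many morphisms to each object'' $\Rightarrow$ ``finitely many epimorphisms to each object'' is immediate (epimorphisms form a subset of morphisms). So I would prove (a) small $\Rightarrow$ finitely many epimorphisms to each object, and (b) finitely many epimorphisms to each object $\Rightarrow$ finitely many morphisms to each object; combined with the trivial implication this closes the cycle.

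For implication (a): a small pro-object, by \cref{def-small}, arises from a diagram $(I,\mathcal F)$ in which every $\mathcal F(f)$ is an epimorphism, \emph{and} for each $G \in \category$ the set of epimorphisms to $G$ is finite. So this direction is essentially by definition; the only subtlety is that ``small'' is defined via the existence of \emph{some} presentation with epimorphic transition maps, and I should note that the finiteness of $\Sur((I,\mathcal F),G)$ is an isomorphism-invariant property of the pro-object, so it holds regardless of which presentation we picked. This is a one-line observation.

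For implication (b), which is the heart of the lemma: assume $(I,\mathcal F)$ has finitely many epimorphisms to each object of $\category$, and let $G \in \category$; I want to show $\Hom((I,\mathcal F), G)$ is finite. Using the factorization hypothesis, every morphism $\mathcal F(i) \to G$ factors as $\iota \circ \pi$ with $\pi$ an epimorphism onto some subobject $G'$ of $G$ and $\iota \colon G' \hookrightarrow G$ the inclusion. Passing to the colimit, every morphism $(I,\mathcal F) \to G$ factors through some subobject $G' \subseteq G$ as an epimorphism followed by the inclusion. Now I would use that $G$ has only finitely many subobjects up to isomorphism; more carefully, I want finitely many \emph{actual} subobjects, which follows since each isomorphism class of subobject of $G$ admits only finitely many monomorphisms into $G$ (the number of such is bounded by $\abs{\Hom(G',G)}$, but to avoid circularity I can instead argue: the set of subobjects $G' \subseteq G$ with a fixed isomorphism type maps injectively, via $G' \mapsto$ (isomorphism class of $G'$ together with the induced epimorphism data), into a set controlled by $\Aut(G)$ acting on the finite set of monos — and $\Aut(G)$ is finite by \cref{D:diamond}\eqref{D:Aut}). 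Hence there are finitely many subobjects of $G$. For each such subobject $G'$, the morphisms $(I,\mathcal F) \to G$ factoring through $G'$ as an epimorphism correspond to elements of $\Sur((I,\mathcal F), G')$, which is finite by hypothesis. Taking the union over the finitely many $G'$ gives $\Hom((I,\mathcal F),G)$ finite.

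The main obstacle I anticipate is bookkeeping around the epi--mono factorization at the level of the pro-object rather than at a single stage: I need that a morphism $(I,\mathcal F)\to G$, which by \cite[(2.6.4)]{KashiwaraSchapira} is represented by some $\mathcal F(i) \to G$, has a well-defined ``image'' subobject independent of the representative, and that factoring through a subobject $G'$ really does make the first map an epimorphism \emph{of pro-objects} (this is exactly the content, in the pro-setting, of the argument in \cref{quotient-profinite-modular} that $\Sur$ commutes with the relevant colimit — here I should either invoke a stage where the transition maps into $G$ stabilize, or note that the factorization through the image can be chosen compatibly as $i$ varies over a cofinal subcategory). I expect this to require a short lemma-within-the-proof, but no deep idea: cofilteredness of $I$ plus finiteness of $\Hom(\mathcal F(i),G)$ for each $i$ lets one pass to a cofinal subdiagram on which the image subobject is constant, reducing everything to the single-object case.
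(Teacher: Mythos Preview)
Your cycle does not close. You establish
\[
\text{small} \Rightarrow \text{finitely many epis} \Rightarrow \text{finitely many morphisms} \Rightarrow \text{finitely many epis},
\]
but nothing here gets you \emph{back} to ``small''. Recall from \cref{def-small} that smallness has two parts: (i) the pro-object is isomorphic to one whose transition maps are all epimorphisms, and (ii) it has finitely many epimorphisms to each object. Your implication (a) uses only that (i)+(ii) $\Rightarrow$ (ii), which is trivial; the substantive content is that (ii) alone already forces (i), i.e.\ that \emph{any} pro-object can be presented with epimorphic transition maps. This is exactly what the paper proves first, via a nontrivial construction: for each $i \in I$ one takes a \emph{minimal valid subobject} of $\mathcal F(i)$ (a minimal subobject through which some $\mathcal F(j) \to \mathcal F(i)$ factors), builds a new cofiltered index category $J$ out of these, and checks that the resulting $(J,\mathcal G)$ has epimorphic transitions and is isomorphic to $(I,\mathcal F)$. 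This step uses both the epi--mono factorization and the finiteness-of-subobjects hypotheses, and without it the equivalence does not hold.

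Your own worry in the last paragraph is precisely where this gap bites. You want the induced map $(I,\mathcal F) \to G'$ to be an epimorphism of pro-objects, but if $\phi$ is represented by an epimorphism $\phi_i \colon \mathcal F(i) \to G'$ and $f\phi = g\phi$ as pro-morphisms, equality only tells you $f\phi_i \mathcal F(j\to i) = g\phi_i \mathcal F(j\to i)$ for some $j \to i$; you cannot cancel $\mathcal F(j\to i)$ unless it is an epimorphism. So your argument for (b) already silently assumes a presentation with epimorphic transitions --- the very thing you have not established. Once you do prove that every pro-object admits such a presentation (as the paper does), both the missing implication and your concern about (b) are resolved simultaneously.
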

 
 \begin{proof} We will first show that every pro-object of $\category$ is isomorphic to one of the form $(J, \mathcal G)$ where all morphisms in the image of $\mathcal G$ are epimorphisms.
  
 Fix a pro-object $(I,\mathcal F)$. For every $j \in I$ mapping to $i$, we can express $\mathcal F(j) \to \mathcal F(i)$ in at least one way as an epimorphism followed by a monomorphism.
 The monomorphism gives a subobject of  $\mathcal F(i)$. Call a subobject of $\mathcal F(i)$ arising this way a \emph{valid} subobject. We are interested in the valid subobjects that are minimal for the natural ordering on subobjects (restricted to valid subobjects). 
 
  Let $J$ be the category of triples of of an object $i\in I, $ an object $H \in \category,$ and a monomorphism  $\pi \colon H \to \mathcal F(i)$,  such that $(H, \pi)$ is a minimal valid subobject of $\mathcal F(i)$.  We take the morphisms in $J$ from $(i_1, H_1,\pi_1) \to (i_2, H_2, \pi_2)$ to be given by pairs of a morphism $f \colon i_1 \to i_2$ and an epimorphism $g \colon H_1 \to H_2$ such that $ \mathcal F(f) \circ \pi_1 = \pi_2 \circ g$. Let $\mathcal G$ be the functor $J \to \category$ sending $(i, H, \pi) $ to $H$ and the morphism given above to $g$. 
 
 We claim that $J$ is cofiltered and $(J, \mathcal G)$ is isomorphic to $(I, \mathcal F)$. 
 
 To show this, we first check that for each $i \in I$ there exists a minimal valid subobject of $\mathcal{F}(i)$.  This is because there is always at least one valid subobject $\mathcal F(i)$, and by assumption the number of subobjects is finite, so there is always a minimal element.
 
Next we check that for a minimal valid subobject $H$ of $\mathcal{F}(i)$, for $j$ mapping to $i$ such that $\mathcal F(j) \to \mathcal F(i)$ factors through an epimorphism $\mathcal F(j) \to H$,  for all $k \in I$ mapping to $j$, the given map $\mathcal F(k) \to \mathcal F(i)$ factors through an epimorphism $\mathcal F(k) \to H$.   (We call such a map $j\ra i$ a witness of the validity of $H$.)
Indeed, for every $k$ mapping to $j$, the map $\mathcal F(k) \to \mathcal F(i)$ factors through $H$. So, as a morphism, $\mathcal F(k) \to H$ is the composition of an epimorphism followed by a monomorphism.
If that monomorphism is not an isomorphism then it defines a proper subobject of $H$, hence a subobject of $\mathcal F(i)$ smaller than $H$, which $\mathcal F(k)$ factors through, contradicting minimality. Thus that monomorphism is an isomorphism so $\mathcal F(k) \to H$ is an epimorphism.

 We now check that $J$ is cofiltered. Given two objects $(i_1,H_1,\pi_1)$ and $ (i_2, H_2,\pi_2)$, we need to find an object mapping to both. 
 Choose $j_1$ mapping to $i_1$ witnessing the validity of $H_1$ and 
 and choose $j_2$ mapping to $i_2$ witnessing the validity of $H_2$. By the cofiltered property of $I$, we can choose $j_3$ mapping to $j_1$ and $j_2$. By the above, we can find $(j_3, H_3, \pi_3) \in J$.  Let's check that any such $(j_3, H_3,\pi_3)$ maps to $(i_1,H_1,\pi_1)$ (lifting the chosen map $f: j_3\ra i_1$). By the above, $  \mathcal F(f) \colon  \mathcal F(j_3) \to  \mathcal F(i)$ factors through an epimorphism $\mathcal F(j_3) \to H_1$. Taking $g$ to be the composition of this epimorphism with $\pi_3$, we see that $\pi_1 \circ g  = \mathcal F(f) \circ \pi_3$. To see that $g$ is an epimorphism, we choose $j_4$ such that $\mathcal F(j_4) \to \mathcal F(j_3)$ factors through $H_3$, and note that $\mathcal F(j_4) \to H_1$ is an epimorphism since otherwise we would obtain a valid subobject smaller than $H_1$, so $H_3 \to H_1$ is an epimorphism, as desired.  Then $(j_3, H_3, \pi_3) $ maps to $(I_2, H_2,\pi_2)$ by a similar argument, verifying the first part of the cofiltered assumption.
 
 Now we check the second part of the cofiltered assumption. 
Consider two maps $(f,g)$ and $(f',g')$ from $(i_1,H_1,\pi_1)$ to $ (i_2, H_2,\pi_2)$. 
  Since $I$ is cofiltered, we can pick $j_5$ and a map $f_{51}: j_5 \ra i_1$ such that $f f_{51}=f' f_{51}$, and further by the cofiltered assumption we can choose such a $j_5$ and $f_{51}$ such that $j_5$ maps to $j_1$ and $j_2$.  By the above, any $(j_5, H_5,\pi_5)\in J$ maps to $(i_1,H_1,\pi_1)$ via $(f_{51},h)$ for some $h$.
  We have 
  $$\pi_2gh=\mathcal{F}(f) \pi_1 h=\mathcal{F}(f) \mathcal{F}(f_{51}) \pi_5=\mathcal{F}(f') \mathcal{F}(f_{51}) \pi_5=
  \mathcal{F}(f') \pi_1 h=\pi_2g'h.$$
   Since $\pi_2$ is a monomorphism, it follows that $gh=g'h$ and $(f_{51},h)(f,g)=(f_{51},h)(f',g'),$ which shows the second part of the cofiltered assumption.

 Now we check that $(I, \mathcal F)$ and $(J, \mathcal G)$ are isomorphic. To construct a morphism $(I, \mathcal F) \to (J,\mathcal G)$, we must choose for each $(i, H, \pi) \in J$ a $j \in I$ and a map $ \mathcal F(j) \to \mathcal G ( i,H, \pi) = H$. We can take $j$ and this map to be a witness to the validity of $(H,\pi)$. To construct a morphism $(J,\mathcal G) \to (I,\mathcal F)$, we choose for each $i \in I$ a $(j, H , \pi) \in J$ and a map $H = \mathcal G( j,H, \pi)  \to \mathcal F(i)$. We choose $j = i$ and $(H,\pi)$ a minimal valid subobject, and the map $\pi$. It is straightforward to check these two maps are well-defined and are inverses.

We have now shown that every pro-object of $\category$ is isomorphic to one of the form $(J, \mathcal G)$ where all morphisms in the image of $\mathcal G$ are epimorphisms.
 
Thus, a pro-object is small if and only if it has finitely many epimorphisms to each object of $\category$. It remains to show this condition is equivalent to having finite many morphisms to each object of $\category$.  Having finitely many morphisms implies having finitely many epimorphisms, so it suffices to show the reverse implication. Each morphism $(I,\mathcal F) \to G$ arises from a map $\mathcal F(i) \to G$ which factors as an epimorphism $\mathcal F(i) \to H$ followed by a monomorphism $H \to G$. The morphism $(I,\mathcal F) \to H$ is manifestly an epimorphism. There are finitely many monomorphisms $H \to G$, up to isomorphism, by assumption, and for each $H$, finitely many epimorphisms from $(\mathcal F, I)$, proving that there are then finitely many morphisms from $(I,\mathcal F) \to G$.
This concludes the proof of the equivalence of the three conditions in the lemma.
 \end{proof}
 
 \begin{remark} Let $\category$ be the category of finite groups, which clearly satisfies the hypotheses of Lemma~\ref{small-characterization}.
The category of pro-objects in the category of finite groups is equivalent to the category of profinite groups \cite[Example (b) on p. 254 and Theorem on p. 245]{Johnstonestone}. A profinite group is small in our sense if and only if it has finitely many continuous surjective homomorphisms to any finite group by Lemma \ref{small-characterization}. By a standard argument, this is equivalent to having finitely many open subgroups of index $n$ for each $n$, which is the usual definition of a small profinite group (e.g. see  \cite[Section 16.10]{Fried2008}).
  In this case Lemma \ref{small-finiteness} says that a small profinite group has a maximal quotient in $\C$. This case appeared previously in \cite[Lemma 8.8]{SW}, while the case of a topologically finitely generated profinite group is more classical \cite[Corollary 15.72]{Neumann1967}. 
\end{remark}

 \begin{lemma}\label{L:prolocalRmod}
 \begin{enumerate} 
 \item  Let $R$ be a Noetherian local commutative ring with finite residue field, let $\mathfrak m$ its maximal ideal, and $\hat{R}$ its completion. Then the category of small pro-finite $R$-modules 
(i.e.  pro-objects for the category of finite $R$-modules) 
 is equivalent to the the category of finitely generated $\hat{R}$-modules. The sets $U_{S, L } = \{ N \in \Prf \mid  N\otimes_R S \cong L \}$ for $S$ a finite quotient of $R$  and $K$ an $R$-module form a basis for the topology on the set of isomorphism classes.
 
 \item Let $R$ be a Noetherian commutative ring. The the category of small pro-finite $R$-modules is equivalent to the product over maximal ideals $\mathfrak m$ of $R$ with finite residue field of the category of small pro-finite $R_{\mathfrak m}$-modules
 (where $R_{\mathfrak m}$ is the completion of $R$ at $\mathfrak{m}$). The topology on its set of isomorphism classes is given by the product topology (of the topology described above). 
 \end{enumerate}
 \end{lemma}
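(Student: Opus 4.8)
\emph{Part (1).} The plan is to produce an explicit equivalence from the category of finitely generated $\hat R$-modules to the category of small pro-objects. First I would reduce to the case $R=\hat R$: since $R$ is Noetherian with finite residue field each $R/\mathfrak{m}^n$ is finite, and every finite $R$-module is annihilated by some $\mathfrak{m}^n$ (the chain $M\supseteq \mathfrak{m}M\supseteq\cdots$ stabilizes, so $\mathfrak{m}^nM=\mathfrak{m}^{n+1}M=0$ by Nakayama), so the categories of finite $R$-modules and finite $\hat R$-modules coincide. Next, the levels $\C_n$ of finite $R/\mathfrak{m}^n$-modules (each generated, as a level, by the single object $R/\mathfrak{m}^n$) are cofinal among all levels, since any level is generated by finitely many finite modules, each killed by some power of $\mathfrak{m}$. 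Hence, by \Cref{pro-object-existence}, a small pro-object is the same datum as a compatible system $(M_n)_{n\ge 1}$ of finite $R/\mathfrak{m}^n$-modules with $M_{n+1}/\mathfrak{m}^nM_{n+1}\isom M_n$. I would then define $\Phi$ on a finitely generated $R$-module $M$ to be the pro-object with components $M/\mathfrak{m}^nM$ and the reduction maps as transitions. It lands among small pro-objects since every epimorphism $\Phi(M)\ra G$ factors through the finite module $M/\mathfrak{m}^nM$ once $\mathfrak{m}^n$ kills $G$, and it is fully faithful by the standard computation
\begin{align*}
\Hom(\Phi(M),\Phi(N)) &= \varprojlim_m \varinjlim_n \Hom(M/\mathfrak{m}^nM, N/\mathfrak{m}^mN) = \varprojlim_m \Hom(M, N/\mathfrak{m}^mN) \\
&= \Hom\bigl(M, \varprojlim_m N/\mathfrak{m}^mN\bigr) = \Hom(M,N),
\end{align*}
using that a map into $N/\mathfrak{m}^mN$ kills $\mathfrak{m}^mM$, and that a finitely generated module over the complete Noetherian local ring $R$ is $\mathfrak{m}$-adically complete.

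The main obstacle is essential surjectivity. Given a compatible system $(M_n)$, I would choose isomorphisms $M_{n+1}/\mathfrak{m}^nM_{n+1}\isom M_n$ to obtain genuine surjective transition maps $\pi_n\colon M_{n+1}\ra M_n$ with $\ker\pi_n=\mathfrak{m}^nM_{n+1}$, and set $M:=\varprojlim_n M_n$. First I would show $M$ is finitely generated: each $M_n$ is generated by $d:=\dim_{R/\mathfrak{m}}(M_n/\mathfrak{m}M_n)=\dim_{R/\mathfrak{m}}M_1$ elements, and lifting a basis of $M_1$ through the surjection $M\ra M_1$ gives $e_1,\dots,e_d\in M$ with $R^d\ra M_n$ surjective for every $n$ (by Nakayama); writing $K_n=\ker(R^d\ra M_n)\supseteq\mathfrak{m}^nR^d$ and applying $\varprojlim$ to the exact sequences $0\ra K_n/\mathfrak{m}^nR^d\ra R^d/\mathfrak{m}^nR^d\ra M_n\ra 0$ of inverse systems, the left system consists of finite modules, hence is Mittag--Leffler with vanishing $\varprojlim^1$, so $R^d=\varprojlim_n R^d/\mathfrak{m}^nR^d\ra \varprojlim_n M_n=M$ is onto. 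Then $M$ is $\mathfrak{m}$-adically complete, so $M=\varprojlim_n M/\mathfrak{m}^nM$, and for the surjection $q_n\colon M\ra M_n$ with kernel $J_n\supseteq\mathfrak{m}^nM$ one has $J_n=\mathfrak{m}^nM+J_{n+1}$ (because $q_n=\pi_n q_{n+1}$, $q_{n+1}$ is onto, and $\ker\pi_n=\mathfrak{m}^nM_{n+1}$); hence the inverse system $(J_n/\mathfrak{m}^nM)_n$ has surjective transition maps, and its limit is $\ker(\varprojlim_n M/\mathfrak{m}^nM\ra\varprojlim_n M_n)=0$. A surjective inverse system with zero limit is zero, so $J_n=\mathfrak{m}^nM$, and keeping track of the transition maps gives $\Phi(M)\isom(M_n)$ as pro-objects.

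For the topological claim in Part (1): a finite quotient $S=R/I$ of $R$ has $I\supseteq\mathfrak{m}^n$ for some $n$, so $N\otimes_RS=N/IN$ is a quotient of $N/\mathfrak{m}^nN$; thus each $U_{S,L}$ is a union of sets of the form $U_{R/\mathfrak{m}^n,L'}$, while conversely $U_{R/\mathfrak{m}^n,L}=U_{S,L}$ with $S=R/\mathfrak{m}^n$. Since the $\C_n$ are cofinal, the $U_{R/\mathfrak{m}^n,L}$ form a basis for the topology on $\Prf$, and hence so do the $U_{S,L}$. For Part (2) I would first note that every finite $R$-module $M$ is killed by a \emph{finite} quotient $R/I$: taking $I=\operatorname{Ann}(M)$, the map $R/I\ra\prod_{x\in M}M$, $r\mapsto(rx)_x$, is injective, so $R/I$ is finite; being Artinian it splits as a product $\prod_{\mathfrak{m}\in T}(R/I)_{\mathfrak{m}}$ over the finitely many maximal ideals $T$ containing $I$, all with finite residue field. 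Correspondingly $M=\bigoplus_{\mathfrak{m}\in T}M_{\mathfrak{m}}$ with $M_{\mathfrak{m}}$ a finite $R_{\mathfrak{m}}$-module, and for finite modules $\Hom_R$ splits as the product over $\mathfrak{m}$ of $\Hom_{R_{\mathfrak{m}}}$.

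Finally, for Part (2), the levels $\C_{T,n}$ of finite modules supported on a finite set $T$ of maximal ideals and killed by $\prod_{\mathfrak{m}\in T}\mathfrak{m}^n$ are cofinal, and the isomorphism classes in $\C_{T,n}$ are exactly the products over $\mathfrak{m}\in T$ of isomorphism classes in the corresponding level of finite $R_{\mathfrak{m}}/\mathfrak{m}^n$-modules. Therefore, combining \Cref{pro-object-existence} with Part (1), a small pro-finite $R$-module is the same as a compatible system over the $\C_{T,n}$, which is precisely a tuple $(N_{\mathfrak{m}})_{\mathfrak{m}}$ of finitely generated $R_{\mathfrak{m}}$-modules indexed by the maximal ideals $\mathfrak{m}$ with finite residue field; the splitting of $\Hom$-sets above promotes this bijection to an equivalence of categories. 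Under this identification the basic open set $U_{\C_{T,n},F}$ with $F=\bigoplus_{\mathfrak{m}\in T}F_{\mathfrak{m}}$ is $\prod_{\mathfrak{m}\in T}U_{R_{\mathfrak{m}}/\mathfrak{m}^n,F_{\mathfrak{m}}}\times\prod_{\mathfrak{m}\notin T}(\text{everything})$, so the topology on the set of isomorphism classes is the product topology.
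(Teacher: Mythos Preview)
Your proof is correct and follows essentially the same strategy as the paper: in the local case construct the functor $M\mapsto (M/\mathfrak m^nM)_n$ and verify it is an equivalence, and in the general case decompose finite $R$-modules into their local pieces at maximal ideals with finite residue field. The paper's own proof is deliberately terse (it opens with ``This is a standard argument so we will be somewhat brief''), so your write-up simply supplies the details the paper omits---in particular the Mittag--Leffler argument for finite generation of the inverse limit and the verification that $J_n=\mathfrak m^nM$, which the paper leaves entirely to the reader.
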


\begin{proof} This is a standard argument so we will be somewhat brief.
We first consider the local case. Given a finitely generated $\hat{R}$-module $N$, we obtain a pro-finite $R$-module $\tilde{N}$ by taking the inverse limit of the finite modules $N/\mathfrak m^k$. This is small since $N$ is finitely generated.
Conversely, 
any pro-finite $R$-module $\lim N_i$ gives a $\hat{R}$-module $N$ by taking the inverse limit, and the smallness gives that $N$ is finitely generated. 

One can check these constructions give an equivalence of categories.  
By Lemma \ref{mod-diamond}, the sets $U_{S,L}$ are exactly the usual basis for the topology on $\Prf$.

Now let $R$ be a general Noetherian commutative ring.
The natural map from the direct sum over maximal ideals $\mathfrak m$ with finite quotient of the category of finite $R_{\mathfrak m}$ modules to the category of finite $R$-modules is an equivalence, and this can be used to check the rest of the lemma.
\end{proof}

\section{Examples and applications}\label{s-examples}

We have already mentioned two fundamental examples of categories to which the main theorem can be applied: finite groups and finite $R$-modules for a ring $R$.  We will first investigate these cases in more detail, showing particular distributions that can be obtained, and proven to be uniquely determined by their moments, using the main theorem. We will then provide a toolkit for constructing more general examples, which has two parts. First, we give a general construction using the language of universal algebra that includes these two fundamental examples as well as many more. Second, we explain how new examples can be produced from old. We show that the product of any \Diamantine two categories is a \Diamantine category, that the objects in any \Diamantine category with an action of a fixed finite group $\Gamma$ form a \Diamantine category, and similar statements allowing us to put on extra structure.  Finally, we end with a simple examples, such as the opposite of the category of finite sets, and the opposite of the category of finite graphs, to show that there are interesting results even in these categories.

\subsection{Modules}\label{SS:Modules}

In this subsection, we apply our main results to the category of finite $R$-modules for a ring $R$.   
We will show how the constructed measures and the condition for well-behaved can be computed more explicitly, and give some examples of important measures and moments.  First we see that finite $R$-modules form a diamond category and give another interpretations of levels in this case.

\begin{lemma}\label{mod-diamond} Let $R$ be a ring (not necessarily commutative).  Then 

\begin{enumerate}

\item The category of finite $R$-modules is a \Diamantine category.

\item Each level of the category of finite $R$-modules is the set of isomorphism classes of $S$-modules for some finite quotient ring $S$ of $R$, and conversely each set of isomorphism classes of $S$-modules for some finite quotient ring $S$ of $R$ is a level of the category of finite $R$-modules.

\end{enumerate}

\end{lemma}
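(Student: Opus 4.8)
The plan is to treat the two parts separately: part (1) by verifying in turn the four conditions of Definition~\ref{D:diamond}, and part (2) by an annihilator computation that reuses the ring $S$ produced while checking part (1).

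For part (1), the substantive point is condition~\eqref{D:modular}. For a finite $R$-module $M$, the map $N \mapsto M/N$ is an anti-isomorphism from the lattice of submodules of $M$ onto the poset of isomorphism classes of quotients of $M$ with the order of Section~\ref{S:prelim}. The submodule lattice of any module is modular — this is the modular law $A \cap (B+C) = (A \cap B) + C$ for $C \subseteq A$ — and, since $M$ is a finite set with only finitely many subsets, this lattice is finite; as the opposite of a finite modular lattice is again a finite modular lattice, \eqref{D:modular} follows. Condition~\eqref{D:Aut} holds because $\operatorname{Aut}_R(M)$ is a subgroup of the finite symmetric group on $M$, and the last condition of Definition~\ref{D:diamond} holds since the only minimal object is the zero module. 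For condition~\eqref{D:finite}, let $\C$ be a level generated by finite modules $M_1,\dots,M_k$, let $I = \bigcap_{i=1}^{k} \operatorname{Ann}_R(M_i)$ — a two-sided ideal — and put $S = R/I$; each $R/\operatorname{Ann}_R(M_i)$ embeds into the finite ring $\operatorname{End}_{\mathbb Z}(M_i)$, so $S$ is finite. Since taking quotients and finite subdirect products preserves the property of being annihilated by $I$, every object of $\C$ is a finite $S$-module. If $N \to G$ is a simple epimorphism with $N \in \C$, then $\ker(N \to G)$ is a simple $S$-module, of which there are finitely many up to isomorphism; for each simple $S$-module $T$ the extensions of $G$ by $T$ are classified by the finite group $\operatorname{Ext}^1_S(G,T)$, so only finitely many isomorphism classes of such $N$ arise, giving~\eqref{D:finite}.

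For part (2), the computation just made shows that a level generated by $M_1,\dots,M_k$ is contained in the set $\mathcal M_S$ of finite $S$-modules, for $S = R/\bigcap_i \operatorname{Ann}_R(M_i)$. Conversely, for any finite quotient ring $S = R/I$, the set $\mathcal M_S$ is downward-closed and contains the zero module, and it is join-closed because if $G/N_1$ and $G/N_2$ are $S$-modules then $IG \subseteq N_1 \cap N_2$, so $G/(N_1 \cap N_2)$ is an $S$-module as well; hence $\mathcal M_S$ is a formation. It is generated by the single finite module ${}_SS$ (regarded as a finite $R$-module), since every finite $S$-module is a quotient of a finite direct sum of copies of ${}_SS$ and both operations keep us inside any formation containing ${}_SS$; thus $\mathcal M_S$ is a level. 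Combining, part (2) comes down to improving the first inclusion to an equality: given $\C$ generated by $M_1,\dots,M_k$ and $S$ as above, one must show that ${}_SS$ — or, equivalently, some generator of the category $\mathcal M_S$ — already lies in $\C$, i.e.\ that $S$ is a quotient of $\bigoplus_i M_i^{\,m_i}$ for suitable exponents $m_i$.

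I expect this last equality to be the main obstacle. The route I would try is through the structure theory of the finite ring $S$: decompose $S/J(S)$ into matrix rings over finite fields, lift a complete set of primitive orthogonal idempotents to exhibit ${}_SS$ as a direct sum of indecomposable projective $S$-modules, and then aim to realize each such indecomposable projective as a direct summand of some power of one of the $M_i$, using that $\bigoplus_i M_i$ is a faithful $S$-module together with the lattice-theoretic facts recorded in Lemmas~\ref{module-semisimple} and~\ref{module-semisimple-description}. Carrying this through — and, if necessary, isolating the precise feature of $R$ that makes it go — is where the real content of the lemma sits; the remaining parts are routine once $S$ is in hand.
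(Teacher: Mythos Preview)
Your treatment of part (1) and the reduction of part (2) to showing ${}_SS \in \C$ are both correct and essentially match the paper. The gap is in your proposed route for that last step.

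You suggest realizing each indecomposable projective $S$-module as a direct summand of some power of one of the $M_i$, using that $\bigoplus_i M_i$ is a faithful $S$-module. But a faithful module over a finite ring need not be a generator. For instance, take $R = S$ to be the ring of upper-triangular $2\times 2$ matrices over $\mathbb{F}_2$, with primitive idempotents $e_1,e_2$. The projective $P_1 = Se_1$ is a faithful $S$-module, yet $\operatorname{Hom}_S(P_1, S_2) = 0$ where $S_2 = Se_2$ is simple projective, so $S_2$ is not a direct summand of any $P_1^n$. With $\C$ the level generated by $P_1$ and $I = \operatorname{Ann}_R(P_1) = 0$, your argument would stall here, even though in fact $S_2 \in \C$ and $\C = \mathcal M_S$.

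The paper's argument avoids projectives entirely with two elementary observations. First, $S$ embeds as an $R$-submodule of $\prod_i M_i^{|M_i|}$ via $s \mapsto (s\cdot m)_{m \in M_i,\, i}$, the map recording the action of $s$ on each element. Second, levels are closed under submodules: if $K \leq N$ with $N \in \C$, then
\[
K \;\cong\; \bigl(N \times_{N/K} N\bigr)\big/\Delta(N),
\]
which lies in $\C$ since $N \times_{N/K} N$ is a join of two copies of $N$ and $K$ is a quotient of it. Combining these gives $S \in \C$ immediately. This trick is what makes the proof short; your structure-theory approach, even if it could be patched, would be considerably heavier.
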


\begin{proof} We prove the first two assumptions of the definition of a \Diamantine category first. 
The poset of quotients of a module is dual to the poset of submodules, which is easily checked to be modular. 
Finiteness of automorphism groups follows from the finiteness of the modules.

Let $T$ be a finite set of $R$-modules. The action of $R$ on the objects in $T$ factors through a minimal finite quotient ring $S$. Let's prove that the level $\C$ generated by $T$ is equal to the category of $S$-modules.  Since fiber products exist in the category of $R$-modules, joins are given by fiber products over the meet. The property of being an $S$-module is preserved by quotients and fiber products, thus join-closed and downward-closed, so every module in $\C$ is an $S$-module. 

To show the converse, since every finite $S$-module is a quotient of $S^n$, and $S^n$ is a fiber product of copies of $S$, it suffices to show that $S \in \C$. Now $S$ is a submodule of the product over, for each $N_i$ in $T$, the module $N_i^{|N_i|}$ via the map that records where $s\in S$ sends each element of $N_i$.  So it suffices to show that $\C$ is closed under taking submodules.  If $K$ is a submodule of $N$ then $K$ is the quotient of $N \times_{N/K} N$ by the diagonal $N$, so $\C$ is closed under taking submodules as well.

 To finish the proof of (1), we observe that, since $S$ is finite, every $S$-module has finitely many extensions by a simple $S$-module, meaning there are finitely many simple morphisms from an object in $\C$.  Clearly,  there are countably many isomorphism classes of minimal objects. 
 
 To finish the proof of (2), we have already shown that every level is the category of $S$-modules for some $S$ and we must show the converse, but this is clear as we can take the level generated by $S$. 
  \end{proof}

 For $S$ a finite quotient of $R$, we write $\C_S$ for the level of finite $S$-modules.   We now give the sums defining $v_{\C_S, N}$, and the condition for well-behaved, more explicitly.
 
 \begin{proposition}\label{module-ext-formula} 
Let $R$ be a ring.  
 Let $S$ be a finite quotient ring of $R$, and let $K_1,\dots, K_n$ be representatives of the isomorphism classes of finite simple $S$-modules. Let $q_1,\dots, q_n$ be the cardinalities of the endomorphism fields of $K_i$.
 
 For $N$ an $S$-module and $\alpha \in \operatorname{Ext}^1(N, \prod_{i=1}^n K_i^{e_i})$, let $N_\alpha$ be the associated extension of $N$ by $\prod_{i=1}^n K_i^{e_i}$.

 Let $(M_N)_N\in\R^{\C_S}$.  Then for $N \in \C_S$ we have
  
 \[ v_{\C_S, N}  = \frac{1}{ \abs{ \Aut(N)}} \sum_{e_1,\dots, e_n=0}^{\infty}  \prod_{i=1}^n \left( \frac{ (-1)^{e_i} }{  \abs{\Hom (N, K_i)}^{e_i} \prod_{j=1}^{e_i} (q_i^j-1) }  \right)  \sum_{ \alpha \in \operatorname{Ext}^1(N, \prod_{i=1}^n K_i^{e_i})}  M_{N_\alpha} .\]
 
 We have that $(M_N)_N$ is well-behaved 
 at level $\C_S$  if and only if the sum above is absolutely convergent for every finite $S$-module $N$.
 
   \end{proposition}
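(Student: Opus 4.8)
The plan is to expand $v_{\C_S,N}$ from its definition, reduce to semisimple morphisms using the M\"obius computations of Section~\ref{s-calculation}, organize the sum according to the isomorphism type of the kernel, and evaluate the resulting inner sum by a double-counting argument relating short exact sequences to extension classes. Since the $\Aut(N)$-action on $\Sur(G,N)$ by post-composition is free, $\hat\mu(N,G)=\frac{1}{\abs{\Aut N}}\sum_{\pi\in\Sur(G,N)}\mu(\pi)$, so
\[ v_{\C_S,N}=\frac{1}{\abs{\Aut N}}\sum_{G\in\C_S}\frac{M_G}{\abs{\Aut G}}\sum_{\pi\in\Sur(G,N)}\mu(\pi). \]
By Lemma~\ref{non-semisimple-vanishing} only semisimple $\pi$ contribute, and for $\pi\colon G\to N$ semisimple with $G\in\C_S$, Lemma~\ref{module-semisimple-description} gives $\ker\pi\cong\prod_i K_i^{e_i}$ for a unique $(e_i)\in\N^n$, with $\mu(\pi)=\prod_i(-1)^{e_i}q_i^{\binom{e_i}{2}}$ depending only on $(e_i)$. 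Grouping the sum by the isomorphism type $V=\prod_i K_i^{e_i}$ of $\ker\pi$, the claim reduces to evaluating, for each such $V$, the quantity $\sum_{G\in\C_S}\,\sum_{\pi\in\Sur(G,N),\ \ker\pi\cong V}\frac{M_G}{\abs{\Aut G}}$.

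To evaluate this I would apply orbit counting twice. First, the orbit--stabilizer theorem for the $\Aut(G)$-action on $\Sur(G,N)$ by precomposition turns the quantity into $\sum_{[(G,\pi)]}\frac{M_G}{\abs{\{\phi\in\Aut G:\pi\phi=\pi\}}}$, a sum over isomorphism classes of pairs $(G,\pi)$ with $\ker\pi\cong V$. Second, introducing an identification $\iota\colon V\xrightarrow{\sim}\ker\pi$ — the set of which is an $\Aut(V)$-torsor on which the relative automorphism group $\{\phi\in\Aut G:\pi\phi=\pi\}$ acts with stabilizers isomorphic to $\Hom(N,V)$ — another orbit-count identifies isomorphism classes of triples $(G,\pi,\iota)$, i.e.\ of short exact sequences $0\to V\to G\to N\to 0$ of $S$-modules, with $\operatorname{Ext}^1(N,V)$, the automorphism group of the sequence attached to $\alpha$ being $\Hom(N,V)$ (embedded in $\Aut G$ as $\{\mathrm{id}+\bar\psi\circ\pi:\bar\psi\in\Hom(N,V)\}$). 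Combining gives
\[ \sum_{G\in\C_S}\,\sum_{\pi\in\Sur(G,N),\ \ker\pi\cong V}\frac{M_G}{\abs{\Aut G}}=\frac{1}{\abs{\Aut V}\,\abs{\Hom(N,V)}}\sum_{\alpha\in\operatorname{Ext}^1(N,V)}M_{N_\alpha}. \]
Substituting $\abs{\Aut(\prod_i K_i^{e_i})}=\prod_i\abs{\GL_{e_i}(\F_{q_i})}=\prod_i q_i^{\binom{e_i}{2}}\prod_{j=1}^{e_i}(q_i^j-1)$ and $\abs{\Hom(N,\prod_i K_i^{e_i})}=\prod_i\abs{\Hom(N,K_i)}^{e_i}$, the powers $q_i^{\binom{e_i}{2}}$ cancel against those in $\mu(\pi)$ and the stated formula for $v_{\C_S,N}$ drops out.

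For the well-behavedness statement, I would note $Z(\pi)=2^{\omega(\pi)}$ by Lemma~\ref{Z-omega} and that for semisimple $\pi\colon G\to N$ with $G\in\C_S$ the lattice $[N,G]$ has at most $n$ simple factors (one per index $i$ with $e_i\geq1$), so $Z(\pi)\leq 2^n$ is bounded independently of $G$ — this is case~(2) of Lemma~\ref{Z-to-convolution} for the level $\C_S$. Hence well-behavedness at $\C_S$ amounts to the finiteness, for all $N'\in\C_S$, of $\sum_{G\in\C_S}\sum_{\pi\in\Sur(G,N')}\frac{\abs{\mu(\pi)}\,\abs{M_G}}{\abs{\Aut G}}$. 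Running the two orbit-counting steps above with $M,\mu$ replaced by $\abs{M},\abs{\mu}$ shows this equals $\abs{\Aut N'}$ times the sum of absolute values of the terms (indexed by pairs $(e,\alpha)$) of the series defining $v_{\C_S,N'}$; as $\abs{\Aut N'}$ is a fixed positive constant, the equivalence follows.

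The hard part is the double-counting step: one must track three automorphism factors — $\abs{\Aut G}$ from the original sum, $\abs{\Aut V}$ from the choice of identification of $\ker\pi$ with $V$, and $\abs{\Hom(N,V)}$ from the automorphisms of an extension acting trivially on the sub- and quotient-object — and combine them correctly; everything after that is a routine substitution of the standard cardinality formulas for $\GL_e(\F_q)$ and $\Hom(N,K_i)$.
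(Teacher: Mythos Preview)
Your proof is correct and follows essentially the same approach as the paper: reduce to semisimple $\pi$ via Lemma~\ref{non-semisimple-vanishing}, classify $\ker\pi$ by Lemma~\ref{module-semisimple-description}, and reparametrize by extension classes. The only cosmetic difference is bookkeeping: the paper introduces the identification $f\colon\prod_i K_i^{e_i}\to\ker\pi$ first and then directly counts, for each $\alpha$, how many triples $(L,f,\pi)$ give rise to it (namely $\abs{\Aut(N_\alpha)}/\prod_i\abs{\Hom(N,K_i)}^{e_i}$), whereas you organize the same count as two successive orbit--stabilizer steps; the resulting identity is identical. Your well-behavedness argument via $Z(\pi)\leq 2^n$ also matches the paper's.
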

 
 \begin{proof}   By definition we have
  \[ v_{\C_S, N}  =  \sum_{L \in \C_S }  \sum_{ \pi \in \Sur ( L, N)}  \frac{ \mu(\pi)  M_L }{ \abs{\Aut(N) } \abs{\Aut(L)}}.\]

The $\mu(\pi)$ term vanishes unless $\pi$ is semisimple, in which case $\ker \pi$ is a product of finite simple $S$-modules, so we can write $\ker\pi$ as $\prod_{i=1}^n K_i^{e_i}$ for a unique tuple $e_1,\dots, e_n$ of natural numbers.  In this case, $\mu(\pi) = \prod_{i=1}^n (-1)^{e_i} q_i^{ \binom{e_i}{2}} $ by Lemma \ref{module-semisimple-description}. Thus

  \[ v_{\C_S, N}  =\frac{1}{ \abs{ \Aut(N)}} \sum_{e_1,\dots, e_n=0}^{\infty} \prod_{i=1}^n \left( (-1) ^{e_i} q_i^{ \binom{e_i}{2}}   \right)  \sum_{L \in \C_s} \sum_{ \substack{ \pi \in \Sur(L,N) \\ \ker \pi \cong \prod_{i=1}^n K_i^{e_i} }}  \frac{M_L}{ \abs{\Aut(L)}} .\]

Now $ \ker \pi \cong \prod_{i=1}^n K_i^{e_i}$ if and only if there exists an injection $f \colon \prod_{i=1}^n K_i^{e_i} \to N$ such that $\operatorname{im} f = \ker \pi$, and the number of such $f$'s is 
\[\abs{ \Aut( \prod_{i=1}^n K_i^{e_i})} = \prod_{i=1}^n \abs{ \Aut(  K_i^{e_i}) }= \prod_{i=1}^n \abs{ GL_{e_i}(\mathbb F_{q_i})} = \prod_{i=1}^n \prod_{j=1}^{e_i}  (q^{e_i} - q^{ e_i-j}) =  \prod_{i=1}^n  \left( q^{ \binom{e_i}{2}} \prod_{j=1}^{e_i}  (q^{j}- 1) \right) .\] 

Thus
  \[ v_{\C_S, N}  =\frac{1}{ \abs{ \Aut(N)}} \sum_{e_1,\dots, e_n=0}^{\infty} \prod_{i=1}^n \left( \frac{ (-1) ^{e_i} }{  \prod_{j=1}^{e_i} (q_i^j-1) } \right)  \sum_{L \in \C_s} \sum_{ \substack{ \pi \in \Sur(L,N) \\ f\colon \prod_{i=1}^{n } K_i^{e_i} \to L \\  \operatorname{Im} f = \ker \pi \\ f \textrm{ injective} }}  \frac{M_L}{ \abs{\Aut(L)}} .\]
  
  Associated to each triple $L, f, \pi$ there is a class $\alpha \in \operatorname{Ext}^1(N, \prod_{i=1}^n K_i^{e_i})$, the class of the exact sequence $0 \to \prod_{i=1}^n K_i^{e_i} \to_f  K \to_{\pi} N \to 0$.  For this $\alpha$ we have $N_\alpha \cong L$ so $M_{N_{\alpha} } = M_{L}$. 
  
  Next we check that each class $\alpha$ arises from exactly $ \Aut(N_\alpha) / \prod_{i=1}^n \abs{\Hom (N, K_i)}^{e_i} $ triples $L, f,\pi$. 
  Two triples give the same $\alpha$, if and only if have the same $L$ and are related by an automorphism of $L$.
  Thus the number of different triples giving $\alpha$ is the number of orbits of $\Aut(L)$ on maps $f,\pi$.
   The stabilizer is the group of 
   automorphisms fixing the submodule $\prod_{i=1}^n K_i^{e_i}$ and its quotient module. 
   The group of such automorphisms is $\Hom(N, \prod_{i=1}^n K_i^{e_i})$ with cardinality
  \[ \abs{ \Hom(N, \prod_{i=1}^n K_i^{e_i})} =  \prod_{i=1}^n \abs{\Hom (N, K_i)}^{e_i} \]
so the number of choices of $(f,\pi)$ is $\Aut(N_\alpha) / \prod_{i=1}^n \abs{\Hom (N, K_i)}^{e_i}$, as desired.

Thus, we can replace the sum over $L, f,\pi$ with a sum over $\alpha$ after multiplying by \[\Aut(L) / \prod_{i=1}^n \abs{\Hom (N, K_i)}^{e_i}  ,\] which cancels the $\Aut(L)$ term and gives the desired statement.

The $M_N$ are well-behaved  if and only if the sum defining $v_{\C_S,N}$ with an additional $Z(\pi)^3$ factor is absolutely convergent for all $N$. 
We have $2\leq Z(\pi)\leq 2^n$,  so that absolute convergence happens if and only if the sum defining $v_{\C_S,N}$ (in which $n$ is constant) is absolutely convergent for all $N$. 
\end{proof}

Now we give a commonly occurring example of moments (e.g.  the Cohen-Lenstra-Martinet distributions have such moments \cite[Theorem 6.2]{Wang2021}), for which we find the measure explicitly.

\begin{lemma}\label{mod-card-wb} Let $R$ be a ring,  $\category$ be the category of finite $R$-modules and $u$ be a real number.   
For each $N\in \category$, let $M_N=\abs{N}^{-u}$. 
Then $(M_N )_N$ is well-behaved.

Let $S$ be a finite quotient ring of $R$, and let $K_1,\dots, K_n$ be representatives of the isomorphism classes of finite simple $S$-modules. Let $q_1,\dots, q_n$ be the cardinalities of the endomorphism fields of $K_i$. For $N$ a finite $S$-module, we have
\[ v_{\C_S,N}  = \frac{ 1}{ \abs{\Aut(N) } \abs{N}^{u}}  \prod_{i=1}^n \prod_{j=1}^{\infty} \left( 1 -    \frac{\abs{ \operatorname{Ext}^1_{ S} ( N,  K_i ) }}{ \abs{ \operatorname{Hom}(N, K_i)} \abs{K_i}^u  }\abs{q_i }^{-j} \right) \]

If $u$ is an integer then $v_{\C_S,N} \geq 0$ for all finite $S$-modules $N$. 
\end{lemma}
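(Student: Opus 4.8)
The plan is to strip away the positive prefactor and reduce the inequality to a statement about each Euler-type factor of the product. Since $\abs{\Aut(N)}$ and $\abs{N}^{u}$ are strictly positive, the sign of $v_{\C_S,N}$ equals the sign of $\prod_{i=1}^{n} P_i$, where $P_i := \prod_{j=1}^{\infty}\bigl(1 - c_i\, q_i^{-j}\bigr)$ and $c_i := \abs{\operatorname{Ext}^1_{S}(N,K_i)}\big/\bigl(\abs{\operatorname{Hom}_S(N,K_i)}\,\abs{K_i}^{u}\bigr)$. So it suffices to show $P_i \ge 0$ for each $i$.

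The key step, and the only place the integrality of $u$ is used, is to observe that $c_i$ is an \emph{integer} power of $q_i$. Since $K_i$ is simple, $\operatorname{End}_S(K_i)$ is a finite division ring, hence the finite field $\F_{q_i}$ by Wedderburn's little theorem. Therefore $K_i$, as well as $\operatorname{Hom}_S(N,K_i)$ and $\operatorname{Ext}^1_S(N,K_i)$ — the latter two being $\operatorname{End}_S(K_i)$-modules via the functorial action on $K_i$ — are all finite $\F_{q_i}$-vector spaces, so their cardinalities are powers of $q_i$; writing $\abs{K_i}=q_i^{d_i}$ and using $u\in\Z$ gives $c_i = q_i^{m_i}$ for some $m_i\in\Z$. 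I expect this identification to be the only genuinely substantive point of the argument: for non-integral $u$ the product $P_i$ can legitimately be negative, so this step is doing real work rather than bookkeeping.

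With that in hand the conclusion is immediate by a case split on $m_i$. If $m_i \le 0$, then $c_i q_i^{-j} \le q_i^{-j} < 1$ for all $j\ge 1$ because $q_i\ge 2$, so every factor of $P_i$ lies in $(0,1)$ and $P_i > 0$. If $m_i \ge 1$, then the $j=m_i$ factor of $P_i$ equals $1 - c_i q_i^{-m_i} = 0$, so $P_i = 0$. In either case $P_i \ge 0$, hence $v_{\C_S,N}\ge 0$ for every finite $S$-module $N$. The only care needed beyond this is to note that $P_i$ converges (already guaranteed by the well-behavedness established earlier in the lemma) so that the factorization into single factors is legitimate, and that a single vanishing factor forces the whole product to be $0$.
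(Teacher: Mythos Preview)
Your argument addresses only the final assertion of the lemma (nonnegativity of $v_{\C_S,N}$ when $u\in\Z$), and for that part it is correct and essentially identical to the paper's proof: the paper also observes that $\operatorname{Ext}^1_S(N,K_i)$, $\operatorname{Hom}(N,K_i)$, and $K_i$ are $\F_{q_i}$-vector spaces, so $c_i$ is an integer power of $q_i$, and then does the same case split on whether that power is positive or not.

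Note, however, that the lemma contains two further claims you have not established: that $(M_N)_N$ is well-behaved, and the closed formula for $v_{\C_S,N}$ itself. The paper derives both of these from Proposition~\ref{module-ext-formula}: one plugs $M_{N_\alpha}=\abs{N_\alpha}^{-u}=\abs{N}^{-u}\prod_i\abs{K_i}^{-ue_i}$ into the sum there, factors the sum over $(e_1,\dots,e_n)$ into a product of one-variable sums, and applies the $q$-series identity $\sum_{e\ge 0} v^e/\prod_{j=1}^e(q^j-1)=\prod_{j\ge 1}(1+vq^{-j})$ to obtain the infinite product. Well-behavedness follows because each one-variable sum has numerator exponential in $e$ and denominator superexponential in $e$, so it converges absolutely; since no cancellation is used along the way, absolute convergence of the final product form implies absolute convergence of the original sum. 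Your write-up treats the formula and well-behavedness as given inputs; if that was intentional, the nonnegativity argument is fine as stated.
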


\begin{remark}
If $R$ is a DVR with residue field $k$, then for a finite $S$-module $N$, we have $\abs{ \operatorname{Ext}^1_{ S} ( N,  k ) }\leq \abs{ \operatorname{Hom}(N, k)}$,
and then for all real $u\geq -1$,  we have $v_{\C_S,N} \geq 0$ for all finite $S$-modules $N$. 
\end{remark}

Thus, by Theorem~\ref{analytic-main-measure}, for $u\in \Z$ (or $u\geq -1$ when $R$ is a DVR),  there is a unique measure on the set of isomorphism classes of pro-finite $R$-modules with moments $|N|^{-u}$ and it is given by the formulas in Lemma~\ref{mod-card-wb}.  We write $\mu_u$ for this measure (when $R$ is understood).

\begin{proof} 
From Proposition \ref{module-ext-formula} we have
 \[ v_{\C_S, N}  = \frac{1}{ \abs{ \Aut(N)}} \sum_{e_1,\dots, e_n=0}^{\infty}  \prod_{i=1}^n \left( \frac{ (-1)^{e_i} }{  \abs{\Hom (N, K_i)}^{e_i} \prod_{j=1}^{e_i} (q_i^j-1) }  \right)  \sum_{ \alpha \in \operatorname{Ext}^1(N, \prod_{i=1}^n K_i^{e_i})}  M_{N_\alpha} .\]
and \[M_{N_\alpha} = \abs{N_\alpha}^{-u} = \abs{N}^{-u} \prod_{i=1}^n \abs{K_i}^{- e_i u }\] so 
 \[ v_{\C_S, N}  = \frac{1}{ \abs{ \Aut(N)} \abs{N}^{u} } \sum_{e_1,\dots, e_n=0}^{\infty}  \prod_{i=1}^n \left( \frac{ (-1)^{e_i} }{  \abs{\Hom (N, K_i)}^{e_i} \abs{K_i}^{e_i u } \prod_{j=1}^{e_i} (q_i^j-1) }  \right)  \abs{ \operatorname{Ext}^1(N, \prod_{i=1}^n K_i^{e_i})}\]
 \[ = \frac{1}{ \abs{ \Aut(N)} \abs{N}^{u} } \sum_{e_1,\dots, e_n=0}^{\infty}  \prod_{i=1}^n \left( \frac{ (-1)^{e_i} }{  \abs{\Hom (N, K_i)}^{e_i} \abs{K_i}^{e_i u } \prod_{j=1}^{e_i} (q_i^j-1) }  \right)  \prod_{i=1}^n \abs{ \operatorname{Ext}^1(N,  K_i)}^{e_i} \]
\[ =  \frac{1}{ \abs{ \Aut(N)} \abs{N}^{u} } \sum_{e_1,\dots, e_n=0}^{\infty}  \prod_{i=1}^n \left( \frac{ (-1)^{e_i} \abs{ \operatorname{Ext}^1(M,  K_i)}^{e_i} }{  \abs{\Hom (N, K_i)}^{e_i} \abs{K_i}^{e_i u } \prod_{j=1}^{e_i} (q_i^j-1) }  \right) \]
\[ =  \frac{1}{ \abs{ \Aut(N)} \abs{N}^{u} }   \prod_{i=1}^n \sum_{e=0}^{\infty} \left( \frac{ (-1)^{e} \abs{ \operatorname{Ext}^1(N,  K_i)}^{e} }{  \abs{\Hom (N, K_i)}^{e} \abs{K_i}^{e u } \prod_{j=1}^{e} (q_i^j-1) }  \right) \] \[= \frac{ 1}{ \abs{\Aut(N) } \abs{N}^{u}}  \prod_{i=1}^n \prod_{j=1}^{\infty} \left( 1 -    \frac{\abs{ \operatorname{Ext}^1_{ S} ( N,  K_i ) }}{ \abs{ \Hom(N, K_i)} \abs{K_i}^u  }\abs{q_i }^{-j} \right) \]
by the $q$-series identity $\sum_{e=0}^{\infty} \frac{  v^e}{ \prod_{j=1}^{e} (q^j-1)} = \prod_{j=1}^{\infty} ( 1 + v q^{-j} ) $ (from the $q$-binomial theorem).

This $q$-series sum is absolutely convergent because the denominator grows superexponentially in $e$ while the numerator grows only exponentially. All our manipulations preserve absolute convergence in the reverse direction because we never combine two terms in the sum with opposite signs -- in fact, we never combine two terms except in the very first step, where we group together identical terms.  Thus  Proposition \ref{module-ext-formula} gives well-behavedness.

If $u$ is an integer, then $ \frac{\abs{ \operatorname{Ext}^1_{ S} ( N,  K_i ) }}{ \abs{ \operatorname{Hom}(N, K_i)} \abs{K_i}^u  }$ is an integer power of $q_i$ since $\operatorname{Ext}^1_{ S} ( N,  K_i )$,  $\operatorname{Hom}(N, K_i)$, and $K_i$ are all vector spaces over $\mathbb F_{q_i}$ and hence have cardinality an integer power of $q_i$. If  $ \frac{\abs{ \operatorname{Ext}^1_{ S} ( N,  K_i ) }}{ \abs{ \operatorname{Hom}(N, K_i)} \abs{K_i}^u  }$ is a positive integer power of $q_i$ then one of the terms in the product over $j$ vanishes, so the product is $0$ and thus is nonnegative, and if it is a nonnegative integer power of $q_i$ then all the terms in the product over $j$ are positive, so the product is nonnegative. Thus $ v_{\C_S, N}  \geq 0$.
\end{proof}

\begin{corollary}\label{C:wbbound} Let $R$ be a ring and let $\category$ be the category of finite $R$-modules.  Let $(M_N)_N\in \R^{\category/\isom}$.  If $M_N = O ( \abs{N}^n)$ for some real $n$, then $(M_N)_N$ is well-behaved. \end{corollary}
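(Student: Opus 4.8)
The plan is to deduce this immediately from the already-established case of $\abs{N}$-power moments together with the monotonicity of well-behavedness. First I would apply Lemma~\ref{mod-card-wb} with $u = -n$: this gives that, for the category of finite $R$-modules, the tuple $(\abs{N}^{n})_N$ is well-behaved (indeed the proof of that lemma reduces the defining sum to an explicit $q$-series whose denominator grows superexponentially while its numerator grows only exponentially, so absolute convergence holds at every level $\C_S$). Next, from the hypothesis $M_N = O(\abs{N}^n)$ I would extract a constant $c$ with $\abs{M_N} \leq c\,\abs{N}^{n}$ for all finite $R$-modules $N$, so that the tuple $(\abs{M_N})_N$ of nonnegative reals is pointwise dominated by $(\abs{N}^{n})_N$.

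Then Lemma~\ref{big-O-wb}, applied to the two nonnegative tuples $(\abs{N}^n)_N$ and $(\abs{M_N})_N$, yields that $(\abs{M_N})_N$ is well-behaved. Finally, since the sum in the definition of well-behaved involves $(M_G)_G$ only through the absolute values $\abs{M_G}$, well-behavedness of $(\abs{M_N})_N$ is literally the same statement as well-behavedness of $(M_N)_N$, which completes the proof.

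I do not expect any genuine obstacle: all the analytic content lives in Lemma~\ref{mod-card-wb} (the $q$-series convergence estimate) and in Lemma~\ref{big-O-wb} (which is immediate from the definition), so the corollary is purely a bookkeeping consequence of these two. The only point worth stating explicitly is that ``well-behaved'' is insensitive to the signs of the $M_N$, so that controlling $\abs{M_N}$ by $\abs{N}^n$ is exactly what is needed; this is why the big-$O$ hypothesis, interpreted as $\abs{M_N} \ll \abs{N}^n$, suffices even though Lemma~\ref{big-O-wb} is phrased for nonnegative tuples.
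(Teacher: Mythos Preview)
Your proof is correct and matches the paper's approach exactly: the paper's proof is the single sentence ``This follows immediately from Lemma~\ref{mod-card-wb}, taking $u=-n$, and Lemma~\ref{big-O-wb}.'' Your additional remark that well-behavedness depends only on $\abs{M_N}$, so that Lemma~\ref{big-O-wb} (stated for nonnegative tuples) applies, is a correct clarification that the paper leaves implicit.
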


\begin{proof} This follows immediately form Lemma \ref{mod-card-wb}, taking $u=-n$, and Lemma \ref{big-O-wb}. \end{proof}

Many special cases of both the uniqueness and robustness that follows from Corollary~\ref{C:wbbound}, and the formulas for particular measures in Lemma~\ref{mod-card-wb} have appeared in previous work.  For example, Fouvry and Kl\"{u}ners prove uniqueness for the case $\R=\Z/p\Z$ \cite[Proposition 3]{Fouvry2006} and give formulas for the probabilities when the moments are all 1 \cite[Proposition 2]{Fouvry2006}.  Ellenberg, Venkatesh, and Westerland prove uniqueness and give formulas for $R=\Z_p$ when all moments are 1, as well as prove robustness \cite[Section 8.1]{Ellenberg2016}.
When $R=\Z$, the  second author proved uniqueness and robustness given by Corollary~\ref{C:wbbound}, as well as giving the formulas for moments $|N|^{-u}$
(see \cite[Theorem 8.3]{Wood2017} 
and \cite[Section 3]{Wood2019a}).  When $R$ is a maximal order in a semi-simple $\Q$-algebra, Wang and the second author did the same \cite[Section 6]{Wang2021}. 
For a $R$ with finitely many (isomorphism classes of) simple $R$-modules and such that $\operatorname{Ext}^1_R$ between any two finite $R$-modules is finite,  the first author \cite[Theorem 1.3]{Sawin2020} proved the uniqueness and robustness given by Corollary~\ref{C:wbbound}.

Under mild hypotheses, the measures from Lemma~\ref{mod-card-wb} have a nice description in terms of a limit of random modules from generators and relations.

\begin{lemma}\label{L:genandrel}
Let $R$ be a ring such that there
 are only countable many isomorphism classes of finite $R$-modules (such as a Noetherian local commutative ring with finite residue field).  Let $\hat{R}$ be the profinite completion of $R$.

As $n\to \infty$, the distribution of the quotient of $\hat{R}^n$ by $n+u$ random relations taken from Haar measure on $\hat{R}^n$ weakly converges to $\mu_u$
(defined above as the measure on pro-finite $R$-modules with moments $|N|^{-u}$).
\end{lemma}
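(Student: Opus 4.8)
The plan is to present the random object as a sequence of probability measures $\nu^n$ on $\Prf$ and then invoke Theorem~\ref{T:weak-convergence}, which applies because the category of finite $R$-modules is a \Diamantine category (Lemma~\ref{mod-diamond}) and, by hypothesis, has at most countably many isomorphism classes. For $n$ large (so that $n+u>0$) and a tuple $r=(r_1,\dots,r_{n+u})\in (\hat R^n)^{n+u}$, the quotient $X_n(r):=\hat R^n/(r_1,\dots,r_{n+u})$ is the inverse limit over the finite quotient rings $S$ of $R$ of the finite $R$-modules $S^n/(\bar r_1,\dots,\bar r_{n+u})$, hence a pro-finite $R$-module; it is \emph{small} since $\abs{\Sur(X_n(r),G)}\leq\abs{\Hom(R^n,G)}=\abs{G}^n<\infty$ for every finite $R$-module $G$, so it defines a point of $\Prf$ (equivalently, a finitely generated $\hat R$-module, cf.\ Lemma~\ref{L:prolocalRmod}). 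Since every level is $\C_S$ for a finite quotient ring $S$ (Lemma~\ref{mod-diamond}) and $X_n(r)^{\C_S}= S^n/(\bar r_1,\dots,\bar r_{n+u})$ depends only on the images of the $r_i$ in $S^n$, the assignment $r\mapsto X_n(r)$ is locally constant, hence Borel measurable; pushing forward the normalized Haar measure on $(\hat R^n)^{n+u}$ along it yields the measure $\nu^n$ on $\Prf$.

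The next step is to compute $\lim_{n\to\infty}\int_{X\in\Prf}\abs{\Sur(X,G)}\,d\nu^n$ for a fixed finite $R$-module $G$. Every $R$-module homomorphism $R^n\to G$ is continuous for the profinite topology and extends uniquely to $\hat R^n$, so $\abs{\Hom(\hat R^n,G)}=\abs{G}^n$ and $\Sur(\hat R^n,G)$ is the set of generating $n$-tuples of $G$. A surjection $\phi\colon\hat R^n\to G$ descends to $X_n(r)$ exactly when $\phi(r_i)=0$ for all $i$; since $\phi_*$ of Haar measure is a translation-invariant, hence uniform, probability measure on the finite group $G$, we get $\Pr[\phi(r_i)=0]=\abs{G}^{-1}$, and independence of the $r_i$ gives $\Pr[\phi(r_i)=0\ \forall i]=\abs{G}^{-(n+u)}$. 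Hence
\[
\int_{X\in\Prf}\abs{\Sur(X,G)}\,d\nu^n=\abs{\Sur(\hat R^n,G)}\,\abs{G}^{-(n+u)}=\frac{\abs{\Sur(\hat R^n,G)}}{\abs{G}^n}\,\abs{G}^{-u}.
\]
Because $\abs{G}^n-\abs{\Sur(\hat R^n,G)}\leq\sum_{M}\abs{M}^n$ over the maximal submodules $M\subsetneq G$ (each satisfying $\abs{M}\leq\abs{G}/2$), the fraction $\abs{\Sur(\hat R^n,G)}/\abs{G}^n\to 1$, so the limiting $G$-moment is $M_G:=\abs{G}^{-u}$.

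Finally, $(M_G)_G=(\abs{G}^{-u})_G$ is well-behaved by Lemma~\ref{mod-card-wb}, so Theorem~\ref{T:weak-convergence} yields that a measure with $G$-moment $M_G$ for all $G$ exists and that $\nu^n$ converges weakly to it; by the uniqueness in Theorem~\ref{analytic-main-measure} this measure is exactly $\mu_u$, which proves the lemma. I expect the main obstacle to lie in the moment computation: verifying the exact identity above --- in particular the uniformity of $\phi_*(\mathrm{Haar})$ on $G$ --- together with the asymptotic $\abs{\Sur(\hat R^n,G)}/\abs{G}^n\to 1$, and the routine but unavoidable checking that $\nu^n$ is genuinely a Borel probability measure on $\Prf$ supported on small pro-finite $R$-modules, so that Theorem~\ref{T:weak-convergence} legitimately applies. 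The conceptual content is otherwise minimal once that theorem and Lemma~\ref{mod-card-wb} are in hand.
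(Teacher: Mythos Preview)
Your proposal is correct and follows essentially the same approach as the paper's proof: reduce to Theorem~\ref{T:weak-convergence} by computing that the $G$-moment of $\nu^n$ is $\abs{\Sur(\hat R^n,G)}\,\abs{G}^{-(n+u)}\to\abs{G}^{-u}$, using that a surjection descends precisely when the random relations land in its kernel. You supply more detail than the paper on why $X_n(r)$ lands in $\Prf$ and why the map $r\mapsto X_n(r)$ is measurable, and you make the appeal to well-behavedness (Lemma~\ref{mod-card-wb}) explicit where the paper leaves it implicit in the definition of $\mu_u$.
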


\begin{proof} 
A finitely generated $\hat{R}$-module $N$ naturally gives a small pro-finite $R$-module.  Each continuous finite quotient ring 
$\hat{R}/I$ is a finite quotient of $R$,  and the functor from the category of continuous finite quotients of $\hat{R}$ taking $\hat{R}/I$ to the finite $R$-module $\hat{R}/I \tensor N$ gives the pro-object.  
Thus  we can apply Theorem~ \ref{T:weak-convergence} to show that the distribution of the quotient of $\hat{R}^n$ by $n+u$ random relations taken from Haar measure converges to $\mu_u$ as long as its $N$-moment converges to $\abs{N}^{-u}$ for all $N$.

This is true since, for each surjection $\pi$ from $\hat{R}^n$ to $N$, the probability that $\pi$ descends to the quotient of $\hat{R}^n$ by $n+u$ Haar random relations is $\abs{N}^{-n-u}$, since this is equivalent to the probability that the images of the $n+u$ random relations in $N$ are all trivial. So the expected number of surjections from the quotient of $\hat{R}^n$ by $n+u$ Haar random relations is $\Surj(\hat{R}^n, N) \abs{N}^{-n-u}$. As $n$ goes to $\infty$, the ratio $\Surj(\hat{R}^n, N)  / \abs{N}^n$ converges to $1$ since it is equal to the probability that $n$ random elements of $N$ generate $N$, which converges to $1$. Hence the expected number of surjections from the quotient of $N$ converges to $\abs{N}^{-u}$, as desired.
\end{proof}

For the rest of Section~\ref{SS:Modules}, we restrict attention to  commutative rings.

When $R$ is a local ring, the formulas for $\mu_u$ simplify even further, and we can make more computations about the measure.

\begin{lemma}\label{L:Localmeasform}
 Let $R$ be a Noetherian local commutative ring with finite residue field $k$. 
We have\[ \mu_u( U_{S,L} ) = \abs{\Aut(L)}^{-1}\abs{L}^{-u} \prod_{j=1}^{\infty} \left( 1 -    \frac{\abs{ \operatorname{Ext}^1_{S} ( L,  k) }}{ \abs{ \operatorname{Hom}(L, k)} }\abs{k}^{-j-u} \right) \] for any nonzero finite quotient $S$ of $R$.
\end{lemma}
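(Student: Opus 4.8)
The plan is to obtain this formula as the special case of Lemma~\ref{mod-card-wb} in which $R$ is local, once the notation $U_{S,L}$ is matched with the notation $U_{\C,F}$ used in Theorem~\ref{analytic-main-measure}.

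First I would record that a nonzero finite quotient $S = R/I$ of a local ring $R$ (with maximal ideal $\m$ and residue field $k$) is again local: since $S \neq 0$ we have $I \subseteq \m$, so $S$ has maximal ideal $\m/I$ and residue field $k$. Hence $k$ is, up to isomorphism, the only simple $S$-module, and because $S$ is commutative its field of endomorphisms is $\End_S(k) = k$, of cardinality $\abs k$. In the language of Lemma~\ref{mod-card-wb} this is the case $n = 1$, $K_1 = k$, $q_1 = \abs k$, so that the product over $i$ there collapses to a single Euler factor.

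Next I would identify $U_{S,L}$ with $U_{\C_S,L}$, where $\C_S$ denotes the level of finite $S$-modules (Lemma~\ref{mod-diamond}). For a small pro-finite $R$-module $X$, the quotient $X^{\C_S}$ is the maximal quotient of $X$ lying in $\C_S$, i.e. the maximal quotient on which $R$ acts through $S$, which is $X/IX = X\otimes_R S$. Thus $X \in U_{\C_S,L}$ iff $X\otimes_R S \cong L$ iff $X \in U_{S,L}$. Since $M_N = \abs N^{-u}$ is well-behaved (Lemma~\ref{mod-card-wb}) and $\mu_u$ is, by definition, the unique measure on $\Prf$ with these moments, Theorem~\ref{analytic-main-measure} gives $\mu_u(U_{S,L}) = \mu_u(U_{\C_S,L}) = v_{\C_S,L}$.

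Finally, substituting $n=1$, $K_1 = k$, $q_1 = \abs k$ into the formula of Lemma~\ref{mod-card-wb} and combining the two powers of $\abs k$ in the denominator ($\abs k^{-u}\abs k^{-j} = \abs k^{-j-u}$) produces exactly the asserted formula. I do not expect a genuine obstacle here; the only things to be careful about are the two bookkeeping points — that $S$ is local, so that a single Euler factor survives, and that $U_{S,L}$ really is $U_{\C_S,L}$ — while all the substantive computation is already carried out in Lemma~\ref{mod-card-wb}.
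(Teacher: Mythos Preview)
Your proposal is correct and follows essentially the same route as the paper: specialize Lemma~\ref{mod-card-wb} to the case where $S$ (being a nonzero quotient of a local ring) has a unique simple module $k$, so that $n=1$, $K_1=k$, $q_1=\abs{k}$, and the product collapses to a single Euler factor. The paper's proof is even more terse, omitting the explicit verification that $U_{S,L}=U_{\C_S,L}$ and that $\mu_u(U_{\C_S,L})=v_{\C_S,L}$, since these identifications are already set up in Lemma~\ref{L:prolocalRmod} and the paragraph defining $\mu_u$; your added bookkeeping is accurate and harmless.
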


\begin{proof} 
Since $R$ is a local ring, the only simple $R$-module is the residue field $k$, and for $S\neq 0$, the residue field is an $S$-module, so we can take $n=1$ and $N_1=k$ in Lemma \ref{mod-card-wb}.  In this case, the formula of Lemma \ref{mod-card-wb}  simplifies to the stated one.
\end{proof}

One interesting set whose measure we can calculate in the local rings case is the set of finite $R$-modules. This is a measurable set since it is a countable union of closed points.

\begin{lemma} Let $R$ be an infinite Noetherian local commutative ring with finite residue field $k$.  We have
\[ \mu_u( \{ N \mid N \textrm{ finite} \} ) = \begin{cases} 0 & \textrm{if } u<0 \\ \mu_u(\{0\})=  \prod_{j=1}^\infty (1 - \abs{k}^{-j-u} )  & \textrm{if } 0 \leq u < \dim R-1 \\ 1 & \textrm{if } u \geq \dim R-1 \end{cases} \]
where $\dim R$ is the Krull dimension. In the middle case, saying that the measure is $\mu_u( \{0\})$ is equivalent to saying each nonzero finite $R$-module has  measure $0$. \end{lemma}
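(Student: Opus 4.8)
The plan is to reduce everything to an explicit formula for the $\mu_u$-mass of a single finite module and then read off the cases from the first two Betti numbers of that module. For a finite $R$-module $L$, let $a$ be least with $\mathfrak{m}^aL=0$. First I would show $\{L\}=U_{R/\mathfrak{m}^{a+1},L}$, so that $\{L\}$ is open and closed: if a finitely generated $\hat{R}$-module $N$ (cf.\ Lemma~\ref{L:prolocalRmod}) has $N\otimes_R R/\mathfrak{m}^{a+1}\isom L$, then also $N/\mathfrak{m}^aN\isom L/\mathfrak{m}^aL\isom L$, so $|N/\mathfrak{m}^aN|=|N/\mathfrak{m}^{a+1}N|$, forcing $\mathfrak{m}^aN=\mathfrak{m}^{a+1}N$ and hence $\mathfrak{m}^aN=0$ by Nakayama, i.e.\ $N\isom L$. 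Since every $\hat{R}$-module extension of $L$ by $k$ has middle term killed by $\mathfrak{m}^{a+1}$, we get $\operatorname{Ext}^1_{R/\mathfrak{m}^{a+1}}(L,k)=\operatorname{Ext}^1_{\hat{R}}(L,k)$, so Lemma~\ref{L:Localmeasform} yields
\begin{equation*}
\mu_u(\{L\})=\frac{|L|^{-u}}{|\Aut(L)|}\prod_{j=1}^\infty\bigl(1-|k|^{\beta(L)-j-u}\bigr),\qquad \beta(L):=\dim_k\operatorname{Ext}^1_{\hat{R}}(L,k)-\dim_k\operatorname{Hom}_{\hat{R}}(L,k).
\end{equation*}
From a minimal free resolution of $L$ over $\hat{R}$, $\dim_k\operatorname{Hom}(L,k)$ is the minimal number of generators $b_0$ of $L$ and $\dim_k\operatorname{Ext}^1(L,k)=b_1$, so $\beta(L)=b_1-b_0$; in particular $\mu_u(\{0\})=\prod_{j\geq1}(1-|k|^{-j-u})$.

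The next ingredient is two bounds on $\beta$. First, $\beta(L)\geq0$ for every finite $L$: localizing the first syzygy $\Omega L\subseteq\hat{R}^{b_0}$ at a minimal prime $\mathfrak{p}$ of $\hat{R}$ gives $(\Omega L)_{\mathfrak{p}}=\hat{R}_{\mathfrak{p}}^{b_0}$ (since $L_{\mathfrak{p}}=0$, using $\dim R\geq1$), so $\Omega L$ needs at least $b_0$ generators. Second, for $L\neq0$ one has $\beta(L)\geq\dim R-1$: the Fitting ideal $\operatorname{Fitt}_0(L)$ is the ideal of maximal minors of a $b_0\times b_1$ presentation matrix, so by the classical height bound for determinantal ideals $\operatorname{ht}\operatorname{Fitt}_0(L)\leq b_1-b_0+1$, while $\operatorname{Fitt}_0(L)$ is $\mathfrak{m}$-primary (as $\operatorname{Supp}L=\{\mathfrak{m}\}$) and hence of height $\dim R$; when only $\beta(L)\geq1$ is needed one instead notes that $\beta(L)=0$ would make $\operatorname{Fitt}_0(L)$ a principal $\mathfrak{m}$-primary ideal, forcing $\dim R\leq1$ by Krull's principal ideal theorem. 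Writing $\mu_u(\{N\text{ finite}\})=\sum_{L\text{ finite}}\mu_u(\{L\})$, the first two cases follow: if $u<0$, then for each finite $L$ the integer $j:=\beta(L)-u\geq-u\geq1$ makes the $j$th factor above $1-|k|^0=0$, so every $\mu_u(\{L\})=0$ and the sum vanishes; if $0\leq u<\dim R-1$ (so $\dim R\geq2$), then for $L\neq0$ the quantity $j:=\beta(L)-u$ is again a positive integer, so $\mu_u(\{L\})=0$, whereas $\mu_u(\{0\})=\prod_{j\geq1}(1-|k|^{-j-u})$ is a convergent product of factors in $(0,1)$ and hence positive, giving $\mu_u(\{N\text{ finite}\})=\mu_u(\{0\})$ and that every nonzero finite module has mass $0$.

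The case $u\geq\dim R-1$, where the claim is $\mu_u(\{N\text{ finite}\})=1$, is the main obstacle; I would prove it as $\mu_u(\{N\text{ infinite}\})=0$. Since a finitely generated $\hat{R}$-module is finite iff killed by some power of $\mathfrak{m}$, the sets $A_a:=\{N:\mathfrak{m}^aN\neq0\}$ decrease to $\{N\text{ infinite}\}$, so it suffices to show $\mu_u(A_a)\to0$. A Nakayama argument shows $\mathfrak{m}^aN\neq0$ depends only on $N/\mathfrak{m}^{a+1}N$, so $A_a$ is the disjoint union of the clopen sets $U_{R/\mathfrak{m}^{a+1},G}$ over finite $R/\mathfrak{m}^{a+1}$-modules $G$ with $\mathfrak{m}^aG\neq0$; any such $G$ has a strict chain $G\supsetneq\mathfrak{m}G\supsetneq\cdots\supsetneq\mathfrak{m}^aG\supsetneq0$, hence length $\geq a+1$. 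Using $\mu_u(U_{R/\mathfrak{m}^{a+1},G})=v_{\C_{R/\mathfrak{m}^{a+1}},G}\leq|G|^{-u}/|\Aut(G)|$ — the product in Proposition~\ref{module-ext-formula} is, by integrality of $u$, either a product of factors in $(0,1)$ or contains a zero factor — one is reduced to showing that the tail $\sum_{G\text{ finite},\ \operatorname{length}(G)\geq a+1}|G|^{-u}/|\Aut(G)|$ of the Cohen--Lenstra series tends to $0$. For $u>\dim R-1$ this series converges (by the standard convergence estimates for Cohen--Lenstra series over local rings), so its tails vanish.

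The borderline exponent $u=\dim R-1$ is the delicate point: the crude bound $v\leq|G|^{-u}/|\Aut(G)|$ is not summable there, so one must retain the product factors, noting that $v_{\C_{R/\mathfrak{m}^{a+1}},G}$ is nonzero only in a restricted range of Betti numbers and there equals (up to contributions of strictly smaller modules) $c\,|G|^{-(\dim R-1)}/|\Aut(G)|$ with $c=\prod_{j\geq1}(1-|k|^{-j})$, whose total sum over all finite $G$ is at most $\mu_u(\Prf)=1$; hence the relevant tails again vanish. I expect the bookkeeping comparing $\operatorname{Ext}^1$ over $R/\mathfrak{m}^{a+1}$ with $\operatorname{Ext}^1$ over $\hat{R}$ for modules not killed by $\mathfrak{m}^{a-1}$, together with this critical-exponent estimate, to be the most technical part of the argument; everything else is an exercise in the formula from Step~1 and the Betti bounds from Step~2. (Throughout, $u$ is taken in the range where $\mu_u$ is defined — in particular $u\in\Z$ — so that the zero-factor arguments apply.)
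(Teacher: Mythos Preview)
Your treatment of the first two cases ($u<0$ and $0\leq u<\dim R-1$) is essentially the paper's argument: compute $\mu_u(\{L\})$ via the point-mass formula, introduce $\beta(L)=b_1-b_0$, and use the determinantal height bound to get $\beta(L)\geq\dim R-1$ for nonzero finite $L$. That part is fine.

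The case $u\geq\dim R-1$ is where you diverge from the paper, and where your argument has a genuine gap. You want to show $\mu_u(A_a)\to 0$ by bounding $\mu_u(A_a)\leq\sum_{\operatorname{length}(G)\geq a+1}|G|^{-u}/|\Aut(G)|$ and then invoking convergence of the Cohen--Lenstra series $\sum_G|G|^{-u}/|\Aut(G)|$. But this convergence is \emph{not} standard for an arbitrary Noetherian local ring: it would require controlling the number of isomorphism classes of finite $R$-modules of each length weighted by $1/|\Aut|$, which for rings of wild representation type is far from obvious. You cite ``standard convergence estimates'' that do not exist at this level of generality. At the critical exponent $u=\dim R-1$ you concede the crude bound fails and give only a sketch; the comparison of $\operatorname{Ext}^1$ over $R/\mathfrak m^{a+1}$ versus $\hat R$ that you flag as ``the most technical part'' is not carried out, and your heuristic that the nonzero $v$-values sum to at most $1$ does not by itself show the \emph{tail} (over modules of large length) goes to zero, since the set being summed over changes with $a$.

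The paper avoids this entirely by going through the random matrix model. It first proves directly (by an inductive ``bad primes'' argument about ranks of random matrices modulo non-maximal primes) that the quotient of $\hat R^n$ by $n+u$ Haar-random relations is finite with probability $1$ when $u\geq\dim R-1$. Since these distributions converge weakly to $\mu_u$, it then suffices to exchange a limit with a sum over finite $N$. The dominating function is obtained not from the crude $|N|^{-u}/|\Aut(N)|$ but from $\mu_u(\{N\})$ itself: one shows the $n$th probability of hitting $N$ is at most $\mu_u(\{N\})\cdot\prod_{j\geq1}(1-|k|^{-j})^{-1}$, and $\sum_N\mu_u(\{N\})\leq\mu_u(\Prf)=1$ automatically. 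This sidesteps any need for Cohen--Lenstra convergence. If you want to salvage your direct approach, you would need an independent proof that $\sum_G|G|^{-u}/|\Aut(G)|<\infty$ for $u>\dim R-1$ over general Noetherian local $R$, plus a complete argument at the boundary---both of which look harder than the paper's route.
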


\begin{proof} Let $N$ be a finite $R$-module. Then $N$ has a minimal presentation in terms of $d$ generators and $m$ relations for  $d = \dim \Hom(N, k)$ and $m = \dim \operatorname{Ext}^1_R ( N,k) $. We can choose a finite quotient $S$ of $R$ such that $N$ factors through $S$, as do all finite extensions of $N$ by $k$, so that $\operatorname{Ext}^1_S( N,k)  = \operatorname{Ext}^1_R ( N,k) $. It follows that for an $R$ module $L$, we have $L \otimes_R S \cong N$ if and only if $L \cong N$, with ``if" obvious and ``only if" because otherwise some quotient of $L$ would be an extension of $N$ by $k$ and thus an $S$-module, a contradiction.

Then \[ \mu_u ( \{ N \}) = \mu_u ( U_{S,N}) = \abs{\Aut(N)}^{-1} \abs{N}^{-u} \prod_{j=1}^{\infty} (1 - \abs{k}^{m-d-j-u } ) \]
which is $0$ if $m-d-j-u =0$ for some $j >0$, equivalently, if $m-d - u >0$.  So we see the probability of attaining $N$ is nonzero only if $m-d\leq u$.

We will now show that if $N$ is a nonzero finite $R$-module, then $m-d\geq \dim R-1$.  
The $m$ relations of $N$ form an $d \times m$ matrix with entries in $R$, and let $I$ be the ideal generated by the $d\times d$ minors of that matrix. 
If $I$ is the unit ideal, then $N$ is trivial.
If $d-n\leq \dim R-2$ and $I$ is not the unit ideal,  then $I$ is contained in a prime of height $\dim R-1$ \cite[Theorem 2.1]{Bruns1988}, so a non-maximal prime $\wp$.
If $S$ is the fraction field of $R/\wp$, then $N\tensor_R S\isom S^k$ for some $k>1$ since $I\sub \wp$ implies that that the relations defining $N\tensor_R S$ are rank at most $d-1$ over $S$.  Thus, in this case $N$ must be infinite, as if $N$ were finite, then it would be annihilated by some power of an element in the maximal ideal but not $\wp$, which would contradict $N\tensor_R S\isom S^k$

Thus, if $u< \dim R-1$, then we cannot have $m-d \leq u$ for $N$ a nonzero $R$-module, and so the probability of each nonzero finite $R$-module is zero.

For the zero module, we have $d=m=0$ so the probability of the zero module is also zero for $u<0$. If $u\geq 0$, the probability of the zero $R$-module is given by $1^{-u} \prod_{j=1}^{\infty} (1 - \abs{k}^{0-0-j-u } ) = \prod_{j=1}^\infty (1 - \abs{k}^{-j-u} )  $,  finishing the proof of the middle case.

We finally consider the case $u \geq \dim R-1$.  In this case, we first check that the quotient of $\hat{R}^n$ by $n+u$ random relations is finite with probability $1$. 
Let $A_m$ be the random matrix over $\hat{R}$ who columns are the first $m$ relations.  Let $I_t(m)$ be the ideal generated by $t\times t$ minors of $A_m$.
We say a prime $\wp$ in $\hat{R}$ of height $h$ is \emph{bad} if $\wp$ is not maximal, and if for some $h+1\leq k\leq n+h$ we have that the rank of $A_k$ mod $\wp$ is $<k-h,$ or equivalently,  $I_{k-h}(k) \sub \wp$.  We will show by induction on $h$ and $k$ that the probability that there exists a bad prime is $0$.

We condition on the first $k-1$ columns of the matrix.  If $I_{k-h}(k-1)\not\sub \wp,$ then  $I_{k-h}(k)\not\sub \wp$.  If $h>0$, then we know by induction that
$I_{(k-1)-(h-1)}(k-1)$ is not contained in any prime of height $h-1$.  Thus any prime of height $h$ containing $I_{k-h}(k-1)$ is a prime of height $0$ in $R/I_{k-h}(k-1)$.  There are only finitely many such primes (given $A_{k-1}$).  For each of these finitely many primes $\wp$ of height $h$, the matrix
$A_{k-1}$ mod $\wp$ has rank at least $k-1-h$ with probability $1$ by the inductive hypothesis, and we choose some $(k-1-h)\times(k-1-h)$ minor of $A_{k-1}$ that is non-zero mod $\wp$.  We condition on all entries of $A_k$ except for one entry in the $k$th column and a row not in the chosen minor.  Then there is at most one value that entry can be mod $\wp$ so that $A_k$ mod $\wp$ is rank $<k-h$.   Since $\hat{R}/\wp$ is infinite (since it is dimension at least $1$), 
  a Haar random element of  $\hat{R}$ takes that value mod $\wp$ with probability $0$.  Since there are only finitely many primes of height $h$ containing $I_{k-h}(k-1)$ (and the rest cannot contain $I_{k-h}(k)$), this completes the induction.

Let $N$ be the quotient of $\hat{R}^n$ by $n+\dim R -1$ random relations.  
With probability $1$, we have that in the quotient by every non-maximal prime $\wp$ of $\hat{R}$, the matrix $A(n+\dim R -1)$ is rank $n$, which implies that the localization $N_\wp=0, $ which implies that the annihilator $I_N$ of $N$ is not contained in $\wp$.  So, with probability 1, we have that 
$\hat{R}/I_N$ is $0$-dimensional and hence finite since the residue is finite, and the finitely generated $\hat{R}/I_N$-module $N$ is thus finite. 

To prove that $\mu_u$ of the set of finite $R$-modules is $1$, it remains to prove that 
\[ \lim_{n\to\infty}  \sum_{N  \textrm{ finite } R\textrm{-module}} \operatorname{Prob} ( \hat{R}^n / \langle n+u \textrm{ Haar random relations} \rangle \cong N )\]\[=  \sum_{N  \textrm{ finite } R\textrm{-module}}  \lim_{n\to\infty}\operatorname{Prob} ( \hat{R}^n / \langle n+u \textrm{ Haar random relations} \rangle \cong N)\] 
as we have just seen the left side is $1$. 

We do this using the dominated convergence theorem. The probability that $\hat{R}^n$ modulo $n+u$ Haar random relations is isomorphic to $N$ is bounded above by $ \abs{\Aut(N)}^{-1}\abs{N}^{-u}$ (see the proof of Lemma~\ref{L:genandrel}), and is $0$ if the minimal number of relations $m$ to define $N$, minus the minimal number of generators $d$, is greater than $u$.

On the other hand, if then $m-d\leq u$, then \[\mu_u(\{N\}) = \abs{\Aut(N)}^{-1} \abs{N}^{-u} \prod_{j=1}^{\infty} (1 - \abs{k}^{m-d-j-u } )  \geq  \abs{\Aut(N)}^{-1} \abs{N}^{-u} \prod_{j=1}^{\infty} (1 - \abs{k}^{-j} ) \] so in either case we have
\[ \operatorname{Prob} ( \hat{R}^n / \langle n+u \textrm{ Haar random relations} \rangle \cong N)  \leq  \mu_u(\{N\})  \prod_{j=1}^{\infty} (1 - \abs{k}^{-j} )^{-1} \]
and this upper bound satisfies the assumption of the dominated convergence theorem since
\[ \sum_{N  \textrm{ finite } R\textrm{-module}}    \mu_u(\{N\})  \prod_{j=1}^{\infty} (1 - \abs{k}^{-j} )^{-1} \leq \mu_u ( \mathcal P ) \prod_{j=1}^{\infty} (1 - \abs{k}^{-j} )^{-1}  =  \prod_{j=1}^{\infty} (1 - \abs{k}^{-j} )^{-1}  < \infty.\]

\end{proof}

For a discrete valuation ring, we can give a simple condition for a collection of moments to be well-behaved that is almost sharp. For an $R$-module $N$, we write $\wedge^2 N$ for the quotient of the $R$-module  $N\tensor N$ by the $R$-module generated by elements of the norm $n\tensor n$ for $n\in N$.

\begin{lemma}\label{dvr-sharp-moments} Let $R$ be a discrete valuation ring with finite residue field.  Let $M_N$ be an assignment of a real number to each isomorphism class of $R$-modules. Then $M_N$ is well-behaved if there exists $\epsilon>0$ and $c$ such that  $ M_N \leq  c  \abs{N}^{1-\epsilon} \abs{ \wedge^2(N) } $ for all finite $R$-modules $N$. \end{lemma}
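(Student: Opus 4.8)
The plan is to reduce well-behavedness to a level-by-level check using Lemma~\ref{la-input}, then on each level $\C_S$ (for $S$ a finite quotient of $R$) estimate the defining sum directly. Since $R$ is a DVR with finite residue field $k$ of size $q$, each finite quotient $S=R/\mathfrak m^t$ is local with residue field $k$, so Lemma~\ref{module-semisimple-description} applies with a single simple module $K=k$, $\omega(\pi)\leq 1$, hence $Z(\pi)\leq 2$ for all semisimple $\pi$. Thus the $Z(\pi)^3$ factor in the definition of well-behaved is harmless, and by Lemma~\ref{big-O-wb} it suffices to show that $\sum_{L\in\C_S}\sum_{\pi\in\Epi(L,N)}\frac{|\mu(N,L)|}{|\Aut(L)|}|N|^{1-\epsilon}|\wedge^2 N|$ type bounds converge — more precisely, following the computation in Proposition~\ref{module-ext-formula}, I want to show $\sum_{L\in\C_S}\sum_{\pi\in\Epi(L,N)}\frac{|\mu(N,L)|}{|\Aut(L)||\Aut(N)|}M_L<\infty$ for each $N$, where only semisimple $\pi$ contribute.

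Concretely, I would run the same rearrangement as in the proof of Proposition~\ref{module-ext-formula}: a semisimple $\pi\colon L\to N$ has $\ker\pi\cong k^e$, $\mu(\pi)=(-1)^e q^{\binom e2}$, and after converting the sum over $(L,\pi)$ into a sum over $e$ and over $\alpha\in\operatorname{Ext}^1_S(N,k^e)$ with $L\cong N_\alpha$, one gets (up to the harmless $Z^3\leq 8$ factor and the $1/|\Aut(N)|$ prefactor)
\[
\sum_{e=0}^\infty \frac{|\operatorname{Ext}^1_S(N,k)|^e}{|\operatorname{Hom}(N,k)|^e\,\prod_{j=1}^e(q^j-1)}\,\max_{\alpha}|M_{N_\alpha}|.
\]
Now I invoke the hypothesis $|M_{N_\alpha}|\leq c|N_\alpha|^{1-\epsilon}|\wedge^2 N_\alpha|=c|N|^{1-\epsilon}q^{e(1-\epsilon)}|\wedge^2 N_\alpha|$. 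The key structural input is an upper bound for $|\wedge^2 N_\alpha|$ when $N_\alpha$ is an extension of $N$ by $k^e$: since over a DVR a finite module $N$ of the form $\bigoplus_i R/\mathfrak m^{a_i}$ has $|\wedge^2 N|=\prod_{i<j}|R/\mathfrak m^{\min(a_i,a_j)}|$, adding $e$ extra cyclic factors (each $R/\mathfrak m^{b}$ with $b\geq 1$, or growing an existing $a_i$ by one) multiplies $|\wedge^2|$ by at most $\prod (\text{size of the interacting factors})$; in all cases $|\wedge^2 N_\alpha|\leq |\wedge^2 N|\cdot |N|^{e}\cdot q^{\binom e2}\cdot(\text{bounded in }N,S)$, because each new generator pairs nontrivially with at most the $d=\dim_k\operatorname{Hom}(N,k)$ old generators plus the other new ones. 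I expect to prove a clean bound of the shape $|\wedge^2 N_\alpha|\ll_N q^{e\,d}\,q^{\binom e2}|\wedge^2 N|$.

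Plugging this in, the $e$-th term of the sum is $\ll_{N,S} c|N|^{1-\epsilon}|\wedge^2 N|\cdot \dfrac{q^{e(1-\epsilon)}q^{ed}q^{\binom e2}}{|\operatorname{Hom}(N,k)|^e\prod_{j=1}^e(q^j-1)}$. The denominator $\prod_{j=1}^e(q^j-1)$ grows like $q^{\binom{e+1}{2}}=q^{\binom e2+e}$, which beats $q^{\binom e2}$ in the numerator by a factor $q^{-e}$; meanwhile $|\operatorname{Ext}^1_S(N,k)|^e/|\operatorname{Hom}(N,k)|^e$ is some fixed geometric ratio $\rho^e$, $|N_\alpha|^{1-\epsilon}$ contributes $q^{e(1-\epsilon)}$, and the $q^{ed}$ from $\wedge^2$ is a fixed geometric factor. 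So the summand is dominated by $C(N,S)\cdot (\rho\, q^{d}\, q^{1-\epsilon}\, q^{-1})^e=C(N,S)\cdot(\rho\,q^{d-\epsilon})^e$ — wait, this is \emph{not} automatically summable, so the honest accounting must exploit that the superexponential $1/\prod_{j=1}^e(q^j-1)\sim q^{-\binom{e+1}{2}}$ kills \emph{every} exponential factor: writing the summand as $q^{-\binom{e}{2}}\cdot(\text{exponential in }e)$ up to the $\prod(q^j-1)$ which is $\geq c' q^{e^2/2}$, the whole $e$-th term decays like $q^{-e^2/2+O(e)}$, hence the series converges absolutely. This is the real point and I should state it as: after all polynomial-in-$|N|$ and geometric-in-$e$ factors are collected, the tail is controlled by $\sum_e q^{-\binom e2+Ae}<\infty$ for any constant $A$, which holds unconditionally.

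The main obstacle is the bound on $|\wedge^2 N_\alpha|$: I need the exterior square of an extension to be controlled in terms of the exterior square of the quotient, the cardinality of the quotient, and the dimension $e$ of the kernel, with the dependence on $e$ no worse than $q^{\binom e2}$ times a geometric factor — it is precisely this $q^{\binom e2}$ matching the $\prod(q^j-1)$ in the denominator (and the spare $q^{-e}$) that makes the hypothesis ``$M_N\leq c|N|^{1-\epsilon}|\wedge^2 N|$'' sharp: without the $\epsilon$ one would be at the boundary of convergence. So I would carry out the argument in this order: (1) reduce to a single level $\C_S$ via Lemma~\ref{la-input}; (2) use $Z(\pi)\leq 2$ on $\C_S$ and Lemma~\ref{big-O-wb} to drop the cube; (3) rerun the rearrangement of Proposition~\ref{module-ext-formula} to express the sum as $\sum_e(\text{geometric})\cdot\frac{1}{\prod_{j\leq e}(q^j-1)}\cdot\sum_\alpha|M_{N_\alpha}|$; (4) prove the lemma $|\wedge^2 N_\alpha|\ll_{N} q^{ed}q^{\binom e2}|\wedge^2 N|$ by decomposing $N$ and $N_\alpha$ into cyclic summands and counting which pairs of summands contribute; (5) substitute the hypothesis and observe the superexponential decay of the resulting series, using $\prod_{j=1}^e(q^j-1)\geq c'q^{e(e+1)/2}$, to conclude absolute convergence and hence well-behavedness at $\C_S$; (6) since $S$ was arbitrary, conclude well-behavedness.
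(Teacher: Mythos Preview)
Your plan through step~(3) is correct and matches the paper's reduction via Proposition~\ref{module-ext-formula}, but step~(4)--(5) contains a genuine gap: the uniform bound $|\wedge^2 N_\alpha|\ll_N q^{ed}q^{\binom{e}{2}}|\wedge^2 N|$ is too crude, and with it the series diverges. The bound is sharp only at $\alpha=0$, where $N_\alpha=N\oplus k^e$ and $|\wedge^2(N\oplus k^e)|=|\wedge^2 N|\cdot q^{ne}\cdot q^{\binom{e}{2}}$. If you apply this worst case to every $\alpha$ and then multiply by $|\operatorname{Ext}^1(N,k^e)|=q^{ne}$, the $q^{\binom{e}{2}}$ in the numerator exactly cancels the $q^{\binom{e}{2}}$ inside $\prod_{j=1}^{e}(q^j-1)\sim q^{\binom{e+1}{2}}=q^{\binom{e}{2}+e}$, so the only ``superexponential'' savings left is a single $q^{-e}$. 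The leftover exponent is then $ne+e(1-\epsilon)-e=e(n-\epsilon)$, which diverges for every $N$ with $n=\dim_k\Hom(N,k)\geq 1$. Your sentence ``the whole $e$-th term decays like $q^{-e^2/2+O(e)}$'' is therefore not correct: after the wedge bound there is no surviving $q^{-e^2/2}$.

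The paper's repair is to make the wedge bound depend on $\alpha$ and then stratify the sum over $\alpha$ by $r=\operatorname{rank}(\alpha)$, viewing $\alpha\in\operatorname{Ext}^1(N,k^e)\cong k^{ne}$ as an $e\times n$ matrix over $k$. A short filtration argument on $\wedge^2 N_\alpha$ gives $|\wedge^2 N_\alpha|\leq |\wedge^2 N|\,q^{en}\,q^{\binom{e-r}{2}}$, because the bottom piece is a quotient of $\wedge^2(k^e/\operatorname{Im}\alpha)$. Since $\#\{\alpha:\operatorname{rank}\alpha=r\}\leq q^{re}$ and $\binom{e-r}{2}+re=\binom{e}{2}+\binom{r+1}{2}$ with $r\leq n$ bounded, the $r$-sum contributes only a constant depending on $N$, and the remaining $e$-sum collapses to $\sum_e q^{-\epsilon e}<\infty$. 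This trade-off between the number of $\alpha$ of given rank and the size of $\wedge^2 N_\alpha$ is exactly what makes the exponent $1-\epsilon$ in the hypothesis sharp; a rank-independent bound cannot see it.
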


We then obtain corollaries for uniqueness of measures from Theorems~\ref{T:general} and \ref{analytic-main-measure}.
A slightly weaker uniqueness result appears in \cite[Theorems 8.2 and 8.3]{Wood2017}, without the $|N|^{1-\epsilon}$ in the upper bound
(though stated there over $\Z_p$, the argument works over other DVR's with finite residue field, e.g. see \cite[Theorem 6.11]{Wang2021}).

Lemma~\ref{dvr-sharp-moments} is almost sharp because, if we set $M_N = \abs{N} \abs{ \wedge^2(N) } =\abs{\Sym^2(N)}$ then the moments are attained for two distinct measures $\mu$: The probability distribution the quotient of $\hat{R}^n$ by the columns of a Haar random $n \times n$ skew-symmetric matrix converges as $n \to \infty$ for $n$ even to one measure, and as $n\to\infty$ for $n$ odd to a different measure, which both have the same moments $\abs{\Sym^2(N)}$.
(See \cite[Theorems 10 and 11]{Clancy2015} for an argument showing moments in a different, but analogous case, and see \cite[Theorems 3.9 and 3.11]{Bhargava2015b} 
for the limiting distributions in the case $\hat{R}=\Z_p$.)
So if we allow $\epsilon=0$ then  $M_N \leq  c  \abs{N} \abs{ \wedge^2(N) } $ are not necessarily well-behaved since the conclusion of Theorem \ref{analytic-main-measure} is not necessarily true. 

\begin{proof} Let $k$ be the residue field of $R$ and $q$ its cardinality. Let $\pi$ be the uniformizer of $R$.

From Proposition \ref{module-ext-formula}, since $k$ is the only simple $R$-module,  it suffices to prove for each finite $R$-module $N$ that    \[  \frac{1}{ \abs{ \Aut(N)}} \sum_{e=0}^{\infty}   \left( \frac{ 1 }{  \abs{\Hom (N, k) }^{e} \prod_{j=1}^{e} (q^j-1) }  \right)  \sum_{ \alpha \in \operatorname{Ext}^1(N, k^e )}  c \abs{N_\alpha }^{1-\epsilon} \abs{ \wedge^2(N_\alpha) }   <\infty.\]

By the classification of modules over PIDs, we can write $N \cong \prod_{\pi=1}^n  R/\pi^{d_i}$ for some tuple $d_1,\dots, d_n$ of positive integers. Then using the exact sequence $0 \to R^n \to R^n \to N \to 0$, we can write  $ \operatorname{Ext}^1(N, k^e ) \cong k^{ne}$. Explicitly, we can represent an extension class as an $e \times n$ matrix where the $i$th column is $\pi^{d_i}$ times a lift of the generator of $R/\pi^{d_i}$ from $N$ to $N_\alpha$.

We now describe $\wedge^2 (N_\alpha)$. This has a filtration into submodules where  $F_1 \sub \wedge^2 (N_\alpha)$ is generated by all $a \wedge b$ where $a,b, \in k^e$, the second term $F_2 \sub \wedge^2(N_\alpha)$ is generated by all $a \wedge b$ where $a \in k^e$, and the last term $F_3= \wedge^2(N_\alpha)$.  Then
\[F_3  / F_2   \cong \wedge^2 N ,\]
while $F_2  / F_1 $ is a quotient of $N\otimes k^{e} \cong k^{ne}$, and  $F_1 $ is a quotient of $\wedge^2 (k^e)$. Furthermore, if $a \in k^e$ lies in the image of the matrix representing $\alpha$ then $a \in \pi N_\alpha$ so for $b\in k^e$, we have  \[a \wedge b \in \pi N_\alpha \wedge b = N_\alpha \wedge \pi b = N_\alpha \wedge 0 =0 ,\] and thus $F_1$ is a quotient of $\wedge^2 ( k^e/ \operatorname{Im}(\alpha))$. Thus
\[ \abs{ \wedge^2(N_\alpha) } = \abs{ F_3 / F_2   } \abs{ F_2 / F_1  } \abs{ F_1 }  \leq \abs{ \wedge^2(N)}  q^{en}  \abs{ \wedge^2 ( k^e/ \operatorname{Im}(\alpha))}  = \abs{ \wedge^2(N)}  q^{en}  q^{ \frac{ (e- \operatorname{rank}(\alpha) ) ( e- \operatorname{rank}(\alpha)-1)}{2}} .\]

We also note that $\abs{N_\alpha} =  \abs{N} q ^{e}$ and $\abs{ \Hom(N,k)} = q^n $. Thus 

\begin{align*}
  &\frac{1}{ \abs{ \Aut(N)}} \sum_{e=0}^{\infty}   \left( \frac{ 1 }{  \abs{\Hom (N, k) }^{e} \prod_{j=1}^{e} (q^j-1) }  \right)  \sum_{ \alpha \in \operatorname{Ext}^1(N, k^e )}  c \abs{N_\alpha }^{1-\epsilon} \abs{ \wedge^2(N_\alpha) } \\
   \leq &  \frac{1}{ \abs{ \Aut(N)}} \sum_{e=0}^{\infty}   \left( \frac{ 1 }{ q^{ne}  \prod_{j=1}^{e} (q^j-1) }  \right)  \sum_{ \alpha \in \operatorname{Ext}^1(N, k^e )}  c  ( \abs{N} q^e )^{1-\epsilon}  \abs{ \wedge^2(N)}  q^{en}  q^{ \frac{ (e- \operatorname{rank}(\alpha) ) ( e- \operatorname{rank}(\alpha)-1)}{2}} \\ 
   = &  \frac{c\abs{ \wedge^2(N)}    \abs{N}^{1-\epsilon}  }{ \abs{ \Aut(N)}} \sum_{e=0}^{\infty}   \left( \frac{ 1 }{   \prod_{j=1}^{e} (q^j-1) }  \right)  \sum_{ \alpha \in \operatorname{Ext}^1(N, k^e )}    (  q^e )^{1-\epsilon}  q^{ \frac{ (e- \operatorname{rank}(\alpha) ) ( e- \operatorname{rank}(\alpha)-1)}{2}} \\
   = &   \frac{c\abs{ \wedge^2(N)}    \abs{N}^{1-\epsilon}  }{ \abs{ \Aut(N)}} \sum_{e=0}^{\infty}   \left( \frac{ 1 }{   \prod_{j=1}^{e} (q^j-1) }  \right) \sum_{r=0}^n  (  q^e )^{1-\epsilon}  q^{ \frac{ (e- r ) ( e- r-1)}{2}} \# \{ \alpha \in \Hom ( k^n, k^e) \mid \operatorname{rank}(\alpha)=r \} \\
    \leq  &C_N \sum_{e=0}^{\infty}   \left( \frac{ 1 }{   \prod_{j=1}^{e} (q^j-1) }  \right) \sum_{r=0}^n (  q^e )^{1-\epsilon}   q^{ \frac{ (e- r ) ( e- r-1)}{2}} q^{ re }  
   = \sum_{e=0}^{\infty}   \left( \frac{ 1 }{   \prod_{j=1}^{e} (q^j-1) }  \right) \sum_{r=0}^n   q^{   \frac{e (e+1-2\epsilon) }{2} + \frac{r^2+r}{2}}\\
    \leq & C'_N   \sum_{e=0}^{\infty}   \left( \frac{ q^ {\frac{e (e+1-2\epsilon) }{2}}}{   \prod_{j=1}^{e} (q^j-1) }  \right)   \leq \frac{1}{ \prod_{j=1}^{\infty} ( 1-q^{-j} ) }  \sum_{e=0}^{\infty} q^{ - \epsilon e} <\infty , 
  \end{align*}
  
where $C_N,C_N'$ are constants only depending on $c,N$.
 \end{proof}

\subsection{Groups}
As in the example of modules, we show that the category of finite groups is a \Diamantine category, simplify the formulas for the measure for general moments, and simplify them further and verify positivity for certain nice moments.  The reader will notice the proofs are quite similar, and indeed one can make analogous computations in many \Diamantine categories.   
Instead of carrying out these computations in further examples, we hope that the examples given here will give the reader a blueprint for how to do the computations in further examples of interest.  One can see see \cite{SW} and our forthcoming paper \cite{SWROUCL} for more examples of analogous computations in other cases.

\begin{lemma}\label{L:groupsD}
 The category of finite groups is a \Diamantine category. \end{lemma}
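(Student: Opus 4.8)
The plan is to verify the four conditions of Definition~\ref{D:diamond} for the category $\category$ of finite groups. Condition~\eqref{D:Aut} is immediate since finite groups have finite automorphism groups, and condition~(4) holds because up to isomorphism there is only one minimal object, the trivial group (every finite group has a unique minimal quotient, namely the trivial group, since quotients of a finite group are quotients by normal subgroups and the whole group is always normal). So the substance is in conditions~\eqref{D:modular} and~\eqref{D:finite}.

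For condition~\eqref{D:modular}, recall that the quotients of a finite group $G$, up to isomorphism, are in bijection with normal subgroups of $G$ via $N \mapsto G/N$, and that a quotient $G/N$ is $\geq G/M$ in the quotient poset exactly when $N \subseteq M$. Thus the poset of quotients of $G$ is anti-isomorphic to the lattice of normal subgroups of $G$ under inclusion. This lattice is finite because $G$ is finite, and it is modular: for normal subgroups $A \le B$ one has $A(X \cap B) = AX \cap B$ for any normal subgroup $X$ (this is the standard Dedekind/modular law, which holds here because $A \subseteq B$ means every element of $AX \cap B$ can be written $ax$ with $a \in A \subseteq B$, forcing $x \in B$). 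Since the dual of a finite modular lattice is a finite modular lattice, the poset of quotients of $G$ is a finite modular lattice, giving~\eqref{D:modular}. It is worth noting, as the paper does in Subsection~\ref{ss-translation}, that under this anti-isomorphism the meet of quotients corresponds to the pushout (which here is $G/(MN)$) and the join to a fiber product when it is a quotient of $G$.

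For condition~\eqref{D:finite}, I would first identify a level $\C$ of finite groups with something concrete. A level is generated by a finite set of groups $G_1, \dots, G_n$; I claim every group in $\C$ has order dividing some fixed integer, and more precisely that the primes and the "local structure" appearing are bounded. The key point is that simple epimorphisms $G \to H$ correspond (dually) to minimal normal subgroups of $G$, i.e., the kernel is a simple $G$-group. If $G \in \C$ and $G \to H$ is simple with $G, H \in \C$, then by Lemma~\ref{group-semisimple-description} (and the discussion in Subsection~\ref{ss-translation}) the kernel is a finite simple $G$-group, hence either an elementary abelian $p$-group that is irreducible as an $\mathbb F_p[G]$-module, or a product of copies of a nonabelian simple group. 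One then needs: given $H \in \C$, only finitely many such $G$ can arise. The cleanest route is to show that every group in the level $\C$ generated by $G_1, \dots, G_n$ has order dividing a fixed $N$ — this follows because the level is the smallest downward-closed, join-closed set containing the $G_i$ and the trivial group, and both quotients and (subdirect) fiber products of groups whose orders divide $N$ again have order dividing $N$ (a subdirect product of $A$ and $B$ embeds in $A \times B$). Since there are only finitely many isomorphism classes of groups of order dividing $N$, the level $\C$ is finite, which trivially gives~\eqref{D:finite}.

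The main obstacle I anticipate is pinning down condition~\eqref{D:finite} cleanly — specifically, confirming that each level of finite groups really is a set of groups of bounded order (equivalently, is finite). The subtlety is that "generated by $G_1, \dots, G_n$" involves iterated finite subdirect products and quotients, and one must check the order bound is preserved under these operations and that the closure stabilizes; but since a subdirect product of $A$ and $B$ injects into $A \times B$, its order divides $|A|\,|B|$, and quotients only decrease order, so taking $N = |G_1| \cdots |G_n|$ (or rather the lcm of all orders of subgroups of products of the $G_i$, but $|G_1|\cdots|G_n|$ already works as an upper bound after closing under the operations) bounds everything. Once finiteness of levels is established, conditions~\eqref{D:finite} is automatic and the proof is complete. (If one prefers to avoid claiming levels are finite, one can instead argue directly that simple morphisms to a fixed $H$ are controlled: abelian kernels must have characteristic dividing $|G_i|$ for some $i$ and bounded dimension, and nonabelian kernels are bounded products of simple groups dividing the relevant orders — but the finiteness argument is shorter.)
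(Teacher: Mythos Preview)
Your treatment of conditions~\eqref{D:modular}, \eqref{D:Aut}, and~(4) is correct and matches the paper's approach essentially verbatim.

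The argument for condition~\eqref{D:finite}, however, contains a genuine error. You claim that every group in the level generated by $G_1,\dots,G_n$ has order dividing a fixed $N$, and hence that each level is finite. This is false: Example~\ref{no-absolute-convergence} of the paper exhibits the level $\C$ generated by $\{\F_3,S_3\}$ and shows it contains $\F_3^m \rtimes \F_2^n$ for all $m,n\geq 0$, an infinite family with unbounded order. The mistake in your reasoning is the assertion that ``(subdirect) fiber products of groups whose orders divide $N$ again have order dividing $N$'': a subdirect product of $A$ and $B$ embeds in $A\times B$, so its order divides $|A|\,|B|$, not $N$. Iterating joins therefore produces groups of unbounded size, and no single $N$ bounds the whole level.

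Your parenthetical fallback --- bounding directly the simple morphisms to a fixed $H$ by controlling the possible kernels --- is the right idea, and is essentially what is needed. The abelian kernels are irreducible $\F_p[H]$-modules for primes $p$ dividing some generator's order (cf.\ the argument in Lemma~\ref{weird-fg}), of which there are finitely many, and then $H^2$ controls the extensions; nonabelian kernels also need a finiteness argument. The paper itself does not carry this out but cites \cite[Corollary~6.13]{Liu2020} for condition~\eqref{D:finite}. If you want a self-contained proof, you should develop this fallback rather than the bounded-order claim.
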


\begin{proof} 

The poset of quotients of a group is dual to the poset of normal subgroups, and it is a classical fact (easy to check) that the normal subgroups form a modular lattice \cite[Ch.  I, Thm. 11 on p. 13]{Birkhoff1967} (a property that is manifestly stable under duality). The finiteness of automorphism groups is trivial, and the third assumption follows from \cite[Corollary 6.13]{Liu2020}.  Clearly,  there are countably many isomorphism classes of minimal objects. \end{proof}

\begin{proof}[Proof of Theorem~\ref{T:groups}]
Using Lemma~\ref{L:groupsD}, we have that
Theorems~\ref{analytic-main-measure} and \ref{T:weak-convergence} specialize to
Theorem~\ref{T:groups}, except that we need a translation from pro-isomorphism classes to profinite groups with finitely many continuous homomorphisms to any finite group, which is provided by Lemmas~\ref{pro-object-existence} and \ref{small-characterization}.
\end{proof}

We now introduce some notation that will help to classify simple surjections of finite groups. Let $G$ be a finite group.  A \emph{$[G]$-group} is a group $H$ together with a homomorphism from $G$ to $\Out(H)$.  A morphism of $[G]$-groups is a homomorphism $f: H\ra H'$ such that for each element $g\in G$, for each lift $\sigma_1$ of the image of $g$ from $\Out(H)$ to $\Aut(H)$, there is a lift $\sigma_2$ of the image of $g$ from $\Out(H')$ to $\Aut(H')$ such that $\sigma_2\circ f=f\circ \sigma_1$.  
A $[G]$-group isomorphism is then defined a $[G]$-group morphism with an inverse $[G]$-group morphism.
 We write $\Aut_{[G]}(H)$ for the group of automorphisms of $G$ as an $[H]$-group.
Note that $G$ acts on the set of normal subgroups of a $[G]$-group $H$, and we say a nontrivial $[G]$-group is \emph{simple} if it has no nontrivial proper fixed points for this action.  
If we have an exact sequence $1\ra N \ra H \ra G\ra 1$, then $N$ is naturally a $[G]$-group.
 A normal subgroup $N'$ of $N$ is fixed by the $\Out(G)$ action if and only if it is normal in $H$, and so $N$
 is a product of simple $H$-groups if and only if it is a product of simple $[G]$-groups.

We can now give a formula for measures in the category of groups that is more cumbersome to state than the general formula but will be easier to use in applications.

 \begin{proposition}\label{group-ext-formula} Let $\C$ be a level in the category of groups and $(M_G)_G\in \R^{\C}$.
 
 Fix a finite group $F$.  Let $V_1,\dots, V_n$ be representatives of all isomorphism classes of irreducible representations of $F$ over prime fields such that $V_i \rtimes F \in \C$. For each $i$, let $W_i$ be the subset of $H^2 ( F, V_i)$ consisting of classes whose associated extension of $F$ by $V_i$ is in $\C$ 
 and let $q_i$ be the order of the endomorphism field of $V_i$. 
 Let $N_1,\dots, N_m$ be representatives of all isomorphism classes of pairs of  finite simple $[F]$-groups $N$ such that $F \times_{\Out(N)} \Aut(N)$ is in $\C$.
  
 For nonnegative integers $e_1,\dots, e_n, f_1,\dots, f_m$ and $\alpha_i \in  W_i^{e_i} \subseteq H^2(F, V_i)^{e_i} = H^2 (F, V_i^{e_i})$, let $G_{{ \mathbf e}, {\mathbf f,} {\mathbf \alpha}}$ be the fiber product over $F$ of, for each $i$ from $1$ to $n$, the extension of $F$ by $V_i^{e_i}$ associated to $\alpha_i$, with, for each $i$ from $1$ to $m$, $F \times_{\Out(N_i)^{f_i}} \Aut(N_i)^{f_i}$.
 
 Then we have
 \[  \hspace{-.5in} v_{\C, F}  = \frac{1}{ \abs{ \Aut(F)}} \sum_{{\mathbf e} \in \mathbb N^n, {\mathbf f}\in \mathbb N^m}  \prod_{i=1}^n \left( \frac{ (-1)^{e_i} \abs{H^0(F,V_i)}^{e_i} }{ \abs{V_i}^{e_i}   \abs{H^1 (F, v_i) }^{e_i} \prod_{j=1}^{e_i} (q_i^j-1) }  \right) \prod_{i=1}^m \left( \frac{(-1)^{f_i}  } { \abs{ \Aut_{[F]}(N_i)}^{f_i} (f_i)! } \right) \sum_{ {\mathbf \alpha}  \in \prod_{i=1}^n W_i^{e_i}  } M_{ G_{{ \mathbf e}, {\mathbf f,} {\mathbf \alpha}}}  .\]
 
 The $M_G$ are well-behaved at level $\C$  if and only if the similar sum
  \[  \sum_{{\mathbf e} \in \mathbb N^n, {\mathbf f}\in \mathbb N^m}  \prod_{i=1}^n \left( \frac{ \abs{H^0(F,V_i)}^{e_i} }{ \abs{V_i}^{e_i}   \abs{H^1 (F, v_i) }^{e_i} \prod_{j=1}^{e_i} (q_i^j-1) }  \right) \prod_{i=1}^m \left( \frac{8^{f_i}  } { \abs{ \Aut_{[F]}(N_i)}^{f_i} (f_i)! } \right) \sum_{ {\mathbf \alpha}  \in \prod_{i=1}^n W_i^{e_i}  } M_{ G_{{ \mathbf e}, {\mathbf f,} {\mathbf \alpha}}}  \]
is absolutely convergent.
   \end{proposition}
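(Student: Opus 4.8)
The plan is to specialize the general formula \eqref{E:defvgen} for $v_{\C,F}$ to the category of finite groups, following the same template already used in the proof of Proposition~\ref{module-ext-formula} for modules. First I would expand $v_{\C,F}= \sum_{G\in\C}\frac{\hat\mu(F,G)}{|\Aut(G)|}M_G = \sum_{G\in\C}\sum_{\pi\in\Sur(G,F)}\frac{\mu(\pi)M_G}{|\Aut(F)||\Aut(G)|}$, and then invoke Lemma~\ref{non-semisimple-vanishing} to restrict to semisimple $\pi$. By the translation in Subsection~\ref{ss-translation} and Lemma~\ref{group-semisimple-description}, a semisimple $\pi$ has $\ker\pi$ a product of finite simple $[F]$-groups, which splits as $\prod_i V_i^{e_i}\times\prod_i N_i^{f_i}$ for abelian $V_i$ and nonabelian $N_i$ (the $N_i$ each appearing with exponent recorded by $f_i$, since the argument in Lemma~\ref{group-semisimple-description} shows each nonabelian isomorphism class can appear repeatedly but they are recorded via $f_i$ here rather than forced to be $1$ — I should double-check this point). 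The M\"obius value is then $\mu(\pi)=(-1)^{\sum f_i}\prod_i(-1)^{e_i}q_i^{\binom{e_i}{2}}$ by Lemma~\ref{group-semisimple-description}, and each such $G$ up to isomorphism is $G_{\mathbf e,\mathbf f,\boldsymbol\alpha}$ for the appropriate extension data.

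Next I would convert the sum over $(G,\pi)$ with $\ker\pi$ of a fixed isomorphism type into a sum over the relevant cohomological data $\boldsymbol\alpha\in\prod_i W_i^{e_i}\subseteq\prod_i H^2(F,V_i)^{e_i}$ together with the choice of a $[F]$-embedding of the nonabelian part, by an orbit-counting argument exactly parallel to the module case: an isomorphism class $G$ with a semisimple $\pi$ of the given kernel type corresponds to a $G$-quotient structure, and the number of pairs $(\pi, \text{identification of }\ker\pi)$ giving a fixed $G$ equals $|\Aut(G)|/(\text{stabilizer})$, where the stabilizer is the group of automorphisms of $G$ fixing the kernel subgroup and its quotient. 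That stabilizer has order $|\Hom(F,\ker\pi)|$ in the abelian directions times $|\Aut_{[F]}(N_i)|^{f_i}(f_i)!$ in the nonabelian directions (the $(f_i)!$ accounting for permuting the identical nonabelian factors, and $\Aut_{[F]}(N_i)^{f_i}$ for automorphisms of each). Carrying out this cancellation removes the $|\Aut(G)|$ in the denominator and, using $|\Hom(F,\ker\pi)|=\prod_i|H^0(F,V_i)|^{e_i}$, $|\Aut(\prod_iV_i^{e_i})|=\prod_i q_i^{\binom{e_i}{2}}\prod_{j=1}^{e_i}(q_i^j-1)$, and the count $|H^2(F,V_i^{e_i})|/|H^1(F,V_i^{e_i})|$ appearing as the number of extension classes weighted appropriately — I need to be careful here that the precise shape $\frac{|H^0|^{e_i}}{|V_i|^{e_i}|H^1(F,V_i)|^{e_i}}$ emerges from the standard relation between $\Ext$-type data and group cohomology $H^2$, combined with counting surjections $F\times\ker\pi$-structures — produces the stated product.

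For the well-behavedness clause, I would note $2\le Z(\pi)\le 2^{\omega(\pi)}$ and more precisely $Z(\pi)=2^{\omega(\pi)}$ by Lemma~\ref{Z-omega}, where $\omega(\pi)=\sum e_i'+\sum f_i$ counts all simple factors ($e_i'$ the number of simple factors contributed by each $V_i^{e_i}$, which may differ from $e_i$); since well-behavedness requires absolute convergence of the sum with a $Z(\pi)^3$ factor, and since in the stated rearranged sum the exponent $e_i$ and the count $f_i$ control all the combinatorics, one checks $Z(\pi)^3$ is absorbed: in the nonabelian directions $Z$ contributes a factor $8^{f_i}$ per $i$ (hence the $8^{f_i}$ appearing in the second displayed sum), while in the abelian directions the bound $\sum_{K\in[F,G]}|\mu(K,G)|\le 8\,|\mu(F,G)|\cdot(\text{per abelian simple factor})$ from Lemma~\ref{Z-to-convolution}(1)/(3) — indeed Lemma~\ref{weird-fg} shows case (3) holds for groups so the abelian simple factors are bounded in number by $F$ and $\C$ — lets the $Z$ contribution from abelian factors be absorbed into an implicit constant depending only on $F$ and $\C$. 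Hence the $Z(\pi)^3$-weighted sum converges if and only if the displayed $8^{f_i}$-weighted sum does, matching the general definition of well-behaved applied through Lemma~\ref{la-input}.

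The main obstacle I anticipate is the bookkeeping in the nonabelian directions: unlike modules, where each simple module appears with a single multiplicity and $\Aut$ of a power of a simple module is a matrix group, here each nonabelian simple $[F]$-group $N_i$ appears as a direct power $N_i^{f_i}$ whose automorphism-as-$[F]$-group group is $\Aut_{[F]}(N_i)\wr S_{f_i}$ of order $|\Aut_{[F]}(N_i)|^{f_i}(f_i)!$, and one must verify carefully, using \cite[Lemma 5.5]{Liu2020} (the description of sub-$G$-groups of products of simple $G$-groups) and the structure of the kernel as a $[F]$-group, that the fiber product $G_{\mathbf e,\mathbf f,\boldsymbol\alpha}$ is well-defined up to isomorphism and that the orbit-stabilizer count produces exactly $1/(|\Aut_{[F]}(N_i)|^{f_i}(f_i)!)$. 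The abelian part, by contrast, is essentially identical to the module computation in Proposition~\ref{module-ext-formula} applied to $\mathbb Z[F]$-modules, with $\Ext^1$ replaced by $H^2(F,-)$ and the extra $|V_i|$ and $|H^1(F,V_i)|$ factors coming from the distinction between the $\Ext$-group classifying module extensions and the $H^2$ classifying group extensions together with the action on splittings; getting those exponents exactly right is the other place requiring care.
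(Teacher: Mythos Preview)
Your overall approach matches the paper's, but the orbit–stabilizer bookkeeping in the middle paragraph is muddled, and this is exactly the place where the argument has content.

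The paper separates the count into two distinct steps, which you have run together. First, one passes from pairs $(G,\pi)$ to triples $(G,\pi,\iota)$ where $\iota$ is an $[F]$-isomorphism $\prod_i V_i^{e_i}\times\prod_i N_i^{f_i}\to\ker\pi$; the number of such $\iota$ for a fixed $(G,\pi)$ is $\lvert\Aut_{[F]}(\ker\pi)\rvert=\prod_i\lvert GL_{e_i}(\mathbb F_{q_i})\rvert\cdot\prod_i\lvert\Aut_{[F]}(N_i)\rvert^{f_i}(f_i)!$. This is where the $q_i^{\binom{e_i}{2}}\prod_j(q_i^j-1)$ and the $\lvert\Aut_{[F]}(N_i)\rvert^{f_i}(f_i)!$ factors come from. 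Second, one groups triples $(G,\pi,\iota)$ according to the extension data $\boldsymbol\alpha$ they determine, and here $\Aut(G_{\mathbf e,\mathbf f,\boldsymbol\alpha})$ acts transitively on the triples with a given $\boldsymbol\alpha$, with stabilizer the automorphisms of $G$ preserving both $\iota$ and $\pi$. That stabilizer is purely abelian: any such automorphism sends $g\mapsto g\cdot c(g)$ with $c(g)\in\ker\pi$ centralizing every $N_i^{f_i}$, hence $c(g)\in\prod_i V_i^{e_i}$, so $c$ is a $1$-cocycle and the stabilizer has order $\lvert Z^1(F,\prod_i V_i^{e_i})\rvert=\prod_i\bigl(\lvert V_i\rvert\,\lvert H^1(F,V_i)\rvert/\lvert H^0(F,V_i)\rvert\bigr)^{e_i}$. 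Your claim that the stabilizer ``has order $\lvert\Hom(F,\ker\pi)\rvert$ in the abelian directions times $\lvert\Aut_{[F]}(N_i)\rvert^{f_i}(f_i)!$ in the nonabelian directions'' conflates these two steps and gets both wrong: the nonabelian contribution to the stabilizer is trivial, and the abelian contribution is the cocycle group $Z^1$, not $\Hom(F,\ker\pi)$ or $\prod_i\lvert H^0(F,V_i)\rvert^{e_i}$.

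Two smaller points. Your parenthetical about nonabelian multiplicities is on the right track but the resolution is that Lemma~\ref{group-semisimple-description} concerns $G$-groups (each nonabelian simple $G$-group appears once), whereas here the $N_i$ are $[F]$-groups, and distinct simple $G$-group factors can be isomorphic as $[F]$-groups, which is what $f_i$ records. For well-behavedness, the paper does not go through Lemma~\ref{Z-to-convolution} or Lemma~\ref{la-input}: it simply observes $\omega(\pi)=\#\{i:e_i>0\}+\sum_i f_i\le n+\sum_i f_i$, so $Z(\pi)^3\le 8^n\prod_i 8^{f_i}$ with $8^n$ a constant depending only on $F$ and $\C$; your invocation of $e_i'$ as ``the number of simple factors contributed by each $V_i^{e_i}$'' is off, since the lattice factor for $V_i^{e_i}$ is a single subspace lattice contributing $1$ to $\omega$ when $e_i>0$.
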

 
 \begin{proof}   By definition we have
  \[ v_{\C, F}  =  \sum_{G \in \C }  \sum_{ \pi \in \Sur ( G,F )}  \frac{ \mu(\pi)  M_G }{ \abs{\Aut(G) } \abs{\Aut(F)}}.\]

The $\mu(\pi)$ term vanishes unless $\pi$ is semisimple, in which case $\ker \pi$ is a product of finite simple $[F]$-groups, 
of which the abelian ones are irreducible representations of $F$ over prime fields,  So, we can write $\ker\pi$ as $\prod_{i=1}^n V_i^{e_i} \times \prod_{i=1}^m N_i^{f_i} $ for  unique tuple $e_1,\dots, e_n, f_1,\dots f_m$ of nonnegative integers.  In this case, $\mu(\pi) = \prod_{i=1}^n (-1)^{e_i} q_i^{ \binom{e_i}{2}} \cdot \prod_{i=1}^m (-1)^{f_i} $ by Lemma \ref{group-semisimple-description}. 
Thus
  \[ v_{\C, F} =\frac{1}{ \abs{ \Aut(F)}} \sum_{{\mathbf e} \in \mathbb N^n, {\mathbf f}\in \mathbb N^m}  \prod_{i=1}^n \left( (-1) ^{e_i} q_i^{ \binom{e_i}{2}}   \right) \prod_{i=1}^m (-1)^{f_i}   \sum_{G \in \C} \sum_{ \substack{ \pi \in \Sur(G,F) \\ \ker \pi \cong \prod_{i=1}^n V_i^{e_i} \times \prod_{i=1}^m N_i^{f_i}  }}  \frac{M_G}{ \abs{\Aut(G)}} ,\]
where $\cong$ here denotes an $[H]$-group isomorphism.
Now $ \ker \pi \cong \prod_{i=1}^n V_i^{e_i} \times \prod_{i=1}^m N_i^{f_i} $ if and only if there exists an $[F]$-group
isomorphism  $\iota \colon \prod_{i=1}^n V_i^{e_i} \times \prod_{i=1}^m N_i^{f_i} \to \ker \pi$. 
 By \cite[Lemmas 2.3, 2.4, 2.5]{Sawin2020} and the easily-checked fact that bijective $[F]$-morphisms are $[F]$-isomorphisms,
when  one such isomorphism exists, then there are exactly
\[ \prod_{i=1}^n \abs{ \GL_{e_i}(\mathbb F_{q_i})}  \prod_{i=1}^m\abs{ \Aut_{[F]}(N_i)}^{f_i} f_i! =  \prod_{i=1}^n  \left( q^{ \binom{e_i}{2}} \prod_{j=1}^{e_i}  (q^{j}- 1) \right)  \prod_{i=1}^ m\abs{ \Aut_{[F]}(N_i)}^{f_i} f_i!  \] 
such isomorphisms.
Thus
  \[ v_{\C, F}  =\frac{1}{ \abs{ \Aut(F)}} \sum_{{\mathbf e} \in \mathbb N^n, {\mathbf f}\in \mathbb N^m}  \prod_{i=1}^n \frac{ (-1) ^{e_i} }{ \prod_{j=1}^{e_i}  (q^{j}- 1)} \prod_{i=1}^m \frac{ (-1)^{f_i} }{ \abs{ \Aut_{[F]}(N_i)}^{f_i} f_i! }   \sum_{G \in \C} \sum_{ \substack{ \pi \in \Sur(G,F) \\ \iota \colon  \prod_{i=1}^n V_i^{e_i} \times \prod_{i=1}^m N_i^{f_i}  \to G \\  \operatorname{Im} \iota= \ker \pi \\ \iota \textrm{ an $[F]$-group morphism } }}  \frac{M_G}{ \abs{\Aut(G)}} .\]

For each triple $G, \pi, \iota  $ we can quotient $G$ by all the factors of $\ker \pi$ except $\iota(V_i^{e_i})$, obtaining an extension of $F$ by $V_i^{e_i}$, with an extension class $\alpha_i \in H^2(F, V_i)^{e_i}$. We can quotient further to obtain $e_i$ individual extensions of $F$ by $V_i$, each of which must lie in $\C$, hence $\alpha_i \in W^{e_i}$.

There is a natural isomorphism from $G$ to the fiber product, over $F$, of each of the minimal extensions of $F$ obtained by quotienting $G$ by all of the $V_i$ and $N_i$ factors except one.   One can check that the minimal extension by $N_i$ is naturally isomorphic to $ \Aut(N_i) \times_{\Out(N_i)} F$,  where $G$ maps to $\Aut(N_i)$ by its conjugation action and to $F$ by the given map (using that the center of $N_i$ is trivial).
 Thus we have an isomorphism 
$G \ra G_{ {\mathbf e}, {\mathbf f}, {\mathbf \alpha}},$ respecting both the map from $\prod_{i=1}^n V_i^{e_i} \times \prod_{i=1}^m N_i^{f_i}$ and the maps to $F$.

We next check that,  for given ${\mathbf e}, {\mathbf f}$, each class $\alpha$ arises from exactly \[\Aut( G_{ {\mathbf e}, {\mathbf f}, {\mathbf \alpha}})  \prod_{i=1}^n \left(\frac{ \abs{H^0(F,V_i)}}{ \abs{V_i}\abs{H^1 ( F, V_i)}}\right)^{e_i}\] triples $G, \pi, \iota$. 
First, we note that each class $\alpha$ arises from $G= G_{ {\mathbf e}, {\mathbf f}, {\mathbf \alpha}}$ and taking $\pi,\iota$ to be the natural maps,
as $G_{ {\mathbf e}, {\mathbf f}, {\mathbf \alpha}}\in\C$.
We take the group $ G_{ {\mathbf e}, {\mathbf f}, {\mathbf \alpha}}$ to be the  representative of its isomorphism class we use for the sum.  From the last paragraph, we see that if $G_{ {\mathbf e}, {\mathbf f}, {\mathbf \alpha}}, \pi',\iota'$
gives $\alpha$, then there is some automorphism $\beta$ of $G_{ {\mathbf e}, {\mathbf f}, {\mathbf \alpha}}$ such that  $\iota'=\beta\iota$ and $\pi'=\pi\beta$.
So $\Aut(G_{ {\mathbf e}, {\mathbf f}, {\mathbf \alpha}})$ acts transitively on the non-empty set of triples $G, \pi, \iota$ that give $\alpha$, and the stabilizer is the subgroup of automorphisms preserving $\iota$ and $\pi$.

So it suffices to prove the number of automorphisms of $G_{ {\mathbf e}, {\mathbf f}, {\mathbf \alpha}}$ that preserve $\iota$ and $\pi$ is  $\prod_{i=1}^n \left(\frac{ \abs{V_i}\abs{H^1 ( F, V_i)}}{ \abs{H^0(F,V_i)}}\right)^{e_i}$. To do this, note that any such automorphism sends $g \in G$ to the product of $g$ with an element of  $\prod_{i=1}^n V_i^{e_i}  \times \prod_{i=1}^m N_i^{f_i}$, which since the automorphism fixes $N_i^{f_i}$ and thus fixes the action of $g$ on $N_i^{f_i}$ must commute with $N_i^{f_i}$ for all $i$ and thus must lie in $\prod_{i=1}^n V_i^{e_i}  $, giving a cocycle in $C^1(F, \prod_{i=1}^n V_i^{e_i}) $, and every cocycle gives such an automorphism. The number of cocycles is the size of the cohomology group  $\abs{ H^1(F, \prod_{i=1}^n V_i^{e_i}) } = \prod_{i=1}^n \abs{H^1(F, V_i)}^{e_i}$ times the number of coboundaries, which is $\prod_{i=1}^n \abs{V_i}^{e_i}$ divided by the number of cocycles in degree $0$, which is  $\prod_{i=1}^n \abs{H^0(F, V_i)}^{e_i}$, giving the stated formula.

Thus, we can replace the sum over $G,\pi,\iota$ with a sum over $\alpha$ after multiplying by 
$$\Aut( G_{ {\mathbf e}, {\mathbf f}, {\mathbf \alpha}})  \prod_{i=1}^n \left(\frac{ \abs{H^0(F,V_i)}}{ \abs{V_i}\abs{H^1 ( F, V_i)}}\right)^{e_i},$$ which cancels the $\Aut(G)$ term and gives the desired statement.

By definition, the assignment $M_G$ is well-behaved if and only if the sum, with an additional $Z(\pi)^3$ factor, is absolutely convergent. We have $Z(\pi) = 2^{\omega(\pi)}$ where $\omega(\pi) =  \#\{ 1\leq i \leq n \mid e_i >0 \} + \sum_{i=1}^m f_i$, by Lemma~\ref{group-semisimple-description}, and thus $Z(\pi)$ is bounded by a constant $2^n$ times $2^{\sum_{i=1}^m f_i}$. Thus, the $M_G$ are well-behaved if and only if the sum, with the signs removed and an $8^{\sum_{i=1}^m f_i}$ factor inserted, is absolutely convergent. We may also ignore the $\Aut(F)$ factor as it is bounded.
\end{proof}

\begin{lemma}\label{group-card-wb} Let $\category$ be the category of finite groups.   Let $u$ be a real number, and
for each $G\in \category$, let $M_G=\abs{G}^{-u}$. 
Then $(M_G )_G$ is well-behaved.

Keeping the notation of Proposition \ref{group-ext-formula}, we have
\[v_{\C,F} = \frac{\abs{F}^{-u}  }{ \abs{ \Aut(F)}} \prod_{i=1}^n \left( \prod_{j=1}^{\infty}  \left( 1- \frac{ \abs{H^0(F,V_i) }\abs{W_i}}{ \abs{V_i}^{u+1} \abs{H^1(F,V_i) }} q_i^{-j} \right) \right) \prod_{j=1}^m e^{ - \frac{1}{ \abs{N_i}^u \abs{ \Aut_{[F]}(N_i)}}}. \]

If $u$ is an integer then $v_{\C,F} \geq 0$ for all levels $\C$. \end{lemma}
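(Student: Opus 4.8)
The plan is to derive the formula for $v_{\C,F}$ exactly as in the proof of Lemma \ref{mod-card-wb}: start from Proposition \ref{group-ext-formula}, substitute $M_G = |G|^{-u}$, and factor the resulting multi-index sum into a product of one-variable sums, one for each abelian simple $[F]$-group $V_i$ and each non-abelian simple $[F]$-group $N_i$. For the abelian factors, $|G_{\mathbf e,\mathbf f,\mathbf\alpha}|^{-u}$ contributes $\prod_i |V_i|^{-e_i u}$ (and $\prod_i |N_i|^{-f_i u}$ for the non-abelian part), so each abelian sum becomes $\sum_{e=0}^\infty \frac{(-1)^e |H^0(F,V_i)|^e |W_i|^e}{|V_i|^{e(u+1)} |H^1(F,V_i)|^e \prod_{j=1}^e (q_i^j-1)}$. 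Applying the $q$-series identity $\sum_{e=0}^\infty \frac{w^e}{\prod_{j=1}^e (q^j-1)} = \prod_{j=1}^\infty (1 + w q^{-j})$ (the same one used in Lemma \ref{mod-card-wb}) with $w = -|H^0(F,V_i)||W_i| / (|V_i|^{u+1}|H^1(F,V_i)|)$ yields the claimed product $\prod_{j=1}^\infty (1 - \frac{|H^0(F,V_i)||W_i|}{|V_i|^{u+1}|H^1(F,V_i)|} q_i^{-j})$. For the non-abelian factors, the sum over $f_i$ is $\sum_{f=0}^\infty \frac{(-1)^f}{|\Aut_{[F]}(N_i)|^f f! \, |N_i|^{fu}} = e^{-1/(|N_i|^u |\Aut_{[F]}(N_i)|)}$, the exponential series. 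Multiplying all factors together, and pulling out the $|F|^{-u}/|\Aut(F)|$ prefactor, gives the stated formula.

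For well-behavedness, I would invoke the last assertion of Proposition \ref{group-ext-formula}: it suffices to check absolute convergence of the modified sum with signs removed and an extra $8^{\sum f_i}$ inserted. By the same factorization, the abelian sums become $\prod_j (1 + |w_i| q_i^{-j})$ with $|w_i|$ as above, which converges since the $q$-product converges absolutely (the denominator $\prod_{j=1}^e(q_i^j-1)$ grows superexponentially in $e$ while the numerator grows only exponentially), and the non-abelian sums become $\sum_f \frac{8^f}{|\Aut_{[F]}(N_i)|^f f! |N_i|^{fu}} = e^{8/(|N_i|^u|\Aut_{[F]}(N_i)|)}$, again convergent. Crucially, since these are all finite products of convergent sums (there are only finitely many $V_i$ and $N_i$ for a fixed level $\C$ by the finiteness built into diamond categories), and since no cancellation is used in any manipulation except the initial grouping of identical terms, absolute convergence holds, so $(M_G)_G$ is well-behaved at $\C$; as this holds for every level, $(M_G)_G$ is well-behaved.

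For the positivity claim when $u \in \Z$: the non-abelian factors $e^{-1/(|N_i|^u|\Aut_{[F]}(N_i)|)}$ are manifestly positive. For the abelian factors, I would argue as in Lemma \ref{mod-card-wb}: when $u$ is an integer, $|H^0(F,V_i)|$, $|H^1(F,V_i)|$, $|W_i| \leq |H^2(F,V_i)|$, and $|V_i|$ are all cardinalities of $\mathbb F_{q_i}$-vector spaces (the cohomology groups $H^k(F,V_i)$ are $\mathbb F_{q_i}$-modules since $V_i$ is), hence integer powers of $q_i$, so the quantity $\frac{|H^0(F,V_i)||W_i|}{|V_i|^{u+1}|H^1(F,V_i)|}$ is an integer power $q_i^c$ of $q_i$. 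If $c \geq 1$ then the $j=c$ term in $\prod_{j\geq 1}(1 - q_i^{c-j})$ vanishes, making the product $0 \geq 0$; if $c \leq 0$ then every factor $1 - q_i^{c-j}$ is positive, so the product is positive. In either case the abelian factor is nonnegative, so $v_{\C,F} \geq 0$. The main obstacle is verifying the bookkeeping of which cohomological quantities are $\mathbb F_{q_i}$-vector spaces — in particular that $W_i$, being a subset of $H^2(F,V_i)$ closed under addition (the extensions lying in a level form a join-closed, hence additively closed, subset), is itself a subgroup and thus has order a power of $q_i$ — but this is routine given the structure theory already established.
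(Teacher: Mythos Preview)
Your proposal follows the paper's proof essentially verbatim: substitute $M_G=|G|^{-u}$ into Proposition~\ref{group-ext-formula}, factor the multi-index sum into one-variable sums, apply the $q$-series identity for the abelian factors and the exponential series for the non-abelian ones, and use superexponential decay for well-behavedness.

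One small slip in your positivity argument: showing $W_i$ is an additive subgroup of $H^2(F,V_i)$ only gives $|W_i|$ a power of the \emph{characteristic} $p_i$, not of $q_i$. You also need $W_i$ closed under $\mathbb F_{q_i}$-scalars, which follows because for $\lambda\in\mathbb F_{q_i}^\times$ (an endomorphism of $V_i$) the extension associated to $\lambda\alpha$ is isomorphic as a group to the one associated to $\alpha$, hence lies in $\C$. The paper makes the same point with the parenthetical remark that $\C$ being closed under fiber products and quotients forces $W_i$ to be a vector space over the endomorphism field of $V_i$.
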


Thus, by Theorem~\ref{analytic-main-measure}, for $u\in \Z$,  there is a unique measure on the set of isomorphism classes of pro-finite groups with moments $|G|^{-u}$ and it is given by the formulas in Lemma~\ref{group-card-wb}.

For a fixed $u$, if one takes $X_g$ to be the free profinite group on $g$ generators, modulo $g+u$ independent Haar random relations,  it is easy to compute that
$$\lim_{g\ra\infty} \E(\Sur(X_g,G))=|G|^{-u}$$ for every finite group $G$.
So then Theorem~ \ref{T:weak-convergence} implies that the $X_g$ weakly converge in distribution to a probability measure on the space of profinite groups, indeed the one given by the formulas in Lemma~\ref{group-card-wb}.
This recovers \cite[Theorem 1.1]{Liu2020}.
One can reconcile the formulas in Lemma~\ref{group-card-wb} with \cite[Equation~(3.2)]{Liu2020} using observations from the proof of Proposition~\ref{group-ext-formula}.

\begin{proof}   We observe that 
\[M_{ G_{{ \mathbf e}, {\mathbf f,} {\mathbf \alpha}}} = \abs{G_{{ \mathbf e}, {\mathbf f,} {\mathbf \alpha}}}^{-u} = ( \abs{F} \cdot \prod_{i=1}^n \abs{V_i}^{e_i}\cdot \prod_{i=1}^m \abs{N_i}^{f_i} )^{-u} = \abs{F}^{-u} \cdot \prod_{i=1}^n \abs{V_i}^{-u e_i}\cdot \prod_{i=1}^m \abs{N_i}^{-u f_i}\]
so by Proposition~\ref{group-ext-formula},  $(M_G )_G$ is well-behaved if and only if 
\[  \abs{F}^{-u}  \sum_{{\mathbf e} \in \mathbb N^n, {\mathbf f}\in \mathbb N^m}  \prod_{i=1}^n \left( \frac{\abs{H^0(F,V_i)}^{e_i}  \abs{V_i}^{ - u e_i} }{ \abs{V_i}^{e_i}   \abs{H^1 (F, v_i) }^{e_i} \prod_{j=1}^{e_i} (q_i^j-1) }  \right) \prod_{i=1}^m \left( \frac{8^{f_i}   \abs{N_i}^{-u f_i} } { \abs{ \Aut_{[F]}(N_i)}^{f_i} (f_i)! } \right) \sum_{ {\mathbf \alpha}  \in \prod_{i=1}^n W_i^{e_i}  }  1\]
is absolutely convergent. Since the length of the sum over ${\mathbf \alpha}$ is $\prod_{i=1}^n \abs{W_i}^{e_i}$, this is absolutely convergent if and only if
\[  \abs{F}^{-u}  \sum_{{\mathbf e} \in \mathbb N^n, {\mathbf f}\in \mathbb N^m}  \prod_{i=1}^n \left( \frac{\abs{H^0(F,V_i)}^{e_i}  \abs{V_i}^{ - u e_i}  \abs{W_i}^{e_i} }{ \abs{V_i}^{e_i}   \abs{H^1 (F, v_i) }^{e_i} \prod_{j=1}^{e_i} (q_i^j-1) }  \right) \prod_{i=1}^m \left( \frac{8^{f_i}   \abs{N_i}^{-u f_i} } { \abs{ \Aut_{[F]}(N_i)}^{f_i} (f_i)! } \right) \]
is.

Now note that every term in the ratio $\frac{ \abs{H^0(F,V_i)}^{e_i}  \abs{V_i}^{ - u e_i} \abs{W_i}^{e_i}  }{ \abs{V_i}^{e_i}   \abs{H^1 (F, v_i) }^{e_i} \prod_{j=1}^{e_i} (q_i^j-1)} $ is exponential in $e_i$, except for $ \prod_{j=1}^{e_i} (q_i^j-1)$, which is superexponential, so the ratio decays superexponentially. Similarly, every term in $\frac{8^{f_i}   \abs{N_i}^{-u f_i} } { \abs{ \Aut_{[F]}(N_i)}^{f_i} (f_i)! } $ is exponential in $f_i$ except for $(f_i)!$, which is superexponential, so the ratio decays superexponentially. Thus, the term being summed decays superexponentially in all the $e_i,f_i$ variables, and hence the sum is absolutely convergent.

To calculate $v_{\C,F}$, we similarly obtain
 \[  \hspace{-.5in} v_{\C, F}  = \frac{\abs{F}^{-u} }{ \abs{ \Aut(F)}} \sum_{{\mathbf e} \in \mathbb N^n, {\mathbf f}\in \mathbb N^m}  \prod_{i=1}^n \left( \frac{ (-1)^{e_i} \abs{H^0(F,V_i)}^{e_i} \abs{V_i}^{-u e_i}  }{ \abs{V_i}^{e_i}   \abs{H^1 (F, v_i) }^{e_i} \prod_{j=1}^{e_i} (q_i^j-1) }  \right) \prod_{i=1}^m \left( \frac{(-1)^{f_i}  \abs{N_i}^{-u f_i}  } { \abs{ \Aut_{[F]}(N_i)}^{f_i} (f_i)!  } \right) \sum_{ {\mathbf \alpha}  \in \prod_{i=1}^n W_i^{e_i}  } 1  \]
\[ = \frac{\abs{F}^{-u}  }{ \abs{ \Aut(F)}} \sum_{{\mathbf e} \in \mathbb N^n, {\mathbf f}\in \mathbb N^m}  \prod_{i=1}^n \left( \frac{ (-1)^{e_i} \abs{H^0(F,V_i)}^{e_i} \abs{V_i}^{-u e_i}  \abs{W_i}^{e_i}  }{ \abs{V_i}^{e_i}   \abs{H^1 (F, v_i) }^{e_i} \prod_{j=1}^{e_i} (q_i^j-1) }  \right) \prod_{i=1}^m \left( \frac{(-1)^{f_i}  \abs{N_i}^{-u f_i}  } { \abs{ \Aut_{[F]}(N_i)}^{f_i} (f_i)!  } \right)   \]
\[ = \frac{\abs{F}^{-u}  }{ \abs{ \Aut(F)}} \prod_{i=1}^n \left(  \sum_{e=0}^{\infty}  \frac{ (-1)^{e} \abs{H^0(F,V_i)}^{e} \abs{V_i}^{-u e}  \abs{W_i}^{e}  }{ \abs{V_i}^{e}   \abs{H^1 (F, V_i) }^{e} \prod_{j=1}^{e} (q_i^j-1) }  \right) \prod_{i=1}^m \left( \sum_{f=0}^{\infty} \frac{(-1)^{f}  \abs{N_i}^{-u f}  } { \abs{ \Aut_{[F]}(N_i)}^{f} f!  }\right)\]
\[ = \frac{\abs{F}^{-u}  }{ \abs{ \Aut(F)}} \prod_{i=1}^n \left( \prod_{j=1}^{\infty}  \left( 1- \frac{ \abs{H^0(F,V_i) }\abs{W_i}}{ \abs{V_i}^{u+1} \abs{H^1(F,V_i) }} q_i^{-j} \right) \right) \prod_{j=1}^m e^{ - \frac{1}{ \abs{N_i}^u \abs{ \Aut_{[F]}(N_i)}}} \]
by the $q$-series identity  $\sum_{e=0}^{\infty} \frac{  v^e}{ \prod_{j=1}^{e} (q^j-1)} = \prod_{j=1}^{\infty} ( 1 + v q^{-j} ) $ and the Taylor series for the exponential function.

For $u$ an integer, this is nonnegative because, on the one hand, the exponential terms are nonnegative, and, on the other hand,  $\frac{ \abs{H^0(F,V_i) }\abs{W_i}}{ \abs{V_i}^{u+1} \abs{H^1(F,V_i) }}$ is an integer power of $q$ because every term is the cardinality of a vector space over $\mathbb F_{q_i}$ raised to an integer power. 
(One can check that  $\C$  closed under fiber products and quotients implies that $W_i$ is a vector space over the endomorphism field of $V_i$.)
If this power is positive then one of the terms in the product over $j$ is zero, making this factor nonnegative, and otherwise, each term in the product over $j$ is nonnegative, making this factor nonnegative.
\end{proof}

\begin{corollary}Let $\category$ be the category of finite groups.  Let $(M_G)_G\in \R^{\category/\isom}$. If $M_G = O ( \abs{G}^n)$ for some real $n$, then  $(M_G)_G$ is well-behaved. \end{corollary}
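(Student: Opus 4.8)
The plan is to deduce this corollary immediately from two facts already proved in the excerpt: Lemma~\ref{group-card-wb}, which establishes that for every real number $u$ the tuple $(\abs{G}^{-u})_G$ is well-behaved in the category of finite groups, and Lemma~\ref{big-O-wb}, which says that well-behavedness is inherited by any tuple that is dominated, up to a multiplicative constant, by a well-behaved tuple of nonnegative reals. This is the exact analogue of the argument for modules in Corollary~\ref{C:wbbound}.

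Concretely, I would argue as follows. Since the definition of well-behaved involves only the absolute values $\abs{M_G}$, it suffices to show that the tuple $(\abs{M_G})_G$ of nonnegative reals is well-behaved. By hypothesis there is a constant $c$ with $\abs{M_G}\le c\abs{G}^n$ for all finite groups $G$. Applying Lemma~\ref{group-card-wb} with $u=-n$ shows that the tuple $(\abs{G}^{n})_G$ is well-behaved. Then applying Lemma~\ref{big-O-wb} to the pair of nonnegative tuples $(\abs{G}^{n})_G$ and $(\abs{M_G})_G$, where the latter is $O$ of the former, shows $(\abs{M_G})_G$ is well-behaved, hence $(M_G)_G$ is well-behaved.

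I do not expect any real obstacle here: all the substance is contained in Lemma~\ref{group-card-wb}, whose proof rests on the explicit formula of Proposition~\ref{group-ext-formula} together with the observation that $\prod_{j=1}^{e_i}(q_i^j-1)$ grows superexponentially in $e_i$ and $f_i!$ grows superexponentially in $f_i$, while all the remaining factors grow at most exponentially. The only point requiring a moment's care is that the notion of well-behaved is phrased in terms of $\abs{M_G}$, so one must first replace $M_G$ by $\abs{M_G}$ before invoking Lemma~\ref{big-O-wb}, whose statement requires nonnegativity of both tuples; after that the deduction is a one-line combination of the cited lemmas.
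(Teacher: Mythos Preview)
Your proposal is correct and follows exactly the same route as the paper: invoke Lemma~\ref{group-card-wb} with $u=-n$ and then Lemma~\ref{big-O-wb}. Your extra remark about passing to $\abs{M_G}$ first is a legitimate clarification, since Lemma~\ref{big-O-wb} is stated for nonnegative tuples, but otherwise the arguments are identical.
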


\begin{proof} This follows immediately form Lemma \ref{group-card-wb}, taking $u=-n$, and Lemma \ref{big-O-wb}. \end{proof}

\begin{remark} It may be possible to prove, following the calculations in~\cite[\S7]{SW}, that $(M_G)_G$ is well-behaved as long as \[ M_G = O \left( \frac{ \abs{G}^{1-\epsilon} \abs{H_2(G,\mathbb Z)}}{ \abs{H_1(G,\mathbb Z)} }\right)\] for some $\epsilon>0$, but we do not pursue that here. 

\end{remark}

\subsection{General tools for constructing examples}
In this section, we give a general theorem which gives a large class of examples of \Diamantine categories, as well as tools for constructing new
\Diamantine categories from old ones.

A Mal'cev variety
\cite[142 Theorem]{Smith1976}
 is a variety in universal algebra (a set of abstract $n$-ary operations for different natural numbers $n$ together with a set of equations they satisfy) such that some composition of the $n$-ary operations is an operation $T(x,y,z)$ which satisfies $T(x,x,z)= z$ and $T(x,z,z)=x$. The most familiar example of such an operation is given by $T(x,y,z) = x y^{-1} z$ in the variety of groups, or in any variety which contains a group operation. 
 Given a Mal'cev variety, one has a category of finite algebras for that Mal'cev variety.  An object in this category is a finite set $S$ with functions $S^n\ra S$ for each abstract $n$-ary operation of the Mal'cev variety (for each $n$)  such that the functions satisfy the equations of the Mal'cev variety.  A morphism in this category is a function $S\ra S'$ that respects all the operations.
The congruences in any algebra for a Mal'cev variety (i.e. equivalence relations respecting the algebra structure, analogous to normal subgroups of a group or ideals of a ring) form a modular lattice \cite[Ch. VII: Theorem 4 and Theorem of Malcev]{Birkhoff1967}.

\begin{lemma}\label{malcev-construction}
For any Mal'cev variety with operations of finitely many different arities, the category of finite algebras for that Mal'cev variety, with morphisms given by surjections, is a \Diamantine category. \end{lemma}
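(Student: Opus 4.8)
The plan is to verify the four clauses of Definition~\ref{D:diamond} in turn; only \eqref{D:finite} requires real work.

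Since morphisms here are surjective homomorphisms, an isomorphism class of quotients of a finite algebra $H$ is the same datum as a congruence on $H$, and the ordering $Q_1\ge Q_2$ on quotients corresponds to reverse inclusion of the associated congruences. Hence the poset of isomorphism classes of quotients of $H$ is the opposite of the congruence lattice of $H$: finite because there are finitely many equivalence relations on a finite set, and modular by the fact cited just above the lemma (Birkhoff, via the Mal'cev term) that congruence lattices in a Mal'cev variety are modular — modularity being self-dual, this gives \eqref{D:modular}. Clause \eqref{D:Aut} is immediate since $\Aut(H)$ embeds in the symmetric group on the underlying set. For the last clause, a minimal object has exactly one congruence, which forces it to be the one-element algebra; this is unique up to isomorphism in any variety, so there is a single minimal object.

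For \eqref{D:finite}, fix a level $\C$ generated by finite algebras $G_1,\dots,G_n$, put $P=G_1\times\cdots\times G_n$, and let $\mathcal Q$ be the class of quotients of subalgebras of finite powers of $P$. One checks directly that $\mathcal Q$ is downward-closed, closed under subalgebras (a subalgebra of a quotient of $B\le P^k$ is, via preimages, a quotient of a subalgebra of $P^k$) and under finite products, hence join-closed (since $H\vee F$ embeds in $H\times F$ as a subdirect product), and that it contains each $G_i$ and the one-element algebra; therefore $\C\subseteq\mathcal Q$, and every member of $\mathcal Q$ lies in the finitely generated variety $\mathcal W:=\mathrm{HSP}(G_1,\dots,G_n)$. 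Because $\mathcal W$ is generated by finitely many finite algebras, its finitely generated free algebras $\mathbf F_{\mathcal W}(k)$ are finite (each embeds into a finite power of the $G_i$), so a bound on $|H|$ for $H\in\C$ with a simple epimorphism to $G$ follows from a bound on the number of generators of $H$. Now a simple epimorphism $H\to G$ corresponds to a congruence $\psi$ that is an atom of $\mathrm{Con}(H)$ with $H/\psi\cong G$; I would lift a fixed generating set of $G$ to $H$ and argue that together with boundedly many further elements it generates $H$, the bound depending only on $\mathcal W$ and $|G|$.

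This last point is the step I expect to be the main obstacle, since the naive chain argument (successively adjoining one element to the subalgebra generated by the lifts) gives a strictly increasing chain but no a priori bound on its length. The cleanest route I see is the abelian/non-abelian dichotomy for atoms of a congruence-modular (in particular Mal'cev) variety supplied by commutator theory: if $\psi$ is abelian, its blocks form a cyclic simple module over a ring determined by $G$ and $\mathcal W$, and since that module lies in $\mathcal W$ it has bounded exponent with prime divisors from a fixed finite set, hence is one of finitely many, after which the extension $H$ is classified by a finite group of extension classes; if $\psi$ is non-abelian, its centralizer $C_H(\psi)$ meets $\psi$ trivially and so embeds into $G$, while $H/C_H(\psi)$ acts faithfully on the non-abelian "building block" of $\psi$, which lies in $\mathcal W$ and is one of finitely many bounded-size algebras, again bounding $|H|$ in terms of $|G|$ and $\mathcal W$. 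This exactly parallels the concrete analysis already carried out for finite groups around Lemma~\ref{group-semisimple-description} and Proposition~\ref{group-ext-formula}. Since there are then finitely many possibilities for $H$, clause \eqref{D:finite} holds and the category of finite algebras for the Mal'cev variety is a \Diamantine category.
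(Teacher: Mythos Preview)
Your treatment of clauses \eqref{D:modular}, \eqref{D:Aut}, and the countability of minimal objects matches the paper's. The divergence is entirely in clause \eqref{D:finite}.

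The paper's argument is much more elementary than your commutator-theory route and avoids it entirely. Define the \emph{width} of a simple surjection to be the maximum fiber size, and the width of an algebra $A$ to be the maximum width of a simple surjection between any two quotients of $A$. The key observation is that in a Mal'cev variety congruences permute, so for quotients $A,B$ of a common object one has $A\vee B = A\times_{A\wedge B} B$; in particular the fibers of $A\vee B \to A$ are fibers of $B\to A\wedge B$. Using this together with the modular law, the paper shows by a direct lattice argument that the set of algebras of width $\le n$ is downward- and join-closed, where $n$ is the maximum width among a finite generating set of $\C$. Hence every simple $H\to G$ with $H\in\C$ satisfies $|H|\le n\,|G|$, and one finishes by noting there are only finitely many algebras of bounded cardinality (the paper handles the case of infinitely many operations by observing that the equations holding in the product of the generators reduce everything to finitely many operations). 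No structure theory of minimal congruences is needed.

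Your proposed route via the abelian/non-abelian dichotomy is plausible in spirit but, as written, has a gap in the non-abelian case: in a general congruence-modular variety the centralizer $(\,0:\psi\,)$ of a congruence $\psi$ is itself a \emph{congruence}, not a subalgebra, so the claim that ``$C_H(\psi)$ meets $\psi$ trivially and so embeds into $G$'' does not type-check; the group-theoretic picture (kernel is a product of isomorphic non-abelian simple groups, $H$ embeds in $G\times\Aut(\ker)$) does not transfer verbatim. You could likely repair this with tame-congruence or monolith arguments, but that is considerably heavier machinery than the paper's two-paragraph width bound.
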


This includes the cases of finite rings, 
finite commutative rings,
finite $R$-modules for a ring $R$, finite groups, 
finite groups with the action of a fixed group $\Gamma$,
finite quasigroups, finite Lie algebras over a finite field, finite Jordan algebras over a finite field, etc.

\begin{proof} 
In this proof, we use ``algebra'' to refer to an algebra for the given Mal'cev variety.

Since we choose morphisms to be surjections, every morphism is an epimorphism, and the epimorphisms are dual to the lattice of congruences 
\cite[Ch. VI, Sec. 4]{Birkhoff1967}
which is a modular lattice as mentioned above. 
The lattice is finite since the number of congruences is at most the number of partitions of the underlying set.

The automorphism group of a finite algebra is a subgroup of the permutation group of the underlying set, and thus is finite.

The only minimal objects are one-element algebras, which are all isomorphic, so indeed there are at most countably many isomorphism classes. 

The last finiteness step is more difficult.     
If $A$ and $B$ are both quotients of $D$, then the elements of $A\vee B$ are equivalence classes of elements of $D$, where the equivalence is that the equivalence defining $A$ and the equivalence defining $B$ both hold. 
Since we are in a Mal'cev variety,  the elements of $A\wedge B$ are equivalence classes of elements of $D$, where the $x$ is equivalent to $y$ if there is some $z$ such that $x$ and $z$ are equivalent in $A$ and $z$ and $y$ are equivalent in $B$ (\cite[Ch. VII: Theorem 4]{Birkhoff1967}).
Thus, in the setwise fiber product $A\times_{A\wedge B} B$ (which has a natural algebra structure), every element is in the image of the map from $D$.
Thus $D\ra A\times_{A\wedge B} B$ is a surjection of algebras, and the equivalence relation giving the fibers is precisely the same as that defining the fibers of the surjection $D\ra A\vee B$, and hence $A\vee B=A\times_{A\wedge B} B$.

For any simple surjection between algebras, let the width be the maximum size of a fiber of the underlying map of sets. For any algebra $A$, let the width of $A$ be the maximum width of any simple surjection between two quotients of $A$. Let $n$ be the maximum width of an element of a finite generating set of $\C$.

Since $A\vee B=A\times_{A\wedge B} B$,  we have that the sizes of the fibers of $A\vee B\ra A$ are all sizes of fibers of $B\ra A\wedge B$ and vice versa.
We next will see that under certain conditions, the width of $E\ra F$ is the same as the width of  $E\vee G \ra F\vee G$ or $E\wedge G \ra F\wedge G$.
Suppose $E\ra F$ is simple such that $F\geq G \wedge E$ (all as quotients of some object).  Then $\vee G$ gives an isomorphism $[G\wedge E,E]\ra [G,E\vee G]$
by Lemma \ref{dit}, and so $E\vee G \ra F\vee G$ is simple.  Moreover,  $(F\vee G) \vee E=E\vee G$ and $(F\vee G) \wedge E=F\vee (G\wedge E)=F$ (by the modular property and then the assumption that $F\geq G \wedge E$).  Thus, $E\vee G \ra F\vee G$ is $(F\vee G) \vee E \ra F\vee G$ and has the same width as
$E\ra (F\vee G) \wedge E$, which is $E\ra F$.  
Similarly, if $E\ra F$ is simple such that $E\leq G \vee F$, then $E\wedge G \ra F\wedge G$ is simple has has the same width as $E\ra F$.

Our first step is to check that all algebras in $\C$ have width at most $n$. By the definition of level, it suffices to check that the set of algebras of width at most $n$ is downward-stable and join-stable. That it is downward-stable is clear by definition. For join-stable, consider a simple surjection $B \to A$ of quotients of $D \vee E$ (in some lattice of quotients), where $D$ and $E$ have width at most $n$.   We have $ B \geq (D \wedge B) \vee A \geq A$ so, by simplicity $ (D \wedge B) \vee A$, is either equal to $B$ or $A$.

If $(D \wedge B)\vee A =B$, then 
$B = (D \wedge B) \vee A \leq D \vee A$, so by the above $D \wedge B\ra  D\wedge A$ is simple and has the same width as  $B \to A$.  Since $D \wedge B\ra  D\wedge A$ is a simple surjection of quotients of $D$,  we conclude $B \to A$ has width at most $n$.

If $( D \wedge B) \vee A =A$, then $A\geq (D \wedge B)$.   By the above,  $B \to A$ has the same width as  $B\vee D \to A \vee D$.
Moreover, since $B\vee D \leq E\vee D= E\vee (A\vee D)$,  we have that $(B\vee D) \wedge E \ra (A\vee D) \wedge E$ is simple and also has the same width as $B\ra A$.
However,  $(B\vee D) \wedge E \ra (A\vee D) \wedge E$ is a simple map of quotients of $E$, and hence has width at most $n$.  Thus $B \to A$ has width at most $n$.

This concludes the proof that all algebras in $\C$ have width at most $n$. It follows that, for $G$ in $\C$, any $H$ in $\C$ with a simple surjection $H\to G$ has cardinality $|H| \leq |G| n$. If there are finitely many operations in the variety, we are done, as there are finitely many algebra structure on any given finite set.

Otherwise, we need a slightly more complicated argument. Let $K$ be the product of all the elements of a finite generating set of $\C$. Any equation (involving the algebra operations) that holds in $K$ holds in every algebra in $\C$, since equations are preserved by quotients and fiber products. Since $K$ is a finite set, there can only be finitely many distinct $r$-ary maps $K\to K$ for each $r$, so sufficient equations hold in $K$ so that there is some finite set of operations, such that the equations holding on $K$, and hence for algebras in $\C$,  determine the value of every operation in terms of that finite set of operations.
It then holds that there are finitely many algebras in $\C$ on sets of cardinality $\leq |G| n$, completing the proof of the final finiteness step. \end{proof}

One we have a \Diamantine category, there are many ways to modify it and get further \Diamantine categories.

\begin{lemma}\label{product-category} Let $\category_1$ and $\category_2$ be \Diamantine categories. Then the product category $\category_1 \times \category_2$, whose objects are ordered pairs of an object of $\category_1$ and an object of $\category_2$ and whose morphisms are ordered pairs of morphisms, is a \Diamantine category. \end{lemma}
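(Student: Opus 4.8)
The plan is to verify the four axioms of Definition~\ref{D:diamond} for $\category_1 \times \category_2$ one at a time, using the fact that essentially every structure of interest in the product category is computed coordinatewise. First I would establish the basic bookkeeping: a morphism $(f_1,f_2)$ in $\category_1\times\category_2$ is an epimorphism if and only if $f_1$ and $f_2$ are both epimorphisms (since epimorphisms in a product category are detected coordinatewise), and hence the poset of quotients of $(G_1,G_2)$ is naturally isomorphic, as a poset, to the product of the poset of quotients of $G_1$ with the poset of quotients of $G_2$. This is the key structural observation from which everything else follows.

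With that in hand, axiom~\eqref{D:modular} follows because a product of two finite modular lattices is a finite modular lattice: finiteness and the modular law are both checked componentwise (if $a=(a_1,a_2)\le(b_1,b_2)=b$ then $a\vee(x\wedge b)$ and $(a\vee x)\wedge b$ agree in each coordinate by modularity of each factor). Axiom~\eqref{D:Aut} is immediate since $\Aut((G_1,G_2)) = \Aut(G_1)\times\Aut(G_2)$ is finite as a product of finite groups. For the countability axiom, the minimal objects of $\category_1\times\category_2$ are exactly the pairs $(M_1,M_2)$ with $M_i$ minimal in $\category_i$ (an object is minimal iff its quotient poset is a single point, and a product of posets is a point iff each factor is), so there are at most countably many isomorphism classes, being a product of two at-most-countable sets.

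The main obstacle is axiom~\eqref{D:finite}, the finiteness of simple epimorphisms into a given object of a given level. Here I would first identify the levels of $\category_1\times\category_2$: if $\C$ is generated by a finite set $B$ of objects $(G_1^{(k)},G_2^{(k)})$, and $\C_i$ is the level of $\category_i$ generated by the projections $\{G_i^{(k)}\}_k$ together with the minimal objects, then I claim $\C \subseteq \C_1 \times \C_2$ (the set of pairs $(H_1,H_2)$ with $H_i\in\C_i$); this follows because $\C_1\times\C_2$ is downward-closed and join-closed (both operations being coordinatewise) and contains $B$ and all minimal objects. Next, a simple epimorphism $(H_1,H_2)\to(G_1,G_2)$ corresponds to an interval $[(G_1,G_2),(H_1,H_2)]$ with only two elements; since this interval is the product $[G_1,H_1]\times[G_2,H_2]$, having exactly two elements forces one factor to be a single point and the other to have exactly two elements — i.e.\ a simple epimorphism in the product is (up to swapping coordinates) a simple epimorphism in one factor paired with an identity in the other. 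Therefore the simple epimorphisms into $(G_1,G_2)$ from objects of $\C\subseteq\C_1\times\C_2$ are in bijection with the disjoint union of $\{$simple epimorphisms $H_1\to G_1$ in $\C_1\}$ (with second coordinate fixed as $\mathrm{id}_{G_2}$) and $\{$simple epimorphisms $H_2\to G_2$ in $\C_2\}$ (with first coordinate $\mathrm{id}_{G_1}$), each of which is finite since $\category_1$ and $\category_2$ are \Diamantine. This completes the verification of all four axioms, and hence the lemma.
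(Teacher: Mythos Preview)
Your proof is correct and follows essentially the same route as the paper's: both verify the four axioms by exploiting the fact that the quotient poset of $(G_1,G_2)$ is the product of the two quotient posets, both bound a level $\C$ by a product $\C_1\times\C_2$ (your $\C_i$ generated by projections of generators coincides with the paper's ``set of $i$th coordinates of objects of $\C$''), and both identify simple epimorphisms as simple-in-one-coordinate, isomorphism-in-the-other.
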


\begin{proof} We verify the four assumptions in order:

\begin{enumerate}

\item The poset of quotients of an ordered pair $(G_1,G_2)$ is the product of the posets of quotients of $G_1$ and $G_2$, and the product of two finite modular lattices is a finite modular lattice.

\item The automorphism group of an ordered pair $(G_1,G_2)$  is the product of the automorphism groups, and the product of two finite groups is a finite group. 

\item Since the poset of quotients is the product of quotients, the product of two downward-closed sets is downward-closed, and the product of two join-closed sets is join-closed. Thus, the objects in $\C$ are contained in the set of ordered pairs $(G_1,G_2)$ where $G_1\in \C_1$ and $G_2 \in C_2$, for $\C_1$ the set of first terms of objects in $\C$ and $\C_2$ the set of second terms. A morphism to $(G_1,G_2)$ is simple if and only if it is simple in one factor and an isomorphism to the other, so the finiteness of simple morphisms from objects in $\C$ follows from the finiteness of simple morphisms from objects in $\C_1$ and $\C_2$.

\item A minimal object of $\category_1 \times \category_2$ is the ordered pair of a minimal object of $\category_1$ and a minimal object of $\category_2$, so the set of isomorphism classes of minimal objects is the product of the sets of isomorphism classes of minimal objects of $\category_1$ and $\category_2$ and hence is countable.
\end{enumerate}
\end{proof}

For a product category, when the moments themselves split as products over the two factors, we have a simple criterion to check well-behavedness.

\begin{lemma} Let $\category_1$ and $\category_2$ be \Diamantine categories.  Let $(M_G^1)_{G\in \category_1}$ and $(M_G^2)_{G\in\category_2}$ be tuples of nonnegative real numbers indexed by the isomorphism classes, respectively, of $\category_1$ and $\category_2$.  Define $(M_{(G_1,G_2)})_{(G_1,G_2)\in\category_1\times \category_2}$ by the formula $M_{(G_1,G_2)}= M^1_{G_1} M^2_{G_2}$. If $(M_G^1)_{G\in \category_1}$ and $(M_G^2)_{G\in\category_2}$ are well-behaved then so is $(M_{(G_1,G_2)})_{(G_1,G_2)\in\category_1\times \category_2}$. \end{lemma}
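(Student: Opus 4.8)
The plan is to reduce the well-behavedness of the product tuple to the well-behavedness of the factors by showing that the sum defining well-behavedness for $\category_1 \times \category_2$ factors as a product of the corresponding sums for $\category_1$ and $\category_2$. First I would recall, from the proof of \cref{product-category}, the structure of the relevant data on $\category_1 \times \category_2$: the poset of quotients of $(G_1, G_2)$ is the product of the posets of quotients of $G_1$ and $G_2$; consequently epimorphisms $(G_1, G_2) \to (F_1, F_2)$ are exactly pairs $(\pi_1, \pi_2)$ with $\pi_i \in \Epi(G_i, F_i)$; the automorphism group factors as $\Aut(G_1, G_2) = \Aut(G_1) \times \Aut(G_2)$; the M\"obius function is multiplicative, $\mu((F_1,F_2),(G_1,G_2)) = \mu(F_1,G_1)\mu(F_2,G_2)$, since the M\"obius function of a product poset is the product of the M\"obius functions; and $Z$ is multiplicative, $Z(\pi_1,\pi_2) = Z(\pi_1) Z(\pi_2)$, since the distributive law in a product lattice holds coordinatewise (this is already observed in the proof of \cref{Z-omega}).

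Next I would check that any level $\C$ of $\category_1 \times \category_2$ is contained in a product level $\C_1 \times \C_2$, where $\C_i$ is a level of $\category_i$; indeed if $\C$ is generated by finitely many objects $(G_1^{(j)}, G_2^{(j)})$, then taking $\C_i$ to be the level of $\category_i$ generated by the objects $G_i^{(j)}$, one checks $\C \subseteq \C_1 \times \C_2$ exactly as in the proof of part (3) of \cref{product-category} (the product of downward-closed join-closed sets is downward-closed and join-closed). By \cref{big-O-wb}, or just by monotonicity of the sum, it then suffices to prove well-behavedness of $(M_{(G_1,G_2)})$ at the larger formation $\C_1 \times \C_2$. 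Fixing $F = (F_1, F_2) \in \C_1 \times \C_2$, the sum appearing in the definition of well-behaved is
\begin{align*}
&\sum_{(G_1,G_2) \in \C_1 \times \C_2} \sum_{\pi \in \Epi((G_1,G_2),(F_1,F_2))} \frac{|\mu((F_1,F_2),(G_1,G_2))|}{|\Aut(G_1,G_2)|} Z(\pi)^3 M_{(G_1,G_2)} \\
&= \sum_{G_1 \in \C_1} \sum_{\pi_1 \in \Epi(G_1,F_1)} \frac{|\mu(F_1,G_1)|}{|\Aut(G_1)|} Z(\pi_1)^3 M^1_{G_1} \cdot \sum_{G_2 \in \C_2} \sum_{\pi_2 \in \Epi(G_2,F_2)} \frac{|\mu(F_2,G_2)|}{|\Aut(G_2)|} Z(\pi_2)^3 M^2_{G_2},
\end{align*}
using the multiplicativity facts collected above to split each factor in the summand, together with the fact that a sum over pairs of a product of terms each depending on one coordinate factors as a product of sums (valid for nonnegative terms, allowing the value $+\infty$). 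Since $(M_G^1)_{G}$ and $(M_G^2)_{G}$ are well-behaved, each factor on the right is finite, so the product is finite.

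Finally I would note that since this holds for every $F \in \C_1 \times \C_2$, the tuple $(M_{(G_1,G_2)})$ is well-behaved at $\C_1 \times \C_2$, hence at $\C$ by the containment $\C \subseteq \C_1 \times \C_2$; and since this holds for every level $\C$ of $\category_1 \times \category_2$, the tuple is well-behaved. The main obstacle, such as it is, is purely bookkeeping: carefully justifying that the double sum over $(G_1, G_2)$ and $\pi$ genuinely factors as an iterated sum and then as a product, which requires the observation that all morphisms are epimorphisms here so $\Epi((G_1,G_2),(F_1,F_2))$ really is the full product $\Epi(G_1,F_1) \times \Epi(G_2,F_2)$, plus the multiplicativity of $\mu$ on product posets, which one may cite from \cite[Proposition 3.8.2]{Stanley2012} or prove in a line. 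There is no analytic subtlety because every term is nonnegative, so rearrangement is unconditionally valid in $[0,\infty]$.
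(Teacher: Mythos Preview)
Your proof is correct and follows essentially the same approach as the paper: both arguments enclose an arbitrary level $\C$ in a product $\C_1 \times \C_2$ of levels (you via the levels generated by the coordinate projections of generators, the paper via the sets of first and second coordinates of $\C$, which amount to the same thing), invoke the multiplicativity of $\mu$, $Z$, $\Aut$, and $\Epi$ on the product category, and then factor the nonnegative sum as a product of the two finite sums coming from well-behavedness of the factors. The only cosmetic difference is that you mention \cref{big-O-wb}, which is not quite the right reference (it compares tuples, not formations), but your parenthetical ``or just by monotonicity of the sum'' is exactly the argument used in the paper.
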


\begin{proof} Let $\C$ be a level of $\category_1\times \category_2$. Let  $\C_1$ be the set of first terms of objects of $\C$ and $\C_2$ the set of second terms of objects in $\C$.  As in the proof of Lemma \ref{product-category}, $\C \subseteq \C_1\times \C_2$.

For $\pi \colon (G_1,G_2) \to (F_1,F_2)$ the product of epimorphisms $\pi_1\colon G_1 \to F_1$ and $\pi_2 \colon G_2 \to F_2$, the lattice $[(F_1,F_2), (G_1,G_2)]$ is the product of the lattices $[F_1,G_1]$ and $[F_2,G_2]$ so $Z(\pi) = Z(\pi_1) Z(\pi_2)$ and ${\mu}((F_1,F_2),(G_1,G_2)) = \mu(F_1,G_1) \mu(F_2,G_2)$. Thus
\[\sum_{(G_1,G_2)  \in \C}     \sum_{ \pi \in\Epi((G_1,G_2),(F_1,F_2)) }\frac{|\mu ((F_1,F_2),(G_1,G_2)|)}{|\Aut((G_1,G_2))|} Z ( \pi)^3 M_(G_1,G_2) \]
\[ = \sum_{(G_1,G_2)  \in \C}     \sum_{ \pi_1 \in\Epi(G_1,F_1) }  \sum_{ \pi_2 \in\Epi(G_1,F_1) } \frac{|{\mu}(F_1,G_1) | | \mu(F_2,G_2) |}{|\Aut(G_1)|  |\Aut(G_2)|}  Z ( \pi_1)^3 Z(\pi_2)^3 M_{G_1}^1 M_{G_2}^2 \]
\[ \leq  \sum_{(G_1,G_2)  \in \C_1 \times \C_2}     \sum_{ \pi_1 \in\Epi(G_1,F_1) }  \sum_{ \pi_2 \in\Epi(G_1,F_1) } \frac{|{\mu}(F_1,G_1) | | \mu(F_2,G_2) |}{|\Aut(G_1)|  |\Aut(G_2)|}  Z ( \pi_1)^3 Z(\pi_2)^3 M_{G_1}^1 M_{G_2}^2 \]
\[= \Bigl( \sum_{G_1\in \C_1}  \sum_{ \pi_1 \in\Epi(G_1,F_1) } \frac{ |{\mu}(F_1,G_1) |}{|\Aut(G_1)|  } Z ( \pi_1)^3 M^1_{G_1} \Bigr)\Bigl( \sum_{G_2\in \C_2}  \sum_{ \pi_2 \in\Epi(G_2,F_2) } \frac{ |{\mu}(F_2,G_2) |}{{|\Aut(G_2)|  } }Z ( \pi_2)^3 M^2_{G_2} \Bigr)< \infty\]
so $(M_{(G_1,G_2)})_{(G_1,G_2)\in\category_1\times \category_2}$ is well-behaved.\end{proof}

The following result will cover many kinds of additional structure we may add to a \Diamantine category to get another \Diamantine category.

\begin{proposition}\label{stability-master-theorem} Let $\mathcal F\colon \category_1 \ra \category_2$ be a faithful functor that sends epimorphisms to epimorphisms. Assume:

\begin{itemize}

\item  $\category_2$ is a \Diamantine category.

\item  The natural map from the poset of quotients of $G$ to the poset of quotients of $\mathcal F(G)$ is the inclusion of a sublattice (i.e. we have $H_1 \leq H_2$ if $\mathcal F(H_1) \leq \mathcal F(H_2)$ and the image is closed under joins and meets).

\item  That there is some $n$ such that  $\mathcal F$ applied to any simple epimorphism gives a morphism of dimension at most $n$.

\item Given a level $\C$ of $\category_1$, there are finitely many isomorphism classes of objects $G\in \C$   with $\mathcal F(G)$ isomorphic to a given object in $\category_2$.

\item $\category_1$ admits at most countably many isomorphism classes of minimal objects.

\end{itemize}

Then $\category_1$ is a \Diamantine category. 
\end{proposition}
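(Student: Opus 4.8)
The plan is to verify the four conditions of Definition~\ref{D:diamond} for $\category_1$ in turn, using the hypotheses on $\mathcal F$ to transport the corresponding facts from $\category_2$. First I would check condition~\eqref{D:modular}: by the second hypothesis, the poset of quotients of $G \in \category_1$ is identified with a sublattice of the poset of quotients of $\mathcal F(G)$, which is a finite modular lattice by \eqref{D:modular} for $\category_2$; a sublattice of a finite modular lattice is itself a finite modular lattice (finiteness and modularity are both inherited by sublattices — modularity because the identity $a \vee (x \wedge b) = (a \vee x) \wedge b$ for $a \leq b$ holds in the ambient lattice and the sublattice is closed under $\vee$ and $\wedge$). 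Note this already requires us to know the poset of quotients of $G$ is actually a lattice, which is exactly the content of the ``closed under joins and meets'' clause.

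Next, condition~\eqref{D:Aut}: since $\mathcal F$ is faithful, the map $\Aut(G) \to \Aut(\mathcal F(G))$ is injective, so $\Aut(G)$ is a subgroup of a finite group and hence finite. Condition~(4), countably many minimal objects, is assumed directly as the last hypothesis. So the real work is condition~\eqref{D:finite}: for each level $\C$ of $\category_1$ and each $G \in \C$, there should be finitely many $H \in \C$ with a simple epimorphism $H \to G$. Here I would argue as follows. Let $H \to G$ be a simple epimorphism with $H \in \C$. Applying $\mathcal F$ gives an epimorphism $\mathcal F(H) \to \mathcal F(G)$ of dimension at most $n$ (third hypothesis). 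Now I need to know that $\mathcal F$ maps $\C$ into some level of $\category_2$: if $\C$ is generated by $G_1,\dots,G_k$, I would want $\mathcal F(\C)$ to be contained in the level $\mathcal D$ of $\category_2$ generated by $\mathcal F(G_1),\dots,\mathcal F(G_k)$ — this should follow because the sublattice hypothesis means quotients and joins in $\category_1$ map to quotients and joins in $\category_2$, so the downward-closed join-closed set generated by the $G_i$ maps into the one generated by the $\mathcal F(G_i)$ (with care about minimal objects, which map to quotients of the $\mathcal F(G_i)$ and hence lie in $\mathcal D$). Then by Lemma~\ref{dim-C-finite} applied in $\category_2$, there are finitely many isomorphism classes of objects $K \in \mathcal D$ admitting an epimorphism $K \to \mathcal F(G)$ of dimension $\leq n$. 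Finally, by the fourth hypothesis, for each such $K$ there are finitely many isomorphism classes of $H \in \C$ with $\mathcal F(H) \cong K$, so there are finitely many possible $H$ in total.

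The main obstacle I anticipate is the bookkeeping in condition~\eqref{D:finite}: I must be careful that the objects $H$ with a simple epimorphism to $G$ in $\category_1$ really do map into a fixed level of $\category_2$, and that Lemma~\ref{dim-C-finite}, which controls objects mapping to $\mathcal F(G)$ by dimension, applies with a uniform dimension bound $n$ independent of the simple morphism. The dimension bound is given, and the level-containment should be a formal consequence of the sublattice hypothesis, but one needs to check that a composition of epimorphisms in $\category_1$ maps to a composition in $\category_2$ (so that ``quotient of'' is preserved) and that minimal objects of $\category_1$ are handled — a minimal object of $\category_1$ need not map to a minimal object of $\category_2$, but its image is still a quotient of each $\mathcal F(G_i)$ (being a quotient of $\mathcal F$ of anything, or at least one can arrange the generating set to include it), hence lies in $\mathcal D$. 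A clean way to finesse this is simply to enlarge the chosen generating set of $\mathcal D$ to also include $\mathcal F$ of (representatives of) the finitely... — actually there may be countably many minimal objects, so instead one observes directly that every $H$ with $\mathcal F(H) \cong K$ for one of the finitely many $K$ is the only constraint needed, and one does not even need $H \in \mathcal D$; one only needs $\mathcal F(H)$ to admit a dimension-$\leq n$ epimorphism to $\mathcal F(G)$ \emph{and} to lie in the level generated by $\mathcal F(G)$ together with $\mathcal F$ of the finitely many generators — which follows since $H \in \C$. Once the containment $\mathcal F(\C) \subseteq \mathcal D$ is pinned down, the rest is a routine chain of finiteness statements.
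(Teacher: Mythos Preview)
Your proposal is correct and essentially identical to the paper's proof: both verify the four conditions of Definition~\ref{D:diamond} in the same way, using the sublattice hypothesis for \eqref{D:modular}, faithfulness for \eqref{D:Aut}, and for \eqref{D:finite} the containment $\mathcal F(\C)\subseteq\C'$ (with $\C'$ the level generated by $\mathcal F$ of generators of $\C$) together with Lemma~\ref{dim-C-finite} and the fourth hypothesis.

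One small point: your worry that ``a minimal object of $\category_1$ need not map to a minimal object of $\category_2$'' is unfounded and your workaround unnecessary. The sublattice hypothesis forces this: if $G$ is minimal, its quotient poset has one element, but as a sublattice of the quotient poset of $\mathcal F(G)$ it must contain both the top and bottom, so these coincide and $\mathcal F(G)$ is minimal. The paper records this observation (and uses it to give a second, independent argument for countability of minimal objects beyond the direct hypothesis); with it, the preimage of $\C'$ automatically contains all minimal objects of $\category_1$, and the containment $\C\subseteq\mathcal F^{-1}(\C')$ follows cleanly.
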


\begin{remark}
When applying Proposition~\ref{stability-master-theorem}, it is very helpful when checking the conditions to note that  $\mathcal F$ faithful implies that if
 $\mathcal F(\pi)$ is an epimorphism then $\pi$ is an epimorphism. 
\end{remark}

\begin{proof} We verify the four assumptions in order:

\begin{enumerate}

\item The poset of quotients of $G$ is a sublattice of the poset of quotients of $\mathcal F(G)$. A sublattice of a finite modular lattice is modular since a modular lattice is defined by an algebraic equation involving joins and meets.

\item The automorphism group of $G$ is a subgroup of the automorphism group of $\mathcal F(G)$, hence a finite group.

\item Let $\C$ be a level of $C_1$ and let $\C'$ be the level of $C_2$ generated by the application of $\mathcal F$ to a generating set of $\C$.  Then the objects $G \in \category_1$ such that $\mathcal F(G) \in \C'$ are downward-closed and join-closed by the assumption on the natural map of posets, so $\mathcal{F}(\C)\subset \C'$. So it suffices to prove for an object $H$ that there are finitely many simple morphisms $G \to H$ with $\mathcal F(G)\in \C'$. By Lemma \ref{dim-C-finite} and assumption, there are finitely many possibilities for $\mathcal F(G)$ up to isomorphism. By assumption, there are finitely many possibilities for $G\in \C$ up to isomorphism, and then each one has finitely many surjections to $H$ by the previous two properties.

\item  For $G$ minimal, since the poset of quotients $G$ is a sublattice of the poset of quotients of $\mathcal F(G)$ and hence contains the minimal and maximal element, which since $G$ has only one quotient must be equal to each other, $\mathcal F(G)$ must be minimal. Considering the level $\C$ consisting of the minimal objects, we see there are finitely many isomorphism classes of minimal objects $G$ for each minimal object $\mathcal F(G)$, and thus countably many in total.

\item $\category_1$ admits at most countably many isomorphism classes of minimal objects by assumption.
\end{enumerate}
\end{proof}

One such structure is adding the action of a finite group, where $\mathcal{F}$ is the forgetful functor.  

\begin{lemma}\label{group-action-construction} Let $\category$ be a \Diamantine category. Let $\Gamma$ be a finite group. Define the category $\Gamma$-$\category$ in which objects  are pairs of an object $G\in \category$ and a homomorphism $\Gamma \to \Aut(G)$ and morphisms are epimorphisms in $\category$ compatible with the action of $\Gamma$. Then $\Gamma$-$\category$ is a \Diamantine category. \end{lemma}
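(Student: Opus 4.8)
The plan is to apply Proposition~\ref{stability-master-theorem} to the forgetful functor $\mathcal F\colon \Gamma$-$\category\to\category$ sending $(G,\rho)$ to $G$ and an equivariant epimorphism to itself. This functor is faithful, and it sends epimorphisms to epimorphisms since, by construction, every morphism of $\Gamma$-$\category$ already is an epimorphism of $\category$. So it remains to check the five bulleted hypotheses. The first ($\category$ is a \Diamantine category) is given. The fourth and fifth are quick: for a fixed object $H$ of $\category$, the objects of $\Gamma$-$\category$ mapping to it under $\mathcal F$ are the pairs $(H,\rho)$ with $\rho\colon\Gamma\to\Aut(H)$, and there are finitely many such since $\Gamma$ and $\Aut(H)$ are finite (this holds even before intersecting with a level); and since (as explained below) the quotients of $(M,\rho)$ sit inside the quotients of $M$, a minimal object of $\Gamma$-$\category$ is exactly a pair $(M,\rho)$ with $M$ a minimal object of $\category$, of which there are countably many because $\category$ has countably many minimal objects each with finite automorphism group.

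The substantive points are the second and third hypotheses. For the second, I would note that $\rho$ makes $\Gamma$ act on the finite modular lattice $L$ of quotients of $G$ in $\category$ by lattice automorphisms, and then identify the poset of quotients of $(G,\rho)$ in $\Gamma$-$\category$ with the fixed sublattice $L^\Gamma\subseteq L$. The crux is that for an epimorphism $\pi\colon G\to H$ of $\category$, a $\Gamma$-action on $H$ making $\pi$ equivariant is \emph{unique} if it exists (because $\pi$ is epi), exists precisely when the class of $\pi$ is fixed by the induced $\Gamma$-action on $L$, and uniqueness then forces the resulting assignment $\Gamma\to\Aut(H)$ to be a homomorphism. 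Using the same cancellation against an epimorphism one checks that an isomorphism or connecting morphism between $\Gamma$-fixed quotients of $G$ is automatically $\Gamma$-equivariant, so the identification is an isomorphism of posets onto the subposet of $\Gamma$-fixed elements; and since $\Gamma$ acts by lattice automorphisms, $L^\Gamma$ is closed under the joins and meets of $L$, hence is a sublattice. This is exactly the content of the second hypothesis.

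For the third hypothesis I claim $\mathcal F$ carries every simple epimorphism of $\Gamma$-$\category$ to a morphism of dimension at most $n=\abs{\Gamma}$, and I expect this to be the main obstacle. Let $\pi\colon(G,\rho)\to(H,\rho_H)$ be simple; via the identification above this says the only $\Gamma$-fixed elements of the interval $[H,G]$ of $L$ are $H$ and $G$, and $H<G$. Pick $K\in[H,G]$ covering $H$. Since $H$ and $G$ are $\Gamma$-fixed, every $\gamma K$ lies in $[H,G]$, and $\bigvee_{\gamma\in\Gamma}\gamma K$ is $\Gamma$-fixed, lies in $[H,G]$, and strictly exceeds $H$; by simplicity it equals $G$. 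Now in a finite modular lattice, the Diamond Isomorphism Theorem (Lemma~\ref{dit}) gives that $[a,a\vee b]$ and $[a\wedge b,b]$ have the same length, whence the length of $[c,a\vee b]$ equals the length of $[c,a]$ plus the length of $[a\wedge b,b]$, which is at most the length of $[c,a]$ plus the length of $[c,b]$ when $c\le a\wedge b$; iterating over $\gamma\in\Gamma$ shows the length of $[H,G]$ is at most the sum over $\gamma\in\Gamma$ of the lengths of the $[H,\gamma K]$. Each of these has length $1$, since $\gamma^{-1}$ carries $[H,\gamma K]$ isomorphically to $[H,K]$ (using $\gamma^{-1}H=H$) and $K$ covers $H$. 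Hence $\dim(\mathcal F(\pi))\le\abs{\Gamma}$. With all five hypotheses verified, Proposition~\ref{stability-master-theorem} gives that $\Gamma$-$\category$ is a \Diamantine category.
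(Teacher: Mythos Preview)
Your proof is correct and follows essentially the same strategy as the paper: apply Proposition~\ref{stability-master-theorem} to the forgetful functor, identify the quotient poset of $(G,\rho)$ with the $\Gamma$-fixed sublattice of the quotient poset of $G$, and bound the dimension of a simple morphism by $\abs{\Gamma}$ via the join of the $\Gamma$-orbit of an atom above $H$. You are somewhat more explicit than the paper in two places---the uniqueness argument showing why $\Gamma$-fixed quotients of $G$ inherit a canonical $\Gamma$-action, and the subadditivity-of-length computation via Lemma~\ref{dit}---whereas the paper simply asserts these (the latter being a standard consequence of modularity).
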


\begin{proof}
We verify the conditions of Proposition \ref{stability-master-theorem} for the forgetful functor $\mathcal F: \Gamma$-$\category  \ra\category$ forgetting the $\Gamma$ action, and then the lemma will follow. 
We have that $\mathcal{F}$ is faithful and takes all morphisms to epimorphisms (by definition of the morphisms in $\Gamma$-$\category$).
We have assumed that $\category$ is a \Diamantine category.

Since $\Gamma$ and the automorphism group of any object of $\category$ are finite, there are finitely many isomorphism classes of objects in 
$\Gamma$-$\category$ mapping to an object of $\category$.

We can check that the poset of quotients of an object $G$ with an action of $\Gamma$ is the subposet of $\Gamma$-invariant elements of the poset of quotients of $G$.  Since the join and meet of $\Gamma$-invariant elements is $\Gamma$-invariant, the $\Gamma$-invariant elements of the poset of quotients of an object of $\category$ is a sublattice as required.

Furthermore, the minimal element of the poset is $\Gamma$-invariant, and thus always represents a $\Gamma$-equivariant quotient of $G$. If $G$ is not minimal and thus not equal to this quotient then $G $ with an action of $\Gamma$ is not minimal either. The isomorphism classes of minimal objects of $\Gamma$-$\category$ are thus the isomorphism classes of minimal objects of $\category$ endowed with an action of $\Gamma$, of which there are finitely many for each minimal object of $\category$ and thus countably many in total.

A simple morphism of objects with an action of $\Gamma$ has an underlying morphism of objects of dimension at most $\abs{\Gamma}$. Indeed, let $G \to H$ be a morphism of objects, and suppose we have compatible actions of $\Gamma$ on $G$ and $H$. The induced morphism of objects with an action of $\Gamma$ is simple if there is no $K \in [ H,G]$ that is $\Gamma$-invariant other than $H$ and $G$. Take any element of $[H,G]$ that is simple over $H$ and take the join of its orbit under $\Gamma$. Since this join depends only on the orbit, it is invariant under $\Gamma$, and it cannot be $H$, so it must be $G$. Thus $G$ is the join of at most $\abs{\Gamma}$ objects simple over $H$ and thus must have dimension at most $\abs{\Gamma}$ over $H$.

\end{proof}

Several previous works have considered $\Gamma$-groups.   
If we let $\category$ be the category of finite $\Gamma$-groups whose order is relatively prime to $|\Gamma|$,
Liu, Zureick-Brown and the second author proved the existence of, and gave explicit formulas for, a measure on the pro-category with moments $[G:G^\Gamma]^{-u}$ 
\cite[Theorem 6.2]{Liu2019}.
The first author \cite[Theorem 1.2]{Sawin2020} proved uniqueness and robustness in this setting for moments that are $O(|G|^n)$ for some $n$.
The  results of the current paper, along with following the ``blueprints'' above allow one to give a different proof of these explicit formulas and uniqueness and robustness results. 

Another structure is the addition of a map to a fixed object.

\begin{lemma}\label{slice-construction} Let $\category$ be a \Diamantine category. Let $K$ be an object of $\category$. The category $K$-$\operatorname{Ext}$ of $K$-extensions whose objects are objects of $\category$ together with an epimorphism to $K$ and where morphisms are epimorphisms of $\category$ respecting the epimorphisms to $K$ is a \Diamantine category. \end{lemma}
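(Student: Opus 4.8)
The plan is to apply Proposition~\ref{stability-master-theorem} to the forgetful functor $\mathcal F \colon K\text{-}\operatorname{Ext} \to \category$ that sends an object $(G, \pi^G_K)$ to $G$ and a morphism to its underlying epimorphism of $\category$. This functor is faithful, and by the definition of the morphisms in $K\text{-}\operatorname{Ext}$ it sends every morphism (which is an epimorphism respecting the map to $K$) to an epimorphism. Since $\category$ is assumed to be a \Diamantine category, the first hypothesis of Proposition~\ref{stability-master-theorem} holds, and it remains to verify the other four bullet points.

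First I would identify the poset of quotients of $(G, \pi^G_K)$ in $K\text{-}\operatorname{Ext}$ with the sub-poset of the quotients of $G$ consisting of those quotients $H$ of $G$ through which $\pi^G_K$ factors, i.e.\ those $H$ with $H \geq K$ in the lattice $[K, G]$ (where we view $K$ as the quotient of $G$ given by $\pi^G_K$). This is precisely the interval $[K, G]$ in the lattice of quotients of $G$. An interval in a lattice is closed under the joins and meets of the ambient lattice, so it is a sublattice, giving the second hypothesis. (Incidentally, an interval in a finite modular lattice is again finite and modular, which is consistent with Proposition~\ref{stability-master-theorem}\,(1), but all we need here is the sublattice property.) For the third hypothesis, a simple morphism in $K\text{-}\operatorname{Ext}$ from $(G,\pi^G_K) \to (H, \pi^H_K)$ has underlying morphism $G \to H$ an epimorphism of $\category$ such that the interval $[H, G]$ (inside $[K,G]$, hence inside the quotients of $G$) contains only $H$ and $G$; thus $G \to H$ is itself simple in $\category$, so it has dimension $1$, and we may take $n = 1$. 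For the fourth hypothesis, given a level $\C$ of $K\text{-}\operatorname{Ext}$, fix an object $G \in \category$; the objects of $K\text{-}\operatorname{Ext}$ mapping to $G$ under $\mathcal F$ are the pairs $(G, \phi)$ with $\phi \in \Epi(G, K)$, and there are at most $\abs{\Epi(G,K)}$ of these, a finite number (indeed, $\category$ being \Diamantine, any object has finitely many quotients isomorphic to $K$, and the automorphism group of $K$ is finite). Finally, for the last hypothesis, an object $(G, \pi^G_K)$ is minimal in $K\text{-}\operatorname{Ext}$ iff its only quotient in $K\text{-}\operatorname{Ext}$ is itself, i.e.\ iff the interval $[K,G]$ is a single point, i.e.\ iff $G \cong K$ as a quotient of $G$ and $\pi^G_K$ is an isomorphism; so $K\text{-}\operatorname{Ext}$ has a unique isomorphism class of minimal object, namely $(K, \mathrm{id}_K)$.

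With all the hypotheses of Proposition~\ref{stability-master-theorem} verified, we conclude that $K\text{-}\operatorname{Ext}$ is a \Diamantine category. The only step requiring genuine care is the identification of the quotient poset of $(G,\pi^G_K)$ with the interval $[K,G]$: one must check that a quotient of $(G,\pi^G_K)$ in $K\text{-}\operatorname{Ext}$ is the same data as a quotient $H$ of $G$ in $\category$ equipped with the (necessarily unique, since $H \to K$ is forced to be an epimorphism compatible with $G \to H \to K$ and $G\to K$) factorization of $\pi^G_K$ through it, and that the ordering matches; this is a direct unwinding of the definitions of $[F,G]$ and of morphisms in $K\text{-}\operatorname{Ext}$, using the remarks in the "Further notation and conventions" subsection about intervals $[F,G]$. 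Everything else is a routine translation of the corresponding property of $\category$ through the faithful functor $\mathcal F$, exactly as in the proof of Lemma~\ref{group-action-construction}.
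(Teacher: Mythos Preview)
Your proposal is correct and follows essentially the same approach as the paper: both apply Proposition~\ref{stability-master-theorem} to the forgetful functor $\mathcal F\colon K\text{-}\operatorname{Ext}\to\category$, identify the quotient poset of $(G,\pi^G_K)$ with the interval $[K,G]$, observe that simple morphisms in $K\text{-}\operatorname{Ext}$ have underlying simple morphisms in $\category$ (so $n=1$), use finiteness of $\Epi(G,K)$ for the fourth hypothesis, and note that $(K,\mathrm{id}_K)$ is the unique minimal object.
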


This construction is a variant of the notion of slice category, restricted to epimorphisms.

Applied to the category of finite groups, this produces the category of groups with a surjection onto a fixed finite group $K$. This is closely related to the category of groups with an action of $K$, as if $G$ is a group with an action of $K$ then $G \rtimes K$ is a group with a surjection onto $K$. 

\begin{proof} We verify the conditions of Proposition \ref{stability-master-theorem} for the forgetful functor $\mathcal F$ taking an object of $\category$ together with an epimorphism to $K$ to the underlying object, and then the lemma will follow.  This is faithful and takes all morphisms to epimorphisms (by construction of the category).
We have assumed that $\category$ is a \Diamantine category.
   For $G$ an object and $f \colon G \to K$ a surjection, the poset of quotients of $(G, f)$ is simply the interval $[K,G]$, which is indeed a sublattice.
 A morphism $(G,f) \to (H, g)$ is simple if and only if $G \to H$ is simple, so $\mathcal F$ applied to a simple morphism is always simple.
 There are finitely many isomorphism classes of pairs $(G,f)$ for any $G$ because there are finitely many epimorphisms $f \colon G\to K$.
in $\category$ since $\category$ being a \Diamantine category implies that quotient lattices and automorphism groups are finite. Finally, the only minimal objects of $K$-$\operatorname{Ext}$ are isomorphic to $K$ together with the identity map to $K$, so there is a single isomorphism class.
\end{proof}

We can also add any kind of finite data that can pushed forward along morphisms.  

\begin{lemma}\label{decorated-construction} Let $\category$ be a \Diamantine category. Let $\mathcal G$ be a functor from $\category$ to the category of finite sets. Then the category $(C,\mathcal{G})$ of pairs of an object $G\in \category$ together with an element $s\in \mathcal G(G)$, with morphisms $(G ,s_G) \to (H,s_H)$ given by morphisms $f\colon G \to H$ in $\category$ such that $ \mathcal G(f) (s_G) = s_H$, is a \Diamantine category.
\end{lemma}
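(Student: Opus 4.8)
The plan is to verify the conditions of Proposition~\ref{stability-master-theorem} for the forgetful functor $\mathcal F \colon (\category, \mathcal G) \to \category$ that sends a pair $(G, s)$ to $G$ and a morphism $f$ (as a morphism of pairs) to $f$ itself; then the lemma follows immediately. The functor $\mathcal F$ is faithful because a morphism of pairs is literally a morphism of $\category$ satisfying an extra condition, and it sends epimorphisms to epimorphisms since we have (by convention) already discarded all non-epimorphisms, and in any case a morphism of pairs whose underlying morphism is an epimorphism is an epimorphism of pairs. The first hypothesis, that $\category$ is a \Diamantine category, holds by assumption, and the last hypothesis, that $(\category, \mathcal G)$ has countably many isomorphism classes of minimal objects, will follow from the analysis of minimal objects below.

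First I would analyze the poset of quotients of an object $(G,s)$. A quotient of $(G,s)$ is an epimorphism $f \colon G \to H$ (up to isomorphism) together with the requirement that the image element be $\mathcal G(f)(s)$ — but since $s$ is fixed and $\mathcal G(f)$ is determined by $f$, the element $s_H = \mathcal G(f)(s)$ is forced. Hence the poset of quotients of $(G,s)$ is canonically identified with the full poset of quotients of $G$: each quotient $H$ of $G$ carries the unique decoration $\mathcal G(\pi^G_H)(s)$ making $\pi^G_H$ a morphism of pairs, and an inequality $H_1 \geq H_2$ of quotients of $G$ automatically respects decorations by functoriality of $\mathcal G$ (the composite $G \to H_1 \to H_2$ pushes $s$ to the decoration on $H_2$). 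Thus the natural map on posets is an isomorphism onto the whole lattice of quotients of $\mathcal F(G) = G$, which is in particular the inclusion of a sublattice, giving the second hypothesis. The same identification shows $\mathcal F$ applied to a simple epimorphism $(G,s) \to (H, s_H)$ has dimension exactly $1$ (the interval $[(H,s_H),(G,s)]$ equals $[H,G]$), so the third hypothesis holds with $n = 1$.

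For the fourth hypothesis, fix a level $\C$ of $(\category, \mathcal G)$ and an object $G$ of $\category$: the objects of $\C$ mapping to $G$ under $\mathcal F$ are of the form $(G, s)$ with $s \in \mathcal G(G)$, and since $\mathcal G(G)$ is a finite set, there are only finitely many such pairs, hence finitely many isomorphism classes. For the count of minimal objects, an object $(G,s)$ is minimal exactly when its quotient poset is a single point, i.e.\ when $G$ is a minimal object of $\category$ (using the poset identification above); so the minimal objects of $(\category, \mathcal G)$ are the pairs $(G, s)$ with $G$ a minimal object of $\category$ and $s \in \mathcal G(G)$, of which there are finitely many for each of the at most countably many minimal $G$, hence countably many in total. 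This is the only genuinely new bookkeeping point, and it is routine. With all five hypotheses of Proposition~\ref{stability-master-theorem} verified, $(\category, \mathcal G)$ is a \Diamantine category. The main (minor) subtlety to get right is the observation that the decoration on a quotient is uniquely determined, so that the quotient poset of $(G,s)$ is the \emph{entire} quotient poset of $G$ rather than some subposet — once that is clear, every condition is inherited directly from $\category$.
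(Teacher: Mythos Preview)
Your proposal is correct and follows essentially the same approach as the paper: both verify the hypotheses of Proposition~\ref{stability-master-theorem} for the forgetful functor, with the same identification of the quotient poset of $(G,s)$ with that of $G$ via the forced decoration, the same bound $n=1$ on the dimension of simple morphisms, and the same finiteness and minimal-object counts. The only stylistic difference is that the paper checks directly (without invoking the convention that all morphisms in $\category$ are epimorphisms) that an epimorphism of pairs has underlying morphism an epimorphism in $\category$, whereas you dispose of this by citing the convention; both are valid.
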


One example of such a functor on the category of finite groups is provided by the $k$th group homology for any fixed $k$. The $k=3$ case was important in \cite{SW}. The ``bilinearly enhanced groups" studied by Lipnowski, Tsimerman, and the first author in \cite{Lipnowski2020} can also be expressed this way, as the set of bilinearly enhanced group structures on a finite abelian group is a functor from the category of finite abelian groups to finite sets.  Our results then can be used to give a new proof of  \cite[Theorem 8.17]{Lipnowski2020}, which proves uniqueness and robustness for finite abelian $\ell$-groups with bilinearly enhanced group structures.
It should also be possible to give a new proof of the existence result and formulas of  \cite[Theorem 8.14]{Lipnowski2020} using our main results and following the ``blueprints'' above.   {In forthcoming work, the authors will use the results from this paper in the category of $\Gamma$-groups with additional finite data as in Lemma~\ref{decorated-construction} (a class in $H^3(G,\mathbb Z/n)$) to give conjectures about the distribution of Galois groups of maximal unramified extensions of $\Gamma$-number fields, as well as $q\ra\infty$ results in the function field case.

Note that, if $\category$ has fiber products, then the category of pairs constructed here need not, unless $\mathcal F(G \times_K H) = \mathcal F(G) \times_{\mathcal F(K)} \mathcal F(H)$. For example, the category of finite groups together with a $k$th homology class does not have fiber products. It is for this and similar reasons that we avoid using fiber products and use joins instead, even though the familiar examples like groups, rings, and modules have fiber products. 

\begin{proof}We verify the conditions of Proposition \ref{stability-master-theorem} for the (faithful) forgetful functor $\mathcal F$ taking a pair $(G, s_G)$ to $G$, and then the lemma will follow. 

If a morphism  $f\colon G \to H$ in $\category$ such that $ \mathcal G(f) (s_G) = s_H$ is an epimorphism $(G, s_G) \to (H, s_H)$,  $F$ is another object of $\category$, and $h_1,h_2$ are two morphisms $H \to F$ such that $h_1 \circ f= h_2 \circ f$. Then  \[ \mathcal G( h_1) (s_H) = \mathcal G(h_1) ( \mathcal G(f)(s_G)) = \mathcal G( h_1 \circ f) (s_G) = \mathcal G( h_2 \circ f) (s_G) = \mathcal G(h_2) ( \mathcal G(f)(s_G)) = \mathcal G( h_2) (s_H).\] Let $s_F =  \mathcal G( h_1) (s_H) = \mathcal G( h_2) (s_H)$. Then $h_1,h_2$ are two morphisms $(H, s_H) \to (F, s_F)$ whose compositions with $f$ are equal, so they are equal, thus $h_1=h_2$. Hence $f$ is an epimorphism $G \to H$ in $\category$.

We have assumed that $\category$ is a \Diamantine category.
The poset of quotients of $(G, s_G)$ is identical to the poset of quotients of $G$ since each quotient $H$ has a unique element $ \mathcal G( \pi_G^H)(s_G) \in \mathcal G(H)$ compatible with the morphism $\pi_G^H$ from $G$. 
A morphism $(G,s_G) \to (H,s_H)$ is simple if and only if $G \to H$ is simple, and the dimension of such morphisms is bounded by one.
 Since $\mathcal G$ is valued in finite sets, there are finitely many isomorphism classes of pairs $(G,s_G)$ for a fixed $G$. 
 
Since the poset of quotients of $(G,s_G)$ is identical to the poset of quotients of $G$, we see that $(G,s_G)$ is minimal if and only if $G$ is minimal, and since there are finitely many isomorphism classes of pairs $(G,s_G)$ for each $G$, there are finitely many isomorphism classes of minimal $(G,s_G)$ in total.
\end{proof}

Moreover, in the situation of Lemma~\ref{decorated-construction}, we can  relate well-behavedness between 
$(\category,\mathcal{G})$ and $\category$.

\begin{lemma}\label{decorated-well-behaved}  Let $\category$ be a \Diamantine category. Let $\mathcal G$ be a functor from $\category$ to the category of finite sets.

Let $(M_{ H})_H\in \R^{(\category,\mathcal{G})/\isom}$ with $M_H\geq 0$ 
for all $H\in(\category,\mathcal{G})$.
 Then $(M_{ H})_H$ is well-behaved (for $(\category,\mathcal{G})$)  if and only if $(\sum_{s\in \mathcal G(G)} M_{(G,s)})_G\in \R^{\category/\isom}$ is well-behaved (for $\category$). \end{lemma}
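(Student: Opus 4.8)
The plan is to reduce both directions to the defining inequality for well-behavedness. Recall that well-behavedness at a level $\C$ of $\category$ is exactly the finiteness, for each $F\in\C$, of $\sum_{G\in\C}\sum_{\pi\in\Epi(G,F)}\frac{|\mu(F,G)|}{|\Aut(G)|}Z(\pi)^3\,|M_G|$, and similarly for levels of $(\category,\mathcal G)$; I want to compare these sums across the two categories. The first ingredient is that, as shown in the proof of Lemma~\ref{decorated-construction}, the poset of quotients of a decorated object $(G,s)$ is canonically the poset of quotients of $G$; hence for an epimorphism $\tilde\pi\colon(G,s)\to(F,t)$ the interval $[(F,t),(G,s)]$ equals $[F,G]$, so $\mu((F,t),(G,s))=\mu(F,G)$ and $Z(\tilde\pi)=Z(\pi)$, while $\Epi((G,s),(F,t))=\{\pi\in\Epi(G,F):\mathcal G(\pi)(s)=t\}$ and $\Aut((G,s))$ is the stabilizer of $s$ in $\Aut(G)$.

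The combinatorial heart is an orbit–counting identity. Fix $G\in\category$ and a function $c\colon\mathcal G(G)\to[0,\infty)$ invariant under the $\Aut(G)$–action (e.g.\ $c(s)=\sum_{\pi\in\Epi(G,F),\,\mathcal G(\pi)(s)=t}|\mu(F,G)|Z(\pi)^3$ for fixed $F,t$, whose invariance follows by the substitution $\pi\mapsto\pi\sigma$). Since isomorphism classes of $(G,s)$ with underlying object $G$ correspond to $\Aut(G)$–orbits $O$ on $\mathcal G(G)$, and a representative $s_O$ of $O$ has $|\Aut((G,s_O))|=|\Aut(G)|/|O|$, one gets
\[\sum_{O}\frac{M_{(G,s_O)}\,c(s_O)}{|\Aut((G,s_O))|}=\frac1{|\Aut(G)|}\sum_{s\in\mathcal G(G)}M_{(G,s)}\,c(s).\]
In particular, with $c\equiv1$, $\sum_{(G,s)/\isom,\ \text{underlying }G}\frac{M_{(G,s)}}{|\Aut((G,s))|}=\frac{\tilde M_G}{|\Aut(G)|}$, where $\tilde M_G:=\sum_{s\in\mathcal G(G)}M_{(G,s)}$.

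For the ``if'' direction, assume $(\tilde M_G)_G$ is well-behaved. Given a level $\C'$ of $(\category,\mathcal G)$ and $(F,t)\in\C'$, let $\C$ be a level of $\category$ with $\mathcal F(\C')\subseteq\C$ (the level generated by the underlying objects of a finite generating set of $\C'$, using — as in Proposition~\ref{stability-master-theorem} — that quotients, joins, and minimal objects are computed compatibly under $\mathcal F$). In the well-behavedness sum for $(M_H)$ at $\C'$ with base $(F,t)$, replace the constraint $(G,s)\in\C'$ by $G\in\C$, drop the constraint $\mathcal G(\pi)(s)=t$ (both harmless since $M\ge0$), transfer $\mu$ and $Z$ to the underlying morphisms, and collapse the sum over decorations by the orbit identity with $c\equiv1$; this yields $\sum_{G\in\C}\sum_{\pi\in\Epi(G,F)}\frac{|\mu(F,G)|}{|\Aut(G)|}Z(\pi)^3\tilde M_G<\infty$ since $F\in\mathcal F(\C')\subseteq\C$. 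Conversely, assume $(M_H)_H$ is well-behaved, fix a level $\C$ of $\category$ and $F\in\C$, and expand $\tilde M_G=\sum_sM_{(G,s)}$ in the well-behavedness sum for $\tilde M$ at $\C$, partitioning the epimorphisms according to $t=\mathcal G(\pi)(s)\in\mathcal G(F)$. Applying the orbit identity with $c(s)=\sum_{\pi:\mathcal G(\pi)(s)=t}|\mu(F,G)|Z(\pi)^3$ rewrites this sum as $\sum_{t\in\mathcal G(F)}\ \sum_{(G,s)/\isom,\,G\in\C}\frac{M_{(G,s)}}{|\Aut((G,s))|}\sum_{\tilde\pi\in\Epi((G,s),(F,t))}|\mu((F,t),(G,s))|Z(\tilde\pi)^3$. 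Terms with $(G,s)\to(F,t)$ not semisimple vanish (Lemma~\ref{non-semisimple-vanishing}); for the rest $G$ admits a semisimple epimorphism to $F$, and then by Lemma~\ref{semisimple-equivalence} $G$ is the join of those quotients $H_l$ of $G$ carrying a simple epimorphism to $F$, each of which (being a quotient of $G\in\C$ with a simple epimorphism to $F$) is isomorphic to one of the finitely many objects $F^{(1)},\dots,F^{(r)}$ of $\C$ with a simple epimorphism to $F$, the list being finite since $\C$ is narrow. Let $\C'$ be the level of $(\category,\mathcal G)$ generated by the finite set $\{(F,t')\}_{t'\in\mathcal G(F)}\cup\{(F^{(l)},t')\}_{1\le l\le r,\ t'\in\mathcal G(F^{(l)})}$; since the quotient poset of $(G,s)$ is that of $G$, we have $(G,s)=\bigvee_l(H_l,\mathcal G(\pi^G_{H_l})(s))$ with each $(H_l,\cdot)$ isomorphic to a generator of $\C'$, whence $(G,s)\in\C'$. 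As $(F,t)\in\C'$ too, each summand over $t$ is bounded by the (finite) well-behavedness sum for $(M_H)$ at $\C'$ with base $(F,t)$, and there are finitely many $t$.

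I expect the main obstacle to be this last step: although $\{(G,s):G\in\category,\ G\in\C\}$ need not be a finitely generated set of isomorphism classes, the objects $G$ that actually occur in the relevant sums are exactly those with a semisimple epimorphism to $F$, and such $G$ are joins of the finitely many one-step simple extensions of $F$ inside $\C$; the point is that this forces \emph{all} decorated lifts $(G,s)$ into a single finitely generated level $\C'$ of $(\category,\mathcal G)$, because the decoration on a join is determined by the decoration on the ambient object. Pinning down this bookkeeping, and checking the $\Aut(G)$–invariance used in the orbit identity, are the delicate points; the two chains of inequalities are then routine once $M\ge0$ lets us freely enlarge domains of summation and discard constraints.
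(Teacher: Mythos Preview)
Your proposal is correct and follows essentially the same route as the paper's proof: both directions hinge on the identification of the quotient lattice of $(G,s)$ with that of $G$ (so $\mu$ and $Z$ transfer), the orbit--stabilizer bookkeeping relating $\sum_{s\in\mathcal G(G)}\frac{1}{|\Aut(G)|}$ to $\sum_{(G,s)/\isom}\frac{1}{|\Aut((G,s))|}$, and, for the converse, the construction of a single level $\C'$ of $(\category,\mathcal G)$ generated by $F$ and the finitely many simple extensions of $F$ in $\C$ (with all decorations), which contains every decorated $(G,s)$ with $G\to F$ semisimple. If anything, you are more explicit than the paper about the $\Aut(G)$--invariance needed for the orbit identity and about why joins force $(G,s)\in\C'$.
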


Note that an analogous statement need not be true for other notions of ``categories of objects with extra structure" described in Proposition \ref{stability-master-theorem}.

\begin{proof}
The key observation is that for $\pi \colon (G, s_G) \to (H, s_H)$ a map of ordered pairs arising from a map $f \colon G\to H$, the interval $[H,G]$ is naturally isomorphic to the interval $[(H,s_H), (G,s_G)]$ and thus $Z(\pi) = Z(f)$ and $\mu (H, G) = \mu( (H,S_H), (G,s_G))$. 
We write $f_*$ for $\mathcal{G}(f)$.

We first show that if  $ (\sum_{s\in \mathcal G(G)} M_{(G,s)})_G$ is well-behaved then $(M_{H})_H$ is as well.  For $\C$ a level of the category of pairs, writing $\C'$ for the level generated by the underlying groups of a finite set of generators of $\C$, and $(F, s_F) \in \C$, we have
\begin{align*}
 &\sum_{(G,s_G)  \in \C}     \sum_{ \pi \in\Epi((G,s_G),(F,s_F)) }\frac{\abs{ {\mu}((F,s_F) ,(G,s_G) )}}{|\Aut((G,s_G) )|} Z ( \pi)^3 M_{(G, s_G)} \\
 = &\sum_{(G,s_G)  \in \C}     \sum_{ \substack{ f \in\Epi(G, F)  \\ f_* s_G = s_F }}\frac{\abs{{\mu}((F,s_F) ,(G,s_G) )}}{|\Aut((G,s_G) )|} Z ( \pi)^3 M_{(G, s_G)}\\
   =&\sum_{(G,s_G)  \in \C}     \sum_{ \substack{ f \in\Epi(G, F)  \\ f_* s_G = s_F }}\frac{\abs{{\mu}(F ,G )}}{|\Aut((G,s_G) )|} Z ( f)^3 M_{(G, s_G)}\\ 
 = & \sum_{ G \in \C'} \sum_{ \substack{ s_G \in \mathcal G(G) \\  (G, s_G) \in \C}}  \sum_{ \substack{ f \in\Epi(G, F)  \\ f_* s_G = s_F }}\frac{\abs{{\mu}(F ,G )}}{|\Aut(G)|} Z ( f)^3 M_{(G, s_G)} \\
 \leq &\sum_{ G \in \C'} \sum_{  s_G \in \mathcal G(G) }  \sum_{ \substack{ f \in\Epi(G, F) }}\frac{\abs{{\mu}(F ,G )}}{|\Aut(G)|} Z ( f)^3 M_{(G, s_G)} \\
 = &\sum_{ G \in \C'}  \sum_{ \substack{ f \in\Epi(G, F)) }}\frac{\abs{{\mu}(F ,G )}}{|\Aut(G)|} Z ( f)^3   \sum_{ s_G \in \mathcal G(G) } M_{(G, s_G)}  < \infty,
 \end{align*}
showing that $(M_{H})_H$ is well-behaved.

For the converse, we take $\C$ a level of $\category$, fix $F \in \C$, and let $ \overline{\C}$ be the level of $(\category,\mathcal{G})$ generated by $F$, and every object with a simple morphism to $F$, together with all possible values of $s_G$. Closure of $\overline{C}$ under joins implies that for every $G\in \C$ with a semisimple morphism to $F$ and $s_G \in \mathcal {G}(G)$, we have $(G, s_G) \in \C$. We now assume that $(M_{H})_H$ is well-behaved and have
\begin{align*}
 &\sum_{ G \in \C}   \sum_{ \substack{ f \in\Epi(G, F) }}\frac{\abs{{\mu}(F ,G )}}{|\Aut(G)|} Z ( f)^3   \sum_{ s_G \in \mathcal G(G) } M_{(G, s_G)}
 \\=&   \sum_{ G \in \C} \sum_{ \substack{ f \in\Epi(G, F) }}\frac{\abs{{\mu}(F ,G )}}{|\Aut(G)|} Z ( f)^3   \sum_{  s_G \in \mathcal G(G) }  \sum_{ \substack{s_F \in \mathcal G(F) \\  f_* s_G = s_F}}  M_{(G, s_G)} 
 \\= &\sum_{ s_F \in \mathcal G(F)} \sum_{ G \in \C} \sum_{ s_G \in \mathcal G(G) } \sum_{ \substack{ f \in\Epi(G, F)  \\  f_* s_G = s_F }}\frac{\abs{{\mu}(F ,G )}}{|\Aut(G)|} Z ( f)^3   M_{(G, s_G)}\\
  \leq &  \sum_{ s_F \in \mathcal G(F)} \sum_{ (G, s_G) \in \overline{\C} } \sum_{ \pi\in \Epi( (G, s_G),(F, s_F)) } \frac{\abs{ {\mu}((F,s_F) ,(G,s_G) )}}{|\Aut((G,s_G) )|} Z ( \pi)^3 M_{(G, s_G)}   < \infty  ,
\end{align*}
   showing that $\sum_{ s_G \in \mathcal G(G) } M_{(G, s_G)} $ is well-behaved.
\end{proof}

In the setting of Lemma~\ref{decorated-construction}, we also have another convenient way to check well-behavedness and existence  on
$(\category,\mathcal{G})$.
\begin{corollary} Let $\category$ be a \Diamantine category. Let $\mathcal G$ be a functor from $\category$ to the category of finite sets. Recall the category $(\category,\mathcal{G})$ from Lemma~\ref{decorated-construction}.

For $\C$ a level of $\category$, the set $\W_{\C}$ of isomorphism classes of $(G, s_G) \in (C,\mathcal{G})$ with $G \in \C$ is a narrow formation of  finite complexity.
 For each finite set of objects of $(\category,\mathcal{G})$, there exists a level $\C$ such that $\W_{\C}$ contains all those objects. 
 
If $(M_{ H})_H\in \R^{(\category,\mathcal{G})/\isom}$ is well-behaved at $\W_\C$ for each level $\C$ of $C$, then $(M_H)_H$ is well-behaved.

If $(M_{ H})_H\in \R^{(\category,\mathcal{G})/\isom}$ is well-behaved at $\W_\C$ for every level $\C$ of $C$, 
and $v_{\W_\C,F}\geq 0$ for every level $\C$ and $F\in\W_\C$, then 
for every level $\mathcal{D}$ of $(\category,\mathcal{G})$
a measure $\nu$ exists on $\mathcal{D}$ with moments  $(M_{ H})_H\in \R^{\mathcal{D}}$
and $v_{\mathcal{D},H}\geq 0$ for every $H\in \mathcal{D}$.
 \end{corollary}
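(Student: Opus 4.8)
The plan is to verify the four assertions in turn, in each case transporting the relevant fact about $\category$ to $(\category,\mathcal G)$ through the identification, recorded in the proof of Lemma~\ref{decorated-construction}, of the poset of quotients of a pair $(G,s_G)$ with the poset of quotients of $G$: a quotient $H$ of $G$ carries the unique datum $\mathcal G(\pi^G_H)(s_G)$, a morphism $(G,s_G)\to(H,s_H)$ is simple exactly when $G\to H$ is, and $(G,s_G)$ is minimal exactly when $G$ is. For the first assertion, since $\C$ is downward-closed, join-closed and contains every minimal object of $\category$, this identification shows that $\W_\C$ has the same three properties inside $(\category,\mathcal G)$, so $\W_\C$ is a formation; it is narrow because a simple epimorphism to $(G,s_G)\in\W_\C$ has underlying object one of the finitely many (by narrowness of $\C$) isomorphism classes of $H\in\C$ with a simple epimorphism to $G$, and for each such $H$ the finite set $\mathcal G(H)$ bounds the number of pairs $(H,s_H)$; and it has finite complexity because semisimplicity of a morphism is a lattice-theoretic condition, so the complexity of $(G,s_G)$ equals that of $G$, which is bounded over $\C$ by Corollary~\ref{C:levelcom}.

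For the second assertion, given objects $(G_1,s_1),\dots,(G_n,s_n)$ of $(\category,\mathcal G)$ I take $\C$ to be the level of $\category$ generated by $G_1,\dots,G_n$, so that each $(G_i,s_i)$ lies in $\W_\C$ by construction. The third assertion then follows immediately by applying Lemma~\ref{la-input} to the \Diamantine category $(\category,\mathcal G)$ with $\mathcal T=\{\W_\C : \C\text{ a level of }\category\}$: the second assertion is exactly the hypothesis of that lemma, and $(M_H)_H$ is well-behaved at each $\W_\C$ by assumption, so $(M_H)_H$ is well-behaved.

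For the fourth assertion, fix a level $\mathcal D$ of $(\category,\mathcal G)$, say generated by $(G_1,s_1),\dots,(G_n,s_n)$, and let $\C$ be the level of $\category$ generated by $G_1,\dots,G_n$; since $\W_\C$ is a formation containing these generators, $\mathcal D\subseteq\W_\C$. Now $\W_\C$ is a narrow formation of finite complexity, $(M_H)_H$ is (well-)behaved at $\W_\C$, and $v_{\W_\C,F}\geq 0$ for all $F\in\W_\C$, so Theorem~\ref{amc-23} yields a measure on $\W_\C$ with the prescribed moments which pushes forward, along $(G,s_G)\mapsto (G,s_G)^{\mathcal D}$, to a measure $\nu$ on the sub-formation $\mathcal D$ with moments $(M_H)_H\in\R^{\mathcal D}$. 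To obtain $v_{\mathcal D,H}\geq 0$, I observe that well-behavedness at $\W_\C$ restricts to the hypothesis \eqref{E:weakwell} of Theorem~\ref{amc-1} at the downward-closed set $\mathcal D$ (drop the $Z(\pi)^3\geq 1$ factor and shrink the index set from $\W_\C$ to $\mathcal D\ni F$), so Theorem~\ref{amc-1} applies with $p_{\mathcal D,F}=\nu(\{F\})\geq 0$ — which satisfies $\sum_{F\in\mathcal D}S_{F,G}\,p_{\mathcal D,F}=m_G$ precisely because $\nu$ has $G$-moment $M_G$ — and gives $\nu(\{F\})=\sum_{G\in\mathcal D}T_{G,F}\,m_G=v_{\mathcal D,F}$, which is therefore nonnegative. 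Essentially everything here is bookkeeping built on Lemma~\ref{decorated-construction} and the general theorems of Section~\ref{S:Main}; the one point that needs genuine care — and which I regard as the crux — is arranging the auxiliary level $\C\subseteq\category$ so that simultaneously $\mathcal D\subseteq\W_\C$, the tuple $(M_H)_H$ remains (well-)behaved on $\W_\C$, and the sign condition $v_{\W_\C,\cdot}\geq 0$ is available, i.e. the sandwich $\mathcal D\subseteq\W_\C$ that lets both Theorem~\ref{amc-23} and Theorem~\ref{amc-1} be imported from $\W_\C$ down to $\mathcal D$.
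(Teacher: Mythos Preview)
Your proof is correct and follows essentially the same approach as the paper's own proof: both establish that $\W_\C$ is a narrow formation of finite complexity via the quotient-poset identification from Lemma~\ref{decorated-construction}, invoke Lemma~\ref{la-input} for well-behavedness, and use Theorem~\ref{amc-23} on $\W_\C$ to produce the measure on $\mathcal D$. The only cosmetic difference is in the last step: the paper cites the (Uniqueness) clause of Theorem~\ref{T:general} to conclude $v_{\mathcal D,H}=\nu(\{H\})\geq 0$, while you unpack this by appealing directly to Theorem~\ref{amc-1} and explicitly checking its hypothesis~\eqref{E:weakwell} via restriction from $\W_\C$; these are the same argument, since Theorem~\ref{amc-1} is precisely the engine behind that uniqueness clause.
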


\begin{proof} It is helpful to recall the facts from the proof of Lemma~\ref{decorated-construction} that a morphism $(G,s_G)\to (H,s_H)$ is an epimorphism if and only if the underlying map $G\to H$ is, and that the poset of quotients of $(G,s_G)$ is thereby isomorphic to the poset of quotients of $G$.  Thus the complexity of $(G,s_G)$ is equal to the complexity of $G$.  Thus $\W_\C$ is finite complexity by Corollary~\ref{C:levelcom}.

In particular, if $(H, s_H)$ is a quotient of $(G,s_G)$ then $H$ is a quotient of $G$, so $\W_{\C}$ is downward-closed because $\C$ is. Similarly, the join of quotients $(H_1,s_{H_1})$ and $(H_2,s_{H_2})$ of $(G_2,s_{G_2})$ has underlying object the join of $H_1$ and $H_2$, so $\W_{\C}$ is join-closed because $\C$ is. Finally, $(G,s_G)$ is minimal only if $G$ is minimal, so $\W_{\C}$ contains all the minimal objects.  For $(G',s_{G'})$ an object of $\W_{\C}$ with a simple epimorphism to $(G, s_G)$, the induced morphism $G' \to G$ must be a simple epimorphism. There are finitely many isomorphism classes of such $G'\in \C$ because $\C$ is a level and $\category$ is a \Diamantine category, and then for each $G'$ there are finitely many choices of $s_{G'}$  since $\mathcal G(G')$ is a finite set.  So $\W_\C$ is a narrow formation.

For $(G_1,s_{G_1}),\dots, (G_n, s_{G_n})$ a finite set of objects of $(\category,\mathcal{G})$, let $\C$ be the level generated by $G_1,\dots, G_n$. Then clearly $\W_{\C}$ contains all these objects.  
It also follows that every level $\mathcal{D}$ of $(\category,\mathcal{G})$ is contained in $\W_{\C}$ for some level $\C$ of $C$.
 The well-behavedness of $(M_H)_H$  then follows from
  Lemma~\ref{la-input}. Using Theorem \ref{amc-23}, we can then  check the positivity criterion for existence on  $\W_\C$ and obtain existence on level $\mathcal{D}$.  The (Uniqueness) part of Theorem~\ref{T:general} then gives $v_{\mathcal{D},H}\geq 0$.
\end{proof}

The following special case of Lemma~\ref{decorated-construction} is often useful.

\begin{lemma}\label{subcategory} Let $\category$ be a \Diamantine category. Let $\category'$ be a full subcategory of $\category$ such that every morphism of $\category$ with source in $\category'$ has target in $\category'$. (For example, if all morphisms in $\category$ are epimorphisms, we can take any downward-closed full subcategory of $\category$.) Then $\category'$ is a \Diamantine category. \end{lemma}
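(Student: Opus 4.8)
The plan is to deduce this from Lemma~\ref{decorated-construction}, by realizing $\category'$ as the category $(\category, \mathcal G)$ for a suitable functor $\mathcal G \colon \category \to \textrm{(finite sets)}$. The natural choice is to let $\mathcal G(G)$ be a one-element set, say $\{\ast\}$, if $G$ is isomorphic to an object of $\category'$, and the empty set otherwise. To make $\mathcal G$ a functor, I need to check that for every morphism $f \colon G \to H$ of $\category$, the set $\mathcal G(G)$ maps to $\mathcal G(H)$; this is exactly the hypothesis that every morphism of $\category$ with source in $\category'$ has target in $\category'$ (so $\mathcal G(G) \neq \emptyset$ forces $\mathcal G(H) \neq \emptyset$, and there is a unique map of sets to a one-element set). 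Functoriality (compatibility with composition and identities) is then automatic since all the target sets involved are singletons or empty.

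\textbf{Key steps.} First I would verify that $\mathcal G$ as defined is a well-defined functor to finite sets, which is the content of the previous paragraph; the only nontrivial point is the hypothesis on morphisms, and I should note that $\mathcal G$ is well-defined on isomorphism classes since ``isomorphic to an object of $\category'$'' is isomorphism-invariant. Second, I would observe that an object $(G, s_G)$ of $(\category, \mathcal G)$ is precisely a choice of $G \in \category$ together with $s_G \in \mathcal G(G)$, and such $s_G$ exists (and is unique) exactly when $G$ is isomorphic to an object of $\category'$. Since $\category'$ is a full subcategory, a morphism $(G, s_G) \to (H, s_H)$ in $(\category, \mathcal G)$ — which is a morphism $G \to H$ in $\category$ satisfying $\mathcal G(f)(s_G) = s_H$, a condition that is vacuous here since $\mathcal G(H)$ is a singleton — is the same as a morphism $G \to H$ in $\category'$. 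Thus $(\category, \mathcal G)$ is equivalent to $\category'$ (indeed isomorphic as a category, up to the harmless passage between ``objects of $\category'$'' and ``objects of $\category$ isomorphic to one in $\category'$''). Third, I would invoke Lemma~\ref{decorated-construction} to conclude that $(\category, \mathcal G)$, and hence $\category'$, is a \Diamantine category. (One should note that being a \Diamantine category is preserved under equivalence of categories, since all four conditions in Definition~\ref{D:diamond} — the quotient lattice, automorphism groups, levels and simple morphisms, minimal objects — are defined in terms of the categorical structure alone.)

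\textbf{Main obstacle.} There is no serious obstacle; this is a bookkeeping argument. The only point requiring a moment's care is the parenthetical example in the statement: when all morphisms in $\category$ are epimorphisms, one may take $\category'$ to be any downward-closed full subcategory. Here I would simply remark that downward-closed means exactly that if $G \in \category'$ and $H$ is a quotient of $G$ then $H \in \category'$; since in this setting every morphism is an epimorphism, every morphism with source in $\category'$ exhibits its target as a quotient of an object of $\category'$, hence lands in $\category'$, so the general hypothesis of the lemma is satisfied. I would also double check the mild subtlety that $\category'$ full is essential: it guarantees that no morphisms are lost when passing to the subcategory, so that the quotient posets and automorphism groups of objects of $\category'$ agree with those computed in $\category$, matching the structure of $(\category, \mathcal G)$.
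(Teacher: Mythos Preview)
Your proposal is correct and matches the paper's proof essentially verbatim: the paper also defines the functor $\mathcal G$ sending objects of $\category'$ to a one-element set and all others to the empty set, notes that the hypothesis on morphisms is exactly what is needed for this to be a functor, and invokes Lemma~\ref{decorated-construction}. Your write-up simply fills in more of the routine verifications (functoriality, the identification of $(\category,\mathcal G)$ with $\category'$, and the parenthetical example) that the paper leaves implicit.
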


\begin{proof} There is a unique functor from $\category$ to finite sets that takes each object of $\category'$ to the one-element set and every other object to the empty set, with the assumption that every morphism with source in $\category'$ has target in $\category'$ being the condition needed for the functor to exist.

Applying Lemma~\ref{decorated-construction} to this functor, we obtain $\category'$, showing that $\category'$ is \Diamantine.\end{proof}

We have seen some examples, but there are many more constructions allowed by Proposition~\ref{stability-master-theorem}.
Proposition~\ref{stability-master-theorem} will be useful because the variations of extra data that may need to be considered are endless.  
Liu \cite{Liu2022} works with $\Gamma$-groups with a certain kind of homology element, and the current authors plan to work in a similar setting in forthcoming work.  Proposition~\ref{stability-master-theorem} still applies in these settings.
We hope the results and methods of this paper can be used to prove
Conjectures 6.1 and 6.3 in \cite{Liu2022}, which are on existence, uniqueness, and robustness for measures with certain moments.

\subsection{Finite sets and finite trees}

We now give some very simple examples of \Diamantine categories, which still can lead to interesting results.

\begin{lemma} The opposite of the category of finite sets is a \Diamantine category. \end{lemma}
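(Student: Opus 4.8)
The plan is to spell out what the quotients, automorphisms, and simple epimorphisms are in the category $\mathcal{C}^{\mathrm{op}}$ (with $\mathcal{C}$ the category of finite sets), after which the four conditions of Definition~\ref{D:diamond} all hold by inspection.

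First I would set up the dictionary. An epimorphism $G \to F$ in $\mathcal{C}^{\mathrm{op}}$ is by definition a monomorphism $F \to G$ in $\mathcal{C}$, that is, an injection of finite sets; in particular the empty map $\emptyset \to G$ exhibits $\emptyset$ as a quotient of every object. Hence the isomorphism classes of quotients of $G$ are in order-preserving bijection with the subsets of $G$ ordered by inclusion (the subset being the image of the injection), with $H \geq F$ corresponding to $F \subseteq H$, and with $H \vee F$, $H \wedge F$ given by union and intersection. Thus the poset of quotients of $G$ is the lattice of subsets of $G$ (Example~\ref{subset-example}), which is finite and distributive, hence a finite modular lattice; this is condition~\eqref{D:modular}.

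Condition~\eqref{D:Aut} is immediate, since there are at most $|G|!$ bijections $G \to G$, so $\Aut(G)$ is finite. For condition~\eqref{D:finite}, the dictionary shows that $H \to G$ is simple exactly when $G$ corresponds to a subset of $H$ with one-point complement, i.e.\ $|H| = |G|+1$; as finite sets are classified up to isomorphism by cardinality, there is at most one isomorphism class of such $H$, and in particular only finitely many inside any level. Finally, $G$ is minimal, i.e.\ has only itself as a quotient, exactly when its only subset is $G$ itself, which means $G = \emptyset$; so there is a single isomorphism class of minimal object, giving the last condition.

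I expect no real obstacle; the one point to be careful about is the reversal of arrows (and hence of the order on quotients) in passing to the opposite category, which is precisely the content of the dictionary above. As an aside, one can note that the only levels here are $\{\emptyset\}$ and all of $\mathcal{C}^{\mathrm{op}}$: any level containing a nonempty set contains, by downward-closure, a one-point set, and then by join-closure (taking unions of disjoint one-point, two-point, etc.\ subsets of a large set) it contains finite sets of every cardinality.
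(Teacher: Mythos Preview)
Your proof is correct and is essentially the paper's direct verification: quotients in the opposite category are subsets, giving a finite (distributive, hence modular) lattice; automorphism groups are finite; simple epimorphisms add a single element so there is one isomorphism class above each object; and the empty set is the unique minimal object. The paper additionally observes, as an alternative route, that the opposite of finite sets is equivalent to finite Boolean algebras and hence falls under the general Mal'cev variety construction (Lemma~\ref{malcev-construction}), but the direct check you give matches the paper's second paragraph.
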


\begin{proof} This follows from Lemma \ref{malcev-construction} because the opposite of the category of finite sets is equivalent to the category of finite Boolean algebras, \cite[Prop. 7.30]{Awodey2006} which is a Mal'cev variety.

It can also be checked directly. A quotient object in the opposite of the category of finite sets is a subobject in the category of finite sets, i.e., a subset, and subsets of a fixed finite set form a modular lattice. The automorphism group of a finite set is finite, and simple morphisms to a finite set $S$ are injections of $S$ into a set of cardinality $\abs{S}+1$, of which there exists only one up to isomorphism.
\end{proof}

Since isomorphism classes of finite sets are parameterized by natural numbers, our main theorem in this context gives a statement about when probability distributions on natural numbers are determined by their moments.

 \begin{proposition} Let $M_0,M_1,\dots$ be a sequence of nonnegative numbers
  such that for all $m \geq 0$,   we have $\sum_{n \geq m}  \frac{ 2^{n-m} M_n }{ m ! (n-m)!} < \infty$. 
 
 Then there exists a sequence $\nu_m$ of nonnegative real numbers such that \[ \sum_{m=0}^{\infty} \left(\prod_{i=0}^{n-1} (m-i)  \right) \nu_m = M_n \] for all $n\geq 0$ if and only if 
 if and only if $ \sum_{n \geq m}  \frac{ (-1) ^{n-m} M_n }{ m ! (n-m)!}\geq 0$ for all $m\geq 0$. 
 
 If such $\nu_m$ exists, then we have $\nu_m =  \sum_{n \geq m}  \frac{ (-1) ^{n-m} M_n }{ m ! (n-m)!}$ for all $m\geq 0$.

 \end{proposition}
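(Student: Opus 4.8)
The plan is to obtain this proposition as the specialization of Theorem~\ref{T:general} (equivalently, of Theorems~\ref{amc-1} and~\ref{amc-23}) to the \Diamantine category $\category$ that is the opposite of the category of finite sets, so essentially all of the work is bookkeeping: translating each ingredient of the general machinery into this concrete setting. First I would record the dictionary. Isomorphism classes of objects are indexed by cardinality; writing $G_m$ for the object of size $m$, we have $|\Aut(G_m)| = m!$, the unique minimal object is $G_0 = \emptyset$, and the whole category is a single level $\C$ --- indeed it is generated by the one-point set $G_1$, since in $\category$ every object $G_n$ is the join of its $n$ singleton quotients (each isomorphic to $G_1$), so join-closure forces every $G_n$ into the level. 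The poset of quotients of $G_n$ is the Boolean lattice $2^{\{1,\dots,n\}}$, the interval attached to an epimorphism $G_n \to G_m$ is a Boolean lattice of rank $n-m$, and in particular every epimorphism is semisimple. Morphisms $G_n \to G_m$ in $\category$ are monomorphisms $G_m \hookrightarrow G_n$ of sets, so $|\Epi(G_n, G_m)| = n!/(n-m)!$, while $|\Sur(G_m, G_n)|$ is the number of injections $G_n \hookrightarrow G_m$, namely $\prod_{i=0}^{n-1}(m-i)$. Hence a measure on $\C$ is precisely a sequence $(\nu_m)_{m\ge 0}$ of nonnegative reals, its $G_n$-moment is $\sum_{m=0}^\infty\big(\prod_{i=0}^{n-1}(m-i)\big)\nu_m$, and ``a measure with those moments exists'' is exactly the existence clause of the proposition. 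Finally, by Theorem~\ref{fcml-classification} the interval attached to $G_n \to G_m$ is a product of $n-m$ two-element lattices, so Lemmas~\ref{L:muvalues} and~\ref{Z-omega} give $\mu(G_m, G_n) = (-1)^{n-m}$ and $Z(\pi) = 2^{n-m}$.

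Next I would verify well-behavedness and compute $v_{\C,G_m}$. Because every interval is a product of two-element lattices, the hypothesis of Lemma~\ref{Z-to-convolution}(3) holds vacuously (no simple factor fails to be a two-element lattice), so it suffices to use the notion ``behaved at $\C$'' in the case $a=1$; unwinding the definition, this condition reads $\sum_{n\ge m}\frac{n!/(n-m)!}{n!}\,2^{n-m}M_n = \sum_{n\ge m}\frac{2^{n-m}M_n}{(n-m)!} < \infty$ for every $m$, which is exactly the standing hypothesis of the proposition (up to the harmless factor $1/m!$). For the formula, the $\Aut(G_m) = S_m$--orbits on the set of injections $G_m \hookrightarrow G_n$ are the $m$-element subsets of $G_n$, of which there are $\binom{n}{m}$, each contributing $\mu(\pi) = (-1)^{n-m}$, so $\hat{\mu}(G_m, G_n) = \binom{n}{m}(-1)^{n-m}$. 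Therefore
\[
v_{\C,G_m} \;=\; \sum_{n\ge m}\frac{\hat{\mu}(G_m,G_n)}{|\Aut(G_n)|}M_n \;=\; \sum_{n\ge m}\frac{\binom{n}{m}(-1)^{n-m}}{n!}M_n \;=\; \sum_{n\ge m}\frac{(-1)^{n-m}M_n}{m!\,(n-m)!},
\]
the series converging absolutely since it is dominated term-by-term by the behavedness sum.

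Then I would simply invoke Theorem~\ref{T:general} --- its hypotheses being exactly the behavedness just checked (which in particular implies the weaker condition~\eqref{E:weakwell} needed for Theorem~\ref{amc-1}): a measure on $\C$ with the prescribed moments exists if and only if $v_{\C,G_m} \ge 0$ for all $m$, i.e.\ if and only if $\sum_{n\ge m}\frac{(-1)^{n-m}M_n}{m!(n-m)!} \ge 0$ for all $m$; and when it exists it is unique and $\nu_m = v_{\C,G_m} = \sum_{n\ge m}\frac{(-1)^{n-m}M_n}{m!(n-m)!}$. This is exactly the asserted statement. I expect no genuine obstacle: the only points requiring care are getting the opposite-category combinatorics right (direction of morphisms, and the Boolean shape of every quotient interval), and noticing that the sharp factor $2^{n-m}$ appearing in the proposition's hypothesis is precisely what the refined estimate of Lemma~\ref{Z-to-convolution}(3) delivers with $a=1$ --- the generic ($a=3$) version of well-behavedness would instead demand the strictly stronger $\sum_{n \ge m} 2^{3(n-m)}M_n/(n-m)! < \infty$. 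Alternatively one may bypass Lemma~\ref{Z-to-convolution} altogether and check the absolute-convergence hypotheses of Theorems~\ref{amc-1} and~\ref{amc-23} directly, since here $|\mu(G_m,G_n)| = 1$ and all the relevant sums are completely explicit.
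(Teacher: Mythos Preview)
Your proposal is correct and follows essentially the same route as the paper: specialize to the opposite of the category of finite sets, observe that the single level $\C$ generated by the one-point set contains everything, compute $\mu$, $\hat\mu$, $Z$, and $v_{\C,G_m}$ from the Boolean structure of the quotient lattices, check behavedness with $a=1$, and invoke the main existence/uniqueness theorems. One small point of precision: Theorem~\ref{T:general} as stated assumes \emph{well-behaved} ($a=3$), not the $a=1$ condition you verified, so the clean citation is to Theorems~\ref{amc-1} and~\ref{amc-23} (which take ``behaved'') --- exactly as you note parenthetically at the start and again at the end; the paper's own proof does the same.
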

 
 This is a version of the classical moment problem. Note that the factorial moments $\sum_{m=0}^{\infty} \left(\prod_{i=0}^{n-1} (m-i)  \right) \nu_m $ carry the same information as the power moments $\sum_{m=0}^{\infty} m^n \nu_m$ since we can write either in terms of the other using the Stirling numbers. 
 
 \begin{proof} 
Let $\category$ be the opposite of the category of finite set. 
 First note that every object of this category lies in $\C$ where $\C$ is generated by the one-element set, because joins correspond to unions of sets and every set is the union of one-element sets. 
 
Since the isomorphism classes of finite sets are parameterized by natural numbers, a measure on $\C$ is the same thing as a measure on $\mathbb N$, i.e. a sequence $\nu_m$ of nonnegative real numbers.  We write $[n]$ for the object corresponding to a set with $n$ elements.

The epimorphisms from $[m]$ to $[n]$ are the injections from an $n$-element set to an $m$-element set. The number of these is $\prod_{i=0}^{n-1} (m-i)$, so $\Sur([m],[n]) = \prod_{i=0}^{n-1} (m-i)$. Thus the $[n]$-moment of the measure $\nu_m$ is given by $\sum_{m=0}^{\infty} \left(\prod_{i=0}^{n-1} (m-i)  \right) \nu_m $.
 
 For an epimorphism $\pi \colon [n]\to [m]$, the interval $[[n],[m]]$ is the lattice of subsets of an $n$-element set containing a fixed $m$-element set, i.e. the lattice of subsets of an $n-m$-element set, which is the product of $n-m$ copies of a two-element lattice. This implies $\mu( [n],[m])= (-1)^{n-m}$, and $\hat{\mu}([n],[m])  =  (-1)^{n-m} \binom{n}{m} $, as well as $Z(\pi) =2^{n-m}$.
 
 Thus \[ v_{\C, m} = \sum_{n=0}^{\infty} \frac{  (-1)^{n-m} \binom{n}{m}}{ n!}  M_n =   \sum_{n \geq m}  \frac{ (-1) ^{n-m} M_n }{ m ! (n-m)!}.\]
  Since $[[n],[m]]$ is a product of copies of the two-element lattice, we can use $a=1$ in the definition of ``behaved.''
We have
\begin{align*}&\sum_{[n] \in \C} \sum_{ \pi \in\Epi([n],[m]) }\frac{\abs{{\mu}([n],[m])}}{|\Aut([n])|} Z ( \pi) M_n = \sum_{n=0}^{\infty} \sum_{ \pi \in\Epi([n],[m]) }\frac{\abs{ (-1)^{m-n} }}{n!} 2^{n-m}  M_n  \\
= &\sum_{n=m}^{\infty} \frac{2^{n-m}}{ (n-m)!} M_n  = m! \sum_{n = m}^\infty  \frac{ 2^{n-m} M_n }{ m ! (n-m)!} < \infty,
\end{align*}
so our moments are behaved. 
 Then, the ``if'' of the first statement follows from Theorem~\ref{amc-23}.
The final statement of the proposition follows from Theorem~\ref{amc-1}. 
The ``only if'' of the first statement follows from the final statement.
   \end{proof}

   Our framework also incorporates the notion of local weak convergence of random rooted graphs, also known as Benjamini-Schramm convergence, at least in the special case of trees. We first recall this notion. For a vertex $v$ in a simple graph and a natural number $r$, the $r$th ordered neighborhood of $v$ is the induced subgraph consisting of all vertices connected by a path of length at most $r$ to $v$. The $r$th-order neighborhood can be considered as a rooted graph, with $v$ the root. Note that the $r$th-order neighborhood is always a connected graph. We say a graph is locally finite if the $r$th order neighborhood of each vertex is finite for all $r$.
   
 We can define a measurable space whose underlying set is the set of locally finite rooted graphs up to isomorphism, with the sigma-algebra generated by the set consisting of, for each rooted graph $G$ and natural number $r$, the set of graphs such that the $r$th ordered neighborhood of the root is isomorphic to $G$ as a rooted graph. (This set may be empty if not every vertex of $G$ can be connected to the root by a set of length $\leq r$.) We also consider the topology generated by these graphs.
 
A sequence of random graphs $G_n$ converges locally weakly if, for each $r$, the probability distribution of the isomorphism class of the $r$th ordered neighborhood of a vertex of $G_n$ chosen uniformly at random converges as $n \to \infty$ to a probability distribution.  For a sequence $G_n$ of random graphs, the local weak limit is the limit of, for each $n$, the measure on this space obtained as the distribution of the connected component of $G_n$ containing a vertex chosen uniformly at random, with root that random vertex. This limit exists only if the sequence converges locally weakly.

We can restrict to trees by considering a measurable space of locally finite rooted trees, with the restricted sigma-algebra and subspace topology. We can then consider local weak convergence on this space of a sequence of random trees, or, more generally, a sequence of graphs such that the probability that the $r$th order neighborhood of a uniformly random vertex is a tree converges to $1$ for as $n$ goes to $\infty$ with fixed $r$ (a mild condition which is satisfied by many examples of interest that are not trees). Let us see how this probability space arises from our framework.

  Consider the category  whose objects are finite rooted trees (i.e. connected simple graphs without cycles, together with a marked vertex), where morphisms $T_1 \to T_2$ are inclusions of $T_2$ as a subgraph of $T_1$ that sends the root to the root.  Note that every tree that is a subgraph of another tree is automatically an induced subgraph, so it does not matter whether we consider ordinary or induced subgraphs. 
  
  \begin{lemma} The category of finite rooted trees is a diamond category.\end{lemma}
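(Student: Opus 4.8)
The plan is to verify the four conditions of Definition~\ref{D:diamond} directly, using the combinatorics of rooted trees, rather than invoking the Mal'cev machinery (the category of finite rooted trees is not a variety of algebras). First I would describe the quotients of a fixed finite rooted tree $T$ concretely: a quotient of $T$ is a rooted subtree $T'$ containing the root, and the partial order on such subtrees is inclusion. So the poset of quotients of $T$ is the poset of root-containing subtrees of $T$.

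The main obstacle, and the step that needs genuine care, is verifying that this poset is a \emph{modular lattice} (condition~\eqref{D:modular}). I would argue as follows. Given two root-containing subtrees $T_1, T_2$, their intersection $T_1 \cap T_2$ is again a root-containing subtree — it is connected because any vertex in both is joined to the root by a unique path in $T$, which must lie in both $T_1$ and $T_2$ — so meets exist and are given by intersection. For joins, $T_1 \vee T_2$ should be $T_1 \cup T_2$; one checks $T_1 \cup T_2$ is connected (both contain the root), hence a root-containing subtree, and it is clearly the least one containing both. Since the lattice is finite, it is complete. For modularity, I would verify $a \leq c \implies a \vee (b \wedge c) = (a \vee c) \wedge b$ wait, the correct form is $a \vee (b \wedge c) = (a \vee b) \wedge c$ when $a \leq c$; here joins are unions and meets are intersections of vertex sets (the edge sets being forced), so this reduces to the distributivity identity $A \cup (B \cap C) = (A \cup B) \cap C$ for sets with $A \subseteq C$, which holds for arbitrary sets. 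Thus the lattice of root-containing subtrees is in fact distributive, a fortiori modular. (One should double-check that taking unions/intersections of vertex sets is compatible with the subtree structure: a subset of vertices of $T$ that is connected and contains the root determines a unique subtree, so the lattice of root-containing subtrees embeds as a sublattice of the Boolean lattice of vertex subsets, and is closed under the relevant unions and intersections.)

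The remaining three conditions are routine. For~\eqref{D:Aut}: the automorphism group of a finite rooted tree is a subgroup of the permutation group of its finite vertex set, hence finite. For~\eqref{D:finite}: a simple epimorphism $T' \to T$ corresponds to a root-containing subtree $T \subseteq T'$ with no intermediate subtree, i.e.\ $T'$ is obtained from $T$ by attaching a single new leaf to some existing vertex of $T$; given $T$, there are $|V(T)|$ choices of attachment point, hence finitely many such $T'$ up to isomorphism. Since every finite rooted tree is built up from the one-vertex tree by a sequence of leaf-attachments, the one-vertex rooted tree generates the whole category as a level, so there is essentially a single level to check and~\eqref{D:finite} holds. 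For the last condition: the one-vertex rooted tree is the unique minimal object (it has only itself as a root-containing subtree), so there is exactly one isomorphism class of minimal object, in particular at most countably many. Assembling these four verifications completes the proof that the category of finite rooted trees is a \Diamantine category.
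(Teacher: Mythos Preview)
Your proof is correct and follows essentially the same approach as the paper: identify quotients of $T$ with root-containing subtrees, observe that unions and intersections of such subtrees are again such subtrees so the lattice embeds in the Boolean lattice of vertex subsets (hence is distributive, so modular), and count simple epimorphisms as leaf-attachments.

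One remark: your sentence ``the one-vertex rooted tree generates the whole category as a level'' is false. The level generated by the one-vertex tree is just the one-vertex tree itself (joins of the root with itself give nothing new); more generally, the level generated by a path of length $n+1$ consists only of trees of height $\leq n$, and the whole category is not a level at all. Fortunately this does not damage your argument: you have already shown that for any $T$ there are at most $|V(T)|$ isomorphism classes of $T'$ admitting a simple epimorphism to $T$, and this global bound immediately gives finiteness within every level $\C$. So simply delete that final sentence of the \eqref{D:finite} verification.
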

  
  \begin{proof} The finiteness of automorphisms groups is obvious, since they are a subset of the automorphism group of the set of vertices. The partially ordered set of quotients of an object $T$ is the partially ordered set $S_T$ of subsets of its vertex set, containing the root, whose induced subgraph is connected. The intersection of two connected subsets of the vertices of a tree is always connected, and the union of two connected subsets containing the root is connected, so $S_T$ is stable under unions and intersections. It follows that $S_T$ is a lattice, since unions give joins and intersections give meets, and a modular lattice, because the modular identity in the lattice of sets implies the modular identity in $S_T$. Furthermore, $S_T$ is finite since there are finitely many subsets, so the partially ordered set of quotients is a finite modular lattice. To check the third condition, we observe that given any proper connected subtree as a tree, we can always find a connected subtree with exactly one more vertex, showing that the only simple epimorphisms are $T_1 \to T_2$ occur when $T_2$ has exactly one  fewer vertex than $T_1$, or in other words where $T_1$ is obtained from $T_2$ by adding a leaf to one vertex. The number of these up to isomorphism is bounded by the number of vertices of $T_2$, which is finite.  There is only one minimal object.\end{proof}
  
  A pro-object in the category of finite rooted trees is the union of an increasing  net of finite rooted trees, which is simply a possibly-infinite rooted tree. A pro-object is small if and only if it is locally finite, i.e. each vertex has finitely many neighbors, since if a vertex a distance $n$ from the root has infinitely many neighbors, there are infinitely many morphisms to a path of length $n+2$ that send the 1st vertex to the root and the $n+1$st vertex to the vertex with infinitely many neighbors.
  
  The level generated by a path of length $n+1$ consists of all finite rooted trees of height $\leq n$, i.e. where each vertex has distance at most $n$ from the root, because all such trees may be written as joins of quotients of the path, and because finite rooted trees of height $\leq n$ are stabled under joins and quotients, where ``joins" are unions preserving the root and ``quotients" are subtrees containing the root. In fact, all levels are of this form, as every rooted tree of height $m$ contains a path of length $m+1$ as a subtree, so the level generated by a set of trees of maximum height $m$ both contains and is contained in the level generated by a path of length $m+1$. We can refer to the level generated by a path of length $n$ as level $n-1$. The maximal quotient in level $n$ of a possibly-infinite rooted tree is simply the subtree consisting of all vertices of depth $\leq n$, i.e. the $n$th-order neighborhood of the root.
  
  Thus, $\Prf$ is the set of isomorphism classes of locally-finite rooted trees, with the sigma-algebra and topology generated by, for each rooted tree of  height $\leq n$, the set of trees such that the $n$th-order neghborhood of the root is that fixed tree -- exactly the sigma-algebra we saw above was relevant for local weak convergence.
  
  For a measure on $\Prf$, the $T$'th moment for a rooted tree $T$ of a measure describing a random locally-finite rooted tree is the expected number of inclusions of $T$ as a rooted subtree of the random tree. Thus, for a measure obtained by taking a uniformly random root on a random tree with $n$ vertices, the $T$-moment is simply the expected number of inclusions of $T$ as an unrooted subtree, divided by $n$. ( In particular, the moment is independent of the choice of root.) 
  
  Two different models of random trees which arise naturally as local weak limits are the Galton-Wason tree, which is the local weak limit of the sequence of, for each $n$, the Erd\"H{o}s-R\'{e}nyi random graph on $n$ vertices with edge probability $\lambda/n$, and the skeleton tree, which is the local weak limit of the sequence of, for each $n$, a uniformly random tree on $n$ vertices.
  
  It is easy to check that both these families have moments given by simple formulas, which are well-behaved, so they are uniquely determined by their moments. The first has moment $M_T = \lambda^{ \# E(T)}$, where $\# E(T)$ is the number of edges, while the second has moment $\# V(T)$, where $\#V(T)$ is the number of vertices. Thus, at least two models that are natural from the point of view of local weak convergence are also natural from our point of view.

It would be possible, by defining a more complicated category, to obtain the probability space of local weak convergence of graphs, but we would not obtain such a simple notion of moments.

\bibliographystyle{alpha}
\bibliography{../references.bib,../MyLibrary.bib}

\end{document}